\title{Unipotent Representations of Complex Groups and Extended Sommers Duality}
\begin{document}

\begin{abstract}
Let $G$ be a complex reductive algebraic group. In \cite{LMBM}, we have defined a finite set of irreducible admissible representations of $G$ called \emph{unipotent representations}, generalizing the \emph{special unipotent representations} of Arthur (\cite{Arthur1983}) and Barbasch-Vogan (\cite{BarbaschVogan1985}). These representations are defined in terms of filtered quantizations of symplectic singularities and are expected to form the building blocks of the unitary dual of $G$. In this paper, we provide a description of these representations in terms of the Langlands dual group $G^{\vee}$. To this end, we construct a duality map $D$ from the set of pairs $(\OO^{\vee},\bar{C})$ consisting of a nilpotent orbit $\OO^{\vee} \subset \fg^{\vee}$ and a conjugacy class $\bar{C}$ in Lusztig's canonical quotient $\bar{A}(\OO^{\vee})$ to the set of finite covers of nilpotent orbits in $\fg^*$. 
\end{abstract}

\maketitle

\section{Introduction}

Let $G$ be a complex reductive algebraic group with Lie algebra $\fg$. A \emph{nilpotent cover} is a finite connected $G$-equivariant cover of a nilpotent co-adjoint orbit $\OO \subset \fg^*$. Write $\Cov(G)$ for the (finite) set of isomorphism classes of nilpotent covers for $G$. In \cite{LMBM}, we attach to each nilpotent cover $\widetilde{\OO}$ a finite set $\mathrm{Unip}_{\widetilde{\OO}}(G)$ of irreducible unitary representations of $G$ called \emph{unipotent representations}. Such representations possess a variety of special properties and are conjectured to form the building blocks of the unitary dual. They include, as a proper subset, all special unipotent representations in the sense of Arthur (\cite{Arthur1983}) and Barbasch-Vogan (\cite{BarbaschVogan1985}). The purpose of this article is to give a description of the sets $\mathrm{Unip}_{\widetilde{\OO}}(G)$ in terms of the Langlands dual group $G^{\vee}$. 

The dual-group parameters which appear in our description are called \emph{Lusztig-Achar data}. A Lusztig-Achar datum for $G^{\vee}$ is a pair $(\OO^{\vee},\bar{C})$ consisting of a nilpotent orbit $\OO^{\vee} \subset \fg^{\vee}$ and a conjugacy class $\bar{C}$ in Lusztig's canonical quotient $\bar{A}(\OO^{\vee})$ of the $G^{\vee}$-equivariant fundamental group of $\OO^{\vee}$, see Section \ref{subsec:Abar}. Such objects have previously appeared in the work of Achar \cite{Achar2003}. Write $\LA(G^{\vee})$ for the (finite) set of Lusztig-Achar data for $G^{\vee}$. We will also consider the subset $\LA^*(G^{\vee}) \subset \LA(G^{\vee})$ of so-called \emph{special Lusztig-Achar data}, defined in \cite[Section 3]{Achar2003}. This subset includes all pairs of the form $(\OO^{\vee},1)$ (among others).

Choose a Cartan subalgebra $\fh \subset \fg$ and let $W$ denote the Weyl group. Let $\fh_{\RR}^*$ denote the real form of the dual space $\fh^*$ spanned by the roots in $G$. To each Lusztig-Achar datum $(\OO^{\vee},\bar{C})$, we attach a $W$-invariant subset $S(\OO^{\vee},\bar{C}) \subset \fh_{\RR}^*$. If $(\OO^{\vee},\bar{C})$ is special we show in Theorem \ref{thm:inflchars} that there is a unique minimal-length $W$-orbit $\gamma(\OO^{\vee},\bar{C}) \subset S(\OO^{\vee},\bar{C})$.  This $W$-orbit determines an infinitesimal character for $U(\fg)$ by the Harish-Chandra isomorphism (which we also denote by $\gamma(\OO^{\vee},\bar{C})$), and hence a maximal ideal $I(\OO^{\vee},\bar{C})$ in $U(\fg)$. We consider the finite set
\begin{align*}
\mathrm{Unip}_{(\OO^{\vee},\bar{C})}(G) = \{&\text{irreducible $G$-equivariant Harish-Chandra $U(\fg)$-bimodules}\\
&\text{which are annihilated on both sides by } I(
\OO^{\vee},\bar{C})\}.\end{align*}
The notation suggests a relationship between $\mathrm{Unip}_{(\OO^{\vee},\bar{C})}(G)$ and $\mathrm{Unip}_{\widetilde{\OO}}(G)$. To make this relationship precise, we define in Section \ref{sec:mainresults} a natural duality map
$$D: \LA^*(G^{\vee}) \to \Cov(G)$$
This map generalizes the duality maps of Barbasch-Vogan-Lusztig-Spaltenstein (\cite{BarbaschVogan1985}), Sommers (\cite{Sommers2001}), and Mason-Brown-Matvieievskyi-Losev (\cite{LMBM}), and is equivalent (under some nontrivial identifications) to the duality map of Achar (\cite{Achar2003}). It enjoys various nice properties. For example, it is injective, maps distinguished Lusztig-Achar data to birationally rigid covers, and intertwines saturation of Lusztig-Achar data with birational induction of nilpotent covers. Our first main result is the following.

\begin{theorem}[See Theorem \ref{thm:inflchars} below]\label{theorem:main1intro}
Let $(\OO^{\vee},\bar{C}) \in \LA^*(G^{\vee})$ and let $\widetilde{\OO} = D(\OO^{\vee},\bar{C})$. Then there is an equality
$$\mathrm{Unip}_{(\OO^{\vee},\bar{C})}(G) = \mathrm{Unip}_{\widetilde{\OO}}(G).$$
\end{theorem}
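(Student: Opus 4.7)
The plan is to reduce the claimed equality of sets of Harish-Chandra bimodules to two independent statements: an equality of infinitesimal characters and an equality of maximal primitive ideals. Let $\gamma_0(\widetilde{\OO})$ and $I_0(\widetilde{\OO})$ denote, respectively, the canonical infinitesimal character and the maximal primitive ideal attached to $\widetilde{\OO}$ in \cite{LMBM}; by the main construction in \emph{loc.\ cit.}, $\mathrm{Unip}_{\widetilde{\OO}}(G)$ is characterized as the set of irreducible $G$-equivariant Harish-Chandra bimodules annihilated on both sides by $I_0(\widetilde{\OO})$. Hence, if one can establish (i) $\gamma(\OO^{\vee},\bar{C}) = \gamma_0(\widetilde{\OO})$ and (ii) $I(\OO^{\vee},\bar{C}) = I_0(\widetilde{\OO})$, then the theorem is immediate.

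For (i), my first step is to exploit the compatibility of the duality $D$ with saturation of Lusztig-Achar data on one side and birational induction of nilpotent covers on the other, a property emphasized in the introduction. Writing $(\OO^{\vee},\bar{C})$ as the saturation of a distinguished datum from a pseudo-Levi subgroup $L^{\vee} \subset G^{\vee}$ transports, under $D$, to a presentation of $\widetilde{\OO}$ as the birational induction of a birationally rigid cover of a nilpotent orbit in the corresponding Levi of $G$. Both assignments $(\OO^{\vee},\bar{C}) \mapsto \gamma(\OO^{\vee},\bar{C})$ and $\widetilde{\OO} \mapsto \gamma_0(\widetilde{\OO})$ respect these inductive constructions, so the comparison reduces to the distinguished/birationally rigid case. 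In that base case, the Sommers-style description of $\gamma(\OO^{\vee},\bar{C})$ in terms of pseudo-Levi subgroups should match the explicit quantization-theoretic formula for $\gamma_0(\widetilde{\OO})$ derived in \cite{LMBM}.

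For (ii), the strategy is to combine (i) with associated-variety information. Granted (i), both $I(\OO^{\vee},\bar{C})$ and $I_0(\widetilde{\OO})$ are maximal primitive ideals of $U(\fg)$ sharing the same infinitesimal character. I would then show both have associated variety equal to $\overline{\OO}$, where $\OO \subset \fg^*$ is the image of $\widetilde{\OO}$: for the left-hand side this follows from the description of the primitive ideal via Lusztig cells and the Barbasch-Vogan correspondence, while for the right-hand side it is a direct consequence of the construction in \cite{LMBM}. Since a maximal primitive ideal in $U(\fg)$ is uniquely determined by its infinitesimal character together with its associated variety, (ii) follows.

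The hard step will be (i). The two descriptions of the infinitesimal character are of very different natures: $\gamma(\OO^{\vee},\bar{C})$ is defined by a minimal-length condition on a $W$-invariant subset $S(\OO^{\vee},\bar{C}) \subset \fh_{\RR}^*$ arising from pseudo-Levi data in $G^{\vee}$, while $\gamma_0(\widetilde{\OO})$ is produced from the period map of a canonical filtered quantization of the cover $\widetilde{\OO}$. Even after reducing to the base case, bridging these two descriptions will likely require either a case-by-case verification split by Lie type, or a uniform argument relating the pseudo-Levi combinatorics on the Langlands dual side to the geometry of symplectic resolutions of the cover. I expect this comparison to be the technical core of the proof, with the remaining steps being relatively formal.
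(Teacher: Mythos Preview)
Your proposal is essentially correct and follows the same route as the paper. The reduction in (i) via saturation/birational induction to the distinguished case, followed by a type-by-type verification (combinatorics in classical types, tables in exceptional types), is exactly how the paper proves Theorem~\ref{thm:inflchars}. One terminological slip: the saturation is from a genuine \emph{Levi} subgroup $L^{\vee}$, not a pseudo-Levi.

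Your step (ii), however, is over-engineered. You correctly note that both $I(\OO^{\vee},\bar{C})$ and $I_0(\widetilde{\OO})$ are maximal ideals with the same infinitesimal character, but then invoke associated varieties to conclude equality. This is unnecessary: for each infinitesimal character there is a \emph{unique} maximal ideal in $U(\fg)$, so the equality of infinitesimal characters from (i) already forces $I(\OO^{\vee},\bar{C}) = I_0(\widetilde{\OO})$. The nontrivial input you are actually using, and which you should flag explicitly, is that $I_0(\widetilde{\OO})$ is maximal; this is the content of Proposition~\ref{prop:unipotentfacts}(i) (proved in \cite{MBMat}), and the paper relies on it in precisely this way. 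The associated-variety comparison, while true, plays no role.
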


Our second main result gives a parameterization of the set $\mathrm{Unip}_{(\OO^{\vee},\bar{C})}(G^{\vee})$ in terms of the Langlands dual group. For each special Lusztig-Achar datum $(\OO^{\vee},\bar{C}) \in \LA^*(G^{\vee})$, define
$$\gamma = \gamma(\OO^{\vee},\bar{C}), \quad s = \exp(2\pi i \gamma), \quad R^{\vee} = Z_{G^{\vee}}(s)^{\circ}, \quad L^{\vee} = Z_{G^{\vee}}(\gamma).$$
Note that $R^{\vee}$ is a pseudo-Levi subgroup of $G^{\vee}$ and $L^{\vee}$ is a Levi subgroup of $R^{\vee}$. Hence, we can consider the Richardson orbit $\OO_{R^{\vee}}$ for $R^{\vee}$ corresponding to $L^{\vee}$ (this is the nilpotent orbit for $R^{\vee}$ obtained by Lusztig-Spaltenstein induction from the $0$ orbit for $L^{\vee}$).

\begin{theorem}[See Corollary \ref{cor:HC} below]\label{theorem:main2intro}
Assume $G$ is adjoint and let $(\OO^{\vee},\bar{C}) \in \mathrm{LA}^*(G^{\vee})$. Then there is a natural bijection
$$\mathrm{Unip}_{(\OO^{\vee},\bar{C})}(G) \simeq \{\text{irreducible representations of } \bar{A}(\OO_{R^{\vee}})\}.$$
\end{theorem}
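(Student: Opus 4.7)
Combining Theorem \ref{theorem:main1intro} with the definition of $\mathrm{Unip}_{(\OO^{\vee},\bar{C})}(G)$, the task reduces to classifying the irreducible $G$-equivariant Harish-Chandra $U(\fg)$-bimodules with infinitesimal character $\gamma$ on both sides whose two-sided annihilator is exactly the maximal ideal $I(\OO^{\vee},\bar{C})$. The plan is to carry out this classification using the parameterization of Harish-Chandra bimodules for a complex group via the geometry of the Langlands dual, and then match the resulting parameters against $\mathrm{Irr}(\bar{A}(\OO_{R^{\vee}}))$ on the nose. The adjointness of $G$ is used to ensure that the relevant component groups on the dual side are the naive ones and that no further centre-of-$G$ twisting appears.

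\textbf{Reduction to the pseudo-Levi $R^{\vee}$.} Since $s = \exp(2\pi i \gamma)$ has connected centralizer $R^{\vee}$ and $\gamma$ lies in the centre of $\fl^{\vee}$, the standard reduction (going back to Joseph and Barbasch-Vogan for complex groups, see also \cite{LMBM}) gives a bijection between irreducible Harish-Chandra $U(\fg)$-bimodules of infinitesimal character $\gamma$ on each side and $R^{\vee}$-equivariant irreducible perverse sheaves on the unipotent variety of $R^{\vee}$, i.e. pairs $(\OO_{0}^{\vee},\chi)$ of an $R^{\vee}$-orbit $\OO_{0}^{\vee} \subset \fr^{\vee}$ together with an irreducible representation $\chi$ of the component group $A^{R^{\vee}}(\OO_{0}^{\vee})$. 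Under this bijection, the associated variety of the annihilator corresponds to the closure of the $G$-saturation of $\OO_{0}^{\vee}$, and the two-sided-cell filtration on bimodules corresponds to Lusztig's families.

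\textbf{Matching the annihilator.} Under the Kazhdan-Lusztig-Lusztig dictionary between primitive ideals of $U(\fg)$ and two-sided cells, the ideal $I(\OO^{\vee},\bar{C})$ is the unique primitive ideal of infinitesimal character $\gamma$ whose associated cell corresponds, via the Springer correspondence for $R^{\vee}$, to the Richardson orbit $\OO_{R^{\vee}}$ induced from $L^{\vee}$. The pairs $(\OO_{0}^{\vee},\chi)$ whose corresponding bimodule is annihilated on both sides by $I(\OO^{\vee},\bar{C})$ are therefore exactly those with $\OO_{0}^{\vee} = \OO_{R^{\vee}}$ and $\chi$ appearing in the family of $W(R^{\vee})$-representations attached to $\OO_{R^{\vee}}$. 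By Lusztig's theorem identifying the quotient of $A^{R^{\vee}}(\OO_{R^{\vee}})$ through which the family acts with the canonical quotient, these $\chi$ are precisely the characters of $\bar{A}(\OO_{R^{\vee}})$, producing the claimed bijection. Finally, the identification with $\mathrm{Unip}_{\widetilde{\OO}}(G)$ for $\widetilde{\OO} = D(\OO^{\vee},\bar{C})$ is supplied by Theorem \ref{theorem:main1intro}.

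\textbf{Main obstacle.} The principal technical point will be the step that trims $A^{R^{\vee}}(\OO_{R^{\vee}})$ down to its canonical quotient $\bar{A}(\OO_{R^{\vee}})$. For representations $\chi$ not factoring through $\bar{A}(\OO_{R^{\vee}})$ the corresponding bimodule exists but its two-sided annihilator is strictly larger than $I(\OO^{\vee},\bar{C})$; making this precise requires combining Lusztig's description of cells and families in $W(R^{\vee})$ with the explicit construction of the duality $D$ in Section \ref{sec:mainresults} to verify that the class $\bar{C}$ records exactly the $\bar{A}$-equivariance data. The other ingredients---translation to $\gamma$, the reduction to $R^{\vee}$, and Beilinson-Bernstein/Kazhdan-Lusztig parameterization---are by now standard in the complex-group setting.
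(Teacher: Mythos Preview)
Your outline follows a very different route from the paper, and it has a genuine gap at the two places you flag as ``standard''.

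The paper does \emph{not} classify Harish-Chandra bimodules by a Langlands/Kazhdan--Lusztig parameterization on the dual side. Instead it invokes Theorem~\ref{thm:classificationHC} (from \cite{LMBM}), which supplies a monoidal equivalence
\[
\HC^G\bigl(U(\fg)/I(\widetilde{\OO})\bigr)\;\simeq\;\Gamma(\widetilde{\OO})\modd,
\]
where $\Gamma(\widetilde{\OO})=\Aut(\widetilde{\OO}_{max},\OO)$ is defined purely on the $G$-side via covers and symplectic singularities. The whole content of the result then becomes Theorem~\ref{thm:Gamma}: a group isomorphism $\bar{A}(\OO_{R^{\vee}})\simeq\Gamma(\widetilde{\OO})$. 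This is proved by extensive case-by-case work (Sections~\ref{sec:proofsclassical3} and~\ref{subsec:proofsexceptional2}), leaning heavily on the new structural result Theorem~\ref{thm:maximaltomaximal} that birational induction preserves maximality of covers.

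Your ``standard reduction'' to pairs $(\OO_0^{\vee},\chi)$ with $\chi\in\widehat{A^{R^{\vee}}(\OO_0^{\vee})}$ is not standard in the generality you need. For complex groups, the Barbasch--Vogan classification (\cite[Theorem~III]{BarbaschVogan1985}) produces exactly the bijection you describe, but only when $\OO^{\vee}$ is \emph{even} and $\bar{C}=1$; the extension to special $\OO^{\vee}$ is \cite[Appendix~A]{Wong2023}. The paper notes this explicitly in the remark following Corollary~\ref{cor:HC}, and the case-free argument you sketch is essentially what is recorded in Remark~\ref{rmk:even} for that even case. For general $(\OO^{\vee},\bar{C})$ --- non-even $\OO^{\vee}$, or $\bar{C}\neq 1$ --- there is no available ``cell/family'' dictionary on $R^{\vee}$ that pins down both the orbit $\OO_{R^{\vee}}$ \emph{and} the passage from $A^{R^{\vee}}(\OO_{R^{\vee}})$ to $\bar{A}(\OO_{R^{\vee}})$ as the exact annihilator condition; your ``Matching the annihilator'' paragraph asserts this but does not prove it, and I do not believe it follows from anything in the literature. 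This is precisely why the paper abandons the dual-side perverse-sheaf picture and instead compares $\bar{A}(\OO_{R^{\vee}})$ with the geometrically defined $\Gamma(\widetilde{\OO})$.
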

\begin{rmk}
In fact, the representations in $\mathrm{Unip}_{(\OO^{\vee},\bar{C})}(G)$ are the irreducible objects in the monoidal category $\HC^G(U(\fg)/I(\OO^{\vee},\bar{C}))$ of $G$-equivariant Harish-Chandra $U(\fg)$-modules annihilated on both sides by $I(\OO^{\vee},\bar{C})$, see Section \ref{sec:unipotentbimodules}. The bijection in Theorem \ref{theorem:main2intro} comes from a monoidal equivalence of categories
$$\HC^G(U(\fg)/I(\OO^{\vee},\bar{C})) \simeq \bar{A}(\OO_{R^{\vee}})\modd$$
\end{rmk}

We note that the main definitions in this paper (the duality map $D$, the the set $S(\OO^{\vee},\bar{C}) \subset \fh_{\RR}^*$, and the pair $(R^{\vee},\OO_{R^{\vee}})$) are formulated in a canonical (i.e. case-free) manner, although many of our proofs require case-by-case anaylsis.

To deduce Theorem \ref{theorem:main2intro}, we were forced to prove several new results on conical symplectic singularities and nilpotent covers, which may be of independent interest (see Section \ref{sec:symplecticsingularities}, in particular Sections \ref{sec:Wcovers}-\ref{sec:maximal}). The main such result is Theorem \ref{thm:maximaltomaximal} --- it states that birational induction of nilpotent covers takes maximal covers to maximal covers (in the sense of \cite[Section 6.5]{LMBM}, see also Lemma \ref{lem:coverfacts}).

Now fix a nilpotent adjoint orbit $\OO^{\vee} \subset \fg^{\vee}$. Choose an $\mathfrak{sl}(2)$-triple $(e^{\vee},f^{\vee},h^{\vee})$ with $e^{\vee} \in \OO^{\vee}$. The corresponding homomorphism $\psi_{\OO^{\vee}}: SL(2) \to G^{\vee}$ is a unipotent Arthur parameter for the complex group $G$ and the corresponding Arthur packet coincides with the set $\mathrm{Unip}_{(\OO^{\vee},1)}(G)$. So in fact Theorem \ref{theorem:main2intro} provides, as a special case, a parameterization of the elements of the unipotent Arthur packets for a complex reductive group (such a parameterization was previously obtained in \cite{BarbaschVogan1985} and \cite{Wong2023} only in the special case when $\OO^{\vee}$ is special).

We conclude with a few remarks regarding symplectic duality, which appears to be connected, in somewhat mysterious ways, to the main results of this paper. In \cite[Section 9.3]{LMBM} it was conjectured that the nilpotent Slodowy slice $S^\vee$ to the nilpotent orbit $\OO^\vee\subset \fg^\vee$ is symplectically dual to the affinization of the nilpotent cover $D(\OO^\vee, 1)$. In ongoing work, Finkelberg, Hanany, and Nakajima produce for each nilpotent orbit $\OO^{\vee}$ in $\mathfrak{so}(2n)$ or $\mathfrak{sp}(2n)$ an ortho-symplectic quiver gauge theory $Q$ with Higgs branch $S^\vee$ and Coulomb branch isomorphic to the affinization of a certain cover of the special nilpotent orbit $d(\OO^\vee)$ with Galois group $\bar{A}(\OO^\vee)$. It is reasonable to conjecture that this nilpotent cover is isomorphic to $D(\OO^{\vee},1)$. The results of this paper offer strong evidence for this. Indeed, it follows from Theorem \ref{thm:Gamma} that $D(\OO^{\vee},1)$ is a Galois cover of $d(\OO^{\vee})$ with Galois group $\bar{A}(\OO^\vee)$. 


To each conjugacy class $\bar{C}$ in $\Bar{A}(\OO^\vee)$ we can associate a certain finite group $\Pi$ depending on $\bar{C}$ which acts on both $S^\vee$ and $\Spec(\CC[D(\OO^{\vee},1)])$ by graded Poisson automorphisms. Based on the observations in \cite{KP_special}, we expect that the variety of fixed points $\Spec(\CC[D(\OO^{\vee},1)])^\Pi$ is identified with $\Spec(\CC[D(\OO^{\vee},\bar{C})])$. We believe that the assignment 
$$(S^\vee, \Pi) \mapsto \Spec(\CC[D(\OO^{\vee},\Bar{C})])$$
should be regarded as a special case of a (still highly conjectural) \emph{equivariant} version of symplectic duality. This topic will be explored in a future paper. 



\subsection{Structure of paper}

In Section \ref{sec:prelim}, we recall some preliminaries related to nilpotent orbits and Lie theory, including (birational) induction, Lusztig's canonical quotient, primitive ideals in the unversal enveloping algebra, and Sommers duality. In Section \ref{sec:symplecticsingularities}, we recall some preliminaries related to symplectic singularities and unipotent ideals. This section also includes several new results. In Section \ref{sec:mainresults} we state our main results on unipotent representations. The proofs of these results appear in Sections \ref{sec:combinatoricsclassical}-\ref{sec:tables}. The classical cases are proved in Sections \ref{sec:combinatoricsclassical} and \ref{sec:proofsclassical}; the exceptional cases are handled in Sections \ref{sec:proofsexceptional} and \ref{sec:tables}.

\subsection{Acknowledgments}

We would like to thank Jeffrey Adams, Dan Barbasch, Ivan Losev, Hiraku Nakajima, Alexander Premet, Eric Sommers, and David Vogan for helpful discussions. The third author is grateful to Chen Jiang and Yoshinori Namikawa for numerous discussions on birational geometry, and to Binyong Sun for his hospitality and inspiring discussions during his visits to the Institute for Advanced Study at Zhejiang University. 

The work of S.Yu was partially supported by China NSFC grants (Grants No. 12001453 and 12131018) and Fundamental Research Funds for the Central Universities (Grants No. 20720200067 and 20720200071).

\tableofcontents

\section{Preliminaries}\label{sec:prelim}

Let $G$ be a complex connected reductive algebraic group with Lie algebra $\fg$. A \emph{nilpotent orbit} for $G$ is a co-adjoint orbit $\OO \subset \fg^*$ which is stable under scaling. Let
$$\Orb(G) := \{\text{nilpotent orbits $\OO \subset \fg^*$}\}$$
It is well-known that $\Orb(G)$ is finite and independent of isogeny.  Let $\cN \subset \fg^*$ denote the union of all nilpotent orbits in $\fg^*$.

For each $\OO \in \mathsf{Orb}(G)$, choose an element $e \in \OO$, and let $A(\OO)$ denote the (finite) component group of the centralizer of $e$ in $G$. Note that $A(\OO)$ is independent (up to conjugacy) of the choice of $e$ in $\OO$. 

A \emph{conjugacy datum} for $G$ is a pair $(\OO,C)$ consisting of a nilpotent orbit $\OO \in \mathsf{Orb}(G)$ and a conjugacy class $C$ in $A(\OO)$. Let
$$\Conj(G) := \{\text{conjugacy data $(\OO,C)$ for $G$}\}$$
Since $\Orb(G)$ is finite and $A(\OO)$ is finite (for each $\OO \in \Orb(G)$), $\Conj(G)$ is finite. 

A \emph{pseudo-Levi subgroup} of $G$ is the identity component of the centralizer of a semisimple element $s \in G$. A \emph{McNinch-Sommers datum} for $G$ is a triple $(M,tZ^{\circ},\OO_M)$ consisting of a pseudo-Levi subgroup $M \subset G$, a coset $tZ^{\circ}$ in the component group $Z/Z^{\circ}$ of $Z = Z(M)$, and a nilpotent orbit $\OO_M \in \Orb(M)$ such that $M=Z_G(tZ^{\circ})^{\circ}$. Note that $G$ acts by conjugation on the set of McNinch-Sommers data. Let
$$\MS(G) := \{\text{McNinch-Sommers data $(M,tZ^{\circ},\OO_M)$ for $G$}\}/G.$$
\subsection{Saturation}\label{subsec:saturation}

Suppose $L \subset G$ is a Levi subgroup. If $\OO_L \in \Orb(L)$, then $G\cdot \OO_L \in \Orb(G)$. This defines a map
\begin{equation}\label{eq:satorb}\mathrm{Sat}^G_L: \Orb(L) \to \Orb(G), \qquad \mathrm{Sat}^G_L\OO_L = G \cdot \OO_L.\end{equation}
Now choose $e \in \OO_L$. The inclusion $Z_L(e) \subset Z_G(e)$ induces a group homomorphism
\begin{equation}\label{eq:iota}\iota: A(\OO_L) \to A(\mathrm{Sat}^G_L\OO_L).\end{equation}
(in fact, $\iota$ is injective, but we will not use this fact). Thus we get a map
\begin{equation}\label{eq:satconj}\mathrm{Sat}^G_L: \Conj(L) \to \Conj(G), \qquad \mathrm{Sat}^G_L(\OO_L,C_L) = (\mathrm{Sat}^G_L\OO_L, \iota(C_L)).\end{equation}
Finally, suppose $(M,tZ^{\circ},\OO_M) \in \MS(L)$. Since $Z_G(tZ^{\circ}) \subseteq Z_G(Z^{\circ}) \subseteq Z_G(Z(L)^{\circ})=L$, we have $Z_G(tZ^{\circ}) = Z_L(tZ^{\circ})^{\circ}=M$. So there is a tautological map
$$\Sat^G_L: \MS(L) \to \MS(G), \qquad \Sat^G_L(M,tZ^{\circ},\OO_M) = (M,tZ^{\circ},\OO_M)$$
A nilpotent orbit (resp. conjugacy datum, resp. McNinch-Sommers datum) is \emph{distinguished} if cannot be obtained by saturation from a proper Levi subgroup. Let $\fz(\fa)$ denote the center of a Lie algebra $\fa$.

\begin{lemma}\label{lem:distinguishedMS}
Let $(M,tZ^{\circ},\OO_M) \in \MS(G)$. Then the following are equivalent:
\begin{itemize}
    \item[(i)] $(M,tZ^{\circ},\OO_M)$ is distinguished.
    \item[(ii)] $M$ is not contained in a proper Levi subgroup of $G$.
    \item[(iii)] $\fz(\fg)=\fz(\fm)$.
    \item[(iv)] $M$ is of maximal semisimple rank.
\end{itemize}
\end{lemma}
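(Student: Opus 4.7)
The plan is to prove (iii) $\iff$ (iv), then (ii) $\iff$ (iii), and finally (i) $\iff$ (ii). The first two equivalences are purely group-theoretic facts about pseudo-Levi subgroups; only the last uses the definition of saturation from Section \ref{subsec:saturation}. The underlying preliminary is that a pseudo-Levi $M = Z_G(s)^{\circ}$ contains a maximal torus $T$ of $G$, yielding (a) $\mathrm{rank}(M) = \mathrm{rank}(G)$, (b) $Z_G(M) \subseteq Z_G(T) = T \subseteq M$, so $Z_G(M) = Z(M)$, and (c) $\fz(\fg) \subseteq \ft \subseteq \fm$ commutes with $\fm$, hence $\fz(\fg) \subseteq \fz(\fm)$.

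The semisimple rank of a connected reductive group $H$ equals $\mathrm{rank}(H) - \dim\fz(\fh)$. Combined with (a) and (c), the semisimple rank of $M$ is maximal (equal to that of $G$) precisely when $\fz(\fg) = \fz(\fm)$, giving (iii) $\iff$ (iv). For (ii) $\iff$ (iii): if $M \subseteq L$ with $L$ a proper Levi, then $Z(L)^{\circ} \subseteq Z_G(M) = Z(M)$ by (b), so $\fz(\mathfrak{l}) \subseteq \fz(\fm)$; since $L$ is a proper Levi, $\fz(\mathfrak{l}) \supsetneq \fz(\fg)$, forcing $\fz(\fm) \supsetneq \fz(\fg)$. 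Conversely, if $\fz(\fm) \supsetneq \fz(\fg)$, then $L := Z_G(Z(M)^{\circ})$ is a proper Levi subgroup of $G$ (as the centralizer of a non-central torus in a connected reductive group) containing $M$.

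For (i) $\iff$ (ii), if $M \subseteq L$ for a proper Levi $L$, the inclusions $tZ^{\circ} \subseteq Z(M) \subseteq M \subseteq L$ give $M \subseteq Z_L(tZ^{\circ})^{\circ} \subseteq Z_G(tZ^{\circ})^{\circ} = M$, so $Z_L(tZ^{\circ})^{\circ} = M$ and the triple $(M, tZ^{\circ}, \OO_M)$ is already a McNinch-Sommers datum for $L$; the tautological saturation map $\Sat^G_L$ then sends it to itself, so it is not distinguished. Conversely, since $\Sat^G_L$ is the identity on underlying triples, any preimage of $(M, tZ^{\circ}, \OO_M)$ under some $\Sat^G_L$ with $L$ proper would require $M \subseteq L$, contradicting (ii).

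I expect no serious obstacle: the argument is bookkeeping around the two standard facts that a pseudo-Levi contains a maximal torus and that the centralizer of a non-central torus in a connected reductive group is a proper Levi subgroup. The subtlest step is verifying that the same $tZ^{\circ}$ witnesses the pseudo-Levi condition for both $G$ and $L$, which is precisely the sandwich $M \subseteq Z_L(tZ^{\circ})^{\circ} \subseteq Z_G(tZ^{\circ})^{\circ} = M$ given above.
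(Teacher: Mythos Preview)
Your proof is correct and follows essentially the same approach as the paper's: the paper also argues (i)$\iff$(ii) via the sandwich $Z_L(tZ^{\circ})^{\circ}=Z_G(tZ^{\circ})^{\circ}\cap L=M$, deduces (ii)$\iff$(iii) from the standard Levi-centralizer facts $L=Z_G(\fz(\fl))$ and $L\neq G\iff\fz(\fl)\neq\fz(\fg)$, and declares (iii)$\iff$(iv) to be ``by definition.'' Your version is slightly more explicit in justifying $Z_G(M)=Z(M)$ and $\fz(\fg)\subseteq\fz(\fm)$ from the maximal-torus inclusion, but the structure is identical.
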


\begin{proof}
Clearly (ii) implies (i). To see that (i) implies (ii), suppose that $M$ is contained in a proper Levi subgroup $L$ of $G$. Then $Z_L(tZ^{\circ})^{\circ} = Z_G(tZ^{\circ})^{\circ} \cap L = M \cap L = M$. So $(M,tZ^{\circ},\OO_M) \in \MS(L)$, i.e. $(M,tZ^{\circ},\OO_M) \in \MS(G)$ is not distinguished.

The equivalence of (ii) and (iii) is an immediate consequence of the following well-known facts: if $L$ is a Levi subgroup of $G$, then $L=Z_G(\fz(\fl))$ and $L \neq G$ if and only if $\fz(\fl)\neq \fz(\fg)$. (iii) and (iv) are equivalent by definition.
\end{proof}

We say that a McNinch-Sommers datum $(M,tZ^{\circ},\OO_M)$ is \emph{large} if $\OO_M$ is distinguished. Denote the set of conjugacy classes of large McNinch-Sommers data by $\MS^{large}(G)$. It is clear that largeness is preserved under saturation. Let $(M,tZ^{\circ},\OO_M) \in \MS(G)$. Choose $e \in \OO_M$. Then $Z^{\circ} \subset Z_G(e)^{\circ}$ and $t \in Z_G(e)$. Thus, we get a map
$$\pi: \MS(G) \to \Conj(G), \qquad \pi(M,tZ^{\circ},\OO_M) = (G \cdot e, tZ_G(e)^{\circ}).$$
We will sometimes write $\pi^G$ instead of $\pi$ when we wish to emphasize the dependence on $G$.
\begin{lemma}\label{lem:alphabeta}
The following are true
\begin{itemize}
    \item[(i)] The restriction of $\pi$ to $\MS^{large}(G)$ is a bijection onto $\Conj(G)$.
    \item[(ii)] If $L \subset G$ is a Levi subgroup, then the following diagram commutes
    \begin{center}
    \begin{tikzcd}
    \MS(L) \ar[d,twoheadrightarrow,"\pi^L"] \ar[r,"\mathrm{Sat}^G_L"] & \MS(G) \ar[d,twoheadrightarrow,"\pi^G"]\\
    \Conj(L) \ar[r,"\mathrm{Sat}^G_L"]& \Conj(G)
    \end{tikzcd}
    \end{center}
    \item[(iii)] If $(\OO,C) \in \Conj(G)$ is distinguished, then $\pi^{-1}(\OO,C)$ consists of large, distinguished McNinch-Sommers data.
\end{itemize}
\end{lemma}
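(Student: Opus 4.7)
The three parts interlock: (ii) is a direct unpacking of definitions, (i) is the main work and uses Jordan decomposition together with a Bala--Carter/Sommers improvement, and (iii) is a contrapositive argument that calls on both (i) and (ii) together with Lemma~\ref{lem:distinguishedMS}.

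Part (ii) reduces to a computation. For $(M, tZ^{\circ}, \OO_M) \in \MS(L)$ and $e \in \OO_M$, the composition $\pi^G \circ \Sat^G_L$ returns $(G \cdot e,\, t\, Z_G(e)^{\circ})$ directly, while $\Sat^G_L \circ \pi^L$ returns $(G \cdot (L \cdot e),\, \iota(tZ_L(e)^{\circ})) = (G \cdot e,\, tZ_G(e)^{\circ})$ by the definition of $\iota$ in \eqref{eq:iota} and of $\Sat^G_L$ on $\Conj$.

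For (i), I would construct an inverse $\sigma: \Conj(G) \to \MS^{large}(G)$ as follows. Given $(\OO, C)$, pick $e \in \OO$ and a lift $x \in Z_G(e)$ of $C$. The semisimple part $t = x_s$ still lies in $Z_G(e)$ and projects to $C$, since the unipotent factor of $x$ lies in a connected subgroup of $Z_G(e)$. Set $M_0 = Z_G(t)^{\circ}$, a pseudo-Levi with $e \in \fm_0$. If $M_0 \cdot e$ is not distinguished in $M_0$, the Bala--Carter/Sommers procedure inside $M_0$ produces $z \in Z_G(e)^{\circ}$ with $Z_G(tz)^{\circ} \supsetneq M_0$; iterating terminates at a triple $(M, tZ(M)^{\circ}, M \cdot e) \in \MS^{large}(G)$. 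Well-definedness up to $G$-conjugation and the identity $\pi \circ \sigma = \mathrm{id}$ are direct. For injectivity of $\pi|_{\MS^{large}(G)}$, two preimages can be conjugated to share a nilpotent representative $e$; the condition $M = Z_G(tZ^{\circ})^{\circ}$ then determines $M$ from the coset $tZ_G(e)^{\circ}$, and distinguishedness of $\OO_M$ in $M$ determines $\OO_M$.

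For (iii), suppose $(M, tZ^{\circ}, \OO_M) \in \pi^{-1}(\OO, C)$. If this triple is not distinguished, Lemma~\ref{lem:distinguishedMS} gives a proper Levi $L \subsetneq G$ with $M \subseteq L$, and the calculation in the proof of that lemma shows $(M, tZ^{\circ}, \OO_M) \in \MS(L)$; then $\Sat^G_L$ sends it to itself, and (ii) places $(\OO, C)$ in the image of $\Sat^G_L:\Conj(L) \to \Conj(G)$, a contradiction. The main obstacle is the largeness conclusion. Here I would suppose $\OO_M$ is not distinguished in $M$ and apply Part (i) to the pseudo-Levi $M$ itself to obtain the unique large datum $(M', t'Z'^{\circ}, \OO_{M'}) \in \MS^{large}(M)$ whose $\pi^M$-image is $(\OO_M,\, tZ_M(e)^{\circ})$, with necessarily $M' \subsetneq M$. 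Since a pseudo-Levi of a pseudo-Levi is a pseudo-Levi, this triple also lies in $\MS(G)$, and a direct chase (formally the same as (ii), with $M$ in place of $L$) shows its $\pi^G$-image is $(\OO, C)$. But $M' \subsetneq M \subseteq G$ has strictly smaller semisimple rank than $G$, so by Lemma~\ref{lem:distinguishedMS} the triple $(M', t'Z'^{\circ}, \OO_{M'})$ is not distinguished in $\MS(G)$, contradicting the distinguished half of (iii) just established.
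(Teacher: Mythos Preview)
Your arguments for (i) and (ii) are fine and match the paper's approach (the paper simply cites \cite{SommersMcNinch} for (i) and says (ii) is immediate from the definitions). The ``distinguished'' half of (iii) also matches the paper.

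The gap is in the ``largeness'' half of (iii). You produce $(M', t'Z'^{\circ}, \OO_{M'}) \in \MS^{large}(M)$ via (i) applied to $M$, with $M' \subsetneq M$ a proper pseudo-Levi, and then claim: (a) this triple lies in $\MS(G)$, and (b) $M'$ has strictly smaller semisimple rank than $G$. Both steps are problematic. For (a), knowing that $M'$ is a pseudo-Levi of $G$ is not the same as knowing $M' = Z_G(t'Z(M')^{\circ})^{\circ}$; the natural attempt to show $Z_G(t'Z(M')^{\circ})^{\circ} \subseteq M$ fails because distinguishedness of the original datum gives $Z(M)^{\circ} = Z(G)^{\circ}$, which yields no constraint. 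For (b), a proper pseudo-Levi can have maximal semisimple rank---for instance $Sp_2 \times Sp_2 \subsetneq Sp_4$---so $M' \subsetneq M$ does not force smaller semisimple rank, and hence does not force non-distinguishedness via Lemma~\ref{lem:distinguishedMS}.

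The paper avoids pseudo-Levis of $M$ entirely. Since $\OO_M$ is not distinguished in $M$, there is a proper \emph{Levi} $L \subsetneq M$ and $\OO_L \in \Orb(L)$ with $\Sat^M_L \OO_L = \OO_M$. One then checks directly (using $Z(M)^{\circ} \subseteq Z(L)^{\circ}$ and $Z_M(Z(L)^{\circ}) = L$, the latter being the Levi property) that $(L, tZ(L)^{\circ}, \OO_L) \in \MS(G)$ with $\pi^G$-image $(\OO,C)$. Because $L$ is a proper Levi of $M$, one has $\fz(\fl) \supsetneq \fz(\fm) = \fz(\fg)$, so this new triple is not distinguished---contradicting the first half of (iii). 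The crucial point is that proper Levis, unlike proper pseudo-Levis, always have strictly larger center.
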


\begin{proof}
(i) is the content of \cite[Theorem 1]{SommersMcNinch}. (ii) is immediate from the definitions. For (iii), suppose $(\OO,C) \in \Conj(G)$ is distinguished. It follows from (ii) that $\pi^{-1}(\OO,C)$ consists of distinguished McNinch-Sommers data. Suppose $(M,tZ(M)^{\circ},\OO_M) \in \pi^{-1}(\OO,C)$ is not large, i.e. $\OO_M$ is not distinguished. Then there is a proper Levi subgroup $L \subset M$ and a nilpotent orbit $\OO_L \in \Orb(L)$ such that $\OO_M=\Sat^G_L \OO_L$. A Levi subgroup of a pseudo-Levi subgroup of $G$ is a pseudo-Levi subgroup of $G$. So $L$ is a pseudo-Levi subgroup of $G$. Since $L \subset M$, there are inclusions $Z(M) \subset Z(L)$ and $Z(M)^{\circ} \subset Z(L)^{\circ}$. Hence, $tZ(M)^{\circ}$ determines a coset $tZ(L)^{\circ}$ in $Z(L)/Z(L)^{\circ}$. Since $tZ(M)^{\circ} \subseteq tZ(L)^{\circ}$, we have
$$Z_G(tZ(L)^{\circ})^{\circ} \subseteq  Z_G(tZ(M)^{\circ})^{\circ} = M$$
Also
$$Z_G(tZ(L)^{\circ})^{\circ} \subseteq Z_G(Z(L)^{\circ})$$
Thus,
$$Z_G(tZ(L)^{\circ})^{\circ} \subseteqq Z_M(Z(L)^{\circ}) = L$$
On the other hand, clearly $L \subseteq Z_G(tZ(L)^{\circ})^{\circ}$. So in fact, $L=Z_G(tZ(L)^{\circ})^{\circ}$, i.e. $(L,tZ(L)^{\circ},\OO_L) \in \MS(G)$. By the definition of $\pi$, we have $\pi(L,tZ(L)^{\circ},\OO_L)$, so $(L,tZ(L)^{\circ},\OO_L)$ must be distinguished by the argument above. And yet, since $L \subset M$ is a proper Levi subgroup, we have $\fz(\fg) \subseteq \fz(\fm) \subsetneq \fz(\fl)$, so $(L,tZ(L)^{\circ},\OO_L)$ is not distinguished by Lemma \ref{lem:distinguishedMS}. This is a contradiction.
\end{proof}

Write 
\begin{align*}
    \Orb_0(G) &:= \{(L,\OO_L) \mid \OO_L \in \Orb(L) \text{ distinguished}\}/G\\
    \Conj_0(G) &:= \{(L,(\OO_L,C_L)) \mid (\OO_L,C_L) \in \Conj(L) \text{ distinguished}\}/G\\
    \MS_0(G) &:= \{(L,(M,tZ^{\circ},\OO_M)) \mid (M,tZ^{\circ},\OO_M) \in \MS(L)\text{ distinguished}\}/G\\
    \MS^{large}_0(G) &= \{(L,(M,tZ^{\circ},\OO_M)) \mid (M,tZ^{\circ},\OO_M) \in \MS^{large}(L)\text{ distinguished}\}/G\\
\end{align*}
where $L$ runs over all Levi subgroups of $G$. Saturation gives rise to surjective maps
$$\Orb_0(G) \to \Orb(G), \quad \Conj_0(G) \to \Conj(G), \quad \MS_0(G) \to \MS(G), \quad \MS^{large}_0(G) \to \MS^{large}(G)$$
At the risk of abusing notation, we denote all four maps by `$\Sat$'.

\begin{prop}\label{prop:BalaCarter}
The maps 
$$\Orb_0(G) \to \Orb(G), \quad \Conj_0(G) \to \Conj(G), \quad \MS_0(G) \to \MS(G), \quad \MS^{large}_0(G) \to \MS^{large}(G)$$
are bijections.
\end{prop}

\begin{proof}
The assertion for $\Orb(G)$ is the classical Bala-Carter theorem, see \cite[Theorem 8.2.12]{CM} for a proof. By Lemma \ref{lem:distinguishedMS}, a McNinch-Sommers datum $(M,tZ^{\circ},\OO_M) \in \MS(G)$ is distinguished if and only if $\fz(\fg)=\fz(\fm)$. On the other hand, there is a unique Levi subgroup $L \subset G$ containing $M$ such that $\fz(\fm)=\fz(\fl)$, namely $L:=Z_G(\fz(\fm))$. This proves this assertion for both $\MS(G)$ and $\MS^{large}(G)$. The assertion for $\Conj(G)$ follows from the assertion for $\MS^{large}(G)$ and Lemma \ref{lem:alphabeta}.
\end{proof}

\subsection{Nilpotent covers}\label{sec:nilpotentcovers}

A \emph{nilpotent cover} for $G$ is a finite \'{e}tale $G$-equivariant cover of a nilpotent co-adjoint $G$-orbit. A \emph{morphism of nilpotent covers} is a finite \'{e}tale map $\widetilde{\OO} \to \widehat{\OO}$ which intertwines the covering maps $\widetilde{\OO} \to \OO$ and $\widehat{\OO} \to \OO$ (any such map is automatically $G$-equivariant, see \cref{ex:nilpotentcover}). Let $\Aut(\widetilde{\OO},\OO)$ denote the set of invertible endomorphisms of $\widetilde{\OO} \to \OO$. If we fix a morphism of covers $\widetilde{\OO} \to \widehat{\OO}$, we can similarly define $\Aut(\widetilde{\OO},\widehat{\OO})$. In this case, we call $\Aut(\widetilde{\OO},\widehat{\OO})$ the \emph{Galois group} of the cover $\widetilde{\OO} \to \widehat{\OO}$. We say that the moprhism $\widetilde{\OO} \to \widehat{\OO}$ is \emph{Galois} if it induces an isomorphism $\widehat{\OO} \simeq \widetilde{\OO}/\Aut(\widetilde{\OO},\widehat{\OO})$.
Let
$$\Cov(G) := \{\text{isomorphism classes of nilpotent covers $\widetilde{\OO}$ for $G$}\}$$
If $\OO \in \Orb(G)$, then (isomorphism classes of) nilpotent covers of $\OO$ are in one-to-one correspondence with (conjugacy classes of) subgroups of $A(\OO)$. In particular, $\Cov(G)$ is finite. Occasionally, we will also consider the category $\mathcal{C}ov(G)$ consisting of nilpotent covers for $G$, equipped with morphisms of nilpotent covers.

Following \cite{LMBM}, we will define an equivalence relation $\sim$ on $\Cov(G)$. Suppose $\widetilde{\OO} \to \widehat{\OO}$ is a morphism of covers. Then there is an induced map of affine varieties $\Spec(\CC[\widetilde{\OO}]) \to \Spec(\CC[\widehat{\OO}])$. We say this map is {\it almost \'{e}tale} if it is \'{e}tale over all $G$-orbits in $\Spec(\CC[\widehat{\OO}])$ of codimension $2$ (it is automatically \'{e}tale over the open $G$-orbit $\widehat{\OO}$). We write $\widetilde{\OO} \geqslant \widehat{\OO}$ if there exists a morphism $\widetilde{\OO} \to \widehat{\OO}$ such that the induced map $\Spec(\CC[\widetilde{\OO}]) \to \Spec(\CC[\widehat{\OO}])$ has this property. This defines a partial order on $\Cov(G)$. 

\begin{definition}[Definition 6.5.1, \cite{LMBM}] \label{defn:cover_equivalence}
Let $\sim$ be the equivalence relation on $\Cov(G)$ defined by taking the symmetric closure of $\geqslant$. For $\widetilde{\OO} \in \Cov(G)$, let $[\widetilde{\OO}]$ denote the equivalence class of $\widetilde{\OO}$.
\end{definition}

We will need some basic facts about equivalence classes in $\Cov(G)$.

\begin{lemma}[Lemma 6.5.3, \cite{LMBM}]\label{lem:coverfacts}
Let $[\widetilde{\OO}] \subset \Cov(G)$ be an equivalence class. Then the following are true:
\begin{itemize}
    \item[(i)] $[\widetilde{\OO}]$ contains a unique maximal cover $\widetilde{\OO}_{max}$.
    \item[(ii)] $\widetilde{\OO}_{max}$ is Galois over every cover in $[\widetilde{\OO}]$.
\end{itemize}
\end{lemma}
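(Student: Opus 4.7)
The plan is to recast the equivalence relation $\sim$ into combinatorics of subgroups of the component group $A(\OO)$, via the local structure of the affinizations $\Spec(\CC[\widetilde{\OO}])$ at codimension-$2$ boundary strata.

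First, since a morphism of nilpotent covers exists only between covers of a common orbit, the entire equivalence class $[\widetilde{\OO}]$ consists of covers of a single nilpotent orbit $\OO \in \Orb(G)$. Fixing $e \in \OO$, such covers are classified by conjugacy classes of subgroups $H \leq A(\OO)$: write $\widetilde{\OO}_H$ for the cover and $X_H := \Spec(\CC[\widetilde{\OO}_H])$ for its affinization. A morphism $\widetilde{\OO}_H \to \widetilde{\OO}_{H'}$ exists iff $H$ is subconjugate to $H'$, and in that case it induces a finite quotient map $X_H \to X_{H'}$.

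Next I would formulate a combinatorial criterion for $\geqslant$. For each codimension-$2$ $G$-orbit $\OO' \subset \overline{\OO}$, the theory of transverse slices to nilpotent orbits yields a formal Kleinian slice $\CC^2/\Gamma_{\OO'}$ to $\OO'$ in the normalization of $\overline{\OO}$, and the monodromy of the universal cover $\widetilde{\OO}^{univ} \to \OO$ around $\OO'$ produces a cyclic subgroup $C_{\OO'} \leq A(\OO)$ (well-defined up to conjugation) with $|C_{\OO'}| = |\Gamma_{\OO'}|$. The key claim is: for $H \leq H' \leq A(\OO)$, one has $\widetilde{\OO}_H \geqslant \widetilde{\OO}_{H'}$ iff for every codimension-$2$ orbit $\OO'$ and every $A(\OO)$-conjugate $C'$ of $C_{\OO'}$ the intersections satisfy $H \cap C' = H' \cap C'$. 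Indeed, the transverse Kleinian singularity of $X_H$ at a preimage of $\OO'$ has order equal to the index of $H \cap C'$ in $C'$, so \'etaleness of $X_H \to X_{H'}$ over codimension-$2$ strata forces these local orders to coincide on both sides.

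Granting this criterion, part (i) is straightforward. Fix representatives of the classes in $[\widetilde{\OO}]$ and let $\{H_\beta\}$ be the collection of their $A(\OO)$-conjugates; set $H_{max} := \bigcap_\beta H_\beta$, a normal subgroup of $A(\OO)$ canonically determined by the class. By the criterion, $H_\beta \cap C'$ is independent of $\beta$ for each fixed $C'$ and so equals some $K_{C'}$, whence $H_{max} \cap C' = \bigcap_\beta (H_\beta \cap C') = K_{C'}$ as well; therefore $\widetilde{\OO}_{H_{max}} \in [\widetilde{\OO}]$, and since $H_{max} \leq H_\alpha$ with matching codimension-$2$ intersections for every representative $H_\alpha$, we obtain $\widetilde{\OO}_{H_{max}} \geqslant \widetilde{\OO}_{H_\alpha}$. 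Minimality and uniqueness of $H_{max}$ are then automatic. For part (ii), normality of $H_{max}$ in $A(\OO)$ (by construction) implies normality in every $H_\alpha$, so $\widetilde{\OO}_{H_{max}} \to \widetilde{\OO}_{H_\alpha}$ is Galois with group $H_\alpha/H_{max}$.

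The main obstacle is the combinatorial criterion in the second step: identifying the cyclic subgroups $C_{\OO'} \leq A(\OO)$ and relating the \'etaleness condition to their intersections with $H$ and $H'$ requires a careful local analysis of how the finite quotient $\widetilde{\OO}^{univ} \to \widetilde{\OO}_H$ interacts with the symplectic transverse slice. This is where the input from the structure theory of conical symplectic singularities really enters, and where the technical weight of the argument lies.
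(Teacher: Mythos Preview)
There is a genuine gap in your Step~3. You define $H_{\max}$ as the intersection of \emph{all $A(\OO)$-conjugates} of the subgroups representing the covers in $[\widetilde{\OO}]$, and then assert that ``$H_\beta \cap C'$ is independent of $\beta$'' so that $H_{\max} \cap C' = K_{C'}$. But your criterion only gives $H \cap C' = H' \cap C'$ along an actual containment $H \leq H'$ appearing in a zigzag; it says nothing about two conjugates $H$ and $gHg^{-1}$ of the \emph{same} subgroup, which are typically incomparable. Concretely, take $A(\OO)=S_3$ with a single $A(\OO)$-orbit of codimension-$2$ leaves in $X^{univ}$ whose stabilizers are the three order-$2$ subgroups. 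For $H=\langle (12)\rangle$ one checks from your own criterion that $\widetilde{\OO}_H$ is alone in its equivalence class, so $\widetilde{\OO}_{\max}=\widetilde{\OO}_H$ and the correct minimal subgroup is $H$ itself. Your construction instead gives $H_{\max}=\bigcap_g gHg^{-1}=\{e\}$, and $\widetilde{\OO}_{\{e\}}$ is \emph{not} in $[\widetilde{\OO}_H]$ (the intersection with $\langle(12)\rangle$ jumps). The lemma only asserts that $H_{\max}$ is normal in each $H_\alpha$ from the class, not in all of $A(\OO)$; forcing the latter by taking the normal core is too aggressive.

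A secondary imprecision: the local invariant you call $C_{\OO'}$ is not ``monodromy of $\widetilde{\OO}^{univ}\to\OO$ around $\OO'$'' (the orbit $\OO'$ lies in $\overline{\OO}\setminus\OO$, so there is no such monodromy), but rather the stabilizer in $A(\OO)$ of a point in a codimension-$2$ $G$-orbit of $X^{univ}=\Spec(\CC[\widetilde{\OO}^{univ}])$; this acts on the transverse slice and need not be cyclic. With that corrected object your \'etaleness criterion $H\cap S_{\mathfrak L}=H'\cap S_{\mathfrak L}$ is the right one, but the passage from it to a \emph{normal} $H_{\max}$ still fails as above. The paper itself does not supply a proof here (the result is quoted from \cite{LMBM}); the argument there proceeds differently, via the structure of the open locus $\widetilde{X}^{\diamond}$ where all codimension-$\geq 4$ leaves are removed, and does not attempt to produce a subgroup of $A(\OO)$ normal in the full component group.
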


\subsection{Induction}\label{subsec:induction}

Suppose $L \subset G$ is a Levi subgroup and let $\mathbb{O}_L \in \Orb(L)$. Fix a parabolic subgroup $P \subset G$ with Levi decomposition $P = LU$. The annihilator of $\fp$ in $\fg^*$ is a $P$-stable subspace $\fp^{\perp} \subset \fg^*$. Form the $G$-equivariant fiber bundle $G \times^P (\overline{\mathbb{O}}_L \times \fp^{\perp})$ over the partial flag variety $G/P$. There is a proper $G$-equivariant map
$$\mu: G \times^P (\overline{\mathbb{O}}_L \times \fp^{\perp}) \to \mathfrak{g}^* \qquad \mu(g,\xi) = \Ad^*(g)\xi$$
The image of $\mu$ is a closed irreducible $G$-invariant subset of $\cN$, and hence the closure in $\fg^*$ of a nilpotent $G$-orbit, denoted $\mathrm{Ind}^G_L\mathbb{O}_L  \in \Orb(G)$. It is a standard fact that $\Ind^G_L \OO_L$ is independent of the choice of parabolic $P$. The correspondence
$$\mathrm{Ind}^G_L: \Orb(L) \to \Orb(G)$$
is called \emph{Lusztig-Spaltenstein} induction (\cite{LS}). 

Now let $\widetilde{\OO}_L \in \Cov(L)$ and form the affine variety $\Spec(\CC[\widetilde{\OO}_L])$. There is an $L$-action on $\Spec(\CC[\widetilde{\OO}_L])$ (induced from the $L$-action on $\widetilde{\mathbb{O}}_L$) and a finite surjective $L$-equivariant map $\Spec(\CC[\widetilde{\OO}_L]) \to \overline{\mathbb{O}}_L$. Let $\widetilde{\mu}$ denote the composition
$$G \times^P (\Spec(\CC[\widetilde{\OO}_L]) \times \fp^{\perp}) \to G \times^P (\overline{\mathbb{O}}_L \times \fp^{\perp}) \overset{\mu}{\to} \fg^*.$$
The image of $\widetilde{\mu}$ is the closure of $\Ind^G_L\OO_L$ and the preimage of $\Ind^G_L\OO_L$ is a finite \'{e}tale $G$-equivariant cover, denoted $\Bind^G_L \widetilde{\OO}_L \in \Cov(G)$. Again, $\Bind^G_L\widetilde{\OO}_L$ is independent of the choice of parabolic $P$ (see \cite[Proposition 2.4.1(i)]{LMBM}). The correspondence
$$\Bind^G_L: \Cov(L) \to \Cov(G)$$
is called \emph{birational induction}. We will need several basic facts about birational induction.

\begin{prop}\label{prop:propsofbind}
Birational induction has the following properties:

\begin{enumerate}[label=(\roman*)]
    \item Suppose $\widetilde{\OO}_L, \widehat{\OO}_L \in \mathcal{C}ov(L)$ and let $\widetilde{\OO} = \Bind^G_L \widetilde{\OO}_L$, $\widehat{\OO} = \Bind^G_L \widehat{\OO}_L$. Then any $L$-equivariant morphism $p_L: \widehat{\OO}_L \to \widetilde{\OO}_L$ induces a canonically defined finite $G$-equivariant morphism $p = \Bind_L^G (p_L):  \widehat{\OO} \to \widetilde{\OO}$ with $\deg p = \deg p_L$. Therefore birational induction induces a well-defined functor $\mathcal{B}ind_L^G: \mathcal{C}ov(L) \to \mathcal{C}ov(G)$, $\widetilde{\OO}_L \mapsto \Bind_L^G \widetilde{\OO}_L$. 
    \item Suppose the morphism $p_L: \widehat{\OO}_L \to \widetilde{\OO}_L$ in (i) is a finite Galois $L$-equivariant covering. Then the induced covering $p = \Bind_L^G(p_L): \widehat{\OO} \to \widetilde{\OO}$ is a finite Galois $G$-equivariant covering. Moreover, the induced group homomorphism
$$ \Aut(\widehat{\OO}_L, \widetilde{\OO}_L) \xrightarrow{\sim} \Aut(\widehat{\OO},\widetilde{\OO})$$
is an isomorphism.
    \item Suppose $M \subset G$ is a Levi subgroup containing $L$. Then there is a canonical natural equivalence of functors
    $$\mathcal{B}ind^G_L \simeq \mathcal{B}ind^G_M \circ \mathcal{B}ind^M_L.$$
    \item For any $\widetilde{\OO}_L \in \Cov(L)$, $\deg(\widetilde{\OO}_L \to \OO_L)$ divides $\deg(\Bind^G_L \widetilde{\OO}_L \to \Ind^G_L \OO_L)$.
\end{enumerate}
\end{prop}

\begin{proof}
 Let $\widetilde{Y}=G \times^P (\Spec(\CC[\widetilde{\OO}_L]) \times \fp^{\perp})$ and $\widehat{Y} = G \times^P (\Spec(\CC[\widehat{\OO}_L]) \times \fp^{\perp})$. Extend the morphism $p_L$ to $p_L: \Spec(\CC[\widehat{\OO}_L])\to \Spec(\CC[\widetilde{\OO}_L])$. $p_L$ induces a natural finite morphism $\widehat{Y}\to \widetilde{Y}$ which restricts to the morphism $p=\Bind_L^G (p_L):  \widehat{\OO} \to \widetilde{\OO}$ as claimed in (i). The equality $\deg p = \deg p_L$ is clear by construction. The independence of $p=\Bind_L^G (p_L)$ on the parabolic $P$ follows from the argument of \cite[Lemma 4.1]{Losev4}.
 
 For (ii), we argue as follows. Every automorphism $\gamma$ of $\widehat{Y} \to \widetilde{Y}$ restricts to an automorphism $\widetilde{\gamma}|_{\widetilde{\OO}} \in \Aut (\widehat{\OO}, \widetilde{\OO})$. Thus, we obtain an injective homomorphism $\Aut(\widehat{\OO}_L, \widetilde{\OO}_L)\to \Aut(\widehat{\OO}, \widetilde{\OO})$. We wish to show that this homomorphism is surjective. Note that
$$|\Aut(\widehat{\OO}, \widetilde{\OO})|\leq \deg p = \deg p_L=|\Aut(\widehat{\OO}_L, \widetilde{\OO}_L)|$$
and equality holds if and only if the cover is Galois. The statement follows.

(iii) and (iv) are (ii) and (iv) of \cite[Proposition 2.4.1]{LMBM}.
\end{proof}

A nilpotent orbit (resp. nilpotent cover) is \emph{rigid} (resp. \emph{birationally rigid}) if it cannot be obtained by induction (resp. birational induction) from a proper Levi subgroup. Note that if $\OO \in \Orb(G)$ is rigid and $\widetilde{\OO}$ covers $\OO$, then $\widetilde{\OO}$ is birationally rigid. Let
$$\Cov_0(G) := \{(L,\widetilde{\OO}_L) \mid \widetilde{\OO}_L \in \Cov(L) \text{ is birationally rigid}\}/G$$
where $L$ runs over all Levi subgroups of $G$.

\begin{prop}[Proposition 2.4.1(iii), \cite{LMBM}]\label{prop:bindinjective}
The map
$$\Bind: \Cov_0(G) \to \Cov(G), \qquad \Bind(L,\widetilde{\OO}_L) = \Bind^G_L \widetilde{\OO}_L$$
is a bijection. 
\end{prop}

We will see in Section \ref{subsec:unipotent} that birational induction preserves the equivalence relation on nilpotent covers and takes maximal covers to maximal covers. 

We conclude this subsection by describing a large class of orbits which are birationally induced from $\{0\}$. Suppose $\OO \in \Orb(G)$. Using an $\Ad(\fg)$-invariant identification $\fg \simeq \fg^*$, we can regard $\OO$ as a nilpotent $G$-orbit in $\fg$. Choose an element $e \in \OO$ and an $\mathfrak{sl}(2)$-triple $(e,f,h)$. The operator $\ad(h)$ defines a $\ZZ$-grading on $\fg$
$$\fg = \bigoplus_{i \in \ZZ} \fg_i, \qquad \fg_i := \{\xi \in \fg \mid \ad(h)(\xi) = i\xi\}.$$
We say that $\OO$ is \emph{even} if $\fg_i=0$ for every odd integer $i$. In any case, we can define a parabolic subalgebra
\begin{equation}\label{eq:JMlevi}
 \fp_{\OO} = \mathfrak{l}_{\OO} \oplus \mathfrak{n}_{\OO}, \qquad \fl_{\OO} := \fg_0, \qquad \fn_{\OO} := \bigoplus_{i \geq 1} \fg_i.\end{equation}
We call $\fp_{\OO}$ (resp. $\fl_{\OO}$) the \emph{Jacobson-Morozov} parabolic (resp. Levi) associated to $\OO$. Both $\fp_{\OO}$ and $\fl_{\OO}$ are well-defined up to conjugation by $G$. The following result is well-known. The proof is contained in \cite{Kostant1959}, see also \cite[Thm 3.3.1]{CM}.

\begin{prop}\label{prop:even}
Suppose $\OO$ is an even nilpotent $G$-orbit. Then
$$\OO = \mathrm{Bind}^G_{L_{\OO}} \{0\}.$$
\end{prop}

\subsection{Birational induction and equivariant fundamental groups}\label{subsec:pi1_induction}

Fix the notation of Section \ref{subsec:induction}, e.g. $L$, $P$, $\widetilde{\OO}_L$, $\widetilde{\OO}$, $\widetilde{\mu}: G \times^P (\Spec(\CC[\widetilde{\OO}_L]) \times \fp^{\perp}) \to \overline{\OO}$ and so on. Let $U \subset P$ denote the unipotent radical. In this section, we will construct a surjective homomorphism $\phi^G_L(\widetilde{\OO}_L): \pi_1^G(\widetilde{\OO}) \to \pi_1^L(\widetilde{\OO}_L)$ between the equivariant fundamental groups of $\widetilde{\OO}$ and $\widetilde{\OO}_L$. 

Let $\widetilde{Z}^0 := G \times^P (\widetilde{\mathbb{O}}_L \times \fp^{\perp})$, a fiber bundle over the partial flag variety $G/P$. The inclusion $i: \widetilde{\mathbb{O}}=\widetilde{\mu}^{-1}(\OO)\subset \widetilde{Z}^0$ induces a group homomorphism
\begin{equation}\label{eqn:istar}
i_*: \pi_1(\widetilde{\mathbb{O}}) \to \pi_1(\widetilde{Z}^0).\end{equation}
Since $i: \widetilde{\mathbb{O}} \hookrightarrow \widetilde{Z}^0$ is an open embedding of smooth complex manifolds, the complement $\widetilde{Z}^0 - i(\widetilde{\mathbb{O}})$ is of real codimension $\geq 2$. Hence, the homomorphism (\ref{eqn:istar}) is surjective.

Note that $\widetilde{Z}^0$ is a vector bundle over the homogeneous space $G \times^P \widetilde{\mathbb{O}}_L$ with fiber $\fp^{\perp}$. So there is a natural isomorphism
$$\pi_1(\widetilde{Z}^0) \simeq \pi_1(G \times^P \widetilde{\mathbb{O}}_L).$$
If we fix a base point $x \in \widetilde{\mathbb{O}}$, we get a fibration $G \to \widetilde{\mathbb{O}}$ with fiber $G_x$. This fibration gives rise to an exact sequence of homotopy groups
$$\pi_1(G) \to \pi_1(\widetilde{\mathbb{O}}) \to \pi_0(G_x) \to 1.$$
The final (nontrivial) term is isomorphic to $\pi_1^G(\widetilde{\mathbb{O}})$.

Similarly, if we fix a base point $(1,y) \in G \times^P \widetilde{\mathbb{O}}_L$, we get a fibration $G \to G \times^P \widetilde{\mathbb{O}}_L$ with fiber $P_y$. This fibration gives rise to an exact sequence of homotopy groups
$$\pi_1(G) \to \pi_1(G \times^P \widetilde{\mathbb{O}}_L) \to \pi_0(P_y) \to 1.$$
Since $U\subset P_y$, we have $P_y= L_y \ltimes U$. It follows that $\pi_0(P_y) \simeq \pi_0(L_y) \simeq \pi_1^L(\widetilde{\mathbb{O}}_L)$. 

Now choose $x \in \widetilde{\OO}$ and $y \in \widetilde{\OO}_L$ such that $x=(1, \bar{x})$, where $\bar{x} \in \widetilde{\OO}_L \times \fp^{\perp}$, and $\bar{x}$ is mapped to $y$ under the projection $\widetilde{\OO}_L \times \fp^{\perp} \twoheadrightarrow\widetilde{\OO}_L$. Then we have group isomorphisms
    \[ \pi_0(L_{\bar{x}}) \simeq \pi_0(P_{\bar{x}}) \simeq \pi_0(G_{x}) = \pi^G_1(\widetilde{\OO})\]
induced by the inclusions $L \subset P \subset G$. There is a commutative diagram of groups
\begin{center}
    \begin{tikzcd}
      \pi_1(G) \ar[r] \ar[d,equals] & \pi_1(\widetilde{\mathbb{O}}) \ar[r] \ar[d,twoheadrightarrow,"i_*"] & \pi_1^G(\widetilde{\mathbb{O}}) \ar[r] & 1\\
      \pi_1(G) \ar[r] & \pi_1(\widetilde{Z}^0) \ar[r] & \pi_1^L(\widetilde{\mathbb{O}}_L) \ar[r] & 1
    \end{tikzcd}
\end{center}
where the fundamental groups $\pi_1(G)$, $\pi_1(\widetilde{\OO})$, $\pi_1(\widetilde{Z}^0)$, and $\pi_1^L(\widetilde{\OO}_L)$ are defined with respect to the base points $1 \in G$, $x \in \widetilde{\OO}$, $x \in \widetilde{Z}^0$, and $y \in \widetilde{\OO}_L$, respectively. We define 
\[\phi_L^G(\widetilde{\OO}_L): \pi_1^G(\widetilde{\OO}) \to \pi_1^L(\widetilde{\OO}_L)\]
to be the unique homomorphism which makes the above diagram commute. Sometimes we simply write $\phi$ for $\phi_L^G(\widetilde{\OO}_L)$ when there is no ambiguity.

The following lemma about properties of $\phi$ is standard and the proof is left to the reader.

\begin{lemma}\label{lem:phi}
    With the notations above, the following are true:
    \begin{itemize}
     \item[(i)]
        Let $\widetilde{\OO}_L, \widehat{\OO}_L \in \mathcal{C}ov(L)$ and let $\widetilde{\OO} = \Bind^G_L \widetilde{\OO}_L$, $\widehat{\OO} = \Bind^G_L \widehat{\OO}_L$. Let $p_L: \widehat{\OO}_L \to \widetilde{\OO}_L$ be an $L$-equivariant covering map, inducing a $G$-equivariant covering map $p=\Bind_L^G (p_L): \widetilde{\OO} \to \widehat{\OO}$ as in \cref{prop:propsofbind}. Then we have the following commutative diagram
        \begin{center}
        \begin{tikzcd}
            \pi_1^G(\widetilde{\OO}) \ar[r, hookrightarrow, "p_*"] \ar[d,twoheadrightarrow,"\phi"] & \pi_1^G(\widehat{\OO}) \ar[d, twoheadrightarrow,"\phi"] \\
            \pi_1^L(\widetilde{\OO}_L) \ar[r, hookrightarrow, "(p_L)_*"] & \pi_1^L(\widehat{\OO}_L) 
        \end{tikzcd}
        \end{center}
        Moreover, we have $\phi^{-1}(\pi_1^G(\widetilde{\OO}_L)) = \pi_1^G(\widetilde{\OO})$.

	\item[(ii)]
          Under the group isomorphisms $\pi_0(L_{\bar{x}}) \simeq \pi^G_1(\widetilde{\OO})$ and $\pi_0(L_y) \simeq \pi^L_1 (\widetilde{\OO}_L)$, the map $\phi: \pi^G_1(\widehat{\OO}) \to \pi^L_1 (\widetilde{\OO}_L)$ is identified with the map $\pi_0(L_{\bar{x}}) \to \pi_0(L_{y})$ induced by the natural inclusion $L_{\bar{x}} \subset L_{y}$. 
	\item[(iii)]
	   Suppose $M \subset G$ is a Levi subgroup containing $L$. Let $\widetilde{\OO}_L \in \Cov(L)$ and $\widetilde{\OO}_M= \Bind_L^M \widetilde{\OO}_L$ and $\widetilde{\OO} = \Bind_M^G \widetilde{\OO}_M = \Bind_L^G \widetilde{\OO}_L$. Then the map $\phi_L^G(\OO_L): \pi_1^G(\widetilde{\OO}) \to \pi_1^L(\widetilde{\OO}_L)$ is equal to the composition of $\phi_L^M(\widetilde{\OO}_L): \pi_1^M(\widetilde{\OO}_M) \to \pi_1^L(\widetilde{\OO}_L)$ and $\phi_M^G(\widetilde{\OO}_M): \pi_1^G(\widetilde{\OO}) \to \pi_1^M(\widetilde{\OO}_M)$.
	\end{itemize}
\end{lemma}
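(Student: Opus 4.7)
The plan is to prove (ii) first, since (i) and (iii) then follow by routine diagram chases using the stabilizer description it provides. For (ii), I would compute both sides of $\phi$ in terms of stabilizers at the chosen basepoint $x = [1, \bar{x}] \in \widetilde{Z}^0$. The right-hand side is the standard long exact sequence of the fibration $L \to \widetilde{\OO}_L$ with fiber $L_y$, giving $\pi_0(L_y) \simeq \pi_1^L(\widetilde{\OO}_L)$. For the left-hand side, note that $G_{[1, \bar{x}]} = P_{\bar{x}}$ is the standard identification of stabilizers for induced spaces $G \times^P X$. Using the Levi decomposition $P = L \ltimes U$ and the fact that $U_{\bar{x}}$, being a closed subgroup of a connected unipotent group, is connected, the projection $P_{\bar{x}} \twoheadrightarrow L_{\bar{x}}$ induces an isomorphism $\pi_0(P_{\bar{x}}) \simeq \pi_0(L_{\bar{x}})$. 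Tracing the defining commutative diagram for $\phi$ — in particular that its bottom row is built from the fibration $G \to G \times^P \widetilde{\OO}_L$ (obtained from $\widetilde{Z}^0$ by forgetting the $\fp^\perp$-factor), with fiber $P_y$ and hence $\pi_0$ equal to $\pi_0(L_y)$ — the map $\phi$ is identified with the map $\pi_0(L_{\bar{x}}) \to \pi_0(L_y)$ induced by the natural inclusion $L_{\bar{x}} \hookrightarrow L_y$.

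For (i), I would fix compatible basepoints so that all four terms of the square can be computed via (ii). Let $\hat{x} = (\hat{y}, \eta) \in \widehat{\OO}_L \times \fp^\perp$ with $p_L(\hat{y}) = y$, and set $\bar{x} = (y, \eta)$, so that $p$ maps $[1, \hat{x}]$ to $[1, \bar{x}]$. Under the identifications of (ii), the two vertical $\phi$'s become the maps on $\pi_0$ induced by $L_{\hat{x}} \hookrightarrow L_{\hat{y}}$ and $L_{\bar{x}} \hookrightarrow L_y$, while the horizontal maps become those induced by $L_{\hat{x}} \hookrightarrow L_{\bar{x}}$ and $L_{\hat{y}} \hookrightarrow L_y$; commutativity is then immediate from the equality of the chains $L_{\hat{x}} \subset L_{\bar{x}} \subset L_y$ and $L_{\hat{x}} \subset L_{\hat{y}} \subset L_y$ inside $L$. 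The ``moreover'' assertion, interpreted as the statement that the subgroup of $\pi_1^G(\widetilde{\OO})$ cut out by the cover $\widehat{\OO}$ equals the preimage under $\phi$ of the subgroup of $\pi_1^L(\widetilde{\OO}_L)$ cut out by $\widehat{\OO}_L$, is then the usual Galois correspondence combined with Proposition \ref{prop:propsofbind}(i), which guarantees that $\Bind^G_L$ is compatible with the subgroup-to-cover dictionary. For (iii), choose nested parabolics $P_L \subset P_M \subset G$ with Levi components $L \subset M$, and basepoints chosen compatibly across the three fibrations $L \to \widetilde{\OO}_L$, $M \to \widetilde{\OO}_M$, and $G \to \widetilde{\OO}$. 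By (ii), each of the three instances of $\phi$ is the map on component groups induced by an inclusion of stabilizers, and the desired factorization reduces to the transitivity of the chain $L_{\bar{x}} \subset M_{\bar{x}} \subset G_x$.

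I expect the main technical subtlety to be in part (ii), namely the correct identification of the bottom row of the defining diagram of $\phi$ with $\pi_0(L_y)$ rather than with $\pi_0(L_{\bar{x}})$. The point is that this bottom row comes not from the fibration $G \to \widetilde{Z}^0$, whose fiber at $[1, \bar{x}]$ is $P_{\bar{x}}$ and would give $\pi_0(L_{\bar{x}})$, but rather from the fibration $G \to G \times^P \widetilde{\OO}_L$, whose fiber at $[1, y]$ is $P_y$ and whose $\pi_0$ is $\pi_0(L_y)$. Once this basepoint bookkeeping is executed correctly and the natural transformation between the two fibrations is recognized as inducing precisely the inclusion $L_{\bar{x}} \hookrightarrow L_y$, the remainder of the lemma is formal.
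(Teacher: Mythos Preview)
The paper does not give a proof of this lemma; immediately before it, the text says ``The following lemma about properties of $\phi$ is standard and the proof is left to the reader.'' Your proposal fills in exactly the kind of standard argument the authors have in mind: establish (ii) first by reading off $\phi$ from the two fibrations $G \to \widetilde{\OO}$ and $G \to G \times^P \widetilde{\OO}_L$ in terms of component groups of stabilizers, and then deduce (i) and (iii) by choosing compatible basepoints and chasing the resulting inclusions of stabilizer subgroups. This is correct and is the natural route.

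One small wording point: in your argument for (ii), the claim that $U_{\bar{x}}$ is connected is correct, but the reason is that any closed subgroup of a unipotent group over $\CC$ is itself unipotent and hence connected; phrasing it as ``closed subgroup of a connected unipotent group'' risks suggesting the (false) general principle that closed subgroups of connected groups are connected.
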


\subsection{Primitive ideals}

Let $U(\fg)$ be the universal enveloping algebra of $\fg$. Recall that $U(\fg)$ has a natural natural filtration and there is a $G$-equivariant isomorphism $\gr U(\fg) \simeq S(\fg)$. Now let $I \subset U(\fg)$ be a two-sided ideal. Then $\gr(I)$ corresponds under the isomorphism $\gr U(\fg) \simeq S(\fg)$ to a $G$-invariant ideal in $S(\fg)$. The \emph{associated variety} of $I$ is defined to be the vanishing locus $V(I) \subset \fg^*$ of this ideal.

A two-sided ideal $I \subset U(\fg)$ is said to be \emph{primitive} if it is the annihilator of a simple left $U(\fg)$-module. If $I$ is primitive, then $V(I)$ is irreducible (see \cite{Joseph1985}) and hence the closure of a (unique) nilpotent orbit.

By Quillen's lemma, the intersection of $I$ with the center $\mathfrak{Z}(\fg)$ of $U(\fg)$ is the kernel of an algebra homomorphism $\mathfrak{Z}(\fg) \to \CC$, called the \emph{infinitesimal character} of $I$. Such homomorphisms are identified via the Harish-Chandra isomorphism with $W$-orbits on $\fh^*$. For each $\gamma \in \fh^*/W$, there is a \emph{unique} maximal (primitive) ideal in $U(\fg)$ with infinitesimal character $\gamma$, which we will denote by $J(\gamma) \subset U(\fg)$.

\subsection{BVLS duality}

Let $G^{\vee}$ be the Langlands dual group of $G$ and let $\fg^{\vee}$ be its Lie algebra. If we fix a Cartan subalgebra $\fh \subset \fg$, we get a Cartan subalgebra $\fh^{\vee} \subset \fg^{\vee}$ and a canonical identification $\fh^{\vee} \simeq \fh^*$. To each nilpotent $G^{\vee}$-orbit $\OO^{\vee} \in \Orb(G^{\vee})$ we can associate a maximal ideal in $U(\fg)$ in the following manner. First, using an $G^{\vee}$-invariant isomorphism $(\fg^{\vee})^* \simeq \fg^{\vee}$, we can identify $\OO^{\vee}$ with a nilpotent $G^{\vee}$-orbit in $\fg^{\vee}$ (still denoted $\OO^{\vee}$). Next, we choose an $\mathfrak{sl}(2)$-triple $(e^{\vee},f^{\vee},h^{\vee})$ with $e^{\vee} \in \OO^{\vee}$ and $h^{\vee} \in \fh^{\vee} \simeq \fh^*$. The element $\frac{1}{2}h^{\vee}$ is well-defined up to the $W$-action on $\fh^*$, and hence determines an infinitesimal character for $U(\fg)$, which we will denote by $\gamma_{\OO^{\vee}}$.

\begin{definition}\label{def:specialunipotent}
Let $\OO^{\vee} \in \Orb(G^{\vee})$. Then the \emph{special unipotent ideal} attached to $\OO^{\vee}$ is the unique maximal ideal $J(\gamma_{\OO^{\vee}}) \subset U(\fg)$ with infinitesimal character $\gamma_{\OO^{\vee}}$.
\end{definition}

For each nilpotent orbit $\OO^{\vee} \in \Orb(G^{\vee})$, let $d(\OO^{\vee})$ denote the (unique) open $G$-orbit in $V(J(\gamma_{\OO^{\vee}}))$. This defines a map
$$d: \Orb(G^{\vee}) \to \Orb(G)$$
called \emph{Barbasch-Vogan-Lusztig-Spaltenstein (BVLS) duality}. We will sometimes write $d^G$ instead of $d$, when we wish to emphasize the dependence on $G$. 

\begin{prop}[Proposition A2, \cite{BarbaschVogan1985}]\label{prop:propsofd}
The map $d$ enjoys the following properties:
\begin{itemize}
    \item[(i)] $d$ is order-reversing (with respect to the clsoure orderings on $\Orb(G)$ and $\Orb(G^{\vee})$).
    \item[(ii)] $d^3=d$.
    \item[(iii)] If $L \subset G$ is a Levi subgroup, then
    $$d^G \circ \Sat^{G^{\vee}}_{L^{\vee}} = \Ind^G_L \circ d^L$$
\end{itemize}
\end{prop}

An orbit $\OO \in \Orb(G)$ is said to be \emph{special} if it lies in the image of $d^G$. We write $\Orb^*(G)$ for the set of special nilpotent orbits.

\subsection{Truncated induction}\label{subsec:truncated}

Let $M^{\vee} \subset G^{\vee}$ be a pseudo-Levi subgroup and let $M$ be the Langlands dual group of $M^\vee$. In \cite[Chapter 13.3]{Lusztig1984}, Lusztig defines a map called \emph{truncated induction}
$$j^G_M: \Orb^*(M) \to \Orb(G)$$
This map generalizes Lusztig-Spaltenstein induction in the following sense: if $M$ is a Levi subgroup of $G$, then $j^G_M$ coincides with the restriction of $\Ind^G_M$ to $\Orb^*(M)$. 

The definition of $j^G_M$ is not particularly relevant for the purposes of this paper, so we will be brief in recalling it. Choose a Cartan subalgebra $\fh \subset \fm$ and let $W$ (resp. $W_M$) denote the Weyl group of $G$ (resp. $M$). There is a surjective map 
\begin{equation}\label{eq:springer}\{(\OO,\psi) \mid \OO \in \Orb(G), \ \psi \in \widehat{A(\OO)}\}
 \twoheadrightarrow \widehat{W} \sqcup \{0\}, \qquad (\OO,\psi) \mapsto E_{(\OO,\psi)}.\end{equation}
called the \emph{Springer correspondence} (see \cite{Springerconstruction}). We say that an irreducible representation $\psi$ of $A(\OO)$ is of \emph{Springer type} if $E_{(\OO,\psi)} \neq 0$ (the trivial representation of $A(\OO)$ is always of Springer type). For each irreducible representation $E$ of $W$, Lusztig defines a nonnegative integer $b_E$ called the \emph{b-value} or \emph{fake degree} of $E$. If $\OO \in \Orb^*(M)$, it is shown in \cite[Chapter 13.3]{Lusztig1984} that $\Ind^W_{W_M} E_{\OO,1}$ contains a unique irreducible subrepresentation $E'$ such that $b_{E'} = b_{E_{(\OO,1)}}$ and $E' = E_{(\OO',1)}$ for some $\OO' \in \Orb(G)$. Then $j^G_M$ is defined by $j^G_M \OO=\OO'$.

\subsection{Sommers duality}

Consider the map
$$\underline{d}_S: \MS(G^{\vee}) \to \Orb(G), \qquad \underline{d}_S(M^{\vee},tZ^{\circ},\OO_M) = j^G_M d(\OO_{M^{\vee}})$$
\emph{Sommers duality} is defined to be the composition
$$d_S: \Conj(G^{\vee}) \overset{\pi^{-1}
}{\to} \MS^{large}(G^{\vee}) \overset{\underline{d}_S}{\to} \Orb(G)$$
where $\pi:\MS^{large}(G^{\vee}) \to \Conj(G^{\vee})$ is the bijection of Lemma \ref{lem:alphabeta}(i). We will sometimes write $d_S^G$ instead of $d_S$ when we wish to emphasize the dependence on $G$.

\begin{prop}[Section 6, \cite{Sommers2001}]\label{prop:propsofds}
The map $d_S: \Conj(G^{\vee}) \to \Orb(G)$ has the following properties:
\begin{itemize}
    \item[(i)] $d_S$ is surjective.
    \item[(ii)] If $L \subset G$ is a Levi subgroup, then
    $$d_S^G \circ \Sat^{G^{\vee}}_{L^{\vee}} = \Ind^G_L \circ d_S^L$$
    \item[(iii)] $d_S(\OO^{\vee},1) = d(\OO^{\vee})$, for every $\OO^{\vee} \in \Orb(G^{\vee})$.
\end{itemize}
\end{prop}

\subsection{Lusztig's canonical quotient}\label{subsec:Abar}

For each $\OO \in \Orb(G)$, there is a canonically defined quotient group $\bar{A}(\OO)$ of $A(\OO)$, called \emph{Lusztig's canonical quotient}. In \cite{Lusztig1984}, this quotient is defined in the case when $\OO$ is \emph{special}, but in fact Lusztig's definition is valid for arbitrary $\OO$. An alternative description of $\bar{A}(\OO)$ is provided in \cite[Section 5]{Sommers2001}, which we briefly recall here. 

For each conjugacy datum $(\OO,C) \in \Conj(G)$, Sommers defines a nonnegative integer $\tilde{b}_{(\OO,C)}$ called the $\tilde{b}$\emph{-value} of $(\OO,C)$ as follows. If $(M, tZ^{\circ},\OO_M) \in \MS(G)$ is such that $\pi(M, tZ^{\circ},\OO_M) = (\OO, C)$, then
$$\tilde{b}_{(\OO,C)} := \frac{1}{2}(\dim M - \dim(d^M(\OO_M))- \rank G ),$$
Equivalently, $\tilde{b}_{(\OO,C)}$ is the dimension of the Springer fiber attached to any element in $d^M(\OO_M)$. Sommers shows that $\tilde{b}_{(\OO,C)}$ only depends on $(\OO,C)$ (i.e. is independent of the lift $(M, tZ^{\circ},\OO_M)$) \cite[Proposition 1]{Sommers2001}. In fact, $\tilde{b}_{(\OO,C)}$ coincides with the dimension of the Springer fibers of the orbit $d_S(\OO,C)$ in $\fg^\vee$ (see \cite[Section 6]{Sommers2001}). The $\tilde{b}$-values satisfy $\tilde{b}_{\OO,C} \geq \tilde{b}_{\OO,1}$ for any conjugacy class $C$ of $A(\OO)$ (\cite[Proposition 3]{Sommers2001}).

Let $N'$ be the union of all conjugacy classes $C$ in $A(\OO)$ such that $\tilde{b}_{\OO,C} = \tilde{b}_{\OO,1}$. A case-by-case calculation shows that $N'$ is a subgroup of $A(\OO)$ and coincides with a subgroup $N$ which Lusztig defines using Springer theory and generic degrees of irreducible Weyl group representations, so that $\bar{A}(\OO) = A(\OO)/ N = A(\OO)/ N'$ (\cite[Theorem 6]{Sommers2001}).

\subsection{Lusztig-Achar data}\label{subsec:Achar}
A \emph{Lusztig-Achar datum} for $G$ is a pair $(\OO, \bar{C})$ consisting of a nilpotent orbit $\OO \in \Orb(G)$ and a conjugacy class $\bar{C}$ in $\bar{A}(\OO)$. Let
$$\LA(G) := \{\text{Lusztig-Achar data $(\OO,\bar{C})$ for $G$}\}$$
Since the groups $\bar{A}(\OO)$ are independent of isogeny, so is the set $\LA(G)$. Of course, there is a natural projection $\Conj(G)\to \LA(G) $. The following is a consequence of \cite[Theorems 5.1,6.1]{Achar2003}.
\begin{lemma}\label{lem:AtoAbar}
Let $\OO \in \Orb(G)$ and let $C_1$ and $C_2$ be conjugacy classes in $A(\OO)$. Then the following are equivalent
\begin{itemize}
    \item[(i)] $d_S(\OO,C_1) = d_S(\OO,C_2)$.
    \item[(ii)] $C_1$ and $C_2$ have the same image in $\bar{A}(\OO)$. 
\end{itemize}
\end{lemma}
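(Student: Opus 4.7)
The plan is to derive each implication from one of the two cited theorems of Achar, together with Sommers's characterization of $\bar{A}(\OO)$ recalled in Section \ref{subsec:Abar}.

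For the implication (ii) $\Rightarrow$ (i), I want to show that $d_S(\OO, C)$ depends only on the image of $C$ in $\bar{A}(\OO)$. Writing $N \subset A(\OO)$ for the kernel of the projection $A(\OO) \twoheadrightarrow \bar{A}(\OO)$, this amounts to proving that $d_S(\OO, C)$ is constant on $N$-cosets. By Sommers's Theorem 6 quoted in Section \ref{subsec:Abar}, $N$ is the union of classes $C$ with $\tilde{b}_{(\OO, C)} = \tilde{b}_{(\OO, 1)}$, and $\tilde{b}_{(\OO, C)}$ is the dimension of the Springer fiber over $d_S(\OO, C)$. This forces the dimensions of the Springer fibers to coincide on each coset, but to upgrade this to equality of the Sommers dual orbits themselves I would invoke \cite[Theorem 5.1]{Achar2003}, whose conclusion is precisely that Sommers duality factors through $\LA(G)$.

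For the implication (i) $\Rightarrow$ (ii), by contraposition I need to show that distinct classes $\bar{C}_1 \neq \bar{C}_2$ in $\bar{A}(\OO)$ give rise to distinct Sommers duals $d_S(\OO, C_1) \neq d_S(\OO, C_2)$. This is the assertion that the induced map $\LA(G) \to \Orb(G^{\vee})$ is injective on the fiber over $\OO$, and this injectivity is exactly the content of \cite[Theorem 6.1]{Achar2003}, where an explicit partial inverse to $d_S$ is constructed in terms of pseudo-Levi combinatorics.

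The main obstacle, and the reason both implications must appeal to Achar, is the step in (ii) $\Rightarrow$ (i) that upgrades equality of $\tilde{b}$-values to equality of orbits: matching Springer fiber dimensions does not in general force the underlying orbits to coincide, so one needs finer information than what $\tilde{b}$-values alone provide. Achar's theorems supply this information, but only by means of a case-by-case analysis relying on the explicit classification of $\bar{A}(\OO)$ in each classical and exceptional type, together with the combinatorial formula for $\underline{d}_S$. Any case-free argument would require a conceptual construction of an orbit-level inverse to Sommers duality on each fiber, which I do not see how to produce uniformly.
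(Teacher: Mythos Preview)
Your proposal is correct and takes essentially the same approach as the paper: the paper offers no proof beyond stating that the lemma is a consequence of \cite[Theorems 5.1, 6.1]{Achar2003}, and you have correctly identified which theorem handles which implication, with Theorem 5.1 giving (ii) $\Rightarrow$ (i) and Theorem 6.1 giving (i) $\Rightarrow$ (ii). Your additional discussion of the $\tilde b$-value obstruction and the case-by-case nature of Achar's arguments is accurate context, though it goes beyond what the paper itself provides.
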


\begin{cor}
The map $d_S: \Conj(G^{\vee}) \to \Orb(G)$ factors through the projection $\Conj(G^{\vee}) \to \LA(G^{\vee})$. 
\end{cor}

We wish to define a notion of `saturation' for Lusztig-Achar data analogous to (\ref{eq:satconj}). So let $L \subset G$ be a Levi subgroup and let $\OO_L \in \Orb(L)$. 

\begin{prop}\label{prop:iotaAbar}
The homomorphism $\iota: A(\OO_L) \to A(\Sat^G_L\OO_L)$ (cf. (\ref{eq:iota})) descends to a homomorphism $\bar{\iota}: \bar{A}(\OO_L) \to \bar{A}(\Sat^G_L\OO_L)$. 
\end{prop}

\begin{proof}
  Let $\OO = \Sat^G_L\OO_L$. It follows from the definition of $\Tilde{b}$-value and Proposition \ref{prop:propsofds}(ii) that 
  $$\tilde{b}(\Sat^G_L(\OO_L, C_L)) = \Tilde{b}(\OO_L,C_L)$$ 
  for any conjugacy class $C_L$ in $A(\OO_L)$. Therefore $\iota(H'_{\OO_L}) \subset H'_\OO$ and the proposition holds. 
\end{proof}

Using Proposition \ref{prop:iotaAbar}, we can define a map
$$\mathrm{Sat}^G_L: \LA(L) \to \LA(G), \qquad \mathrm{Sat}^G_L(\OO_L, \bar{C}_L) = (\mathrm{Sat}^G_L\OO_L, \bar{\iota}(\bar{C}_L))$$
A Lusztig-Achar datum $(\OO,\bar{C})$ is \emph{distinguished} if it cannot be obtained by saturation from a proper Levi subgroup. Let
$$\LA_0(G) := \{(L,(\OO_L,\bar{C}_L)) \mid (\OO_L,\bar{C}_L) \text{ is distinguished}\}/G$$
where, as usual, $L$ runs over all Levi subgroups of $G$. 

\begin{lemma}\label{lem:K(O)}
Suppose $Z(G)$ is connected. Then for each $\OO \in \Orb(G)$, there is a subgroup $K(\OO) \subset A(\OO)$ such that
    \begin{itemize}
        \item[(i)] The quotient map $A(\OO) \to \bar{A}(\OO)$ restricts to an isomorphism $K(\OO) \xrightarrow{\sim} \bar{A}(\OO)$.
        \item[(ii)] The natural map from conjugacy classes in $K(\OO)$ to conjugacy classes in $A(\OO)$ is injective.
        \item[(iii)] If $L \subset G$ is a Levi subgroup and $\OO_L \in \Orb(L)$ such that $\OO=\mathrm{Sat}^G_L \OO_L$, then the homomorphism $\iota$ defined in (\ref{eq:iota}) maps $K(\OO_L)$ to $K(\OO)$.
    \end{itemize}
\end{lemma}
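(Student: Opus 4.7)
The plan is a case-by-case argument following the classification of simple factors and their nilpotent orbits. First, since nilpotent orbits for $G$ factor as products of nilpotent orbits over the simple factors of $\fg$, and both $A(\OO)$ and $\bar{A}(\OO)$ split accordingly (as do the saturation map and $\iota$), it suffices to construct $K(\OO)$ for each simple factor of $G$ having connected center. In type $A$ this forces the factor to be $PGL_n$, where $A(\OO) = \bar{A}(\OO) = 1$ for every $\OO$ and $K(\OO) = 1$ works trivially.

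For the classical types $B$, $C$, $D$, connectedness of $Z(G)$ forces the adjoint groups $SO_{2n+1}$, $PSp_{2n}$, $PSO_{2n}$. In these cases $A(\OO)$ and $\bar{A}(\OO)$ are elementary abelian $2$-groups admitting an explicit combinatorial description via the partition attached to $\OO$ (see \cite{CM}), and the description of $\bar{A}(\OO)$ recalled in Section \ref{subsec:Abar} identifies the kernel of $A(\OO) \twoheadrightarrow \bar{A}(\OO)$ explicitly. I would choose $K(\OO)$ to be any $\mathbb{F}_2$-vector-space complement of this kernel in $A(\OO)$. Property (ii) is then automatic because $A(\OO)$ is abelian, so every conjugacy class is a singleton.

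For the exceptional adjoint groups $G_2$, $F_4$, $E_6$, $E_7$, $E_8$, I would work directly from the tabulated data for $A(\OO)$ and $\bar{A}(\OO)$. In most orbits the surjection $A(\OO) \to \bar{A}(\OO)$ is already an isomorphism and we take $K(\OO) = A(\OO)$; in the remaining (few) cases $A(\OO)$ is a small central extension of $\bar{A}(\OO) \in \{1, \ZZ/2, S_3, S_4, S_5\}$ by a cyclic abelian kernel, and in each such orbit one checks by hand that a splitting $K(\OO) \hookrightarrow A(\OO)$ exists and that two distinct conjugacy classes of $K(\OO)$ remain distinct inside $A(\OO)$.

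The main obstacle is (iii), compatibility with saturation. By Proposition \ref{prop:BalaCarter} every $\OO_L \in \Orb(L)$ is obtained by saturation from a distinguished orbit in a smaller Levi, so transitivity of $\iota$ under chains of Levi inclusions reduces the task to those $(L,\OO_L)$ with $\OO_L$ distinguished. The real difficulty is that the splitting $K(\OO)$ is non-unique in general, so the choices made for every orbit in every Levi must be coordinated so that $\iota(K(\OO_L)) \subset K(\OO)$ holds simultaneously across all such pairs. I expect this coherence to be achievable by exploiting the preservation of Sommers's $\tilde{b}$-values under saturation (which was precisely what made $\bar{\iota}$ well-defined in the proof of Proposition \ref{prop:iotaAbar}), together with a finite case-by-case verification for the exceptional types where non-abelian $\bar{A}(\OO)$ can appear.
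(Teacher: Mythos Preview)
Your overall strategy (reduce to simple factors, handle classical and exceptional types separately, with (ii) automatic in the abelian cases) matches the paper's. But there is a genuine gap in the classical case: choosing $K(\OO)$ to be an \emph{arbitrary} $\mathbb{F}_2$-complement of the kernel will not satisfy (iii). Different complements for different orbits need not be compatible under $\iota$, and you acknowledge this yourself without resolving it. The paper does not attempt to deduce coherence from $\tilde{b}$-value considerations; instead it invokes a \emph{specific} choice of $K(\OO)$ already constructed by Sommers in \cite[Section~5]{Sommers2001} (the paper also spells out the relevant basis elements in its Section~\ref{subsec:Sat_LA}), for which compatibility with saturation is built into the combinatorics of the partition description.

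For the exceptional adjoint groups, your description is both slightly wrong and more complicated than necessary. The map $A(\OO) \to \bar{A}(\OO)$ is \emph{not} in general a central extension (e.g.\ for $E_8(b_6)$ one has $A(\OO) \simeq S_3 \twoheadrightarrow S_2$, and the kernel $\ZZ/3$ is not central). More to the point, the paper observes that for adjoint exceptional groups only three situations ever occur: $A(\OO) \simeq \bar{A}(\OO)$; or $A(\OO) \simeq S_2$, $\bar{A}(\OO) = 1$; or the single orbit $E_8(b_6)$ with $A(\OO) \simeq S_3$, $\bar{A}(\OO) \simeq S_2$. In each case the choice of $K(\OO)$ is forced or essentially unique, and (i)--(iii) are then an easy direct check rather than an open-ended case analysis.
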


\begin{proof}
For $G$ a classical group, subgroups with these properties were exhibited in \cite[Section 5]{Sommers2001} (see also \cref{subsec:Sat_LA} for the case of distinguished Lusztig-Achar data). For $G$ an adjoint exceptional group, there are three possibilities:
\begin{enumerate}
    \item $A(\OO) \simeq \bar{A}(\OO)$.
    \item $A(\OO) \simeq S_2$, $\bar{A}(\OO) \simeq 1$.
    \item $G$ is of type $E_8$, $\OO$ is the distinguished orbit $E_8(b_6)$, $A(\OO) \simeq S_3$, and $\bar{A}(\OO) \simeq S_2$.
\end{enumerate}
In the first case, we take $K(\OO)=A(\OO)$. In the second case, we take $K(\OO)=\{1\}$. In the third case, we take $K(\OO)$ to be any order $2$ subgroup of $A(\OO)$. It is an easy exercise to check that conditions (i), (ii), and (iii) are satisfied for these choices of $K(\OO)$.
\end{proof}

\begin{prop}\label{prop:uniquedistinguishedAbar}
The map
$$\mathrm{Sat}: \LA_0(G) \to \LA(G), \qquad \mathrm{Sat}(L,(\OO_L,\bar{C}_L)) = \Sat^G_L(\OO_L,\bar{C}_L)$$
is a bijection.
\end{prop}

\begin{proof}
Since $\LA(G)$ is independent of isogeny, we can assume in the proof that $Z(G)$ is connected. Let $\bar{C}$ be a conjugacy class in $\bar{A}(\OO)$ and let $C'$ be the preimage of $\bar{C}$ under the isomorphism $K(\OO) \xrightarrow{\sim} \bar{A}(\OO)$ of Lemma \ref{lem:K(O)}(i). By Lemma \ref{lem:K(O)}(ii), there is a unique conjugacy class $C$ in $A(\OO)$ such that $C \cap K(\OO) = C'$. Taking $(\OO,\bar{C})$ to $(\OO,C)$ defines an injective map $\eta: \LA(G) \to \Conj(G)$, right-inverse to the projection $\Conj(G) \to \LA(G)$. 

Now suppose $L \subset G$ is a Levi subgroup. Then $Z(L)$ is connected (see e.g. \cite[Lemma 9]{NairPrasad}) and the following diagrams commute
\begin{equation} \label{diag:Conj_LA_Sat}
    \begin{tikzcd}
    \Conj(L) \ar[r,"\Sat^G_L"] \ar[d,twoheadrightarrow]& \Conj(G) \ar[d,twoheadrightarrow, ]\\
    \LA(L) \ar[r,"\Sat^G_L"]& \LA(G) 
    \end{tikzcd}
    \begin{tikzcd}
    \LA(L) \ar[r,"\Sat^G_L"] \ar[d,hookrightarrow, "\eta^L"]& \LA(G) \ar[d,hookrightarrow, "\eta^G"]\\
    \Conj(L) \ar[r,"\Sat^G_L"]& \Conj(G) 
    \end{tikzcd}
\end{equation}
(the first diagram commutes by definition, and the second by Lemma \ref{lem:K(O)}(iii)). 

Suppose $(L_1,(\OO_{L_1},\bar{C}_{L_1})), (L_2,(\OO_{L_2},\bar{C}_{L_2})) \in \LA_0(G)$ such that $\Sat(\OO_{L_1},\bar{C}_{L_1}))=\Sat(\OO_{L_2},\bar{C}_{L_2})) = (\OO,\bar{C})$. Let $(\OO_{L_1},C_{L_1}) = \eta(\OO_{L_1},\bar{C}_{L_1})$, $(\OO_{L_2},C_{L_2}) = \eta(\OO_{L_2},\bar{C}_{L_2})$, and $(\OO,C) = \eta(\OO,\bar{C})$. By the first commutative diagram the conjugacy data $(\OO_{L_1},C_{L_1})$ and $(\OO_{L_2},C_{L_2})$ are distinguished. By the second commutative diagram $\Sat(L_1,(\OO_{L_1},C_{L_1})) =\Sat(L_2,(\OO_{L_2},C_{L_2}) = (\OO,C)$. So by Proposition \ref{prop:BalaCarter}, $(L_1,(\OO_{L_1},C_{L_1}))$ and $(L_2,(\OO_{L_2},C_{L_2}))$ are conjugate. Hence, $(L_1,(\OO_{L_1},\bar{C}_{L_1}))$ and $(L_2,(\OO_{L_2},\bar{C}_{L_2}))$ are conjugate. This completes the proof. 
\end{proof}

We conclude by describing a useful parameterization of $\LA(G)$. A \emph{Sommers datum} for $G$ is a pair $(M,\OO_M)$ consisting of a pseudo-Levi subgroup $M \subset G$ and a distinguished nilpotent orbit $\OO_M \in \Orb(M)$. Note that $G$ acts by conjugation on the set of Sommers data. Let
$$\Som(G) := \{\text{Sommers data $(M,\OO_M)$ for $G$}\}/G$$
If $L \subset G$ is a Levi subgroup, there is a tautological map
$$\Sat^G_L: \Som(L) \to \Som(G), \qquad \Sat^G_L(M,\OO_M) = (M,\OO_M).$$
which is clearly compatible with $\Sat^G_L: \MS(L) \to \MS(G)$ under the forgetful maps $\MS(G) \to \Som(G)$ and $\MS(L) \to \Som(L)$. 

We define an equivalence relation $\sim$ on $\Som(G)$ as follows: $(M_1,\OO_{M_1}) \sim (M_2, \OO_{M_2})$ if and only if $\Sat^G_{M_1}\OO_{M_1} = \Sat^G_{M_2}\OO_{M_2}$ and $j^{G^{\vee}}_{M_1^{\vee}} d(\OO_{M_1^{\vee}}) =j^{G^{\vee}}_{M_2^{\vee}} d(\OO_{M_2^{\vee}})$.

\begin{lemma}\label{Lem:Sommers_data}
Let $G_{ad}$ denote the adjoint form of $G$. Then the following are true:
\begin{itemize}
    \item[(i)] The forgetful map
$$\MS^{large}(G_{ad}) \to \Som(G_{ad}) = \Som(G)$$
is a bijection. 
\item[(ii)] The fibers of the surjective map
$$\Som(G) \xrightarrow{\sim} \MS^{large}(G_{ad}) \xrightarrow{\sim} \Conj(G_{ad}) \twoheadrightarrow \LA(G_{ad}) = \LA(G)$$
(here, the first map is the inverse of the forgetful map in (i), the second map is $\pi^{G_{ad}}$, and the third map is the natural quotient map) are exactly the equivalence classes in $\Som(G)$. In particular, there is a natural bijection
$$\Som(G)/\sim \, \xrightarrow{\sim} \LA(G).$$
\item[(iii)] Suppose $(\OO,\bar{C}) \in \LA(G)$ corresponds under the bijection in (ii) to an equivalence class $\{(M_1,\OO_{M_1}),...,(M_k,\OO_{M_k})\} \subset \Som(G)$. Then $(\OO,\bar{C})$ is distinguished if and only if one (equivalently, all) of the pseudo-Levi subgroups $M_1,...,M_k$ is of maximal semisimple rank. In this case, the elements in the equivalence class in $\Som(G)$ correspond bijectively via the bijection $\Som(G) \xrightarrow{\sim} \Conj(G_{ad})$ in (ii) to the elements in the preimage of $(\OO,\bar{C})$ under the projection $\Conj(G_{ad}) \twoheadrightarrow \LA(G)$.
\item[(iv)] Suppose $(\OO,\bar{C}) \in \LA(G)$ is distinguished and $(M,tZ^{\circ},\OO_M) \in \MS(G)$ maps to $(\OO,\bar{C})$ under $\MS(G) \overset{\pi}{\to} \Conj(G) \to \LA(G)$. Then $(M,\OO_M)$ belongs to the equivalence class in $\Som(G)$ corresponding to $(\OO,\bar{C})$ under the bijection in (ii).
\end{itemize} 
\end{lemma}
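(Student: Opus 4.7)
The plan is to prove the four parts sequentially, leveraging the bijections and commutative diagrams established earlier.

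For (i), surjectivity is clear: given a Sommers datum $(M, \OO_M)$, write $M = Z_{G_{ad}}(s)^{\circ}$ for some semisimple $s$; since $M$ has maximal rank, $Z_{G_{ad}}(M)$ lies in any maximal torus of $M$, so $s \in Z(M)$, and the triple $(M, sZ(M)^{\circ}, \OO_M)$ is a large MS datum mapping to $(M, \OO_M)$. Injectivity requires showing that, for fixed $(M, \OO_M)$, any two cosets $tZ(M)^{\circ}$ and $t'Z(M)^{\circ}$ with $Z_{G_{ad}}(tZ(M)^{\circ})^{\circ} = Z_{G_{ad}}(t'Z(M)^{\circ})^{\circ} = M$ are conjugate under $N_{G_{ad}}(M) \cap N_{G_{ad}}(\OO_M)$. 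This is the main technical point; for adjoint groups it follows from the classical classification of pseudo-Levi subgroups by subdiagrams of the extended Dynkin diagram (see \cite[Section 2]{Sommers2001} and references therein), and genuinely uses that $G$ is adjoint.

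For (ii), unwinding the composition, a Sommers datum $(M, \OO_M)$ with $e \in \OO_M$ passes via (i) to the large MS datum $(M, tZ(M)^{\circ}, \OO_M)$, then under $\pi^{G_{ad}}$ to the conjugacy datum $(G_{ad} \cdot e, tZ_{G_{ad}}(e)^{\circ}) \in \Conj(G_{ad})$, and finally projects to $(\OO, \bar{C}) \in \LA(G)$. By the definition of Sommers duality, the resulting conjugacy datum satisfies $d_S(\OO, C) = j^{G^{\vee}}_{M^{\vee}} d(\OO_{M^{\vee}})$. Lemma \ref{lem:AtoAbar} says that two conjugacy data sharing the same orbit $\OO$ have equal images in $\LA(G)$ if and only if their $d_S$-values agree. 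Hence the $\LA(G)$-images of $(M_1, \OO_{M_1})$ and $(M_2, \OO_{M_2})$ coincide if and only if both $\Sat^G_{M_1} \OO_{M_1} = \Sat^G_{M_2} \OO_{M_2}$ and $j^{G^{\vee}}_{M_1^{\vee}} d(\OO_{M_1^{\vee}}) = j^{G^{\vee}}_{M_2^{\vee}} d(\OO_{M_2^{\vee}})$, which is exactly the relation $\sim$. So the fibers are precisely the equivalence classes.

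For (iii), the chain of bijections $\Som(G_{ad}) \leftrightarrow \MS^{large}(G_{ad}) \leftrightarrow \Conj(G_{ad})$ identifies ``$M$ has maximal semisimple rank'' with ``MS datum is distinguished'' (Lemma \ref{lem:distinguishedMS}) and then with ``conjugacy datum is distinguished'' (Lemma \ref{lem:alphabeta}(iii), together with the uniqueness of the large MS lift of any conjugacy datum). It therefore suffices to prove $(\OO, \bar{C}) \in \LA(G)$ is distinguished if and only if any (equivalently, every) conjugacy lift $(\OO, C_i) \in \Conj(G_{ad})$ is distinguished. The forward implication is immediate from the commutative diagrams in (\ref{diag:Conj_LA_Sat}): any saturation of $(\OO, C_i)$ descends to a saturation of $(\OO, \bar{C})$. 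For the converse I use Propositions \ref{prop:BalaCarter} and \ref{prop:uniquedistinguishedAbar}: each $(\OO, C_i)$ arises uniquely by saturation from a distinguished origin $(L_i, (\OO_{L_i}, C_{L_i})) \in \Conj_0(G_{ad})$, whose image $(L_i, (\OO_{L_i}, \bar{C}_{L_i})) \in \LA_0(G_{ad})$ saturates to $(\OO, \bar{C})$; by uniqueness in $\LA_0(G_{ad}) \xrightarrow{\sim} \LA(G_{ad})$ the Levis $L_i$ are all $G_{ad}$-conjugate to a single Levi $L$. Either $L = G_{ad}$ (so every $(\OO, C_i)$ is distinguished) or $L \subsetneq G_{ad}$ (so none is), which yields the ``one iff all'' dichotomy. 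Finally (iv) follows from (iii) applied with $G$ in place of $G_{ad}$: the forward-direction commutative diagram argument shows the intermediate conjugacy datum $(\OO, C) \in \Conj(G)$ is distinguished, so Lemma \ref{lem:alphabeta}(iii) forces $(M, tZ^{\circ}, \OO_M)$ to be large and distinguished; in particular $\OO_M$ is distinguished, making $(M, \OO_M) \in \Som(G)$ a Sommers datum whose $\LA(G)$-image is $(\OO, \bar{C})$, hence in the corresponding equivalence class of (ii).
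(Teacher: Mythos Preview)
Your proof is correct and follows essentially the same approach as the paper's. Both reduce (ii) to Lemma~\ref{lem:AtoAbar}, and both derive (iii) and (iv) from Lemma~\ref{lem:distinguishedMS} together with Lemma~\ref{lem:alphabeta}(i),(iii) and the commutative diagrams~\eqref{diag:Conj_LA_Sat}. Two minor remarks: for (i) the paper simply cites \cite[Proposition~35]{SommersMcNinch} rather than \cite{Sommers2001}; and for (iii) you supply more detail than the paper does on the converse direction (max semisimple rank $\Rightarrow$ distinguished LA datum) and the ``one iff all'' claim, invoking Propositions~\ref{prop:BalaCarter} and~\ref{prop:uniquedistinguishedAbar}, whereas the paper's argument only spells out the forward direction.
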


\begin{proof}
(i) is \cite[Proposition 35]{SommersMcNinch}. (ii) is immediate from Lemma \ref{lem:AtoAbar}. (iii) and (iv) follow from part (i) and (iii) of Lemma \ref{lem:alphabeta} and Lemma \ref{lem:distinguishedMS}. Indeed, the first commtative diagram in \eqref{diag:Conj_LA_Sat} implies that any lift $(\OO,C)$ of a distinguished pair $(\OO, \bar{C})$ in $\Conj(G_{ad})$ is also distinguished. Then Lemma \ref{lem:alphabeta} (iii) implies that the preimage $\pi^{-1}(\OO,C)$ of $(\OO,C)$ under the map $\pi: \MS(G_{ad}) \to \Conj(G_{ad})$ is included in $\MS^{large}(G_{ad})$. Therefore by Lemma \ref{lem:alphabeta} (i), $\pi^{-1}(\OO,C)$ consists of only one element, which corresponds to a unique element $(M,\OO_M) \in \Som(G)$, where $M$ is of maximal semisimple rank by Lemma \ref{lem:distinguishedMS}. 
\end{proof}

\subsection{Special Lusztig-Achar data}\label{subsec:specialAchar}

\begin{definition}\label{defn:specialAchar}
  Let $(\OO,\bar{C}) \in \LA(G)$ and let $\OO^{\vee} = d_S^{G^{\vee}}(\OO,\bar{C})$. Following Achar (\cite{Achar2003}) we say that $(\OO,\bar{C})$ is \emph{special} if there is a conjugacy class $\bar{C}' \subset \bar{A}(\OO^{\vee})$ such that
  $$d_S^G(\OO^{\vee},\bar{C}') = \OO.$$
\end{definition}

Let
$$\LA^*(G) := \{\text{special Lusztig-Achar data $(\OO,\bar{C})$ for $G$}\}$$
and
$$\LA^*_0(G) := \{(L,(\OO_L,\bar{C}_L)) \mid (\OO_L,\bar{C}_L) \text{ is special and distinguished}\}/G$$
where, as usual, $L$ runs over all Levi subgroups of $G$. We will need several basic facts about special Achar data.

\begin{prop}\label{prop:specialtospecial}
The following are true:
\begin{itemize}
    \item[(i)] For any orbit $\OO \in \Orb(G)$, the Achar datum $(\OO,1)$ is special.
    \item[(ii)] The bijection of Proposition \ref{prop:uniquedistinguishedAbar} induces an injection
    $$\Sat^{-1}: \LA^*(G) \hookrightarrow \LA^*_0(G)$$
\end{itemize}
\end{prop}

\begin{proof}
(i) is \cite[Proposition 2.7]{Achar2003}. We proceed to proving (ii). In classical types all distinguished Achar data are special, see \cite[Section 5.2]{Achar2003}. So (ii) is immediate. In exceptional types, a list of (special) Achar data is given in \cite[Section 6]{Achar2003}. There are only two non-special distinguished Achar data in exceptional types, the (unique) nontrivial conjugacy class for the nilpotent orbit $A_4+A_1$ in $E_7$ and the (unique) nontrivial conjugacy class for the nilpotent orbit $E_6(a_1)+A_1$ in $E_8$. Saturation to $E_8$ takes the former to the (unique) nontrivial conjugacy class for the nilpotent orbit $A_4+A_1$ in $E_8$, which is non-special by \cite[Section 6]{Achar2003}. This completes the proof of (ii).
\end{proof}

\section{Symplectic singularities and unipotent ideals}\label{sec:symplecticsingularities}

In this section, we will review some preliminary facts about conical symplectic singularities and their Poisson deformations and filtered quantizations. We will also establish several new facts about conical symplectic singularities and nilpotent covers. The main new result is Theorem \ref{thm:maximaltomaximal}. It states that birational induction takes the maximal cover in an equivalence class to the maximal cover in an equivalence class. This result is essential for our parameterization of unipotent representations (Theorem \ref{thm:Gamma}).

\subsection{Poisson deformations}\label{sec:Poissondef}

In this subsection, we will recall the definitions of (graded, formal) Poisson deformations of (graded) Poisson varieties.

A \emph{Poisson scheme} is a scheme $X$ equipped with a Poisson bracket on its structure sheaf $\cO_X$. If $S$ is an affine scheme, a \emph{Poisson $S$-scheme} is a scheme $X$ equipped with a morphism $f:X \to S$ and a $f^{-1}\cO_S$-linear Poisson bracket on $\cO_X$. These definitions have obvious formal counterparts (a \emph{formal Poisson scheme} is a formal scheme with a Poisson bracket on its structure sheaf, and so on). 

\begin{definition}\label{def:Poissondeformations}
Let $X$ be a Poisson variety and let $S$ be a (possibly formal) affine scheme with a distinguished closed point $0$. Then a \emph{(formal) Poisson deformation of $X$ over $S$} is a flat (formal) Poisson $S$-scheme $\sX_S$ equipped with a Poisson isomorphism $\iota:\sX_S\times_S \{0\} \xrightarrow{\sim} X$. If $S$ is a formal scheme complete at $0\in S$, we say that $\sX_S$ is a formal Poisson deformation of $X$.

An isomorphism between Poisson deformations $(\sX_S, \iota)$ and $(\sX'_S, \iota')$ over $S$ is an isomorphism of Poisson schemes $\sX_S \to \sX'_S$ over $S$ such that the induced isomorphism $\sX_S\times_S \{0\} \xrightarrow{\sim} \sX'_S\times_S \{0\}$ intertwines $\iota$ and $\iota'$.
\end{definition}

Let $X$ be a Poisson variety. Let $\Art$ denote the category of local Artinian $\CC$-algebras with residue field $\CC$. For $R \in \Art$, define $\PD_X(R)$ to be the set of isomorphism classes of Poisson deformations over $\Spec(R)$. This defines a functor $\PD_X : \Art \to \Set$. Let $\CC\{\epsilon\} := \CC[\epsilon] / (\epsilon^2)$ denote the ring of dual numbers over $\CC$. Then $\PD_X(\CC\{\epsilon\})$ is the tangent space of $\PD_X$.

Next, we recall the notion of a graded Poisson deformation. Let $X$ be a graded normal Poisson variety. By this, we mean a normal variety equipped with two additional structures:
\begin{itemize}
    \item A rational $\CC^{\times}$-action. Let $\cO_X$ denote the structure sheaf of $X$ with respect to the conical topology on $X$ (i.e. the topology with open subsets equal to $\CC^{\times}$-invariant Zariski open subsets);
    \item A Poisson bracket $\{\cdot, \cdot\}: \cO_X \otimes \cO_X \to \cO_X$ of degree $-d$.
\end{itemize}

\begin{definition}
Let $X$ be a graded normal Poisson variety and let $S$ be an affine scheme with a rational $\CC^{\times}$-action contracting onto a distinguished closed point $0 \in S$. Then a \emph{graded Poisson deformation of $X$ over $S$} is a Poisson deformation $(\mathcal{X}_S,\iota)$ of $X$ over $S$ (cf. Definition \ref{def:Poissondeformations}) equipped with a rational $\CC^{\times}$-action on $\mathcal{X}_S$ such that
\begin{itemize}
    \item[(i)] The action rescales the Poisson bracket by $t \mapsto t^{-d}$.
    \item[(ii)] The action is compatible with the action on $S$.
    \item[(iii)] $\iota$ is $\CC^{\times}$-equivariant.
\end{itemize}
An isomorphism between graded Poisson deformations $(\mathcal{X}_S,\iota)$ and $(\mathcal{X}_S',\iota')$ over $S$ is an isomorphism of Poisson deformations which is $\CC^{\times}$-equivariant.
\end{definition}

\subsection{Filtered quantizations}\label{sec:filteredquant}

In this subsection, we will recall the definitions of filtered quantizations of graded Poisson algebras and graded Poisson varieties.

Let $A$ be a \emph{graded Poisson algebra} of degree $-d \in \ZZ_{<0}$. By this, we mean a finitely-generated commutative associative algebra equipped with two additional structures: an algebra grading 
$$A = \bigoplus_{i=-\infty}^{\infty} A_i;$$
and a Poisson bracket $\{\cdot, \cdot\}$ of degree $-d$
$$\{A_i, A_j\}\subset A_{i+j-d}, \qquad i,j \in \ZZ.$$

\begin{definition}\label{def:filteredquant_algebra}
A \emph{filtered quantization} of $A$ is a pair $(\cA,\theta)$ consisting of
\begin{itemize}
    \item[(i)] an associative algebra $\cA$ equipped with a complete and separated filtration by subspaces
    $$\cA = \bigcup_{i=-\infty}^{\infty} \cA_{\leq i}, \qquad ... \subseteq \cA_{\leq -1} \subseteq \cA_{\leq 0} \subseteq \cA_{\leq 1} \subseteq ...$$
    such that
    $$[\cA_{\leq i}, \cA_{\leq j}] \subseteq \cA_{\leq i+j-d} \qquad i,j \in \ZZ,$$
    and
    \item[(ii)] an isomorphism of graded Poisson algebras
    $$\theta: \gr(\cA) \xrightarrow{\sim} A,$$
    where the Poisson bracket on $\gr(\cA)$ is defined by
    $$\{a+\cA_{\leq i-1}, b+\cA_{\leq j-1}\}=[a,b]+\cA_{\leq i+j-d-1}, \qquad a \in \cA_{\leq i}, \ b \in \cA_{\leq j}.$$
\end{itemize}
An isomorphism of filtered quantizations $(\cA_1, \theta_1) \xrightarrow{\sim} (\cA_2, \theta_2)$ is an isomorphism of filtered algebras $\phi: \cA_1 \xrightarrow{\sim} \cA_2$ such that $\theta_1 = \theta_2 \circ \gr(\phi)$. Denote the set of isomorphism classes of quantizations of $A$ by $\mathrm{Quant}(A)$.
\end{definition}

Often, the isomorphism $\theta$ is clear from the context, and will be omitted from the notation. However, the reader should keep in mind that a filtered quantization $(\cA,\theta)$ is \emph{not} determined up to isomorphism by $\cA$ alone.

Now let $X$ be a graded normal Poisson variety.

\begin{definition}\label{def:filteredquant_variety}
A \emph{filtered quantization} of $X$ is a pair $(\calD,\theta)$ consisting of
\begin{itemize}
    \item[(i)] a sheaf $\mathcal{D}$ of associative algebras in the conical topology on $X$, equipped with a complete and separated filtration by subsheaves of vector spaces
    $$\calD = \bigcup_{i=-\infty}^{\infty} \calD_{\leq i}, \qquad ... \subseteq \calD_{\leq -1} \subseteq \calD_{\leq 0} \subseteq \calD_{\leq 1} \subseteq ...$$
    such that
    $$[\calD_{\leq i}, \calD_{\leq j}] \subseteq \calD_{\leq i+j-d} \qquad i,j \in \ZZ,$$
    and
    \item[(ii)] an isomorphism of sheaves of graded Poisson algebras
    $$\theta: \gr(\calD) \xrightarrow{\sim} \cO_X,$$
    where the Poisson bracket on $\gr(\calD)$ is defined by
    $$\{a+\calD_{\leq i-1}, b+\calD_{\leq j-1}\}=[a,b]+\calD_{\leq i+j-d-1}, \qquad a \in \calD_{\leq i}, \ b \in \calD_{\leq j}.$$
\end{itemize}
An isomorphism of filtered quantizations $(\calD_1, \theta_1) \xrightarrow{\sim} (\calD_2, \theta_2)$ is an isomorphism of sheaves of filtered algebras $\phi: \calD_1 \xrightarrow{\sim} \calD_2$ such that $\theta_1 = \theta_2 \circ \gr(\phi)$. Denote the set of isomorphism classes of quantizations of $X$ by $\mathrm{Quant}(X)$.
\end{definition}

We conclude this subsection by recalling the definition of a \emph{Hamiltonian quantization} of a graded Poisson algebra with Hamiltonian $G$-action. Let $A$ be a graded Poisson algebra of degree $-d$. Suppose $G$ is an algebraic group which acts rationally on $A$ by graded Poisson automorphisms. Write $\mathrm{Der}(A)$ for the Lie algebra of derivations of $A$. The $G$-action on $A$ induces by differentiation a Lie algebra homomorphism
$$\fg \to \mathrm{Der}(A), \qquad \xi \mapsto \xi_A,$$
We say that $A$ (or the $G$-action on $A$) is \emph{Hamiltonian} if there is a $G$-equivariant map $\varphi: \mathfrak{g} \to A_d$ (called a \emph{classical co-moment map}) such that
$$\{\varphi(\xi), a\} = \xi_A(a), \qquad \xi \in \fg, \quad a \in A.$$ 
A filtered quantization $(\cA,\theta)$ is $G$-\emph{equivariant} if $G$ acts rationally on $\cA$ by filtered algebra automorphisms and the isomorphism $\theta: \gr(\cA) \xrightarrow{\sim} A$ is $G$-equivariant. In this case, we get a Lie algebra homomorphism
$$\fg \to \mathrm{Der}(\cA), \qquad \xi \mapsto \xi_{\cA}.$$

\begin{definition}\label{def:hamiltonian}
Suppose $A$ is a graded Poisson algebra equipped with a Hamiltonian $G$-action. A \emph{Hamiltonian} quantization of $A$ is a triple $(\cA,\theta,\Phi)$ consisting of
 \begin{itemize}
     \item[(i)] a $G$-equivariant filtered quantization $(\cA,\theta)$ of $A$, and
     \item[(ii)] a $G$-equivariant map $\Phi: \fg \to \cA_{\leq d}$ (called a \emph{quantum co-moment map}) such that
     $$[\Phi(\xi),a] = \xi_{\cA}(a), \qquad \xi \in \fg, \quad a \in \cA.$$
 \end{itemize}
An isomorphism $(\mathcal{A}_1,\theta_1,\Phi_1) \xrightarrow{\sim} (\mathcal{A}_2,\theta_2,\Phi_2)$ of Hamiltonian quantizations of $A$ is a $G$-equivariant isomorphism of filtered algebras $\phi: \cA_1 \to \cA_2$ such that $\theta_1 = \theta_2 \circ \gr(\phi)$ and $\Phi_2 = \phi \circ \Phi_1$. Denote the set of isomorphism classes of Hamiltonian quantizations of $A$ by $\mathrm{Quant}^G(A)$.
\end{definition}

\subsection{Symplectic singularities and $\QQ$-factorial terminalizations} \label{subsec:symplectic_singularties}

Let $X$ be a normal Poisson variety.

\begin{definition}[\cite{Beauville2000}, Definition 1.1]
We say that $X$ has \emph{symplectic singularities} if
\begin{itemize}
    \item[(i)] The regular locus $X^{reg} \subset X$ is symplectic; denote the symplectic form by $\omega$.
    \item[(ii)] There is a resolution of singularities $\rho: Y \to X$ such that $\rho^* \omega$ extends to a regular (not necessarily symplectic) $2$-form on $Y$.
\end{itemize}
\end{definition}

Now let $X$ be a graded normal Poisson variety.

\begin{definition}
We say that $X$ is a \emph{conical symplectic singularity} if $X$ has symplectic singularities and the $\CC^{\times}$-action on $X$ is contracting onto a point.
\end{definition}

We note that every conical symplectic singularity is automatically affine.

\begin{example}\label{ex:nilpotentcover}
Let $G$ be a complex connected reductive algebraic group and let $\widetilde{\OO} \to \OO$ be a finite \'{e}tale $G$-equivariant cover of a nilpotent co-adjoint $G$-orbit $\OO \subset \fg^*$. There is a natural $\CC^{\times}$-action on $\widetilde{\OO}$ defined in the following manner (see \cite[Section 1]{BrylinskiKostant1994} for details and proofs). Consider the `doubled' dilation action of $\CC^{\times}$ on $\fg^*$, i.e.
$$z \cdot \xi = z^2\xi, \qquad z \in \CC^{\times}, \ \xi \in \fg^*.$$
This action preserves $\OO$, since $\OO$ is a nilpotent orbit, and lifts to a unique $\CC^{\times}$-action on $\widetilde{\OO}$, which commutes with the $G$-action on $\widetilde{\OO}$. It follows that any morphism of covers $\widetilde{\OO} \to \widehat{\OO}$ is automatically $\cm$-equivariant. The $\cm$-action on $\widetilde{\OO}$ induces a non-negative grading on the ring of regular functions $\CC[\widetilde{\OO}]$.

There is also a natural $G$-equivariant symplectic form on $\widetilde{\OO}$, obtained by pullback from the Kirillov-Kostant-Souriau form on $\OO$. This form induces a Poisson bracket on $\CC[\widetilde{\OO}]$, and this bracket is of degree $-2$ with respect to the grading defined above (see again \cite[Section 1]{BrylinskiKostant1994}). It is known that $X=\Spec(\CC[\widetilde{\OO}])$ is a conical symplectic singularity (see \cite[Lemma 2.5]{LosevHC}).

The natural map $\widetilde{\OO} \to X$ is an open embedding, and the $G$-action on $\widetilde{\OO}$ extends to a $G$-action on $X$. The $G$-equivariant covering map $\widetilde{\OO} \to \OO$ extends to a finite $G$-equivariant surjection $X \to \overline{\OO}$. The $G$-action on $\widetilde{\OO}$ (resp. $X$) is Hamiltonian; the moment map is the composition $\widetilde{\OO} \to \OO \subset \fg^*$ (resp. $X \to \overline{\OO} \subset \fg^*$). Note that any morphism of covers $\widetilde{\OO} \to \widehat{\OO}$ is symplectic and intertwines the moment maps, hence is automatically $G$-(hence $G \times \cm$-)equivariant, since $G$ is connected.
\end{example}

Recall that a normal variety $Y$ is $\QQ$\emph{-factorial} if every Weil divisor has a nonzero integer multiple which is Cartier. 

\begin{prop}[Proposition 2.1, \cite{LosevSRA}]
Let $X$ be a Poisson variety with symplectic singularities. Then there is a birational projective morphism $\rho: Y \to X$ such that 
\begin{itemize}
    \item[(i)] $Y$ is an irreducible Poisson variety.
    \item[(ii)] $Y$ is $\QQ$-factorial.
    \item[(iii)] The singular locus of $Y$ is of codimension $\geq 4$. 
\end{itemize}
\end{prop}

The morphism $\rho: Y \to X$ in the proposition above (or the variety $Y$, if the morphism is understood) is called a $\QQ$\emph{-factorial terminalization} of $X$. If $X$ is a conical symplectic singularity, then there is a $\CC^{\times}$-action on $Y$ such that $\rho$ is $\CC^{\times}$-equivariant, see \cite[A.7]{Namikawa3}. By \cite{Namikawa_bir}, every conical symplectic singularity has only finitely-many isomorphism classes of $\QQ$-factorial terminalizations.

\subsection{The Namikawa space and Weyl group}\label{sec:Namikawa}

Let $X$ be a conical symplectic singularity. Associated to $X$ are two important invariants: a finite-dimensional complex vector space $\fP = \fP^X$ called the \emph{Namikawa space} and a finite Coxeter group $W=W^X$ called the \emph{Namikawa Weyl group}. In this section, we will recall several equivalent definitions of these objects. 

\begin{definition}
Let $X$ be a conical symplectic singularity and let $\rho: Y \to X$ be a $\QQ$-factorial terminalization. The \emph{Namikawa space} associated to $X$ is the finite-dimensional complex vector space
$$\fP = \fP^X := H^2(Y^{reg},\CC)$$
\end{definition}

We will see in a moment that $\fP$ depends only on $X$ (and not on the choice of $\rho : Y \to X$). Since $X$ is a symplectic singularity, it contains finitely-many symplectic leaves (\cite[Theorem 2.3]{Kaledin2006}). Let $\fL_k$, $k = 1, 2, \ldots, t$, denote the symplectic leaves of codimension 2. For each such leaf $\fL_k \subset X$, the formal slice to $\fL_k \subset X$ is identified with the formal neighborhood at $0$ in a Kleinian singularity $\Sigma_k = \CC^2/\Gamma_k$. Under the McKay correspondence, $\Gamma_k$ corresponds to a complex simple Lie algebra $\fg_k$ of type A, D, or E. Fix a Cartan subalgebra $\fh_k \subset \fg_k$. Write $\Lambda_k \subset \fh_k^*$ for the weight lattice and $W_k$ for the Weyl group. If we choose a point $x \in \fL_k$, there is a natural identification $H^2(\widetilde{\mu}^{-1}(x),\ZZ) \simeq \Lambda_k$, and $\pi_1(\fL_k)$ acts on $\Lambda_k$ by diagram automorphisms. The \emph{partial Namikawa space} corresponding to $\fL_k$ is the subspace
$$\fP_{k} = \fP_k^X := (\fh_{k}^*)^{\pi_1(\fL_k)}.$$
Also define $\fP_{0} = \fP_0^X := H^2(X^{reg},\CC)$.

\begin{prop}[\cite{Losev4}, Lem 2.8]\label{prop:partialdecomp}
There is a linear isomorphism
\begin{equation}\label{eq:partialdecomp}\fP \xrightarrow{\sim} \bigoplus_{k=0}^t \fP_k.\end{equation}
\end{prop}

For each codimension 2 leaf $\fL_k \subset X$, consider the subgroup of monodromy invariants $W_k^{\pi_1(\fL_k)} \subset W_k$. Note that there is a natural action of $W_k^{\pi_1(\fL_k)}$ on $\fP_{k} = (\fh_{k}^*)^{\pi_1(\fL_k)}$. 

\begin{definition}
The \emph{Namikawa Weyl group} associated to $X$ is the finite group 
$$W = W^X := \prod_{k=1}^t W_k^{\pi_1(\fL_k)}.$$
\end{definition}

Note that $W$ acts on $\fP$ via the isomorphism (\ref{eq:partialdecomp}) (the action on $\fP_{0}$ is trivial).

\subsection{Filtered quantizations of conical symplectic singularities}

In this section, we will recall the classification of filtered quantizations of conical symplectic singularities. Let $X$ be a conical symplectic singularity and let $\rho: Y \to X$ be a graded $\QQ$-factorial terminalization of $X$. For any graded smooth symplectic variety $V$, there is a (non-commutative) \emph{period map}
$$\mathrm{Per}: \mathrm{Quant}(V) \to H^2(V,\CC),$$
see \cite[Sec 4]{BK}, \cite[Sec 2.3]{Losev_isofquant}.

\begin{prop}[\cite{Losev4}, Prop 3.1(1)]\label{prop:P=Quant(Y)}
The maps 
$$\mathrm{Quant}(Y) \overset{|_{Y^{\mathrm{reg}}}}{\to} \mathrm{Quant}(Y^{\mathrm{reg}}) \overset{\mathrm{Per}}{\to} H^2(Y^{\mathrm{reg}},\CC) = \fP^X$$
are bijections.
\end{prop}

For $\lambda \in \fP^X$, let  $\mathcal{D}_{\lambda}$ denote the corresponding filtered quantization of $Y$ and let $\cA_{\lambda} := \Gamma(Y,\mathcal{D}_{\lambda})$. 

\begin{theorem}[Prop 3.3, Thm 3.4, \cite{Losev4}]\label{thm:quantssofsymplectic}
The following are true:

\begin{itemize}
\item[(i)] For every $\lambda \in \fP^X$, the algebra $\cA_\lambda$ is a filtered quantization of $\CC[X]$.
    \item[(ii)] Every filtered quantization of $\CC[X]$ is isomorphic to $\cA_{\lambda}$ for some $\lambda \in \fP^X$.
    \item[(iii)] For every $\lambda, \lambda' \in \fP^X$, we have $\cA_{\lambda} \simeq \cA_{\lambda'}$ if and only if $\lambda' \in W^X \cdot \lambda$.
\end{itemize}
Hence, the map $\lambda \mapsto \cA_{\lambda}$ induces a bijection
$$\fP^X/W^X \simeq \mathrm{Quant}(\CC[X]), \qquad W^X \cdot \lambda \mapsto \cA_{\lambda}.$$
\end{theorem}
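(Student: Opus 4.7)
The plan is to use Proposition \ref{prop:P=Quant(Y)} as the starting point, which already classifies $\mathrm{Quant}(Y)$ by $\fP^X$ via the period map, and to transport this classification down to $X$ via the global sections functor $\Gamma(Y, -)$.

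For (i), I would fix $\lambda \in \fP^X$, set $\cA_\lambda := \Gamma(Y, \mathcal{D}_\lambda)$, and equip it with the filtration $\cA_{\lambda, \leq i} := \Gamma(Y, \mathcal{D}_{\lambda, \leq i})$. To show $\gr \cA_\lambda \simeq \CC[X]$ as graded Poisson algebras, note that the graded pieces of $\mathcal{D}_\lambda$ are isomorphic as coherent sheaves to the graded pieces of $\mathcal{O}_Y$. Since $Y$ has symplectic (hence rational) singularities and $\rho: Y \to X$ is a projective birational morphism to a normal affine variety, one has $\rho_*\mathcal{O}_Y = \mathcal{O}_X$ together with $R^i \rho_* \mathcal{O}_Y = 0$ for $i > 0$; this gives $\Gamma(Y,\mathcal{O}_Y)=\CC[X]$ and $H^i(Y,\mathcal{O}_Y)=0$ for $i>0$. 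An induction on filtration degree using the short exact sequences $0 \to \mathcal{D}_{\lambda, \leq i-1} \to \mathcal{D}_{\lambda, \leq i} \to \mathcal{O}_Y \to 0$ then yields $\gr \cA_\lambda \simeq \Gamma(Y, \gr \mathcal{D}_\lambda) \simeq \CC[X]$ as graded Poisson algebras, proving (i).

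For (ii), given $\cA \in \mathrm{Quant}(\CC[X])$, I would microlocalize $\cA$ over $X^{reg} \simeq \rho^{-1}(X^{reg}) \subset Y$ to obtain a sheaf of filtered algebras quantizing the symplectic open locus. Because the singular locus of $Y$ has codimension $\geq 4$, a Hartogs-type extension principle for quantizations (the obstructions live in Poisson cohomology groups which vanish in codimension $\geq 4$) produces a unique extension to a filtered quantization $\mathcal{D}$ of $Y$. By Proposition \ref{prop:P=Quant(Y)}, $\mathcal{D} \simeq \mathcal{D}_\lambda$ for some $\lambda \in \fP^X$, and a second Hartogs argument applied to sections (both $\cA$ and $\Gamma(Y,\mathcal{D}_\lambda)$ agree with sections over $X^{reg}$ and are filtered deformations of $\CC[X]$, which is normal) identifies $\cA \simeq \cA_\lambda$.

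For (iii), the implication $W^X\cdot\lambda = W^X\cdot\lambda' \Rightarrow \cA_\lambda\simeq\cA_{\lambda'}$ would follow from an explicit construction of the $W^X$-action: using the decomposition \eqref{eq:partialdecomp}, each factor $W_k^{\pi_1(\fL_k)}$ acts via birational Poisson symmetries arising at the formal slice $\Sigma_k$, which is a Kleinian singularity. These symmetries lift to isomorphisms of quantizations on a formal neighborhood of each codimension-$2$ leaf, hence to isomorphisms of $\cA_\lambda$ after passing to global sections (since modifications over codimension $\geq 2$ loci are invisible to $\Gamma(Y,-)$). The converse — that no other identifications occur — is the main obstacle. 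I would handle it by matching both sides against Namikawa's universal graded Poisson deformation of $X$, which lives over $\fP^X/W^X$: filtered quantizations of $\CC[X]$ are in bijection with such graded Poisson deformations equipped with an appropriate period, and the same classification governs quantizations of $Y$ pulled back from $X$. The delicate point is injectivity modulo $W^X$: it is not enough to compute at a single Kleinian slice, one must control how the local Weyl groups at the various codimension-$2$ leaves assemble into the global symmetry of $\fP^X$, which is precisely the content of Namikawa's theorem on Poisson deformations and is the heart of Losev's argument in \cite{Losev4}.
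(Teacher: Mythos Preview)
The paper does not give its own proof of this theorem: it is stated as a citation of \cite[Prop 3.3, Thm 3.4]{Losev4} and used as a black box. So there is no proof in the paper to compare against.

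That said, your sketch is a reasonable outline of Losev's actual argument in \cite{Losev4}. Part (i) is indeed handled by the cohomology vanishing $H^i(Y,\mathcal{O}_Y)=0$ for $i>0$ (rational singularities) plus the filtration argument you describe. Part (ii) goes through microlocalization and extension across the terminal locus, as you say. For (iii), your description is accurate in spirit but the honest work lies in the sentence you flag as ``the main obstacle'': Losev's proof of injectivity modulo $W^X$ does not directly invoke Namikawa's Poisson deformation theorem as a black box, but rather runs a parallel argument for quantizations, reducing to the formal slices at codimension-2 leaves and using the explicit Weyl group action on quantizations of Kleinian singularities. Your final paragraph correctly identifies where the content is, but ``matching both sides against Namikawa's universal graded Poisson deformation'' is more of a slogan than a proof step --- the actual mechanism is the compatibility of the period map with restriction to slices, which is what pins down the $W^X$-action as the full isomorphism relation.
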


We conclude by recalling an equivariant version of Theorem \ref{thm:quantssofsymplectic} from \cite{LMBM}. Let $G$ be a connected reductive algebraic group and suppose $A:=\CC[X]$ admits a Hamiltonian $G$-action, see Section \ref{sec:filteredquant}. Define the \emph{extended Namikawa space}
$$\overline{\fP}^X := \fP^X \oplus \fz(\mathfrak{g})^*$$
This space should be viewed as the equivariant counterpart of $\fP^X$. Let $W^X$ act on $\overline{\fP}^X$ via the decomposition above (the $W^X$-action on the second factor is defined to be trivial). 

\begin{prop}[Lem 4.11.2, \cite{LMBM}]\label{prop:Hamiltonian}
Let $G$ be a connected reductive algebraic group and suppose $A:=\CC[X]$ admits a Hamiltonian $G$-action. Then the following are true:
\begin{itemize}
    \item[(i)] There is a unique classical co-moment map $\varphi: \fg \to A_d$.
    \item[(ii)] Every filtered quantization $\cA \in \mathrm{Quant}(A)$ has a unique $G$-equivariant structure.
    \item[(iii)] For every $\cA \in \mathrm{Quant}(A)$ and $\chi \in \fz(\fg)^*$, there is a unique quantum co-moment map $\Phi_{\chi}: \fg \to \cA_{\leq d}$ such that $\Phi|_{\fz(\fg)} = \chi$.
\end{itemize}
In particular, there is a canonical bijection
$$\overline{\fP}^X/W^X \xrightarrow{\sim} \mathrm{Quant}^G(A) \qquad W^X \cdot (\lambda, \chi) \mapsto (\cA_{\lambda}, \Phi_{\chi}).$$
\end{prop}

\begin{definition}[Def 5.0.1, \cite{LMBM}]\label{def:canonical}
The \emph{canonical quantization} of $\CC[X]$ is the Hamiltonian quantization corresponding to the parameter $0 \in \overline{\fP}^X$. 
\end{definition}

\subsection{$\QQ$-factorial terminalizations of nilpotent covers}\label{sec:Qfactorialnilp}

Let $\widetilde{\OO} \in \Cov(G)$ and let $X = \Spec(\CC[\widetilde{\OO}])$. Then $X$ is a conical symplectic singularity (cf. Example \ref{ex:nilpotentcover}). In this section, we will give explicit (Lie-theoretic) descriptions of the $\QQ$-factorial terminalizations of $X$, the Namikawa space $\fP(\widetilde{\OO}):=\fP^X$, and the partial Namikawa spaces $\fP_k(\widetilde{\OO}) := \fP_k^X$.

Choose $(L,\widetilde{\OO}_L) \in \Cov_0(G)$ such that $\Bind(L,\widetilde{\OO}_L) = \widetilde{\OO}$. By Proposition \ref{prop:bindinjective}, $(L,\widetilde{\OO}_L)$ is unique (up to conjugation by $G$). Let $X_L = \Spec(\CC[\widetilde{\OO}_L])$. Choose a parabolic subgroup $P\subset G$ with Levi decomposition $P=LN$. Form the variety $Y=G \times^P(X_L \times \fp^{\perp})$ and consider the proper $G$-equivariant map $\widetilde{\mu}:Y \to \overline{\OO}$ defined in Section \ref{subsec:induction}. Then $\widetilde{\mu}$ admits a Stein factorization $Y \xrightarrow{\rho} X \to \overline{\OO}$. The first map is projective birational and the second is finite. 

\begin{theorem}[{\cite[Cor 4.3]{Mitya2020}, \cite[Lemma 7.2.4]{LMBM}}] \label{thm:terminalization_cover}
The following are true.
  \begin{enumerate} [label=(\roman*)]
      \item 
      The morphism $\rho: Y \to X$ is a $\QQ$-factorial terminalization of $X$.
      \item 
      Any $\QQ$-factorial terminalization $Y$ of $X$ is of this form.
  \end{enumerate}   
\end{theorem}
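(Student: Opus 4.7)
The plan is to prove (i) and (ii) in tandem, using the classification of $\QQ$-factorial terminalizations of conical symplectic singularities together with the specific Lie-theoretic structure of nilpotent covers.

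First I would establish a key intermediate statement, which is the heart of the argument: if $\widetilde{\OO}_L \in \Cov(L)$ is birationally rigid, then $X_L := \Spec(\CC[\widetilde{\OO}_L])$ is already a $\QQ$-factorial terminalization of itself (i.e.\ $X_L$ is $\QQ$-factorial with singular locus of codimension $\geq 4$). The strategy is to contrapositively show that if $X_L$ admits a nontrivial $\QQ$-factorial terminalization $\rho_L: Y_L \to X_L$, then $\widetilde{\OO}_L$ can be realized as $\Bind^L_{L'} \widetilde{\OO}_{L'}$ for a proper Levi $L' \subsetneq L$, contradicting birational rigidity by Proposition \ref{prop:bindinjective}. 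Concretely, a nontrivial $\rho_L$ either has a divisorial exceptional locus, producing a codimension-two leaf of $X_L$ whose partial Namikawa space $\fP_k$ is nonzero, or fails $\QQ$-factoriality, in which case the relative Picard group of the terminalization is nontrivial; in either case Namikawa's theory of Poisson deformations and Losev's comparison with nilpotent parabolic induction (as developed in the references to \cite{Mitya2020} cited after the statement) lets one extract the required Levi $L'$ and cover $\widetilde{\OO}_{L'}$.

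Granted this, part (i) proceeds as follows. The variety $Y = G \times^P(X_L \times \fp^{\perp})$ is a fiber bundle over $G/P$ with fiber $X_L \times \fp^{\perp}$, so since $X_L$ is $\QQ$-factorial with codimension $\geq 4$ singular locus, the same holds for $Y$. The Poisson structure comes from combining the canonical Poisson structure on the Hamiltonian $G$-variety $G \times^P \fp^{\perp} \cong T^*(G/P)$ with the Poisson structure on $X_L$, and the resulting moment map is precisely $\widetilde{\mu}$. Taking the Stein factorization $Y \xrightarrow{\rho} X \to \overline{\OO}$, the morphism $\rho$ is projective with connected fibers, and the natural open embedding $\widetilde{\OO} \hookrightarrow Y$ (identifying $\widetilde{\OO}$ with $\widetilde{\mu}^{-1}(\OO)$, as in Section \ref{subsec:induction}) shows that $\rho$ restricts to an isomorphism over $\OO$; hence $\rho$ is birational and $\Gamma(Y,\cO_Y) = \CC[\widetilde{\OO}]$, so $X = \Spec(\CC[\widetilde{\OO}])$. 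Thus all three defining conditions for a $\QQ$-factorial terminalization hold.

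For part (ii), I would start with an arbitrary $\QQ$-factorial terminalization $\rho': Y' \to X$ and extract a Bind-presentation of $\widetilde{\OO}$. By Namikawa's theorem, all $\QQ$-factorial terminalizations of $X$ are connected by Mukai flops and share the same invariants $(\fP^X, W^X)$; combined with the contracting $\CC^{\times}$-action and the Hamiltonian $G$-action on $X$, the contraction structure of $\rho'$ naturally identifies a parabolic $P = LN \subset G$, a cover $\widetilde{\OO}_L$ of a nilpotent $L$-orbit, and an isomorphism $Y' \cong G \times^P (X_L \times \fp^{\perp})$. Birational rigidity of $\widetilde{\OO}_L$ then follows by applying the intermediate statement in reverse: if $\widetilde{\OO}_L$ were not birationally rigid, then $X_L$ would itself admit a nontrivial $\QQ$-factorial terminalization, contradicting that $Y'$ is terminal. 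The main obstacle throughout is the intermediate statement in the first paragraph, translating the representation-theoretic property (no Bind from a proper Levi) into the geometric property (no further terminalization exists); this is essentially the content of \cite[Cor 4.3]{Mitya2020}, and is where the bulk of the work lies.
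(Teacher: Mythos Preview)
The paper does not prove this theorem in the text; it is stated as a citation of external results (\cite[Cor 4.3]{Mitya2020} and \cite[Lemma 7.2.4]{LMBM}), so there is no in-paper argument to compare against. Your outline is broadly on the right track and correctly locates the crux: the equivalence between birational rigidity of $\widetilde{\OO}_L$ and $X_L$ being its own $\QQ$-factorial terminalization (equivalently $\fP^{X_L}=0$). Granting that, your argument for (i) is essentially correct; the one step you glide over is $\CC[Y]=\CC[\widetilde{\OO}]$, which follows from normality of $Y$ and the fact that the complement of $\widetilde{\OO}=\widetilde{\mu}^{-1}(\OO)$ in $Y$ has codimension $\geq 2$.

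There is, however, a potential circularity in your argument for the intermediate claim. You propose to show that a nontrivial terminalization of $X_L$ yields a presentation $\widetilde{\OO}_L=\Bind^L_{L'}\widetilde{\OO}_{L'}$ from a proper Levi $L'$; but extracting a Levi and a cover from an arbitrary terminalization is exactly part (ii) of the theorem, applied to $L$ rather than $G$. This can be repaired by induction on the semisimple rank, but you should say so. Alternatively one argues directly via $\fP^{X_L}=\bigoplus_k\fP_k^{X_L}$: vanishing forces $H^2(\widetilde{\OO}_L,\CC)=0$ and the absence of codimension-2 leaves, which is the geometric criterion for birational rigidity (this is what later appears as Proposition~\ref{prop:criterionbirigidcover}, but note that in the paper's logical order that proposition is \emph{deduced from} the present theorem, so you cannot simply cite it here).

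Your treatment of (ii) is the weakest part. The phrase ``the contraction structure of $\rho'$ naturally identifies a parabolic $P$'' is doing substantial work: one must show that the $G\times\CC^\times$-action on $X$ lifts to $Y'$, that the resulting structure forces $Y'$ to fiber over some $G/P$, and that the fiber is $\Spec(\CC[\widetilde{\OO}_L])\times\fp^\perp$ for a birationally rigid cover. The actual arguments in the cited references go through Namikawa's finiteness and Mukai-flop results together with a careful analysis of the $G$-equivariant geometry; your sketch gestures at this but does not indicate the mechanism.
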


If $\fa$ is a Lie algebra, we write $\fX(\fa)$ for the vector space of Lie algebra homomorphisms $\fa \to \CC$. Let $\pi: Y \to G/P$ be the natural projection map which makes $Y$ into a fiber bundle over $G/P$. Let $\underline{\pi}$ denote the restriction of $\pi$ to $Y^{reg}$. Note that $H^2(G/P,\CC)$ is identified with $\fX(\fl \cap [\fg,\fg])$. Consider the composite map 
\begin{equation}
    \eta: \fX(\fl \cap [\fg,\fg]) \simeq H^2(G/P,\CC) \xrightarrow{\underline{\pi}^*} H^2(Y^{reg}, \CC) = \fP(\widetilde{\OO}),
\end{equation}
where $\underline{\pi}^*: H^2(G/P,\CC) \to H^2(Y^{reg}, \CC) = \fP(\widetilde{\OO})$ is the pullback map on cohomology induced by $\underline{\pi}$.

\begin{prop}[Proposition 7.2.2, \cite{LMBM}]\label{prop:Namikawacenter}
The map
$\eta: \fX(\fl \cap [\fg,\fg]) \xrightarrow{\sim} \fP(\widetilde{\OO})$
is an isomorphism.
\end{prop}

We can also describe the spaces $H^2(\widetilde{\OO},\CC)$ and $\fP_k(\widetilde{\OO}) := \fP_k^X$ in terms of Lie-theoretic data. We begin by describing $H^2(\widetilde{\OO},\CC)$. Assume for simplicity that $G$ is simply connected and semisimple. Let $R$ denote the reductive part of the stabilizer of $e \in \OO$ and let $\mathfrak{r}$ be its Lie algebra. We note that $\fr$ does not depend on the choice of $e$ in $\OO$, and the adjoint action of $R$ on $\fz(\fr)$ factors through $R/R^{\circ} \simeq \pi_1(\widetilde{\OO})$. 

\begin{lemma}[Lemma 7.2.7, \cite{LMBM}]\label{lem:computeH2}
There is a natural identification
$$H^2(\widetilde{\OO},\CC) \simeq \fz(\mathfrak{r})^{\pi_1(\widetilde{\OO})}$$    
\end{lemma}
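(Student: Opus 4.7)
The plan is to reduce the topological computation to a reductive homogeneous space, extract $H^2$ from homotopy groups via Hurewicz, and then descend to the cover by taking finite-group invariants.

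Fix $e \in \OO$, a lift $\tilde e \in \widetilde{\OO}$, and a Jacobson--Morozov $\mathfrak{sl}(2)$-triple $(e,f,h)$. By Kostant's theorem the stabilizer $G^{\tilde e}$ admits a Levi decomposition $G^{\tilde e}=R\ltimes U$ with $R$ the reductive part (in the sense used in the lemma statement, so that $R/R^\circ\simeq\pi_1(\widetilde{\OO})$) and $U$ connected unipotent. Write $\Gamma:=R/R^\circ$. The fibration $U\to G/R\to \widetilde{\OO}$ has contractible fiber, hence is a homotopy equivalence, and $G/R^\circ\to G/R$ is a finite Galois \'{e}tale covering with deck group $\Gamma$. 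Thus by transfer with $\CC$-coefficients,
\[ H^2(\widetilde{\OO},\CC)\;\simeq\; H^2(G/R,\CC)\;\simeq\; H^2(G/R^\circ,\CC)^{\Gamma}. \]

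Next, compute $H^2(G/R^\circ,\CC)$ from the fibration $R^\circ\to G\to G/R^\circ$. Since $G$ is simply connected semisimple we have $\pi_1(G)=0$, and $\pi_2$ of any Lie group vanishes, so the long exact sequence yields $\pi_1(G/R^\circ)=0$ and a natural isomorphism $\pi_2(G/R^\circ)\simeq\pi_1(R^\circ)$. Hurewicz then gives $H_2(G/R^\circ,\ZZ)\simeq\pi_1(R^\circ)$. Because $[R^\circ,R^\circ]$ is semisimple its fundamental group is finite and dies after tensoring with $\CC$; projection onto the central torus $Z(R^\circ)^\circ$ composed with its exponential identifies $\pi_1(R^\circ)\otimes_\ZZ\CC\simeq\fz(\fr)$ canonically. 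Dualizing (or invoking the $R$-invariant form on $\fz(\fr)$ induced by the Killing form of $\fg$) produces a canonical isomorphism $H^2(G/R^\circ,\CC)\simeq\fz(\fr)$.

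Finally, the $\Gamma$-action on $G/R^\circ$ by right translation commutes with left $G$-translation; it acts on $H_2$ through its action on $\pi_1(R^\circ)$, and tracing the identifications above shows that this coincides with the adjoint action of $R$ on $\fz(\fr)$, which factors through $\Gamma$. Taking invariants gives $H^2(\widetilde{\OO},\CC)\simeq\fz(\fr)^{\Gamma}=\fz(\fr)^{\pi_1(\widetilde{\OO})}$. The main obstacle is verifying that the chain of identifications (transfer, the connecting homomorphism in the homotopy long exact sequence, Hurewicz, the torus exponential, Killing-form dualization) is genuinely $\Gamma$-equivariant, so that the resulting isomorphism is the natural one claimed rather than a twisted version of it. The simple-connectedness of $G$ is used only to guarantee $\pi_1(G)=0$ in the homotopy computation.
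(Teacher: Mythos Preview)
Your argument is correct and complete. The paper does not supply its own proof of this lemma; it simply cites \cite[Lemma 7.2.7]{LMBM}, so there is no in-paper argument to compare against.

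One notational wrinkle worth flagging: in the paper's setup, $R$ is the reductive part of the stabilizer of $e\in\OO$ (so that, for simply connected $G$, one has $R/R^\circ\simeq\pi_1(\OO)$, with $\pi_1(\widetilde{\OO})$ sitting inside as a subgroup). You instead work with the reductive part of the stabilizer of the lift $\tilde e\in\widetilde{\OO}$, whose component group is $\pi_1(\widetilde{\OO})$ directly. Both groups share the same identity component and Lie algebra, so $\fz(\fr)$ is unchanged and your proof goes through without modification; only your parenthetical ``in the sense used in the lemma statement'' does not quite match the paper's actual definition of $R$.
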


\begin{rmk}\label{rmk:H2}
If $\widetilde{\OO} = \widehat{\OO}$ is the \emph{universal} cover of $\OO$, then $H^2(\widehat{\OO},\CC) \simeq \fz(\mathfrak{r})$ by Lemma \ref{lem:computeH2}. In particular,  $H^2(\widehat{\OO},\CC)=0$ if and only if $\mathfrak{r}$ is semisimple. On the other hand, if $\widetilde{\OO} = \OO$, then $H^2(\widetilde{\OO},\CC) \simeq \fz(\mathfrak{r})^{\pi_1(\OO)}$ was computed in every case by Biswas and Chatterjee in \cite{BISWAS}. 
\end{rmk}

We proceed to describing the partial Namikawa spaces $\fP_k(\widetilde{\OO})$ for $k \geq 1$. Assume that $H^2(\widetilde{\OO},\CC)=0$. Suppose $Q \subset G$ is a parabolic subgroup with Levi factor $M$ and $\widetilde{\OO}_M \in \Cov(M)$ satisfies $\widetilde{\OO}=\mathrm{Bind}^G_M \widetilde{\OO}_M$. The triple $(Q,M,\widetilde{\OO}_M)$ gives rise to a projective birational morphism 
\begin{equation}\label{eq:partialresolution}
\rho;: G \times^Q (\Spec(\CC[\widetilde{\OO}_{M}]) \times \fq^{\perp}) \to \Spec(\CC[\widetilde{\OO}])\end{equation}
\begin{prop}[Prop 7.5.6, \cite{LMBM}]\label{prop:adaptedresolutiondatum}
For each codimension 2 leaf $\fL_k \subset \widetilde{X}$, there is a unique pair $(M_k,\widetilde{\OO}_{M_k})$ consisting of a Levi subgroup $M_k \subset G$ and a nilpotent cover $\widetilde{\OO}_{M_k} \in \Cov(M_k)$ such that
\begin{itemize}
    \item[(i)] $L \subset M_k$.
    \item[(ii)] $\widetilde{\OO} = \mathrm{Bind}^G_{M_k} \widetilde{\OO}_{M_k}$.
    \item[(iii)] For any parabolic $Q \subset G$ with Levi factor $M_k$, the morphism (\ref{eq:partialresolution}) resolves $\Sigma_k$ and preserves $\Sigma_j$ for $j\neq k$.
\end{itemize}
\end{prop}

The pair $(M_k,\widetilde{\OO}_{M_k})$ appearing in Proposition \ref{prop:adaptedresolutiondatum} is called the $\fL_k$-\emph{adapted resolution datum.} 

\begin{prop}[Proposition 3.6.4, \cite{MBMat}]
Let $\fL_k \subset \Spec(\CC[\widetilde{\OO}])$ be a codimension 2 leaf and let $(M_k, \widetilde{\OO}_{M_k})$ be the $\fL_k$-adapted resolution datum. Then the isomorphism $\fP \simeq \fX(\fl \cap [\fg,\fg])$ of Proposition \ref{prop:Namikawacenter} restricts to an isomorphism $\fP_k \simeq \fX(\fm_k \cap [\fg,\fg])$.
\end{prop}

Comparing Propositions \ref{prop:Namikawacenter} and \ref{prop:partialdecomp}, we arrive at the following (purely geometric) criterion for birational rigidity.

\begin{prop}[Proposition 3.7.1, \cite{MBMat}]\label{prop:criterionbirigidcover}
Let $\widetilde{\OO} \in \Cov(G)$. Then $\widetilde{\OO}$ is birationally rigid if and only if the following conditions hold:
\begin{itemize}
    \item[(i)]  $H^2(\widetilde{\OO},\CC)=0$.
    \item[(ii)] $\Spec(\CC[\widetilde{\OO}])$ has no codimension 2 leaves.
\end{itemize}
\end{prop}

We conclude by sketching a simple criterion for birational induction. For any $\OO \in \Orb(G)$, let $\mathcal{P}_{rig}(\OO)$ denote the set of $G$-conjugacy classes of pairs $(L,\OO_L)$ consisting of a Levi subgroup $L \subset G$ and a rigid nilpotent orbit $\OO_L \in \Orb(L)$ such that $\OO=\Ind^G_L \OO_L$. This set can be computed in classical types using the results \cite[Section 7]{CM} and in exceptional types using the tables in \cite[Section 4]{deGraafElashvili}. Consider the integer
\begin{equation}\label{eq:m(O)}
m(\OO) = \max \{\dim \fz(\fl) \mid (L,\OO_L) \in \mathcal{P}_{rig}(\OO)\}.\end{equation}

\begin{lemma}\label{lem:bindcriterion}
Suppose $m(\OO) = \dim \fP$ and there is a unique pair $(L,\OO_L) \in \mathcal{P}_{rig}(\OO)$ such that $\dim(\fz(\fl)) = m(\OO)$. Then  $\OO=\Bind^G_L \OO_L$. 
\end{lemma}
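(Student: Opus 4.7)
The plan is to apply Proposition \ref{prop:bindinjective} to the trivial cover $\OO \in \Cov(G)$: this produces a unique (up to $G$-conjugacy) pair $(L', \widetilde{\OO}_{L'}) \in \Cov_0(G)$ with $\Bind^G_{L'} \widetilde{\OO}_{L'} = \OO$. The strategy is then to identify $(L', \widetilde{\OO}_{L'})$ with $(L, \OO_L)$, from which $\OO = \Bind^G_L \OO_L$ follows at once.

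First I would observe that $\widetilde{\OO}_{L'} = \OO_{L'}$ (trivial cover). The underlying orbit of $\Bind^G_{L'} \widetilde{\OO}_{L'}$ is $\Ind^G_{L'} \OO_{L'}$, and by hypothesis this equals $\OO$; since the cover $\Bind^G_{L'} \widetilde{\OO}_{L'} \to \Ind^G_{L'} \OO_{L'}$ is then $\OO \to \OO$, of degree $1$, Proposition \ref{prop:propsofbind}(iv) forces $\deg(\widetilde{\OO}_{L'} \to \OO_{L'}) \mid 1$.

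Next I would verify that $\OO_{L'}$ is rigid in $L'$. Suppose not: then $\OO_{L'} = \Ind^{L'}_{L''} \OO_{L''}$ for some proper Levi $L'' \subsetneq L'$ with $\OO_{L''}$ rigid in $L''$. Transitivity of Lusztig-Spaltenstein induction gives $\OO = \Ind^G_{L''} \OO_{L''}$, so $(L'', \OO_{L''}) \in \mathcal{P}_{rig}(\OO)$, and $L'' \subsetneq L'$ forces $\dim \fz(\fl'') > \dim \fz(\fl')$. On the other hand, Proposition \ref{prop:Namikawacenter} identifies $\dim \fP$ with $\dim \fX(\fl' \cap [\fg, \fg]) = \dim \fz(\fl')$ (reducing to the case $G$ semisimple, which is harmless for a statement about nilpotent orbits). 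Combined with $m(\OO) = \dim \fP$, this yields $\dim \fz(\fl'') > m(\OO)$, contradicting the definition of $m(\OO)$. Hence $(L', \OO_{L'}) \in \mathcal{P}_{rig}(\OO)$ with $\dim \fz(\fl') = m(\OO)$, so by the uniqueness hypothesis $(L', \OO_{L'})$ is $G$-conjugate to $(L, \OO_L)$, completing the argument.

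The main obstacle is the rigidity step. The output of Proposition \ref{prop:bindinjective} is only guaranteed to be birationally rigid as a cover, which is a priori weaker than rigidity of the underlying orbit. The numerical hypothesis $m(\OO) = \dim \fP$ is exactly what bridges this gap, by preventing the center of a hypothetical intermediate Levi from exceeding $\fz(\fl')$.
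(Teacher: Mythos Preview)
Your proof is correct and follows essentially the same approach as the paper: both obtain a birationally rigid inducing datum via Proposition~\ref{prop:bindinjective}, compute $\dim \fz(\fl')$ via Proposition~\ref{prop:Namikawacenter}, and rule out non-rigidity by producing a smaller Levi that would violate the maximality defining $m(\OO)$. You are in fact slightly more careful than the paper, which simply writes ``Suppose $\OO$ is birationally induced from a birationally rigid orbit $\OO_M$'' without justifying that the birationally rigid cover produced by Proposition~\ref{prop:bindinjective} is trivial --- your degree-divisibility argument via Proposition~\ref{prop:propsofbind}(iv) fills this small gap.
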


\begin{proof}
Suppose $\OO$ is birationally induced from a birationally rigid orbit $\OO_M \in \Orb(M)$ for a Levi subgroup $M \subset G$. By Proposition \ref{prop:Namikawacenter}, $\dim(\fz(\fm)) = m(\OO)$. Suppose $\OO_M$ is not rigid. Then there is a proper Levi subgroup $K \subset M$ and a rigid orbit $\OO_{K} \in \Orb(K)$ such that $\OO=\Ind^G_K \OO_K$. So $(K,\OO_K) \in \mathcal{P}_{rig}(\OO)$ and $\dim (\fz(\mathfrak{k})) > \dim (\fz(\fm)) = m(\OO)$, a contradiction. Hence, $(M,\OO_M) \in \mathcal{P}_{rig}(\OO)$ and so $G$-conjugate to $(L,\OO_L)$. 
\end{proof}

\subsection{Universal Poisson deformations of conical symplectic singularities}\label{subsec:universal_deform_symplectic}

Let $X$ be an affine symplectic singularity. Let $X^\diamond \subset X$ denote the complement in $X$ to the union of all symplectic leaves of codimension $\geq 4$. Then 
$$X^\diamond = X^{reg} \sqcup \bigsqcup_{k=1}^t \mathfrak{L}_k,$$
where $\mathcal{L}_k$ are the codimension 2 leaves in $X$ as in Section \ref{sec:Namikawa}. As observed by Namikawa \cite[p. 52]{Namikawa2011}, the variety $X^\diamond$ admits a unique $\QQ$-factorial terminalization $\pi: \widetilde{X}^\diamond \to X^\diamond$ (up to isomorphism), and $\widetilde{X}^\diamond$ is smooth. Now we take a $\QQ$-factorial terminalization $\rho: Y \to X$ of $X$. Set $Y^\diamond = \rho^{-1}(X^\diamond)$. Then $Y^\diamond$ lies in the regular locus $Y^{reg}$ of $Y$ and the restriction 
$\rho|_{Y^\diamond}: Y^\diamond \to X^\diamond$ is also a $\QQ$-factorial terminalization of $X^\diamond$. Hence there is a unique isomorphism between $Y^\diamond$ and $\widetilde{X}^\diamond$ as varieties over $X^\diamond$. By (2.4) in the proof of \cite[Proposition 2.14]{Losev4}, we have $\operatorname{codim}_{Y} Y \backslash Y^\diamond \geqslant 2$ and so $\operatorname{codim}_{Y^{reg}} Y^{reg} \backslash Y^\diamond \geqslant 2$. This implies that the restriction map $\fP = H^2(Y^{reg}, \CC) \to H^2(Y^\diamond, \CC) \simeq H^2(\widetilde{X}^\diamond, \CC)$ is an isomorphism by a standard argument involving a long exact sequence of cohomology groups (see e.g. the proof of \cite[Theorem 5.1(i)]{Namikawa}). 

By the proof of \cite[Theorems 5.1, 5.2]{Namikawa}, the inclusions $X^\diamond \subset X$ and $Y^\diamond \subset Y$ induce natural transformations $\PD_{X} \to \PD_{X^\diamond}$ and $\PD_{Y} \to \PD_{Y^\diamond}$. The birational maps $\pi$, $\rho$ and $\rho|_{Y^\diamond}$ induce natural transformations 
  $$\pi_*: \PD_{\widetilde{X}^\diamond} \to \PD_{X^\diamond}, \quad \rho_*: \PD_Y \to \PD_X \quad \text{and} \quad (\rho|_{X^\diamond})_*: \PD_{Y^\diamond} \to \PD_{X^\diamond}.$$
These natural transformations fit into the following commutative diagram:

\begin{equation}\label{eq:PD}
  \begin{tikzcd}
    \PD_Y \ar[r] \ar[d, twoheadrightarrow, "\rho_*"] & \PD_{Y^\diamond} \ar[r, "\sim"] \ar[d,twoheadrightarrow, "(\rho|_{X^\diamond})_*"] & \PD_{\widetilde{X}^\diamond} \ar[d, twoheadrightarrow, "\pi_*"] \\
    \PD_X \ar[r]  & \PD_{X^\diamond} \ar[r, equal]  & \PD_{X^\diamond} 
  \end{tikzcd}
\end{equation}

\begin{theorem}[Theorems 5.1, 5.2, \cite{Namikawa}]\label{thm:formal_deformation}
    The following are true:
    \begin{enumerate}[label=(\roman*)]
        \item 
        The horizontal transformations $\PD_{X} \to \PD_{X^\diamond}$ and $\PD_{Y} \to \PD_{Y^\diamond}$ in \eqref{eq:PD} are isomorphisms.
        \item
        We have $\PD_Y(\CC\{\epsilon\}) \simeq \PD_{Y^\diamond}(\CC\{\epsilon\}) \simeq \PD_{\widetilde{X}^\diamond} (\CC\{\epsilon\}) \simeq \fP$.
        \item
        All deformation functors in \eqref{eq:PD} are unobstructed and pro-representable. $\PD_Y \simeq \PD_{Y^\diamond}  \simeq \PD_{\widetilde{X}^\diamond}$ are pro-represented by $\CC[\fP]^\wedge = \CC[\fP^\wedge]$, the completion of $\CC[\fP]$ at the maximal ideal corresponding to $0 \in \fP$.
    \end{enumerate}
\end{theorem}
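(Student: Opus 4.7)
The plan is to reduce everything to the smooth symplectic variety $\widetilde{X}^\diamond$, where classical Hodge-theoretic tools apply, and then transport conclusions back through the diagram \eqref{eq:PD}. In broad strokes: (i) will be handled by Hartogs-type extension of Poisson structures across high-codimension loci; (ii) by the standard period-map computation for smooth symplectic manifolds; and (iii) by a $T^1$-lifting unobstructedness theorem on $\widetilde{X}^\diamond$ combined with Schlessinger's criteria. The last step is the main obstacle.

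For (i), the key input is the codimension bound: $\operatorname{codim}_X(X\setminus X^\diamond)\geq 4$ and $\operatorname{codim}_Y(Y\setminus Y^\diamond)\geq 2$ (the latter is the inequality already cited from \cite{Losev4}). Given any $\sX^\diamond_R \in \PD_{X^\diamond}(R)$, I would form $\sX_R$ on $X$ by pushing the structure sheaf forward; normality of $X$ together with Hartogs' theorem make the pushforward a coherent flat $R$-algebra, and the Poisson bracket extends uniquely across the codim-$\geq 2$ complement by continuity. This yields a quasi-inverse to restriction, and the same argument applies to $Y \supset Y^\diamond$ using the $\QQ$-factorial terminal property of $Y$.

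For (ii), I would observe that $Y^\diamond \subset Y^{reg}$ is smooth and that $Y^\diamond \simeq \widetilde{X}^\diamond$ canonically. On any smooth holomorphic symplectic variety $V$, the symplectic form identifies $T_V \simeq \Omega^1_V$, so Poisson deformations are governed by the truncated de Rham complex; the standard computation then gives $\PD_V(\CC\{\epsilon\}) \simeq H^2(V,\CC)$. Specializing to $V = \widetilde{X}^\diamond$ and combining with the restriction isomorphism $\fP = H^2(Y^{reg},\CC) \xrightarrow{\sim} H^2(Y^\diamond,\CC)$, which holds because $Y^{reg}\setminus Y^\diamond$ has codimension $\geq 2$, yields $\PD_{Y^\diamond}(\CC\{\epsilon\}) \simeq \fP$, and (i) transports the same answer to $\PD_Y$.

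The real work is (iii). Pro-representability, assuming unobstructedness, is a routine verification of Schlessinger's criteria: the fibre-product condition $\PD(R_1\times_R R_2)\xrightarrow{\sim}\PD(R_1)\times_{\PD(R)}\PD(R_2)$ follows from flatness combined with the uniqueness in part (i). The hard step is unobstructedness on $\widetilde{X}^\diamond$: every first-order Poisson deformation must lift to a formal deformation over $\CC[\fP]^\wedge$. Here I would run a $T^1$-lifting argument in the spirit of Namikawa, which is a Poisson analogue of the Kawamata--Ran proof of Bogomolov--Tian--Todorov for Calabi--Yau manifolds. The essential input is that the differential graded Lie algebra controlling Poisson deformations of a smooth symplectic variety is formal (equivalently, the inductive obstruction classes in $H^3$ vanish), and this rests on non-degeneracy of the symplectic form together with a Hodge-to-de Rham degeneration statement; the non-compactness of $\widetilde{X}^\diamond$ is the technical complication that has to be absorbed by carefully controlling the formal-scheme setting. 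Once unobstructedness is established on $\widetilde{X}^\diamond$, the miniversal hull is forced to be the completed symmetric algebra on the tangent space, i.e.\ $\CC[\fP]^\wedge$; the diagram \eqref{eq:PD} then carries pro-representability back to $\PD_{Y^\diamond}$, $\PD_Y$, $\PD_{X^\diamond}$, and finally $\PD_X$.
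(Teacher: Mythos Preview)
The paper does not prove this theorem at all: it is stated with the attribution ``[Theorems 5.1, 5.2, \cite{Namikawa}]'' and is quoted as a black-box input from Namikawa's work, with no proof or proof sketch given in the paper. So there is nothing in the paper to compare your proposal against.

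That said, your outline is broadly in the spirit of Namikawa's actual argument (Hartogs extension for (i), period-map identification of the tangent space for (ii), and a $T^1$-lifting/Bogomolov--Tian--Todorov style unobstructedness argument for (iii)). The one place where your sketch is vague in a way that matters is the unobstructedness step: the $T^1$-lifting argument for $\widetilde{X}^\diamond$ genuinely requires a mixed-Hodge-theoretic degeneration result in the non-compact setting, and ``absorbing the non-compactness by carefully controlling the formal-scheme setting'' is exactly where the work lies in Namikawa's paper. If you intend to flesh this out rather than cite it, that is the step that needs a precise statement and reference.
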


By Theorem \ref{thm:formal_deformation}, each Poisson variety appearing in \eqref{eq:PD} admits a universal formal Poisson deformation. Now assume that $X$ is a conical symplectic singularity. Then by \cite[Lemma 20]{Namikawa3}, the $\cm$-action on $X$ induces $\cm$-actions on the universal formal Poisson deformations of the Poisson varieties in \eqref{eq:PD}. Using these $\CC^{\times}$-actions, Namikawa shows that the universal formal Poisson deformations of $X$ and $Y$ can be algebraized to graded Poisson deformations (\cite[Lemma 22]{Namikawa3}). And Losev shows that these graded Poisson deformations are \emph{universal} in the following sense (\cite[Section 2.2-2.4]{Losev4}).

\begin{definition}\label{defn:universal_graded_deform}
    Let $X$ be a graded Poisson variety. A \emph{universal graded Poisson deformation of $X$} is a graded Poisson deformation $\sX_{univ}$ of $X$ over some conical affine scheme $S$ satisfying the following universal property: let $\sX_B$ be any graded Poisson deformation of $X$ over a conical affine scheme $B$. Then there is a unique $\cm$-equivariant morphism $B \to S$ and a (not necessarily unique) isomorphism of graded Poisson deformations $\sX_B \xrightarrow{\sim} \sX_{univ} \times_S B$ over $B$.
\end{definition}

The following results are proved in \cite{Namikawa, Namikawa2, Namikawa3} and \cite{Losev4}. We use the formulation in \cite[Section 4.7]{LMBM}.

\begin{theorem} \label{thm:universal_deform}
    Let $X$ be a conical symplectic singularity with Poisson bracket of degree $d>0$. Then there exist universal graded Poisson deformations $\sX_{univ} \to \fP/W$ and $\sY_{univ} \to \fP$ of $X$ and $Y$, respectively. There exists a surjective projective Poisson morphism $\tilde{\rho}: \sY_{univ} \to \sX_{univ}$ such that:
    \begin{enumerate}[label=(\roman*)]		
	\item 
        The following diagram commutes and all maps are $\cm$-equivariant:
		\begin{center}
			\begin{tikzcd}
				\sY_{univ} \ar[r, twoheadrightarrow] \ar[d, twoheadrightarrow, "\tilde{\rho}"] & \fP \ar[d, twoheadrightarrow] \\
				\sX_{univ} \ar[r, twoheadrightarrow] & \fP/W
			\end{tikzcd}
		\end{center}
        Here, $\fP \to \fP/W$ is the quotient map and $\cm$ acts linearly on $\fP$ by $t.v \mapsto t^d v$, i.e., all vectors in $\fP$ are eigenvectors of weight $-d$. The induced sequence of morphisms $\sY_{univ} \to \sX_{univ} \times_{\fP/W} \fP \to \sX_{univ}$ is the Stein factorization of $\tilde{\rho}$.
        \item
        The algebra $\CC[\sY_{univ}]$ carries an action of $W$ by graded Poisson algebra automorphisms, which makes the map $\CC[\fP] \to \CC[\sY_{univ}]$ $W$-equivariant. The induced action on $\CC[Y]$ is trivial.
        \item 
        The map $\tilde{\rho}$ induces an isomorphism $\CC[\sX_{univ}] \xrightarrow{\sim} \CC[\sY_{univ}]^W$.
\end{enumerate}
\end{theorem}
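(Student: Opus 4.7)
The plan is to deduce the theorem from Theorem \ref{thm:formal_deformation} in three moves: algebraize the universal formal Poisson deformation of $Y$ via the contracting $\cm$-action to obtain $\sY_{univ}$; lift the Namikawa Weyl group action from $\fP$ to $\sY_{univ}$; and then take the invariant quotient to produce $\sX_{univ}$.

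First, by Theorem \ref{thm:formal_deformation}(iii) there is a universal formal Poisson deformation $\widehat{\sY} \to \mathrm{Spf}\,\CC[\fP]^{\wedge}$ of $Y$. The $\cm$-action on $Y$ lifts uniquely to $\widehat{\sY}$: for each $t \in \cm$, the pulled-back family $t^{*}\widehat{\sY}$ is again a formal Poisson deformation of $Y$ and is therefore classified by a unique automorphism of the base, which a weight computation on the tangent space $\fP = \PD_{Y}(\CC\{\epsilon\})$ identifies with the linear scaling $v \mapsto t^{-d}v$. Since this action contracts $\fP$ to $0$, the structure sheaf of $\widehat{\sY}$ decomposes into positive-weight eigenspaces and algebraizes to a graded Poisson scheme $\sY_{univ} \to \fP$ (\cite[Lemma 22]{Namikawa3}). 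Universality of $\sY_{univ}$ for graded Poisson deformations is then formal: any such deformation $\sY_{B}$ over a conical $B$ yields, on completion at $0 \in B$, a formal deformation classified by a unique morphism $B^{\wedge} \to \fP^{\wedge}$; the contracting $\cm$-actions on $B$ and $\fP$ force this to descend to a unique $\cm$-equivariant morphism $B \to \fP$ realizing $\sY_{B}$ as a pullback (\cite[Section 2]{Losev4}).

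Next I would construct the $W$-action on $\sY_{univ}$. The Namikawa Weyl group $W = \prod_{k} W_{k}^{\pi_{1}(\fL_{k})}$ acts on $\fP = \bigoplus_{k} \fP_{k}$ by its natural reflection representation; this action arises geometrically from the simultaneous resolution of the transverse Kleinian singularity at each codimension-$2$ leaf $\fL_{k} \subset X$, where the formal universal deformation of $\widetilde{X}^{\diamond}$ is locally modelled on the Grothendieck--Brieskorn resolution and the monodromy of $\fL_{k}$ cuts $W_{k}$ down to $W_{k}^{\pi_{1}(\fL_{k})}$. For each $w \in W$, applying universality of $\sY_{univ}$ to the pullback $w^{*}\sY_{univ}$ produces a candidate $\cm$-equivariant Poisson automorphism of $\sY_{univ}$ lifting $w$, and Namikawa's local analysis verifies that these assemble into a genuine $W$-action by graded Poisson automorphisms. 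Because $0 \in \fP$ is $W$-fixed and the identification of the special fiber with $Y$ is canonical, the induced action on $\CC[Y]$ is trivial.

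Finally, set $\sX_{univ} := \Spec \CC[\sY_{univ}]^{W}$ and take $\tilde\rho$ to be the canonical morphism. Property (iii) is then the definition and (ii) is immediate. The special fiber of $\sX_{univ} \to \fP/W$ is $\Spec(\CC[Y]^{W}) = \Spec \CC[Y] = X$ --- the first equality by triviality of the $W$-action on $\CC[Y]$, the second because $\rho : Y \to X$ is projective birational with $X$ normal affine --- so $\sX_{univ}$ is a graded Poisson deformation of $X$. For (i), the map $\sY_{univ} \to \sX_{univ} \times_{\fP/W} \fP$ has connected fibers (each is a fiber $Y_{p}$ of the flat family $\sY_{univ} \to \fP$, connected because $Y$ is), while $\sX_{univ} \times_{\fP/W} \fP \to \sX_{univ}$ is finite, yielding the Stein factorization; universality of $\sX_{univ}$ follows by the same algebraization argument as for $\sY_{univ}$, using that $\PD_{X}$ is pro-represented by $\CC[\fP/W]^{\wedge}$ (via \eqref{eq:PD} and Theorem \ref{thm:formal_deformation}). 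The main obstacle in this plan is Step 2: universality produces candidate $W$-automorphisms of $\sY_{univ}$, but verifying the cocycle condition genuinely requires Namikawa's local description of $\sY_{univ}$ near each $\fL_{k}$ in terms of simultaneous resolutions --- this is the deepest technical input.
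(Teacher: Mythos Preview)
The paper does not prove this theorem: it is stated as a known result, attributed to \cite{Namikawa, Namikawa2, Namikawa3} and \cite{Losev4}, with the formulation taken from \cite[Section 4.7]{LMBM}. Your sketch is a reasonable summary of the strategy in those references (algebraize via the contracting $\cm$-action, then identify the $W$-action), so there is nothing to compare against in the present paper.

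One point worth flagging in your sketch: defining $\sX_{univ} := \Spec \CC[\sY_{univ}]^{W}$ and then asserting its universality is not quite how the cited literature proceeds. Namikawa constructs $\sX_{univ}$ independently by algebraizing the universal formal deformation of $X$ (pro-represented by $\CC[\fP/W]^{\wedge}$), and only afterward identifies it with the $W$-invariants of $\CC[\sY_{univ}]$; the isomorphism (iii) is a theorem, not a definition. Your shortcut would require checking that $\Spec \CC[\sY_{univ}]^{W} \to \fP/W$ is flat with special fiber $X$, which is not entirely obvious (taking $W$-invariants of a flat family over $\fP$ need not yield a flat family over $\fP/W$ without further input). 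Also, your description of the Stein factorization is off: the fibers of $\sY_{univ} \to \sX_{univ} \times_{\fP/W} \fP$ are not the $Y_{p}$ themselves but rather the fibers of the partial resolution $Y_{p} \to X_{p}$, and connectedness there is what comes from Zariski's main theorem.
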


Set $\sX_\fP := \sX_{univ} \times_{\fP/W} \fP$. Then $\sX_\fP$ is a Poisson scheme over $\fP$ and (i) of Theorem \ref{thm:universal_deform} says that $\tilde{\rho}$ induces a morphism $\sY_{univ} \to \sX_\fP$ of Poisson schemes over $\fP$.

For any $\lambda \in \fP$, let $X_\lambda$ (resp. $Y_\lambda$) denote the fiber of the universal deformation $\sX_{univ}$ (resp. $\sY_{univ}$) over $W\lambda$ (resp. $\lambda$). By Theorem \ref{thm:universal_deform}, the morphism $\tilde{\rho}: \sY_{univ} \to \sX_{univ}$ restricts to a projective birational surjective morphism $Y_\lambda \to X_\lambda$ with connected fibers, which induces an isomorphism $\CC[X_\lambda] \xrightarrow{\sim} \CC[Y_\lambda]$ of Poisson algebras. Let $\fP^{reg} \subset \fP$ denote the subset consisting of all $\lambda \in \fP$ for which the morphism $Y_\lambda \to X_\lambda$ is an isomorphism (or equivalently, $Y_\lambda$ is affine) and let $\fP^{sing} := \fP \backslash \fP^{reg}$.

\begin{theorem}[{\cite[Main Theorem, (i)]{Namikawa_bir}}] \label{thm:fP^reg}
    The subset $\fP^{sing} \subset \fP$ is a $W$-stable union of finitely many rational hyperplanes in $\fP$, including the walls corresponding to the $W$-action. In particular, $\fP^{reg}$ is a Zariski-dense open subset of $\fP$.
\end{theorem}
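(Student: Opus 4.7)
My plan is to reduce the statement to a local analysis near each codimension 2 symplectic leaf $\fL_k \subset X$ and then assemble the contributions using the decomposition $\fP \simeq \fP_0 \oplus \bigoplus_{k=1}^t \fP_k$ of Proposition \ref{prop:partialdecomp}.

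First I would establish that $\fP^{reg}$ is non-empty and that $\fP^{sing}$ is closed. Non-emptiness is the assertion that for generic $\lambda$ the Kleinian singularities of the formal slices to the codimension 2 leaves are smoothed, so that $X_\lambda$ is smooth; since a smooth variety is its own $\QQ$-factorial terminalization, the projective birational morphism $Y_\lambda \to X_\lambda$ must then be an isomorphism (using Theorem \ref{thm:universal_deform}(iii) together with Zariski's Main Theorem). Closedness of $\fP^{sing}$ reduces to upper semi-continuity of the dimension of the exceptional locus of $\tilde{\rho}: \sY_{univ} \to \sX_{univ}$.

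Next comes the local analysis. Around $\fL_k$, the formal slice in $X$ is the Kleinian singularity $\CC^2/\Gamma_k$, whose universal graded Poisson deformation, by Brieskorn--Grothendieck--Slodowy theory, is parameterized by $\fh_k/W_k$ and admits a simultaneous partial resolution over $\fh_k$; the fibers over $\fh_k$ are smooth precisely away from the union of the reflection hyperplanes of $W_k$. Taking $\pi_1(\fL_k)$-invariants restricts this picture to the subspace $\fP_k = (\fh_k^*)^{\pi_1(\fL_k)}$ with Weyl group $W_k^{\pi_1(\fL_k)}$, whose reflection hyperplanes are rational (they come from integral root data of $\fg_k$). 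Using the universal property of $\sY_{univ}$ (Definition \ref{defn:universal_graded_deform}, Theorem \ref{thm:universal_deform}), the completion of $\sY_{univ}$ along the preimage of $\fL_k$ is identified with the base change of the partial $W_k^{\pi_1(\fL_k)}$-equivariant Brieskorn--Slodowy family along the projection $\fP \twoheadrightarrow \fP_k$. Consequently, $Y_\lambda$ has singularities above $\fL_k$ precisely when $\lambda$ projects into a reflection hyperplane of $W_k^{\pi_1(\fL_k)}$, and these pull back to rational hyperplanes in $\fP$.

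To assemble globally, I would observe that any singular point of $Y_\lambda$ must lie over a codimension 2 leaf of $X_\lambda$, hence above some $\fL_k \subset X$. Combining with the local description exhibits $\fP^{sing}$ as the finite union, taken over $k=1,\dots,t$, of the rational hyperplane arrangements pulled back from the reflection hyperplanes of $W_k^{\pi_1(\fL_k)}$ in $\fP_k$. Since $W = \prod_k W_k^{\pi_1(\fL_k)}$ by definition, this arrangement is $W$-stable and contains every reflection hyperplane of $W$ on $\fP$; Zariski-density of $\fP^{reg}$ is then immediate. The main obstacle is the local-to-global identification of $\sY_{univ}$ near $\fL_k$ with the pullback of the Brieskorn--Slodowy family: one must match the universal Poisson deformation of the formal slice (with its $\pi_1(\fL_k)$-equivariance) to the restriction of the globally defined family $\sY_{univ}$, which requires careful bookkeeping with the period map and constitutes the substantive content of the main theorem of \cite{Namikawa_bir}.
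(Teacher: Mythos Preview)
The paper does not prove this statement; it is cited from \cite{Namikawa_bir}. Your sketch, however, has a substantive gap beyond the bookkeeping you flag at the end.

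Your assembly step asserts that the failure of $Y_\lambda \to X_\lambda$ to be an isomorphism is detected entirely by the codimension-$2$ leaves $\fL_k$, and hence that $\fP^{sing}$ equals the union of the pulled-back reflection hyperplanes of the groups $W_k^{\pi_1(\fL_k)}$, i.e.\ exactly the $W$-walls. But the theorem only claims that $\fP^{sing}$ \emph{includes} the $W$-walls; in general it is strictly larger. Take $X = \overline{\OO}$ for $\OO$ the minimal nilpotent orbit in $\mathfrak{sl}_n$, $n \ge 3$. The singular locus of $X$ is the single point $\{0\}$, of codimension $2(n-1) \ge 4$, so there are no codimension-$2$ leaves and $W = 1$. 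Nonetheless $\fP = \fP_0 = H^2(\OO, \CC) \cong \CC$, the two $\QQ$-factorial terminalizations $T^*\mathbb{P}^{n-1}$ (one for each maximal parabolic) are related by a Mukai flop, and $\fP^{sing} = \{0\}$ is a hyperplane that is not a $W$-wall. Your argument would output $\fP^{sing} = \emptyset$. Relatedly, the phrase ``any singular point of $Y_\lambda$'' conflates singularities of $Y_\lambda$ with the exceptional locus of $Y_\lambda \to X_\lambda$; the latter can lie entirely over strata of codimension $\ge 4$ in $X$, as this example shows.

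The missing ingredient is relative birational geometry. In \cite{Namikawa_bir} the walls of $\fP^{sing}$ arise as the hyperplanes $[C]^\perp \subset H^2(Y^{\mathrm{reg}}, \CC)$ orthogonal to the finitely many numerical classes of extremal rational curves $C \subset Y$ contracted by $\rho$, and the resulting chambers in the movable cone are the ample cones of the finitely many $\QQ$-factorial terminalizations of $X$. Curves lying over the codimension-$2$ leaves account for the $W$-walls, but curves over deeper strata---invisible to the Brieskorn--Slodowy slice analysis and unconstrained by anything in your treatment of the factor $\fP_0$---contribute the remaining hyperplanes.
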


\subsection{Universal Poisson deformations of nilpotent covers}\label{subsec:universal_deform_nilpotent}

Continue with the notation of Section \ref{subsec:universal_deform_symplectic}. In this subsection, we will give an explicit description of $\sY_{univ}$ in the case of nilpotent covers. Let $\widetilde{\OO} \in \Cov(G)$. Choose $(L,\widetilde{\OO}_L) \in \Cov_0(G)$ such that $\Bind(L,\widetilde{\OO}_L) = \widetilde{\OO}$. Let $X = \Spec(\CC[\widetilde{\OO}])$ and $X_L = \Spec(\CC[\widetilde{\OO}_L])$. Recall that by Theorem \ref{thm:terminalization_cover}, any $\QQ$-factorial terminalization $Y$ of $X$ is of the form $Y=G \times^P(X_L \times \fp^{\perp})$, where $P\subset G$ is a parabolic subgroup with Levi decomposition $P=LN$. 

Before we continue let us introduce the general notion of an {\it induced variety}. Let $Z$ be a Poisson variety equipped with a Hamiltonian $L$-action and a moment map $\mu_L: Z \to \fl^*$.  Let $Z \times_{\fl^*} \fn^{\perp}$ be the pullback of $\mu_L: Z \to \fl^*$ along the projection $\fn^{\perp}=(\fg/\fn)^* \twoheadrightarrow (\fp/\fn)^* = \fl^*$. Let $P$ act on $Z$ via the quotient morphism $P \twoheadrightarrow P/N = L$ and on $\fn^{\perp}$ in the natural way. These actions induce a $P$-action on $Z \times_{\fl^*} \fn^{\perp}$. We define
$$ \Ind_P^G Z := G \times^P \left( Z \times_{\fl^*} \fn^{\perp} \right).$$
Note that $\Ind^G_P Z \simeq G \times^P (Z \times \fp^{\perp})$. In particular, if $Z=X_L$, then $\Ind^G_P Z$ coincides with the variety $Y =G \times^P (X_L \times \fp^{\perp})$. Finally, we note that $\Ind^G_P Z$ is naturally a graded Poisson variety with a Hamiltonian $G$-action, see \cite[Section 7.3]{LMBM} for details. 

Set
  $$ \fz := \fX(\fl \cap [\fg, \fg]), \quad \fz^\circ = \{ \chi \in \fz \,|\, G_\chi = L \}, \quad \text{and} \quad X_L^e := X_L \times \fz.$$
(To make sense of the stabilizer $G_{\chi}$ in the definition of $\fz^{\circ}$, we use the isomorphism $\fg* \simeq \fg$ induced by the Killing form to identify $\fz$ with $\fz(\fl \cap [\fg,\fg])$). The Poisson structure on $X_L$ induces a Poisson structure on $X_L^e$. The $L$-action on $X_L$ and the natural (trivial) $L$-action on $\fz$ induces a Hamiltonian $L$-action on $X_L^e$, such that the moment map $\mu_L^e: X_L^e \to \fl^*$ is the composition of the projection map $X_L^e \twoheadrightarrow X_L$ with the moment map $X_L \to \fl^*$. Hence we can define
$$\sY_\fz := \Ind_P^G X_L^e = G \times^P \left( \fz \times X_L \times \fp^\perp \right).$$
Then we have a natural $L$-equivariant projection map $X_L^e \twoheadrightarrow \fz$, which induces a projection map $\sY_\fz \twoheadrightarrow G \times^P \fz \twoheadrightarrow \fz$ so that $\sY_\fz$ is a graded Poisson scheme over $\fz$. Note that we have a Poisson isomorphism
$$ \sY_\fz \times_{\fz} \{0\} \simeq Y = G \times^P \left(X_L \times \fp^\perp \right)$$
and hence $\sY_\fz$ is a graded Poisson deformation of the $\QQ$-factorial terminalization $Y$ of $X$. 
Set $\sX_\fz := \CC[\sY_\fz]$. This is a graded affine Poisson scheme over $\fz$.
\begin{prop}[{\cite[Proposition 7.2.2]{LMBM}}] \label{prop:universal_deform_cover}
The following are true:
\begin{enumerate} [label=(\roman*)]
    \item 
       The universal graded deformation $\sY_{univ}$ of $Y$ in Theorem \ref{thm:universal_deform} can be identified with $\sY_\fz$ over $\fz$ as graded Poisson deformations of $Y$ under the isomorphism $\eta: \fz \xrightarrow{\sim} \fP$ of Proposition \ref{prop:Namikawacenter}.
    \item
       The isomorphism $\sY_\fz \simeq \sY_{univ}$ in (i) induces a graded $G$-equivariant isomorphism $\sX_\fz \simeq \sX_{univ} \times_{\fP/W} \fP$ of Poisson schemes under the isomorphism $\eta: \fz \xrightarrow{\sim} \fP$ (cf. Theorem \ref{thm:universal_deform}, (i)). 
    \item 
       Under the isomorphism $\eta: \fz \xrightarrow{\sim} \fP$, the open subset $\fP^{reg} \subset \fP$ in Theorem \ref{thm:fP^reg} corresponds to the open subset $\fz^\circ \subset \fz$.      
\end{enumerate}
\end{prop}
\begin{proof}
    (i) is \cite[Proposition 7.2.2, (iii)]{LMBM}; (ii) follows from Theorem \ref{thm:universal_deform}; (iii) is \cite[Lemma 7.2.4, (i)]{LMBM}.
\end{proof}

We can describe the base change of $\sX_\fz$ and $\sY_\fz$ along the open inclusion $\fz^\circ \hookrightarrow \fz$ explicitly as follows. Define open subschemes 
$$(X_L^e)^\circ := X_L \times \fz^\circ \subset X_L^e, \quad \sX_\fz^\circ := \sX_\fz \times_{\fz} \fz^\circ \subset \sX_\fz \quad \text{and} \quad
\sY_\fz^\circ := \sY_\fz \times_{\fz} \fz^\circ \subset \sY_\fz$$ 
Note that $(X_L^e)^\circ$ is stable under the $L$-action on $X_L^e$. 
Then 
$$\sY_\fz^\circ = G \times^P \left( (X_L^e)^\circ \times_{\fl^*} \fn^{\perp} \right) = G \times^P \left( \fz^\circ \times X_L \times \fp^{\perp} \right). $$ 
There is a canonical $G \times \CC^\times$-equivariant isomorphism
\begin{equation}\label{eq:isom_regular1}
	G \times^L (X_L^e)^\circ = G \times^P \left( N \times (X_L^e)^\circ \right) \xrightarrow{\sim} \sY_\fz^\circ = G \times^P \left( (X_L^e)^\circ \times_{\fl^*} \fn^{\perp} \right) 
\end{equation}
induced by the isomorphism
\begin{align}
	N \times (X_L^e)^\circ & \xrightarrow{\hspace*{0.25em} \sim \hspace*{0.25em}}  (X_L^e)^\circ \times_{\fl^*} \fn^{\perp}   \\
	(n, x^e) \quad & \longmapsto \, n.(x^e, \mu_L^e(x^e)) \nonumber
\end{align}
for any $n \in N$ and $x^e \in (X_L^e)^\circ$. Here we regard $\mu_L^e(x^e)$ as a vector in $\fn^{\perp}$ using the $L$-equivariant section $\fl^* \hookrightarrow \fn^{\perp}$ of the natural projection $\fn^{\perp} \twoheadrightarrow \fl^*$ induced by the triangular decomposition $\fg = \fn \oplus \fl \oplus \fn^-$. This means that, even though $\sY_\fz$ depends on the choice of the parabolic $P$, over $\fz^\circ$ there are canonical isomophisms between various $\sY_\fz^\circ$ for different $P$ which preserve the Hamiltonian actions. Moreover, by Proposition \ref{prop:universal_deform_cover}, the map $\sY_\fz^\circ \to \sX_\fz^\circ$ is an isomorphism.

\subsection{Graded Poisson automorphisms of conical symplectic singularities} \label{subsec:Poisson_auto}

Let $X$ be an affine symplectic singularity and define the open subvariety $X^\diamond \subset X$ and its unique $\QQ$-factorial terminalization $\widetilde{X}^\diamond$ as in Section \ref{subsec:universal_deform_symplectic}. Let $\theta$ be a Poisson automorphism of $X$. Note that $\theta$ preserves $X^\diamond$, hence $\theta$ lifts uniquely to a Poisson automorphism $\tilde{\theta}$ of $\widetilde{X}^\diamond$, so that the following diagram commutes
\begin{center}
  \begin{tikzcd}
    \widetilde{X}^\diamond \ar[r, "\tilde{\theta}"] \ar[d, twoheadrightarrow, "\pi"] & \widetilde{X}^\diamond  \ar[d,twoheadrightarrow, "\pi"]  \\
    X^\diamond \ar[r, "\theta"]  & X^\diamond 
  \end{tikzcd}
\end{center}
Correspondingly, we have a commutative diagram of functors:
\begin{center}
  \begin{tikzcd}
    \PD_{\widetilde{X}^\diamond} \ar[r, "\tilde{\theta}_*"] \ar[d, twoheadrightarrow, "\pi_*"] & \PD_{\widetilde{X}^\diamond}  \ar[d,twoheadrightarrow, "\pi_*"]  \\
    \PD_{X^\diamond} \ar[r, "\tilde{\theta}_*"]  & \PD_{X^\diamond}
  \end{tikzcd}
\end{center}
By the (formal) universality of $\PD_{X^\diamond}$ and  $\PD_{\widetilde{X}^\diamond}$, this in turn corresponds to a commutative diagram
\begin{equation}\label{diag:fP_complete}
  \begin{tikzcd}
    \fP^\wedge \ar[r, "\tilde{\theta}_*"] \ar[d, twoheadrightarrow, "\pi_*"] & \fP^\wedge \ar[d,twoheadrightarrow, "\pi_*"]  \\
    (\fP/W)^\wedge \ar[r, "\tilde{\theta}_*"]  &  (\fP/W)^\wedge
  \end{tikzcd}
\end{equation}
of completions of affine schemes at their distinguished points.

\begin{prop}\label{prop:auto_fP}
    Assume that $X$ is a conical symplectic singularity and $\theta$ is a graded (i.e., $\cm$-equivariant) Poisson automorphism of $X$. Then the commutative diagram \eqref{diag:fP_complete} can be obtained by completion from a commutative diagram of linear spaces
    \begin{equation}\label{diag:fP}
      \begin{tikzcd}
        \fP \ar[r] \ar[d, twoheadrightarrow] & \fP \ar[d,twoheadrightarrow]  \\
        \fP/W \ar[r]  & \fP/W
      \end{tikzcd}
    \end{equation}
    where the top horizontal map is linear and the vertical ones are quotient maps. The map $\fP \to \fP$ is given by the pullback map $(\tilde{\theta}^{-1})^*:  H^2(\widetilde{X}^\diamond, \CC) \to H^2(\widetilde{X}^\diamond, \CC)$ induced by $\tilde{\theta}^{-1}$.
\end{prop}
\begin{proof}
    $X^\diamond$ is $\cm$-stable and the $\cm$-action lifts to a $\cm$-action on $\widetilde{X}^\diamond$. These $\cm$-actions induce those on $\fP^\wedge$ and $(\fP/W)^\wedge$ and their completions, so that the morphisms in \eqref{diag:fP_complete} are $\cm$-equivariant. By taking $\cm$-finite part of the coordinate algebra, we can algebraize the diagram \eqref{diag:fP_complete} and get \eqref{diag:fP}, whose morphisms are all $\cm$-equivariant. Let $\gamma: \fP \to \fP$ denote the top horizontal map in \eqref{diag:fP}.

    Since $\fP$ is a vector space, the Zariski tangent space $T_0 \fP$ of $\fP$ at $0$ is canonically identified with $\fP$ itself. Under this identification, it is easy to see that the differential $d\gamma: T_0 \fP \to T_0 \fP$ of $\gamma$ coincides with $\gamma$ itself since $\gamma$ is $\cm$-equivariant. Indeed, by Theorem part (i) of \ref{thm:universal_deform}, $\cm$ acts linearly on $\fP$ by $t.v \mapsto t^d v$. Therefore $\gamma$ being $\cm$-equivariant just means that $\gamma$ commutes with the scalar mulitiplication on $\fP$, and hence 
      $$d\gamma(v) = \frac{d}{dt} \gamma(tv) = \frac{d}{dt} [t\gamma(v)] = \gamma(v), \quad \forall \, v \in T_0 \fP \simeq \fP.$$
    In particular, $\gamma$ is a linear automorphism of $\fP$. On the other hand, $T_0 \fP = \PD_{\widetilde{X}^\diamond}(\CC\{\epsilon\}) \simeq H^2(\widetilde{X}^\diamond, \CC)$ and $d\gamma = (\tilde{\theta}^{-1})^*$. 
\end{proof}

For any graded Poisson varity $X$, let $\Aut(X)$ denote the group of graded Poisson automorphisms of $X$. Note that if $X = \Spec \CC[\widetilde{\OO}]$ for $\widetilde{\OO} \in \Cov(G)$, then $\Aut(X) = \Aut(\widetilde{\OO})$. For a general conical symplectic singularity $X$, Proposition \ref{prop:auto_fP} yields an action of $\Aut(X)$ on $\fP = H^2(\widetilde{X}^\diamond, \CC)$. 
    
\begin{prop}\label{prop:aut_Namikawa_space}
  Let $X$ be a conical symplectic singularity. Then the action of $\Aut(X)$ on $\fP=\fP^X$ enjoys the following properties:
  \begin{enumerate} [label=(\roman*)]
    \item
      It normalizes $W=W^X \subset GL(\fP)$, so that the quotient map $\fP \twoheadrightarrow \fP / W$ is $\Aut(X)$-equivariant.
    \item
      It lifts to an $\Aut(X)$-action on $X_\fP$ by graded Poisson automorphisms, which normalizes the $W$-action. The induced action on $X_0  \simeq X$ is just the defining action of $\Aut(X)$ on $X$.
  \end{enumerate}
\end{prop}

\begin{proof}

    (i) is an immediate consequence of Proposition \ref{prop:auto_fP}. For (ii), note that by the universal property we have a natural $\Aut(X)$-action on the universal family $\sX_{univ}$ over $\fP/W$ by graded Poisson automorphisms, and this action is compatible with the $\Aut(X)$-action on $\fP/W$. Then by (i), the fiber product $X_\fP = \sX_{univ} \times_{\fP / W} \fP$ admits an action of $\Aut(X)$. 
    Now $W$ can be identified with the group of graded Poisson automorphisms of $X_\fP$ whose restrictions to the central fiber $X_{\fP} \times_{\fP} \{0\} \simeq X$ is identity map (this will appear in \cite[Section 7]{LMBM}, see also \cite{IvanNotes}). Clearly the $\Aut(X)$-action on $X_\fP$ normalizes $W$.
\end{proof}

We conclude this section with an easy lemma.
\begin{lemma}\label{lem:Gaois_almostetale_Namikawa_space}
  Let $X_1$ and $X_2$ be two affine symplectic singularities and define open subvarieties $X_i^\diamond \subset X_i$, $i=1, 2$, as in Section \ref{subsec:universal_deform_symplectic}. Suppose $p: X_1 \to X_2$ is a finite almost \'{e}tale (Poisson) morphism, such that its restriction $p|_{X_1^\diamond}: X_1^\diamond \to X_2^\diamond$ is a Galois covering with Galois group $\Gamma$. Then $p$ induces a canonical injective linear map $p^*: \fP^{X_2} \hookrightarrow \fP^{X_1}$ whose image is the the space $(\fP^{X_1})^\Gamma$ of $\Gamma$-fixed vectors of $\fP^{X_1}$.
\end{lemma}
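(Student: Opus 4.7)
The plan is to lift the cover $p$ to the canonical $\QQ$-factorial terminalizations of $X_1^\diamond$ and $X_2^\diamond$ and then apply Galois descent for singular cohomology.

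First, recall from Section \ref{subsec:universal_deform_symplectic} that each $X_i^\diamond$ admits a unique (smooth) $\QQ$-factorial terminalization $\widetilde{X_i^\diamond} \to X_i^\diamond$, and that there is a canonical identification $\fP^{X_i} \simeq H^2(\widetilde{X_i^\diamond}, \CC)$ induced by restriction from $H^2(Y_i^{reg}, \CC)$ for any $\QQ$-factorial terminalization $Y_i$ of $X_i$. So it suffices to produce a canonical injection $H^2(\widetilde{X_2^\diamond}, \CC) \hookrightarrow H^2(\widetilde{X_1^\diamond}, \CC)$ with image the $\Gamma$-invariants.

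Next I would lift $p|_{X_1^\diamond}$ to the terminalizations. Form the fiber product
\[
\widetilde{Z} := \widetilde{X_2^\diamond} \times_{X_2^\diamond} X_1^\diamond.
\]
Since $p|_{X_1^\diamond}$ is a finite étale Galois cover with group $\Gamma$, the induced map $\widetilde{Z} \to \widetilde{X_2^\diamond}$ is also a finite étale Galois cover with group $\Gamma$; in particular $\widetilde{Z}$ is smooth. On the other hand, the projection $\widetilde{Z} \to X_1^\diamond$ is projective (pulled back from $\widetilde{X_2^\diamond} \to X_2^\diamond$) and restricts to an isomorphism over the open locus in $X_1^\diamond$ where $\widetilde{X_2^\diamond} \to X_2^\diamond$ was an isomorphism, hence is birational. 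Thus $\widetilde{Z}$ is a smooth (in particular $\QQ$-factorial and terminal) projective birational model of $X_1^\diamond$, so by the uniqueness of $\QQ$-factorial terminalizations of $X_1^\diamond$ there is a canonical isomorphism $\widetilde{Z} \simeq \widetilde{X_1^\diamond}$ over $X_1^\diamond$. This produces a canonical finite étale Galois cover $q: \widetilde{X_1^\diamond} \to \widetilde{X_2^\diamond}$ with Galois group $\Gamma$.

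Now I would apply the standard fact that for a finite étale Galois cover $q: U \to V$ of smooth complex varieties with group $\Gamma$ (acting freely on $U$) one has a natural isomorphism $H^\ast(V, \CC) \xrightarrow{q^\ast} H^\ast(U, \CC)^\Gamma$. Specializing to degree $2$ and combining with the identifications $\fP^{X_i} \simeq H^2(\widetilde{X_i^\diamond}, \CC)$ yields the desired canonical linear injection $p^\ast: \fP^{X_2} \hookrightarrow \fP^{X_1}$ with image $(\fP^{X_1})^\Gamma$. The main thing to check -- and the step I expect to be the most delicate -- is that the $\Gamma$-action on $H^2(\widetilde{X_1^\diamond}, \CC)$ used in Galois descent coincides with the $\Gamma$-action on $\fP^{X_1}$ induced by $\Gamma \subset \Aut(X_1)$ (via Hartogs, since $X_1 \setminus X_1^\diamond$ has codimension $\geq 2$) through the functoriality of Proposition \ref{prop:auto_fP}. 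This follows from the uniqueness of the lift of each $\gamma \in \Gamma$ to $\widetilde{X_1^\diamond}$: the action from Proposition \ref{prop:auto_fP} is defined precisely as the pullback along the unique Poisson lift of $\gamma$ to the terminalization of $X_1^\diamond$, which is the same lift used in the construction of $q$ above.
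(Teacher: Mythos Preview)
Your proof is correct and follows essentially the same approach as the paper: form the fiber product $\widetilde{X_2^\diamond} \times_{X_2^\diamond} X_1^\diamond$, observe it is a smooth $\QQ$-factorial terminalization of $X_1^\diamond$ and simultaneously a $\Gamma$-Galois cover of $\widetilde{X_2^\diamond}$, then read off the claim from the pullback on $H^2$. Your additional remark about verifying that the $\Gamma$-action from Galois descent agrees with the one coming from Proposition \ref{prop:auto_fP} is a nice point the paper leaves implicit.
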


\begin{proof}
    Let $\widetilde{X}_2^\diamond \to X_2^\diamond$ be the unique (smooth) $\QQ$-factorial terminalization of $X_2^\diamond$ and set $\widetilde{X}_1^\diamond = \widetilde{X}_2^\diamond \times_{X_2^\diamond} X_1^\diamond \to \widetilde{X}_2^\diamond$ to be the base change of $\widetilde{X}_2^\diamond \to X_2^\diamond$ along the morphism $p|_{X_1^\diamond}: X_1^\diamond \to X_2^\diamond$. Then $\widetilde{X}_1^\diamond \to \widetilde{X}_2^\diamond$ is also a Galois covering with Galois group $\Gamma$ and the canonical projection $\widetilde{X}_1^\diamond \to X_1^\diamond$ is a (smooth) $\QQ$-factorial terminalization of $X_1^\diamond$. Therefore the pullback map $p^*: H^2(\widetilde{X}_2^\diamond, \CC) \to H^2(\widetilde{X}_1^\diamond, \CC)$ satisfies the claim.
\end{proof}

\subsection{The extended Namikawa Weyl groups of nilpotent covers}\label{sec:Wcovers}

Fix all of the notation of \cref{subsec:universal_deform_nilpotent}, e.g. $\widetilde{\OO}$, $X$, $(L,\widetilde{\OO}_L)$, $X_L$, $\fz$, and so on. In this section, we will give an explicit description of the Namikawa Weyl group $W(\widetilde{\OO}) := W^X$ in terms of $(L,\widetilde{\OO}_L)$. This will require some additional notation. Let $N_G(L)$ denote the normalizer of $L$ in $G$. Let $\Ad^*$ denote the co-adjoint action of $N_G(L)$ on $\fl^*$ and let $\mu_L: X_L \to \fl^*$ denote the moment map. Consider the group
$$N_G(L, \widetilde{\OO}_L) := \{ (n, \zeta) \in N_G(L) \times \Aut(X_L) \,|\, \Ad(n) \circ \mu_L = \mu_L \circ \zeta \}.$$
We can regard $L$ as a normal subgroup of $N_G(L,\widetilde{\OO}_L)$ via the natural embedding
$$L \hookrightarrow N_G(L, \widetilde{\OO}_L), \qquad l \mapsto (l, l).$$
\begin{definition} 
The {extended Namikawa-Weyl group} associated to $\widetilde{\OO}$ is the finite group
      $$ \widetilde{W}(\widetilde{\OO}) := N_G(L, \widetilde{\OO}_L)/L. $$
\end{definition}

We note that $\widetilde{W}(\widetilde{\OO})$ acts on $\fz = \fX(\fl \cap [\fg,\fg])$ via the natural map $\widetilde{W}(\widetilde{\OO}) \to N_G(L)/L$. 

Next, we will describe an action of $\widetilde{W}(\widetilde{\OO})$ on $\sX_\fz$. For a general graded Poisson variety $X$ with a Hamiltonian $G$-action that commutes with the $\cm$-action, let $\Aut^G(X) \subset \Aut(X)$ denote the subgroup consisting of $G \times \cm$-equivariant Poisson automorphisms of $X$. Note that if $X = \Spec(\CC[\widetilde{\OO}])$ for some $\widetilde{\OO} \in \Cov(G)$, we have  $\Aut^G(X)=\Aut^G(\widetilde{\OO})=\Aut(\widetilde{\OO}, \OO)$ by \cref{ex:nilpotentcover}.

\begin{theorem}\label{thm:action_extended_Namikawa-Weyl}
With notations as in Section \ref{subsec:universal_deform_nilpotent}, the following are true:
\begin{enumerate}[label=(\roman*)]
   \item 
     There is a natural action of $\widetilde{W}(\widetilde{\OO})$ on $\sX_\fz$ by $G \times \cm$-equivariant Poisson automorphisms preserving the moment map and lifting the action on $\fz$. Moreover, the action preserves $\sX_\fz^\circ$ and its restriction on $\sX_\fz^\circ$ induces an action on $G \times^L (X_L^e)^\circ $ via the isomorphism \eqref{eq:isom_regular1}, which descends from the $N_G(L, \widetilde{\OO}_L)$-action on $G \times^L (X_L^e)^\circ $ given explicitly by
	  $$ [g, x, \beta] \mapsto [g n^{-1}, \zeta(x), n.\beta],$$
	  for any $g \in G$, $(x, \beta) \in (X_L^e)^\circ = X_L \times \fz^\circ$ and $(n, \zeta) \in N_G(L, \widetilde{\OO}_L)$. The action of $\widetilde{W}(\widetilde{\OO})$ on $\sX_\fz$ is uniquely characterized by this property. 
   \item  
     Let $\varphi: X_\chi \to X_{\chi'}$, $\chi, \chi' \in \fz^\circ$, be a Hamiltonian isomorphism, i.e., a $G$-equivariant Poisson isomorphism intertwining the moment maps to $\fg^*$. Then $\varphi$ is induced by a unique element of $\widetilde{W}(\widetilde{\OO})$.
   \item
     There is a short exact sequence of groups,
      \begin{equation} \label{exsq:extend_Namikawa-Weyl}
         1 \to W(\widetilde{\OO}) \to \widetilde{W}(\widetilde{\OO}) \to \Aut^G(X) \to 1,
      \end{equation}
    where the surjective homomorphism $\widetilde{W}(\widetilde{\OO}) \twoheadrightarrow \Aut^G(X)$ is given by the restriction of the $\widetilde{W}(\widetilde{\OO})$-action on $\sX_\fz$ described in (i) to the fiber $\sX_\fz \times_\fz \{ 0\} \simeq X$. 
\end{enumerate}
\end{theorem}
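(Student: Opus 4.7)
My plan is to treat the three parts in order, with part (i) providing the explicit construction that parts (ii) and (iii) exploit.

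\textbf{Part (i).} I would first verify that the formula $[g,x,\beta]\mapsto [gn^{-1},\zeta(x),n.\beta]$ is a well-defined action of $N_G(L,\widetilde{\OO}_L)$ on $G\times^L (X_L^e)^\circ$. Compatibility with the $L$-equivalence relation amounts to the identity $\zeta(l.x) = (nln^{-1}).\zeta(x)$ for $l \in L$, which is forced by the Hamiltonian nature of the $L$-action on $X_L$ together with the defining relation $\Ad(n)\circ \mu_L = \mu_L \circ \zeta$. The diagonal $L$ acts trivially, so the action factors through $\widetilde{W}(\widetilde{\OO})$; the $G \times \cm$-equivariance, the Poisson property, and moment-map compatibility are all visible in the formula. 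Transporting via \eqref{eq:isom_regular1} yields a $\widetilde{W}(\widetilde{\OO})$-action on $\sX_\fz^\circ$ lifting that on $\fz^\circ$.

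The main obstacle in part (i) is to extend this action from $\sX_\fz^\circ$ to all of $\sX_\fz$. Since $\fz\setminus\fz^\circ$ is a union of hyperplanes in $\fz$, the complement of $\sX_\fz^\circ$ in $\sX_\fz$ is of codimension one, so a bare Hartogs argument will not suffice. Instead I would proceed via universality: each $w \in \widetilde{W}(\widetilde{\OO})$ induces a linear automorphism of $\fz \simeq \fP$ that normalizes $W(\widetilde{\OO})\subset \mathrm{GL}(\fP)$ (verified by tracking its effect on the codimension-two leaves, or by invoking Proposition \ref{prop:aut_Namikawa_space}(i) applied to the restriction of $w$ to the central fiber). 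Combined with this central-fiber restriction, Proposition \ref{prop:aut_Namikawa_space}(ii) produces a graded Poisson automorphism of $\sX_{univ}$, and Proposition \ref{prop:universal_deform_cover}(ii) base-changes it to an automorphism of $\sX_\fz$. Density of $\sX_\fz^\circ$ in the irreducible scheme $\sX_\fz$ ensures that this extension is unique and agrees with what was constructed explicitly on $\sX_\fz^\circ$.

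\textbf{Part (ii).} For $\chi \in \fz^\circ$ the stabilizer of $\chi$ in $G$ is exactly $L$, and under \eqref{eq:isom_regular1} the fiber $X_\chi$ is identified with $G \times^L (X_L \times \{\chi\})$, whose $\fg^*$-moment preimage of $\chi$ is canonically the Hamiltonian $L$-variety $X_L \times \{\chi\}$. A Hamiltonian isomorphism $\varphi: X_\chi \to X_{\chi'}$ must therefore send $\mu^{-1}(\chi)$ to $\mu^{-1}(\chi')$; picking a representative $n \in G$ with $n.\chi = \chi'$ (necessarily in $N_G(L)$ by the stabilizer computation) and reading off the induced graded Poisson automorphism $\zeta$ of $X_L$ produces a pair $(n,\zeta) \in N_G(L,\widetilde{\OO}_L)$ whose class in $\widetilde{W}(\widetilde{\OO})$ induces $\varphi$. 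Uniqueness follows because any such pair is determined modulo $L$ by its restriction to a single slice.

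\textbf{Part (iii).} The homomorphism $\widetilde{W}(\widetilde{\OO}) \to \Aut^G(X)$ obtained by restricting the action of part (i) to the central fiber lands in $\Aut^G(X)$ by construction. Its kernel consists of elements acting trivially on $X_0 \simeq X$; the characterization of $W(\widetilde{\OO})$ as the group of graded Poisson automorphisms of $\sX_\fz$ restricting to the identity on the central fiber (used in the proof of Proposition \ref{prop:aut_Namikawa_space}(ii) and recorded in \cite[Section 7]{LMBM}) identifies this kernel with $W(\widetilde{\OO})$. For surjectivity, given $\theta \in \Aut^G(X)$, Proposition \ref{prop:aut_Namikawa_space}(ii) lifts $\theta$ to a graded Poisson automorphism $\tilde\theta$ of $\sX_\fz$ acting linearly on $\fz$; choosing $\chi \in \fz^\circ$ with $\tilde\theta(\chi) \in \fz^\circ$ (possible because $\fz^\circ$ is dense), part (ii) produces a unique $w \in \widetilde{W}(\widetilde{\OO})$ inducing the restriction $\tilde\theta|_{X_\chi}$, and density of $\sX_\fz^\circ$ then forces $w$ to induce $\tilde\theta$ globally.
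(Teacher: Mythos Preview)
The paper itself does not prove this theorem: it establishes only the uniqueness claim in (i) (via reducedness and separatedness of $\sX_\fz$) and defers the rest to \cite[Section 7]{LMBM} and \cite{IvanNotes}. So you are supplying an argument where the paper supplies none, and much of your outline is reasonable. There is, however, a genuine circularity in your part (i), and a smaller gap in part (iii).

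\textbf{The circularity in (i).} You propose to extend the $\widetilde{W}(\widetilde{\OO})$-action from $\sX_\fz^\circ$ to $\sX_\fz$ by invoking Proposition \ref{prop:aut_Namikawa_space}(ii), which takes as input an element of $\Aut(X)$, i.e.\ an automorphism of the \emph{central fiber} $X = \sX_\fz \times_\fz \{0\}$. You refer to ``this central-fiber restriction'', but the central fiber lies over $0 \notin \fz^\circ$, so the explicit action you have constructed on $\sX_\fz^\circ$ has no central fiber to restrict to. The action on $X$ is precisely what part (iii) defines \emph{after} the global action on $\sX_\fz$ is in hand, so using it here is circular. A direct route that avoids this: for $(n,\zeta) \in N_G(L,\widetilde{\OO}_L)$, the formula $[g,x,\beta,\alpha] \mapsto [gn^{-1}, \zeta(x), n.\beta, \Ad^*(n)\alpha]$ gives a well-defined $G \times \CC^\times$-equivariant Poisson isomorphism $\sY_\fz^{(P)} \xrightarrow{\sim} \sY_\fz^{(nPn^{-1})}$ between the induced varieties built from the two parabolics $P$ and $nPn^{-1}$. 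Both have global sections canonically equal to $\CC[\sX_\fz]$ (Proposition \ref{prop:universal_deform_cover}, which identifies $\sX_\fz$ with a base change of the universal deformation, independent of $P$), so passing to $\Spec$ of global sections yields the desired automorphism of $\sX_\fz$. Agreement with the explicit formula on $\sX_\fz^\circ$ then follows from the isomorphism \eqref{eq:isom_regular1} and its $P'$-analogue.

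\textbf{The gap in (iii).} For surjectivity you say that $w$ and $\tilde\theta$ agree on the single fiber $X_\chi$, and then invoke ``density of $\sX_\fz^\circ$'' to conclude they agree globally. But agreement on one fiber does not give agreement on the dense open $\sX_\fz^\circ$. What you actually need is weaker: that the image of $w$ in $\Aut^G(X)$ equals $\theta$. For this, note that $w^{-1}\tilde\theta$ is a $\CC^\times$-equivariant automorphism of $\sX_\fz$ fixing $X_\chi$ pointwise; $\CC^\times$-equivariance then forces it to fix $X_{t\chi}$ for all $t \in \CC^\times$, and by continuity it fixes $X_0 = X$, which is what you want.
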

\begin{proof}
The last statement of (i) follows from the fact that $\sX_\fz$ is reduced and separated. The rest will appear in \cite[Section 7]{LMBM}, see also \cite{IvanNotes}.
\end{proof}
%

In view of \cref{prop:aut_Namikawa_space}, we can form the semi-direct product
$$\widetilde{W}^X_G := W^X \rtimes \Aut^G(X),$$ 
This is a finite group which naturally acts on $\fP$, $\sX_{univ}$ and $\sX_\fP$. Note that the definitions of $\widetilde{W}^X_G$ and its actions are independent of choices. The following will appear in \cite[Section 7]{LMBM}, see also \cite{IvanNotes}.

\begin{prop}\label{prop:split_Weyl_O}
A choice of a parabolic subgroup $P = LN \subset G$ determines an isomorphism $\kappa_P: \widetilde{W}(\widetilde{\OO}) \xrightarrow{\sim} \widetilde{W}^X_G$, such that
 \begin{enumerate}
     \item 
     $\kappa_P$ is compatible with the short exact sequence \eqref{exsq:extend_Namikawa-Weyl}, i.e., it restricts to an isomorphism from $W(\widetilde{\OO}) \subset \widetilde{W}(\widetilde{\OO})$ onto $W^X \subset \widetilde{W}^X_G$ and the induced map between the quotients is the identity map of $\Aut^G(X)$. In particular, \eqref{exsq:extend_Namikawa-Weyl} splits.
     \item
     The isomorphism $\eta: \fz \xrightarrow{\sim} \fP$ intertwines the action of $\widetilde{W}(\widetilde{\OO})$ on $\fz$ and the action of $\widetilde{W}^X_G$ on $\fP$ under the identification $\kappa_P: \widetilde{W}(\widetilde{\OO}) \xrightarrow{\sim} \widetilde{W}^X_G$.
 \end{enumerate}
\end{prop}

\subsection{Extended Namikawa Weyl group vs parabolic induction}

Fix all of the notation of \cref{subsec:universal_deform_nilpotent}, e.g. $\widetilde{\OO}$, $X$, $(L,\widetilde{\OO}_L)$, $X_L$, $\fz$, and so on. Suppose $M$ is a Levi subgroup of $G$ containing $L$. Set $\widetilde{\OO}_M = \Bind_L^M \widetilde{\OO}_L$ and $X_M = \Spec(\CC[\widetilde{\OO}_M])$. We want to compare the Namikawa spaces and (extended) Namikawa groups of $\widetilde{\OO}$ and $\widetilde{\OO}_M$. To simplify the statements, we assume in this section that $G$ is semisimple.

Choose parabolic subgroups $P = LN \subset G$ and $Q=MU \subset G$, such that $L \subset M$ and $P \subset Q$. Let $P_M = P \cap M = L N_M$, a parabolic subgroup of $M$ with Levi factor $L$ and unipotent radical $N_M=N \cap M$. Note that $\fn = \fn_M \oplus \fu$. We have an inclusion $\mathfrak{X}(\fm) \hookrightarrow \mathfrak{X}(\fl)$ and a projection $\mathfrak{X}(\fl) \twoheadrightarrow \mathfrak{X}(\fl \cap [\fm, \fm])$ induced by restriction of characters. We write 
\[
\fz_\fm^\fg := \mathfrak{X}(\fm), \quad \fz_\fl^\fm := \mathfrak{X}(\fl \cap [\fm, \fm]).
\] Note that we have natural isomorphisms $\fz \simeq \fz(\fl)^*$, $\fz_\fm^\fg \simeq \fz(\fm)^*$ and $\fz_\fl^\fm \simeq \fz(\fl \cap [\fm, \fm])^*$. Moreover, we have a direct sum decomposition $\fz(\fl) = \fz(\fm) \oplus \fz(\fl \cap [\fm, \fm])$. The latter induces a direct sum decomposition $\fz = \fz_\fm^\fg \oplus \fz_\fl^\fg$. Set $\sY_\fz := \Ind_{P}^G X_L^e$,  $\sY_{\fz_\fm^\fg}^M:= \Ind_{P_M}^M (X_L \times \fz_\fm^\fg)$, $\sY_\fz^M := \Ind_{P_M}^M X_L^e$ and $Y_M = \Ind_{P_M}^M X_L$. As before, set $\sX_\fz := \CC[\sY_\fz]$ and similarly $\sX_{\fz_\fm^\fg}^M := \CC[\sY_{\fz_\fm^\fg}^M]$, $\sX_{\fz}^M := \CC[\sY_{\fz}^M]$. Then $\sY_\fz^M = \sY_{\fz_\fm^\fg}^M \times \fz_\fm^\fg$ and $\sX_\fz^M = X_{\fz_\fm^\fg}^M \times \fz_\fm^\fg$. 

There is a canonical $G \times \CC^\times$-equivariant isomorphism
\begin{equation} \label{eq:isom_ind_family}
	\sY_\fz \xrightarrow{\sim} \Ind_Q^G \sY_\fz^M 
\end{equation}
or
\begin{align}
	G \times^P \left( X_L^e \times_{\fl^*} \fn^{\perp} \right)
	& \xrightarrow{\: \sim \:}   
	G \times^Q \left(  M \times^{P_M} \left( X_L^e \times_{\fl^*} (\fm/\fn_M)^* \right) \times_{\fm^*}  (\fg/\fu)^* \right) \\
	[g, x^e, \alpha] \qquad & \longmapsto \qquad [g, 1_M, x^e, \pi_\fm(\alpha), \iota_\fu(\alpha)] \nonumber
\end{align}
for any $g \in G$, $x^e \in X_L^e$ and $\alpha \in \fn^{\perp}$, where $1_M$ denotes the identity element of $M$. Here we are using the canonical isomorphism 
$$\fn^{\perp} \xrightarrow{\sim} (\fm/\fn_M)^* \times_{\fm^*}  (\fg/\fu)^*, \quad \alpha \mapsto (\pi_\fm(\alpha), \iota_\fu(\alpha)), $$
of $L$-representations, which is given by the pullback diagram
\begin{center}
  \begin{tikzcd}
    \fn^{\perp} \ar[r, hookrightarrow, "\iota_{\fu}"] \ar[d, twoheadrightarrow, "\pi_{\fm}"] & (\fg/\fu)^* \ar[d, twoheadrightarrow] \\
    (\fm / \fn_M)^* \ar[r, hookrightarrow] & \fm^* 
   \end{tikzcd}
\end{center}
where $\iota_{\fu}$ is the dual map of the projection $\fg/\fu \twoheadrightarrow \fg/\fn$ and $\pi_{\fm}$ is the dual map of the inclusion $\fm/\fn_M \hookrightarrow \fg/\fn$. It can be checked that \eqref{eq:isom_ind_family} is well-defined and intertwines the maps to $\fg^* \times \fz$.

Recall that $\sY_\fz \times_{\fz} \{ 0 \}$ is isomorphic to $Y = G \times^P \left( X_L \times (\fg / \fp)^* \right)$. Therefore restricting the isomorphism \eqref{eq:isom_ind_family} to the fibers over $0 \in \fz$ gives an isomorphism 
\begin{equation}
  Y = G \times^P (X_L \times \fp^\perp) \xrightarrow{\sim} \Ind_P^G Y_M = G \times^Q (Y_M \times \fq^\perp). 
\end{equation}
(cf. \cite[Lemma 7.4.1]{LMBM}). Thus we can view $Y^{reg}$ as a fiber bundle over $G/Q$ with fiber $Y_M^{reg} \times \fq^\perp$, so that there are pullback maps
\begin{equation} \label{eq:pullback_Y}
  H^2(G/Q, \CC) \to H^2(Y^{reg}, \CC), \qquad \fP(\widetilde{\OO}) = H^2(Y^{reg}, \CC) \to \fP(\widetilde{\OO}_M) = H^2(Y^{reg}_M, \CC).
\end{equation}
On the other hand, we have identifications
\begin{equation} \label{eq:isom_Ind_M}
    \mathfrak{X}(\fm) \xrightarrow{\sim} H^2(G/Q, \CC), \quad \eta: \mathfrak{X}(\fl) \xrightarrow{\sim} H^2(Y^{reg}, \CC), \quad \eta_M: \mathfrak{X}(\fl \cap [\fm, \fm]) \xrightarrow{\sim} H^2(Y^{reg}_M, \CC),
\end{equation}
where the latter two are from Proposition \ref{prop:Namikawacenter}. The following result is Proposition 7.4.2 of \cite{LMBM}. 
%
\begin{prop}\label{prop:pullback}
    Under the isomorphisms in \eqref{eq:isom_Ind_M}, the pullback maps in \eqref{eq:pullback_Y} correspond to the inclusion $\mathfrak{X}(\fm) \hookrightarrow \mathfrak{X}(\fl)$ and the projection $\mathfrak{X}(\fl) \twoheadrightarrow \mathfrak{X}(\fl \cap [\fm, \fm])$, respectively.
\end{prop}

Next we compare the extended Namikawa groups $\widetilde{W}(\widetilde{\OO})$ and $\widetilde{W}(\widetilde{\OO}_M)$. Note that the natural inclusion $N_M(L) \subset N_G(L)$ induces an inclusion $N_M(L, \widetilde{\OO}_L) \subset N_G(L, \widetilde{\OO}_L)$ and hence an injective group homomorphism $\iota_{\widetilde{W}} : \widetilde{W}(\widetilde{\OO}_M) \hookrightarrow \widetilde{W}(\widetilde{\OO})$. The restriction of the standard action of $\widetilde{W}(\widetilde{\OO})$ on $\fz = \fz_\fm^\fg \oplus \fz_\fl^\fg$ to $\widetilde{W}(\widetilde{\OO}_M)$ is the product of the standard action of $\widetilde{W}(\widetilde{\OO}_M)$ on $\fz_\fl^\fg$ and the trivial one on $\fz_\fm^\fg$.

Note that there is also a natural inclusion $\iota_{\Aut}: \Aut^M(X_M) \hookrightarrow \Aut^G(X)$ of groups induced by the natural action of $\Aut^M(X_M) \simeq \Aut^M(\widetilde{\OO}_M)$ on $\Bind_M^G \widetilde{\OO}_M = \Bind_M^G \Bind_L^M \widetilde{\OO}_L = \Bind_L^G \widetilde{\OO}_L = \widetilde{\OO}$ (cf. Proposition \ref{prop:propsofbind}). We will also need the following result, which is immediate from the definition of the $\Aut(X)$-action on the Namikawa space $\fP$.
\begin{lemma}\label{lem:fP_projection_equiv}
   The pullback map $H^2(Y^{reg},\CC) \to H^2(Y_M^{reg},\CC)$ in \eqref{eq:pullback_Y} is $\Aut^M(X_M)$-equivariant, where $\Aut^M(X_M)$ acts on $H^2(Y^{reg},\CC)$ via the inclusion $\iota_{\Aut}: \Aut^M(X_M)=\Aut^M(\widetilde{\OO}_M) \hookrightarrow \Aut^G(\widetilde{\OO})$.
\end{lemma}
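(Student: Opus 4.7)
The plan is to lift both actions to the $\QQ$-factorial terminalizations $Y$ and $Y_M$ and then invoke naturality of cohomology pullback under an inclusion of fibers. First, for $\zeta \in \Aut^M(X_M)$, I would describe the lift of $\iota_{\Aut}(\zeta) \in \Aut^G(X)$ to $Y$ explicitly. The element $\iota_{\Aut}(\zeta)$ is produced via the functoriality of birational induction (Proposition \ref{prop:propsofbind}(i)); on the other hand, $\zeta$ extends uniquely to a graded Poisson automorphism of $X_M$ and lifts uniquely to a $\CC^\times$-equivariant Poisson automorphism $\tilde\zeta_M$ of the $\QQ$-factorial terminalization $Y_M$ (uniqueness of such lifts being implicit in the definition of the $\Aut(X)$-action on $\fP$ of Proposition \ref{prop:auto_fP}). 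Using the isomorphism $Y \simeq G \times^Q (Y_M \times \fq^\perp)$, the map
\[
\tilde\zeta_Y : [g, y, \alpha] \mapsto [g, \tilde\zeta_M(y), \alpha]
\]
is a $G \times \CC^\times$-equivariant Poisson automorphism of $Y$ whose restriction to the open orbit $\widetilde{\OO} \subset Y$ equals $\iota_{\Aut}(\zeta)$; by uniqueness of lifts of graded Poisson automorphisms of $X$ to the $\QQ$-factorial terminalization $Y$, $\tilde\zeta_Y$ is the lift of $\iota_{\Aut}(\zeta)$.

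Second, I would identify the pullback map in \eqref{eq:pullback_Y} geometrically. The fiber bundle $Y \to G/Q$ with fiber $Y_M \times \fq^\perp$ restricts to a fiber bundle $Y^{reg} \to G/Q$ with fiber $Y_M^{reg} \times \fq^\perp$. Since $\fq^\perp$ is contractible, the composite inclusion $j : Y_M^{reg} \hookrightarrow Y_M^{reg} \times \fq^\perp \hookrightarrow Y^{reg}$, the second map being inclusion of the fiber over $eQ$, induces a map $j^* : H^2(Y^{reg},\CC) \to H^2(Y_M^{reg},\CC)$. By Proposition \ref{prop:pullback} and the identifications in \eqref{eq:isom_Ind_M}, this map agrees with the second pullback in \eqref{eq:pullback_Y}.

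Combining these two steps, the explicit formula for $\tilde\zeta_Y$ shows that $\tilde\zeta_Y$ preserves the fiber $Y_M^{reg} \times \fq^\perp$ over $eQ$ and acts on it by $\tilde\zeta_M \times \mathrm{id}_{\fq^\perp}$; hence $\tilde\zeta_Y \circ j = j \circ \tilde\zeta_M$ (after identifying $Y_M^{reg}$ with $Y_M^{reg} \times \{0\}$). Naturality of cohomology pullback then gives $\tilde\zeta_M^* \circ j^* = j^* \circ \tilde\zeta_Y^*$. Since both the $\Aut^G(X)$-action on $H^2(Y^{reg},\CC)$ and the $\Aut^M(X_M)$-action on $H^2(Y_M^{reg},\CC)$ are given by pullback along the inverse of the lifted automorphism (Proposition \ref{prop:auto_fP}), this is precisely the desired equivariance.

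The principal obstacle I anticipate is verifying that the functorial lift provided by Proposition \ref{prop:propsofbind}(i) really coincides with the explicit formula $[g, y, \alpha] \mapsto [g, \tilde\zeta_M(y), \alpha]$. This should follow cleanly by combining the uniqueness of the $\QQ$-factorial terminalization of $X$ (so that any two graded Poisson lifts of $\iota_{\Aut}(\zeta)$ to $Y$ must agree) with the observation that both candidate lifts agree on the dense open subset $\widetilde{\OO} \subset Y$.
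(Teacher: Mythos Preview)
The paper offers no proof beyond the sentence ``immediate from the definition of the $\Aut(X)$-action on the Namikawa space $\fP$.'' Your outline is therefore considerably more detailed than what the paper provides, and the overall strategy---lift the automorphisms, identify the pullback as restriction to a fiber, and invoke naturality---is sound and is essentially the content behind the paper's one-line assertion.

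There is one technical point that needs attention. You assert that $\zeta \in \Aut^M(X_M)$ ``lifts uniquely to a $\CC^\times$-equivariant Poisson automorphism $\tilde\zeta_M$ of the $\QQ$-factorial terminalization $Y_M$,'' citing Proposition~\ref{prop:auto_fP}. But that proposition only produces a unique lift to $\widetilde{X_M}^\diamond$, not to the full $Y_M$: a graded Poisson automorphism of $X_M$ may send the particular terminalization $Y_M = M \times^{P_M}(X_L \times \fp_M^\perp)$ to a different $\QQ$-factorial terminalization (one associated to a different parabolic with the same Levi), so a lift to $Y_M$ itself is not guaranteed in general. The same issue arises for the lift $\tilde\zeta_Y$ on $Y$.

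The fix is straightforward and does not disturb your argument. Recall from Section~\ref{subsec:universal_deform_symplectic} that $Y^\diamond = \rho^{-1}(X^\diamond)$ is canonically identified with $\widetilde{X}^\diamond$, and that the restriction $H^2(Y^{reg},\CC) \to H^2(Y^\diamond,\CC)$ is an isomorphism (the complement has codimension $\geq 2$). So replace $Y_M$ and $Y$ by $Y_M^\diamond$ and $Y^\diamond$ throughout: the lifts $\tilde\zeta_M$ and $\tilde\zeta_Y$ now exist and are unique by the argument of Proposition~\ref{prop:auto_fP}, your explicit fiberwise formula still makes sense on $Y^\diamond \simeq G \times^Q (Y_M^\diamond \times \fq^\perp)$, and the rest of the argument (compatibility with the fiber inclusion, naturality of cohomology pullback) proceeds unchanged. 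With this adjustment the obstacle you flagged at the end dissolves as well, since uniqueness of the lift to $\widetilde{X}^\diamond$ forces the two candidate descriptions of $\tilde\zeta_Y$ to agree on $Y^\diamond$, which is all that is needed at the level of $H^2$.
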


Let $(\fz_\fm^\fg)^\circ \subset \fz_\fm^\fg$ and $(\fz_\fl^\fm)^\circ \subset \fz_\fl^\fm$ be the open subsets defined in the same way as $\fz^\circ$ (cf. \cref{subsec:universal_deform_nilpotent}), but for $\fm \subset \fg$ and $\fl \subset \fm$ respectively. Set 
\[\fz^\bullet := \fz^\circ \cap [(\fz_\fm^\fg)^\circ \times (\fz_\fl^\fm)^\circ] \subset \fz.\] 
This is the complement in $\fz$ to a finite union of hyperplanes, and hence a dense open subset of $\fz$. Moreover, it is stable under the action of $\widetilde{W}(\widetilde{\OO}_M)$. 

Set $(X^e_L)^\bullet := X_L \times \fz^\bullet$ , $(\sY_\fz^{M})^\bullet  := \sY_\fz^M \times_{\fz} \fz^\bullet$ and so on. Then 
$$ (\sY_\fz^M)^\bullet  =  M \times^{P_M} \left( (X_L^e)^\bullet \times_{\fl^*} (\fm/\fn_M)^* \right).$$ 
Similarly to the isomorphism \eqref{eq:isom_regular1}, we have an $M$-equivariant isomorphism $M \times^L (X_L^e)^\bullet \xrightarrow{\sim} (\sY_\fz^M)^\bullet$, which induces a $G$-equivariant isomorphism
\begin{equation}\label{eq:isom_regular2}
	G \times^M \left( M \times^L (X_L^e)^\bullet \right) \xrightarrow{\sim} G \times^M (\sY_\fz^{M})^\bullet.
\end{equation}
Again a similar construction gives a $G$-equivariant isomorphism
\begin{equation}\label{eq:isom_regular3}
	G \times^M (\sY_\fz^{M})^\bullet \xrightarrow{\sim} \Ind_Q^G (\sY_\fz^{M})^\bullet.
\end{equation}
Composing the isomorphisms  \eqref{eq:isom_regular2} and \eqref{eq:isom_regular3} gives an isomorphism 
\begin{equation}\label{eq:isom_regular4}
	G \times^M \left( M \times^L (X_L^e)^\bullet \right) \xrightarrow{\sim} \Ind_Q^G (\sY_\fz^{M})^\bullet.
\end{equation}

Now we consider the following diagram of $G$-equivariant isomorphisms of varieties over $\fg^* \times \fz^\bullet$. 

\begin{equation}  \label{diag:NamikawaWeyl_parabolic}
  \begin{tikzcd}
    G \times^L (X_L^e)^\bullet \ar[r, "\sim"]  \ar[d, "\sim" labl1]  & \sY_\fz^\bullet \ar[r, "\sim"] \ar[d, "\sim" labl1] & \sX_\fz^\bullet  \\
    G \times^M \left( M \times^L (X_L^e)^\bullet \right) \ar[r, "\sim"] & \Ind_Q^G (\sY_\fz^{M})^\bullet  \ar[r, "\sim"]  & \Ind_Q^G (\sX_\fz^M)^\bullet \ar[u, "\sim" labl2] 
   \end{tikzcd}
\end{equation}
The leftmost vertical isomorphism is the tautological one, the middle vertical isomorphism is the base change of the isomorphism $\sY_\fz \xrightarrow{\sim} \Ind_Q^G \sY_\fz^M$ in \eqref{eq:isom_ind_family} to $\fg^*  \times \fz^\bullet$, and the rightmost vertical isomorphism is given by Stein factorization. The top left isomorphism is \eqref{eq:isom_regular1} and the top right isomorphism is the base change of the natural morphism $\sY_\fz \to \sX_\fz$ to $\fg^*  \times \fz^\bullet$. The bottom left isomorphism is \eqref{eq:isom_regular4}, and the bottom right isomorphism is induced by the isomorphism $(\sY_\fz^{M})^\bullet \xrightarrow{\sim} (\sX_\fz^M)^\bullet$. Therefore the right square sub-diagram is the base change to $\fg^* \times \fz^\bullet$ of the corresponding diagram without the decoration $\bullet$.
	
Each variety in \eqref{diag:NamikawaWeyl_parabolic} carries a natural action of $\widetilde{W}(\widetilde{\OO}_M)$ which is the restriction of an action on the corresponding variety without the decoration $\bullet$ and is compatible with the $\widetilde{W}(\widetilde{\OO}_M)$-action on $\fz^\bullet$ via the projection to $\fz^\bullet$. On the varieties in the top row, $\widetilde{W}(\widetilde{\OO}_M)$ acts via the natural inclusion $\iota_{\widetilde{W}} : \widetilde{W}(\widetilde{\OO}_M) \hookrightarrow \widetilde{W}(\widetilde{\OO})$ and the $\widetilde{W}(\widetilde{\OO})$-action in Theorem \ref{thm:action_extended_Namikawa-Weyl}. On the varieties in the bottom row, the $\widetilde{W}(\widetilde{\OO}_M)$-actions are induced by the actions on
$$ M \times^L (X_L^e)^\bullet  \simeq (\sY_\fz^{M})^\bullet  \simeq (\sX_\fz^M)^\bullet $$
by applying Theorem \ref{thm:action_extended_Namikawa-Weyl} again to $M$, $L$ and $\widetilde{\OO}_M$. Note that $\widetilde{W}(\widetilde{\OO}_M)$ acts trivially on the second factor of $\sX_\fz^M = \sX_{\fz_\fm^\fg}^M \times \fz_\fm^\fg$.

The discussion above and Theorem \ref{thm:action_extended_Namikawa-Weyl}(i) proves the following lemma.

\begin{lemma} \label{lem:NamikawaWeyl_parabolic}
  The diagram \eqref{diag:NamikawaWeyl_parabolic} commutes and all the isomorphisms there are $\widetilde{W}(\widetilde{\OO}_M)$-equivariant.
\end{lemma}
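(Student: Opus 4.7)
The plan is to reduce everything to an explicit verification on the leftmost object $G\times^L (X_L^e)^\bullet$, using the fact that the $\widetilde{W}(\widetilde{\OO})$- and $\widetilde{W}(\widetilde{\OO}_M)$-actions on their respective universal families are uniquely characterized by the formula $[g,x,\beta]\mapsto [gn^{-1},\zeta(x),n.\beta]$ in Theorem \ref{thm:action_extended_Namikawa-Weyl}(i). Since all six varieties in \eqref{diag:NamikawaWeyl_parabolic} are reduced and separated, and since $\sX_\fz^\bullet$ and $\Ind_Q^G(\sX_\fz^M)^\bullet$ sit over $\fg^*\times \fz^\bullet$ equivariantly, any equivariance statement that holds on a dense open subvariety of $G\times^L (X_L^e)^\bullet$ (where the explicit formula applies) propagates to the whole diagram.

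First, I would check commutativity by tracing a generic point. Take $(g,x,\beta)\in G\times (X_L^e)^\bullet$ and compute its image in $\Ind_Q^G(\sY_\fz^M)^\bullet$ along both routes: along the top, apply \eqref{eq:isom_regular1} to land in $\sY_\fz^\bullet$ and then \eqref{eq:isom_ind_family} to land in $\Ind_Q^G(\sY_\fz^M)^\bullet$; along the bottom, apply the tautological isomorphism $G\times^L(X_L^e)^\bullet \xrightarrow{\sim} G\times^M(M\times^L(X_L^e)^\bullet)$ followed by \eqref{eq:isom_regular4}. Both routes send $[g,x,\beta]$ to $[g,1_M,x,\pi_\fm\mu_L^e(x,\beta),\iota_\fu \mu_L^e(x,\beta)]$, using the decomposition $\fn^{\perp}\simeq (\fm/\fn_M)^*\times_{\fm^*}(\fg/\fu)^*$; this is a direct unwinding. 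Commutativity of the right-hand square is then the assertion that the Stein factorization is natural with respect to the isomorphism \eqref{eq:isom_ind_family}, which is automatic.

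Next, I would verify the $\widetilde{W}(\widetilde{\OO}_M)$-equivariance of the leftmost vertical isomorphism. Fix $(n,\zeta)\in N_M(L,\widetilde{\OO}_L)$. Acting on $[g,x,\beta]\in G\times^L(X_L^e)^\bullet$ via $\iota_{\widetilde{W}}$ and Theorem \ref{thm:action_extended_Namikawa-Weyl}(i) applied to $G$ gives $[gn^{-1},\zeta(x),n.\beta]$. Acting on the corresponding $[g,[1_M,x,\beta]]\in G\times^M(M\times^L(X_L^e)^\bullet)$ via Theorem \ref{thm:action_extended_Namikawa-Weyl}(i) applied to $M$ on the inner factor yields $[g,[n^{-1},\zeta(x),n.\beta]]$, and the tautological isomorphism identifies this with $[gn^{-1},\zeta(x),n.\beta]$. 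For the remaining isomorphisms (top-right vertical, horizontal arrows), equivariance is obtained in two equivalent ways: either by transporting the action along the isomorphisms already shown to commute with the explicit formula, or by appealing directly to the uniqueness clause of Theorem \ref{thm:action_extended_Namikawa-Weyl}(i) which pins down the action on $\sX_\fz^\circ$, $\sY_\fz^\circ$, $\sX_\fz^M$, $\sY_\fz^M$ from its restriction to the $G\times^L$- (resp.\ $M\times^L$-) piece.

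The main obstacle is bookkeeping: one must be careful that the moment-map identifications used in \eqref{eq:isom_regular1}, the direct-sum decomposition $\fz=\fz_\fm^\fg\oplus\fz_\fl^\fg$, and the compatibility between $\fn^{\perp}$ and $(\fm/\fn_M)^*\times_{\fm^*}(\fg/\fu)^*$ all respect the conical structure and the $\widetilde{W}(\widetilde{\OO}_M)$-actions, particularly when the inner $L$-moment map $\mu_L^e$ is composed with the $L$-equivariant section $\fl^*\hookrightarrow \fn^{\perp}$. Once these bookkeeping compatibilities are in place, the explicit formula together with the uniqueness in Theorem \ref{thm:action_extended_Namikawa-Weyl}(i) makes the full diagram commute and be $\widetilde{W}(\widetilde{\OO}_M)$-equivariant in a single stroke.
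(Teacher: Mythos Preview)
Your proposal is correct and is essentially the paper's approach spelled out in detail: the paper simply states that the lemma follows from ``the discussion above and Theorem~\ref{thm:action_extended_Namikawa-Weyl}(i)'', meaning precisely that the explicit formula $[g,x,\beta]\mapsto[gn^{-1},\zeta(x),n.\beta]$ and the uniqueness clause in that theorem force both commutativity and equivariance once one unwinds the isomorphisms on the leftmost column. Your point-tracing argument and the reduction to the uniqueness characterization are exactly what is implicit in the paper's one-line justification.
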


Now we are ready to examine the relationship between $\widetilde{W}(\widetilde{\OO}_M)$ and $ \widetilde{W}(\widetilde{\OO})$.

\begin{prop}\label{prop:compare_extended_Weyl}
The homomorphisms $\iota_{\widetilde{W}} : \widetilde{W}(\widetilde{\OO}_M) \hookrightarrow \widetilde{W}(\widetilde{\OO})$ and $\iota_{\Aut}: \Aut^M(\widetilde{\OO}_M) \hookrightarrow \Aut^G(\widetilde{\OO})$ fit into the following commutative diagram 
	\begin{center}
		\begin{tikzcd}
			1 \ar[r] & W(\widetilde{\OO}_M) \ar[r] \ar[d, hookrightarrow] & \widetilde{W}(\widetilde{\OO}_M) \ar[r] \ar[d, hookrightarrow, "\iota_{\widetilde{W}}"] & \Aut^M(\widetilde{\OO}_M) \ar[r] \ar[d, hookrightarrow, "\iota_{\Aut}"] & 1\\
			1 \ar[r]& W(\widetilde{\OO}) \ar[r]  & \widetilde{W}(\widetilde{\OO}) \ar[r]& \Aut^G(\widetilde{\OO}) \ar[r] & 1
		\end{tikzcd}
	\end{center}
where the top and the bottom rows are short exact sequences from \eqref{exsq:extend_Namikawa-Weyl}
\end{prop}

\begin{proof}
 Note that $\Ind_Q^G (\sX_\fz^M)$ and $\sX_\fz$ are reduced and separated. Therefore by Lemma \ref{lem:NamikawaWeyl_parabolic}, the Stein factorization map $\Ind_Q^G (\sX_\fz^M) \twoheadrightarrow \sX_\fz$ is $\widetilde{W}(\widetilde{\OO}_M)$-equivariant. The commutativity of the diagram then follows immediately. 
\end{proof}


\subsection{Unipotent ideals}\label{subsec:unipotent}

Let $G$ be a connected reductive algebraic group and let $\widetilde{\OO} \in \Cov(G)$. Recall the canonical quantization $(\cA_0,\Phi_0)$ of $\CC[\widetilde{\OO}]$, cf. Definition \ref{def:canonical}. 

\begin{definition}[Definition 6.0.1, \cite{LMBM}]\label{def:unipotentideal}
The \emph{unipotent ideal} attached to $\widetilde{\OO}$ is the primitive ideal
$$I(\widetilde{\OO}) := \ker{(\Phi_0: U(\fg) \to \cA_0)} \subset U(\fg)$$
We write $\gamma(\widetilde{\OO}) \in \fh^*/W$ for its infinitesimal character.
\end{definition}

We will need several basic facts about unipotent ideals and their infinitesimal characters. Recall the equivalence relation $\sim$ on nilpotent covers defined in Section \ref{sec:nilpotentcovers}.

\begin{prop}[Propositions 6.5.4, 8.1.1, \cite{LMBM}]\label{prop:unipotentfacts}
The following are true:
\begin{itemize}
    \item[(i)] For every $\widetilde{\OO} \in \Cov(G)$, $I(\widetilde{\OO}) \subset U(\fg)$ is a completely prime maximal ideal with associated variety $\overline{\OO}$.
    \item[(ii)] For $\widetilde{\OO},\widehat{\OO} \in \Cov(G)$, we have 
    $$I(\widetilde{\OO}) = I(\widehat{\OO}) \iff \widetilde{\OO} \sim \widehat{\OO}.$$
    \item[(iii)] Suppose $L \subset G$ is a Levi subgroup. Let $\widetilde{\OO}_L \in \Cov(L)$ and $\widetilde{\OO} = \Bind^G_L \widetilde{\OO}_L$. Then 
    $$\gamma(\widetilde{\OO}_L) = \gamma(\widetilde{\OO})$$
    in $\fh^*/W$. 
\end{itemize}
\end{prop}

\begin{proof}
The maximality in (i) is \cite[Theorem 5.0.1]{MBMat}. The rest of (i) is \cite[Proposition 6.1.2]{LMBM}. (ii) is \cite[Proposition 6.5.4]{LMBM}. (iii) is \cite[Proposition 8.1.1]{LMBM}.
\end{proof}

\begin{cor}\label{cor:bindequivalence}
$\Bind^G_L$ descends to a map on equivalence classes
    $$\Bind^G_L: \Cov(L)/\sim \, \to \Cov(G)/\sim$$
\end{cor}

\begin{proof}
Suppose $\widetilde{\OO}_L \sim \widehat{\OO}_L$ in $\Cov(L)$ and let $\widetilde{\OO} = \Bind^G_L \widetilde{\OO}_L$, $\widehat{\OO} = \Bind^G_L \widehat{\OO}_L$. By (ii) and (iii) of Proposition \ref{prop:unipotentfacts}, we have
$$\gamma(\widetilde{\OO}) = \gamma(\widehat{\OO})$$
By (i) of Proposition \ref{prop:unipotentfacts}, $I(\widetilde{\OO})$ and $I(\widehat{\OO})$ are maximal ideals in $U(\fg)$, and hence uniquely determined by their infinitesimal characters. So in fact
$$I(\widetilde{\OO}) = I(\widehat{\OO})$$
and therefore $\widetilde{\OO} \sim \widehat{\OO}$ by Proposition \ref{prop:unipotentfacts}(ii).

\end{proof}

For the next proposition, choose a $W$-invariant inner product on $\fh^*$ and write $\lVert\cdot\rVert$ for the associated norm.

\begin{prop}\label{prop:inequalityinflchar}
Suppose $\widehat{\OO} \to \widetilde{\OO}$ is a finite $G$-equivariant Galois cover. Then
\begin{itemize}
    \item[(i)] $\lVert\gamma(\widehat{\OO})\rVert \geq \lVert\gamma(\widetilde{\OO})\rVert$.
    \item[(ii)] There is equality in (i) if and only if the covering is almost \'{e}tale (in which case, $\gamma(\widehat{\OO})$ and $\gamma(\widetilde{\OO})$ are conjugate under $W$).
\end{itemize}
\end{prop}

\begin{proof}
Let $\cA_0(\widetilde{\OO})$ denote the canonical quantization of $\CC[\widetilde{\OO}]$ and let $\Gamma = \Aut(\widehat{\OO},\widetilde{\OO})$. Then $\Gamma$ acts on $\cA_0(\widehat{\OO})$ by filtered algebra automorphisms, and there is an isomorphism of filtered quantizations $\cA_0(\widehat{\OO})^{\Gamma} \simeq \cA_{\epsilon}(\widetilde{\OO})$ for some element $\epsilon \in \fP(\widetilde{\OO})$, see \cite[Proposition 5.3.1]{LMBM}. Choose $(L,\widetilde{\OO}) \in \Cov_0(G)$ such that $\widetilde{\OO} = \Bind^G_L \widetilde{\OO}_L$. Let $\delta$ denote the image of $\epsilon$ under the isomorphism $\fP(\widetilde{\OO}) \simeq \fX(\fl \cap [\fg,\fg])$. Then by \cite[Proposition 8.1.3]{LMBM}
$$\gamma(\widetilde{\OO}) = \gamma(\widetilde{\OO}_L), \qquad \gamma(\widehat{\OO}) = \delta + \gamma(\widetilde{\OO}_L),$$
where $\gamma(\widetilde{\OO}_L) \in (\fh/\fz(\fl))^*$. Since $\fh/(\fz(\fl))^*$ and $\fz(\fl)^*$ are orthogonal subspaces of $\fh^*$, we have
$$\lVert\gamma(\widehat{\OO})\rVert^2 = \lVert\gamma(\widetilde{\OO}_L) + \delta\rVert^2 = \lVert\gamma(\widetilde{\OO}_L)\rVert^2 + \lVert\delta\rVert^2 \geq \lVert\gamma(\widetilde{\OO})\rVert^2.$$
This proves (i). 

If the covering $\widehat{\OO} \to \widetilde{\OO}$ is almost \'{e}tale, then $\gamma(\widetilde{\OO})=\gamma(\widehat{\OO})$ by Proposition \ref{prop:unipotentfacts}(ii). Otherwise, it is clear from the proof of \cite[Proposition 5.3.1]{LMBM} that $\delta \neq 0$. And therefore, the inequality in (i) is strict. This proves (ii).
\end{proof}

\begin{cor}\label{cor:bind_almostetale}
Let $\widetilde{\OO}_L,\widehat{\OO}_L \in \Cov(L)$ and suppose $p: \widehat{\OO}_L \to \widetilde{\OO}_L$ is a Galois cover. Then $p$ is almost \'{e}tale if and only if $\Bind^G_L \widehat{\OO}_L \sim \Bind^G_L \widetilde{\OO}_L$.
\end{cor}

\begin{proof}
Let $\widetilde{\OO} = \Bind^G_L \widetilde{\OO}_L$ and $\widehat{\OO} = \Bind^G_L \widehat{\OO}_L$. If $p$ is almost \'{e}tale, then $\widehat{\OO}_L \sim \widetilde{\OO}_L$, and so $\widehat{\OO} \sim \widehat{\OO}$ by Corollary \ref{cor:bindequivalence} Conversely, suppose $p$ is not almost \'{e}tale. Then by Proposition \ref{prop:inequalityinflchar}, there is a strict inequality
$$\lVert\gamma(\widehat{\OO}_L)\rVert > \lVert\gamma(\widetilde{\OO}_L)\rVert.$$
Since the norm is $W$-invariant, this implies that the elements $\gamma(\widehat{\OO}_L)$ and $\gamma(\widetilde{\OO}_L)$ are not conjugate under $W$. But by Proposition \ref{prop:unipotentfacts}(iii), $\gamma(\widehat{\OO}) = \gamma(\widehat{\OO}_L)$ and $\gamma(\widetilde{\OO}) = \gamma(\widetilde{\OO}_L)$. So by Proposition \ref{prop:unipotentfacts}(ii), $\widehat{\OO}$ and $\widetilde{\OO}$ are in different equivalence classes.
\end{proof}

To conclude this section, we note that some of the unipotent ideals of Definition \ref{def:unipotentideal} (particularly in classical types) have previously appeared in the work of various experts, see e.g \cite{McGovern1994} and \cite{Barbasch_dualpairs}. The original contribution in \cite{LMBM} is to provide a \emph{uniform} definition of unipotent ideals.

\subsection{Unipotent bimodules}\label{sec:unipotentbimodules}

Let $G$ be a connected reductive algebraic group. A  \emph{$G$-equivariant Harish-Chandra $U(\fg)$-bimodule} is a finitely-generated $U(\fg)$-bimodule $X$ such that the adjoint action of $\fg$ on $X$ integrates to a rational action of $G$. Let $\HC^G(U(\fg))$ denote the category of $G$-equivariant Harish-Chandra $U(\fg)$-bimodules (with $U(\fg)$-bimodule homomorphisms). If $I \subset U(\fg)$ is a two-sided ideal, let $\HC^G(U(\fg)/I)$ denote the full subcategory of $\HC^G(U(\fg))$ consisting of bimodules $X \in \HC^G(U(\fg))$ such that $IX=XI=0$. This is a monoidal category under $\otimes_{U(\fg)}$. 

\begin{definition}[Definition 6.0.2, \cite{LMBM}]
Let $\widetilde{\OO} \in \Cov(G)$. A \emph{unipotent bimodule attached to $\widetilde{\OO}$} is an irreducible object in $\HC^G(U(\fg)/I(\widetilde{\OO}))$.
\end{definition}

We conclude this subsection by recalling a description of the category $\HC^G(U(\fg)/I(\widetilde{\OO}))$ given in \cite[Section 6]{LMBM}.

Recall from Lemma \ref{lem:coverfacts} that for any nilpotent cover $\widetilde{\OO} \in \Cov(G)$, there is a unique maximal element $\widetilde{\OO}_{max}$ in the equivalence class of $\widetilde{\OO}$. Define
\begin{equation}\label{eq:defofGamma}\Gamma(\widetilde{\OO}) := \Aut(\widetilde{\OO}_{max},\OO)\end{equation}
Note that $\Gamma(\widetilde{\OO})$ is a finite group. 

\begin{theorem}[Theorem 6.6.2, \cite{LMBM}]\label{thm:classificationHC}
Let $\widetilde{\OO} \in \Cov(G)$. Then there is an equivalence of monoidal categories
$$\HC^G(U(\fg)/I(\widetilde{\OO})) \simeq \Gamma(\widetilde{\OO})\modd$$
\end{theorem}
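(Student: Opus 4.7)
The plan is to first reduce to the maximal cover. By \cref{prop:unipotentfacts}(ii), $I(\widetilde{\OO})$ depends only on the equivalence class $[\widetilde{\OO}]$, so replacing $\widetilde{\OO}$ by $\widetilde{\OO}_{max}$ leaves $\HC^G(U(\fg)/I(\widetilde{\OO}))$ unchanged, while by definition $\Gamma(\widetilde{\OO}) = \Aut(\widetilde{\OO}_{max},\OO)$. Let $X = \Spec(\CC[\widetilde{\OO}_{max}])$ and let $\cA_0$ be its canonical Hamiltonian quantization with comoment map $\Phi_0\colon U(\fg) \to \cA_0$, so $\ker\Phi_0 = I(\widetilde{\OO})$. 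The group $\Gamma := \Gamma(\widetilde{\OO})$ acts on $\CC[X]$ by $G$-equivariant graded Poisson automorphisms, and by the uniqueness part of \cref{prop:Hamiltonian} this action lifts uniquely to a $\Gamma$-action on $\cA_0$ by $G$-equivariant filtered algebra automorphisms fixing $\Phi_0$. Hence $\Phi_0$ factors through a canonical homomorphism $\Psi\colon U(\fg)/I(\widetilde{\OO}) \to \cA_0^{\Gamma}$.

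The heart of the argument is noncommutative Galois descent along $\Psi$, and this requires two structural inputs. First, $\Psi$ must be an isomorphism; passing to associated graded and using that taking $\Gamma$-invariants is exact in characteristic $0$, this reduces to $\gr(U(\fg)/I(\widetilde{\OO})) = \CC[X]^{\Gamma} = \CC[\OO]$. The latter equality uses that $\widetilde{\OO}_{max}\to\OO$ is Galois with group $\Gamma$ by \cref{lem:coverfacts}(ii) and the normality of $\overline{\OO}$. The inclusion $\gr(U(\fg)/I(\widetilde{\OO})) \subseteq \CC[\OO]$ comes from \cref{prop:unipotentfacts}(i), which says $I(\widetilde{\OO})$ is completely prime with associated variety $\overline{\OO}$, and equality follows because the image of $S(\fg) \to \CC[X]$ is exactly $\CC[\overline{\OO}] = \CC[\OO]$. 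Second, I would prove that $\cA_0$ is a simple algebra; this is where the maximality of $\widetilde{\OO}_{max}$ enters crucially, since any proper two-sided ideal would produce on associated graded a $G$-stable Poisson ideal in $\CC[X]$, whose vanishing locus would either enlarge the equivalence class (contradicting \cref{lem:coverfacts}(i)) or violate the almost-\'{e}taleness criterion \cref{cor:bind_almostetale} applied to a birational induction datum of $\widetilde{\OO}_{max}$.

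Granted these two inputs, the equivalence is realized by the functor
\[
F\colon \Gamma\modd \longrightarrow \HC^G(U(\fg)/I(\widetilde{\OO})), \qquad V \longmapsto (V \otimes_{\CC} \cA_0)^{\Gamma},
\]
with quasi-inverse supplied by a Losev-style restriction $\bullet_{\dagger}$ that extracts the generic $\Gamma$-representation along $\widetilde{\OO}$. Simplicity of $\cA_0$ together with the Galois-type isomorphism $\cA_0 \otimes_{\cA_0^{\Gamma}} \cA_0 \xrightarrow{\sim} \cA_0 \# \Gamma$ --- the quantum shadow of the torsor identity $\CC[\widetilde{\OO}_{max}] \otimes_{\CC[\OO]} \CC[\widetilde{\OO}_{max}] \simeq \CC[\widetilde{\OO}_{max}] \otimes_{\CC} \CC[\Gamma]^{*}$ --- turns $F$ into a monoidal equivalence, with the monoidal structure coming directly from the tensor product of $\Gamma$-representations. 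The main obstacle I anticipate is the simultaneous verification of simplicity of $\cA_0$ and the Galois property $\cA_0 \otimes_{\cA_0^{\Gamma}} \cA_0 \simeq \cA_0 \# \Gamma$: both are natural \emph{quantum lifts} of geometric statements about the maximal cover, but rigorously lifting them from the associated graded level requires a careful deformation-theoretic argument building on \cref{thm:universal_deform} and on the maximality statement of \cref{lem:coverfacts}.
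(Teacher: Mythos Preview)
The paper does not prove this theorem; it is quoted verbatim from \cite{LMBM} (Theorem 6.6.2) with no argument given. So there is no ``paper's own proof'' to compare against, and your proposal should be read as an attempted reconstruction of the argument in \cite{LMBM}.

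That said, there is a genuine gap in your sketch. You claim that $\Psi\colon U(\fg)/I(\widetilde{\OO}) \to \cA_0^{\Gamma}$ is an isomorphism by passing to associated graded and using ``the normality of $\overline{\OO}$'' to identify $\CC[\overline{\OO}]$ with $\CC[\OO]$. But nilpotent orbit closures are \emph{not} in general normal (already $\widetilde{A}_1$ in $G_2$, which appears in Table~\ref{table:G2}, has non-normal closure), and in such cases $\gr(U(\fg)/I(\widetilde{\OO})) = \CC[\overline{\OO}] \subsetneq \CC[\OO] = \CC[X]^{\Gamma}$, so $\Psi$ is strictly injective on the associated graded and hence cannot be surjective. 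Your descent argument, which is built on this isomorphism together with the identity $\cA_0 \otimes_{\cA_0^{\Gamma}} \cA_0 \simeq \cA_0 \# \Gamma$, therefore does not go through as stated.

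The actual argument in \cite{LMBM} bypasses this issue by working not with $\Psi$ directly but with Losev's restriction functor $\bullet_{\dagger}$ (which you mention only as a candidate quasi-inverse). That functor is defined by completing at a point of $\OO$ and extracting a $\pi_1^G(\OO)$-equivariant structure; its target is controlled by the local geometry of $\widetilde{\OO}_{max}$ over $\OO$ rather than by the global ring map $\Psi$, and it is insensitive to whether $\overline{\OO}$ is normal. The maximality of $\widetilde{\OO}_{max}$ enters, as you correctly anticipate, to guarantee that $\cA_0$ has no nontrivial Harish-Chandra bimodule ideals, which is what forces $\bullet_{\dagger}$ to be faithful and ultimately an equivalence onto $\Gamma(\widetilde{\OO})\modd$.
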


\subsection{Birational induction preserves maximal covers}\label{sec:maximal}

In this section, we will show that birational induction takes maximal covers to maximal covers. First we compare the preimages of covers in the same equivalence class under the map $\Bind: \Cov_0(G) \to \Cov(G)$. 

\begin{prop}\label{prop:compare_birigid_data}
	Suppose $\widetilde{\OO}, \widehat{\OO} \in \Cov(G)$ and $p: \widetilde{\OO} \to \widehat{\OO}$ is a finite Galois $G$-equivariant almost \'{e}tale covering map (cf. Section \ref{sec:nilpotentcovers}). Suppose $\widetilde{\OO} =  \Bind^G_L \widetilde{\OO}_L$, where $L$ is a Levi subgroup of $G$ and $\widetilde{\OO}_L \in \Cov(L)$ is birationally rigid. Then there exist a Levi subgroup $M \subset G$ containing $L$ and a birationally rigid cover $\widehat{\OO}_M \in \Cov(M)$, such that
    \begin{enumerate}
        \item[(i)]
	   $\Bind_M^G \widehat{\OO}_M \simeq \widehat{\OO}$.
	\item[(ii)]
	   There is a finite Galois almost \'{e}tale $M$-equivariant covering map 
            $$p_M: \widetilde{\OO}_M := \Bind_L^M \widetilde{\OO}_L \to \widehat{\OO}_M,$$ 
          such that the induced covering map $\Bind_M^G (p_M): \widetilde{\OO} = \Bind_M^G \widetilde{\OO}_M \to \Bind_M^G \widehat{\OO}_M$ corresponds to $p$ under the isomorphism in (i).
	\end{enumerate}	
\end{prop}

\begin{proof}
    Without loss of generality, we may assume $G$ is semisimple. Set $\Gamma = \Aut(\widetilde{\OO},\widehat{\OO})$. Then $\Gamma$ is a subgroup of $\Aut(\widetilde{\OO}, \OO)$ and hence, by Proposition \ref{prop:split_Weyl_O}(i), can be regarded as a subgroup of $\widetilde{W}(\widetilde{\OO})$ whose intersection with $W(\widetilde{\OO})$ is trivial. Therefore $\Gamma$ acts on 
      $$\fP(\widetilde{\OO}) \simeq  \mathfrak{X}(\fl) = (\fl / [\fl, \fl])^* = \fz(\fl)^* \simeq \fz(\fl),$$ 
    where the last identification is by the Killing form. Let  $M := Z_G (\fz(\fl)^\Gamma)$ be the centralizer of $\fz(\fl)^\Gamma$ in $G$. Then $M$ is a Levi subgroup containing $L$ with Lie algebra $\fm$ satisfying $\fz(\fm) = \fz(\fl)^\Gamma$. Note that $N_G(L)$ acts on $\fz(\fl)$ by conjugation. The subgroup of $N_G(L)$ consisting of elements that fix every vector in $\fz(\fm) \subset \fz(\fl)$ is exactly $N_M(L)$. Therefore $\Gamma$ in fact lies in $\widetilde{W}(\widetilde{\OO}_M) \subset \widetilde{W}(\widetilde{\OO})$. Furthermore, since $\Gamma$ has trivial intersection with $W(\widetilde{\OO})$, it also has trivial intersection with $W(\widetilde{\OO}_M)$ by Proposition \ref{prop:compare_extended_Weyl}. Hence $\Gamma$ is mapped isomorphically onto its image in $\Aut^M(\widetilde{\OO}_M)$, denoted as $\Gamma_M$, under the map $\widetilde{W}(\widetilde{\OO}_M) \twoheadrightarrow \Aut^M(\widetilde{\OO}_M)$ and $\Gamma_M$ in turn maps isomorphically onto $\Gamma$ under the map $\iota_{\Aut} : \Aut^M(\widetilde{\OO}_M) \hookrightarrow \Aut^G(\widetilde{\OO})$. Therefore $\Gamma \simeq \Gamma_M$ acts on $\widetilde{\OO}_M$ freely and $M$-equivariantly, and intertwines the moment map $\widetilde{\OO}_M \to \fm^*$. Thus we can form the quotient $\widehat{\OO}_M := \widetilde{\OO}_M / \Gamma_M$, which is an $M$-equivariant nilpotent cover of $\OO_M$; let $p_M: \widetilde{\OO}_M \to \widehat{\OO}_M$ denote the quotient map. Then $\widehat{\OO}_M$ is an $M$-equivariant nilpotent cover and $p_M$ is a finite Galois $M$-equivariant covering map, such that $\Aut(\widetilde{\OO}_M,\widehat{\OO}_M) = \Gamma_M$. By \cref{prop:propsofbind} (ii), the induced covering map $\Bind(p_M) : \widetilde{\OO} \to \Bind_M^G \widehat{\OO}_M$ is Galois and the composite map $\Aut(\widetilde{\OO}_M,\widehat{\OO}_M) \xrightarrow{\sim} \Aut(\widetilde{\OO},\Bind_M^G \widehat{\OO}_M) \hookrightarrow \Aut^G(\widetilde{\OO})$ is nothing else but the isomorphism $\Gamma_M \xrightarrow{\sim} \Gamma$. Hence $\Bind_M^G \widehat{\OO}_M \simeq \widetilde{\OO} / \Gamma  \simeq \widehat{\OO}$. The claim that $p_M$ is almost \'{e}tale follows from Corollary \ref{cor:bind_almostetale}. 
    
    It remains to show that $\widehat{\OO}_M$ is birationally rigid. By Proposition \ref{prop:Namikawacenter}, we have $\fP(\widetilde{\OO}_M) \simeq \fz(\fl \cap [\fm, \fm])^*$. By Proposition \ref{prop:pullback}, the map $\fP(\widetilde{\OO}) \to \fP(\widetilde{\OO}_M)$ in \eqref{eq:pullback_Y} corresponds to the restriction map $r: \fz(\fl)^* \twoheadrightarrow (\fz(\fl) \cap [\fm, \fm])^*$, which is $\Gamma$-equivariant by Lemma \ref{lem:fP_projection_equiv}. By the construction of $M$, the kernel of $r$ is $\fz(\fm)^* = (\fz(\fl)^\Gamma)^* = (\fz(\fl)^*)^\Gamma = \fP(\widetilde{\OO})^\Gamma$, hence the action of $\Gamma_M = \Gamma$ on $\fP(\widetilde{\OO}_M)$ only fixes $0$. Now by Lemma \ref{lem:Gaois_almostetale_Namikawa_space}, $\fP(\widehat{\OO}_M) = \fP(\widetilde{\OO}_M)^\Gamma = 0$, therefore Proposition \ref{prop:Namikawacenter} implies that $\widehat{\OO}_M$ is birationally rigid.    
    
\end{proof}

\begin{theorem}\label{thm:maximaltomaximal}
  If $\widehat{\OO}_M \in \Cov(M)$ is maximal in its equivalence class, then $\Bind^G_M \widehat{\OO}_M \in \Cov(G)$ is maximal in its equivalence class. 
\end{theorem}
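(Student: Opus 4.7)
The plan is to argue by contradiction, combining Proposition \ref{prop:compare_birigid_data} with the uniqueness of birationally rigid data from Proposition \ref{prop:bindinjective}. Set $\widetilde{\OO} := \Bind^G_M \widehat{\OO}_M$ and let $\widetilde{\OO}_{\max}$ be the unique maximal cover of the equivalence class $[\widetilde{\OO}]$ provided by Lemma \ref{lem:coverfacts}(i). Assume for contradiction that $\widetilde{\OO}_{\max} \neq \widetilde{\OO}$. Then Lemma \ref{lem:coverfacts}(ii) yields a Galois $G$-equivariant covering $p: \widetilde{\OO}_{\max} \twoheadrightarrow \widetilde{\OO}$ whose Galois group $\Gamma$ has order $|\Gamma| > 1$, and $p$ is almost \'{e}tale since $\widetilde{\OO}_{\max} \geqslant \widetilde{\OO}$.

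First I would pick $(L,\widetilde{\OO}_L) \in \Cov_0(G)$ with $\Bind^G_L \widetilde{\OO}_L \simeq \widetilde{\OO}_{\max}$ and apply Proposition \ref{prop:compare_birigid_data} to $p$. This produces a Levi subgroup $M' \subset G$ containing $L$ and a birationally rigid cover $\widehat{\OO}_{M'} \in \Cov(M')$ with $\Bind^G_{M'} \widehat{\OO}_{M'} \simeq \widetilde{\OO}$, together with an almost \'{e}tale Galois $M'$-equivariant covering $p_{M'} : \widetilde{\OO}_{M'} := \Bind^{M'}_L \widetilde{\OO}_L \twoheadrightarrow \widehat{\OO}_{M'}$ whose Galois group, by Proposition \ref{prop:propsofbind}(ii), is identified with $\Gamma$; in particular $\deg p_{M'} = |\Gamma| > 1$.

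Next I would align $(M',\widehat{\OO}_{M'})$ with the birationally rigid data of $\widehat{\OO}_M$. Writing $\widehat{\OO}_M \simeq \Bind^M_{L''} \widehat{\OO}_{L''}$ for some $(L'',\widehat{\OO}_{L''}) \in \Cov_0(M)$, transitivity of $\Bind$ (Proposition \ref{prop:propsofbind}(iii)) gives $\widetilde{\OO} \simeq \Bind^G_{L''} \widehat{\OO}_{L''}$. Thus both $(M',\widehat{\OO}_{M'})$ and $(L'',\widehat{\OO}_{L''})$ are birationally rigid data for $\widetilde{\OO}$, so by the bijectivity in Proposition \ref{prop:bindinjective} they are $G$-conjugate. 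Conjugating the tuple $(L,\widetilde{\OO}_L,M',\widehat{\OO}_{M'},p_{M'})$ by an appropriate element of $G$, we may assume $L \subset M' = L'' \subset M$ and $\widehat{\OO}_{M'} = \widehat{\OO}_{L''}$.

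Applying $\Bind^M_{M'}$ to $p_{M'}$ then yields, by parts (ii) and (iii) of Proposition \ref{prop:propsofbind}, a finite Galois $M$-equivariant covering
\[
  \Bind^M_{M'}(p_{M'}) : \Bind^M_L \widetilde{\OO}_L \;=\; \Bind^M_{M'} \widetilde{\OO}_{M'} \;\twoheadrightarrow\; \Bind^M_{M'} \widehat{\OO}_{M'} \;=\; \widehat{\OO}_M
\]
of degree $|\Gamma|$. Since $p_{M'}$ is almost \'{e}tale, Corollary \ref{cor:bind_almostetale} gives $\Bind^M_L \widetilde{\OO}_L \sim \widehat{\OO}_M$ in $\Cov(M)$; hence $\Bind^M_L \widetilde{\OO}_L$ lies in $[\widehat{\OO}_M]$ but has degree strictly larger than $\widehat{\OO}_M$ over $\OO_M$, contradicting the maximality of $\widehat{\OO}_M$ via Lemma \ref{lem:coverfacts}(ii) (the maximal cover in an equivalence class has the largest degree). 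The main obstacle is Proposition \ref{prop:compare_birigid_data}: transferring the Galois automorphism group $\Gamma$ from $p$ down to a cover $p_{M'}$ at the Levi level is the key structural input, after which the rest of the argument is essentially bookkeeping with birationally rigid data.
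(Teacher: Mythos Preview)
Your proof is correct and follows essentially the same strategy as the paper, hinging on Proposition~\ref{prop:compare_birigid_data} together with the uniqueness of birationally rigid data (Proposition~\ref{prop:bindinjective}). The only difference is organizational: the paper first reduces to the case where $\widehat{\OO}_M$ itself is birationally rigid (by showing that its birationally rigid datum must also be maximal), whereas you handle the general case in one pass by matching $(M',\widehat{\OO}_{M'})$ with the birationally rigid datum $(L'',\widehat{\OO}_{L''})$ of $\widehat{\OO}_M$ and then inducing $p_{M'}$ up to the $M$-level via Corollary~\ref{cor:bind_almostetale}.
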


\begin{proof}
  Set $\widehat{\OO} = \Bind^G_M \widehat{\OO}_M$. We first claim that the statement can be reduced to the case when $\widehat{\OO}_M$ is birationally rigid. Indeed, we can always choose $(L, \widehat{\OO}_{L}) \in \Cov_0(M)$ so that $\Bind_{L}^M \widehat{\OO}_{L} = \widehat{\OO}_M$ and hence $\Bind_{L}^G \widehat{\OO}_{L} = \widehat{\OO}$. Then $\widehat{\OO}_{L}$ must be maximal in its equivalence class. Otherwise, suppose $\widehat{\OO}_{L}^{max}$ is the maximal cover in the equivalence class $[\widehat{\OO}_{L}]$  and there is a nontrivial finite Galois $L$-equivariant covering map $\widehat{\OO}_{L}^{max} \to \widehat{\OO}_{L}$. By Proposition \ref{prop:propsofbind}, (ii), the induced covering map $\Bind_{L}^M \widehat{\OO}_{L}^{max} \to \widehat{\OO}_M$ is nontrivial. But by Proposition \ref{cor:bindequivalence}, $\Bind_{L}^M \widehat{\OO}_{L}^{max}$ and $\widehat{\OO}_M$ are in the same equivalence class. This contradicts with the maximality of $\widehat{\OO}_M$.
  
  Thus we can assume that $\widehat{\OO}_M$ is birationally rigid. Let $\widetilde{\OO}$ be the maximal cover in the equivalence class $[\widehat{\OO}]$ and $p: \widetilde{\OO} \to \widehat{\OO}$ be a Galois covering map. By applying Proposition \ref{prop:compare_birigid_data} to $p$, we know that there exists a Levi subgroup $M'$, a birationally rigid cover $\widehat{\OO}_{M'} \in \Cov(M')$, an $\widetilde{\OO}_{M'} \in \Cov(M')$, such that $\Bind_{M'}^G \widehat{\OO}_{M'} \simeq \widehat{\OO}$ and $\Bind_{M'}^G \widetilde{\OO}_{M'} \simeq \widetilde{\OO}$, together with an $M'$-equivariant almost \'{e}tale covering map $p_{M'}: \widetilde{\OO}_{M'} \to \widehat{\OO}_{M'}$ which induces $p$. By Proposition \ref{prop:bindinjective}, we can assume $M'=M$ and $\widehat{\OO}_{M'}=\widehat{\OO}_M$ possibly after $G$-conjugation. But $\widehat{\OO}_M$ is maximal, therefore $p_{M'}$ is an isomorphism and so is $p$.
\end{proof}

\section{Main results}\label{sec:mainresults}

Let $G$ be a complex connected reductive algebraic group with Langlands dual group $G^{\vee}$. 

\begin{definition}\label{def:Lusztigcover}
For any $\OO \in \Orb(G)$, the \emph{Lusztig cover} of $\OO$ is the finite $G$-equivariant cover $\widetilde{\OO} \to \OO$ corresponding to the kernel of the map $A(\OO) \twoheadrightarrow \bar{A}(\OO)$.
\end{definition}

\begin{rmk}\label{rmk:Lusztigcoverisogeny}
The Lusztig cover is independent of isogeny in the following sense. Suppose $G' \to G$ is a covering group and let $\OO \in \Orb(G)$. Write $\widetilde{\OO}_{Lus}$ (resp. $\widetilde{\OO}_{Lus}'$) for the Lusztig cover of $\OO$ associated to $G$ (resp. $G'$). Since $\bar{A}(\OO)$ is independent of isogeny, the action of $G'$ on $\widetilde{\OO}'_{Lus}$ descends to an action of $G$, and $\widetilde{\OO}'_{Lus} \simeq \widetilde{\OO}_{Lus}$ as $G$-equivariant covers of $\OO$. 
\end{rmk}

\begin{prop}\label{prop:distinguishedbirigid}
Suppose $(\OO^{\vee},\bar{C}) \in \LA(G^{\vee})$ is a special distinguished Lusztig-Achar datum and let $\OO=d_S(\OO^{\vee},\bar{C})$. Then
\begin{itemize}
    \item[(i)] $\widetilde{\OO}_{Lus}$ is birationally rigid. 
\end{itemize}
Furthermore
\begin{itemize}
    \item[(ii)] The restriction of $d_S$ to the set of special distinguished Lusztig-Achar data is injective.
\end{itemize}
\end{prop}

\begin{proof}
If $G$ is a simple classical group, then (i) and (ii) are proved in Section \ref{subsec:proofsclassical1}. If $G$ is a simple adjoint group of exceptional type, then (i) and (ii) are proved in Section \ref{subsec:proofsexceptional1}. To reduce to these cases, it suffices to prove the following:
\begin{itemize}
    \item[(a)] If the assertions hold for $G_1$ and $G_2$, they also hold for the product $G_1 \times G_2$.
    \item[(b)] Suppose $G' \to G$ is a covering group. Then the assertions hold for $G$ if and only if they hold for $G'$.
\end{itemize}

For assertion (ii), both (a) and (b) are obvious. Indeed, the sets $\LA^*(G^{\vee})$, $\Orb(G)$ and the map $d_S: \LA^*(G^{\vee}) \to \Orb(G)$ are independent of isogeny and $d_S^G=d_S^{G_1} \times d_S^{G_2}$. For (i), (a) is clear (for essentially the same reasons). For (b), we argue as follows. By Remark \ref{rmk:Lusztigcoverisogeny}, there is an isomorphism of algebraic varieties $\widetilde{\OO}_{Lus} \simeq \widetilde{\OO}_{Lus}'$. Now (b) follows at once from Proposition \ref{prop:criterionbirigidcover}.
\end{proof}

\begin{rmk}\label{rmk:nonspecial}
We note that Proposition \ref{prop:distinguishedbirigid} is not true if we drop the assumption that $(\OO^{\vee},\bar{C}) \in \LA(G^{\vee})$ is special. For example, let $G$ be the simple adjoint group of type $E_7$ and let $\OO^{\vee}=A_4+A_1$. Then $\bar{A}(\OO^{\vee})=S_2$. Let $\bar{C}$ denote the nontrivial conjugacy class in $\bar{A}(\OO^{\vee})$. Then $(\OO^{\vee},\bar{C})$ is distinguished, but not special. Note that $\OO=d_S(\OO^{\vee},\bar{C})=A_3+A_2+A_1$ and $A(\OO)=1$. So $\widetilde{\OO}_{univ}=\OO$. But $\OO$ is not birationally rigid, see \cite[Proposition 3.8.3]{MBMat} (notably, if we replace $G$ with its simply connected form, then $A(\OO)=\ZZ_2$ and hence $\widetilde{\OO}_{univ}$ is a 2-fold cover of $\OO$. By \cite[Proposition 3.9.5]{MBMat}, this cover is birationally rigid).
\end{rmk}

We are now prepared to define our duality map $D: \LA^*(G^{\vee}) \to \Cov(G)$. First, consider the map
$$D_0: \LA^*_0(G^{\vee}) \to \Cov_0(G), \qquad D_0(L^{\vee},(\OO_{L^{\vee}},\bar{C}_{L^{\vee}})) = (L,d_S^L(\OO_{L^{\vee}},\bar{C}_{L^{\vee}})_{Lus})$$
This is well-defined by Proposition \ref{prop:distinguishedbirigid}(ii). We define $D$ to be the composition
$$D: \LA^*(G^{\vee}) \overset{\Sat^{-1}}{\to} \LA_0^*(G^{\vee}) \overset{D_0}{\to} \Cov_0(G) \overset{\Bind}{\to} \Cov(G)$$
where $\Sat^{-1}$ is the map of Proposition \ref{prop:specialtospecial}(ii). We will sometimes write $D^G$ to indicate the dependence on $G$.

\begin{prop}\label{prop:propsofD}
The map 
$$D: \LA^*(G^{\vee}) \to \Cov(G)$$
has the following properties:
\begin{itemize}
    \item[(i)] $D$ is injective.
    \item[(ii)] If $L \subset G$ is a Levi subgroup, then
    $$D^G \circ \Sat^{G^{\vee}}_{L^{\vee}} = \Bind^G_L \circ D^L $$
    \item[(iii)] $(\OO^{\vee},\bar{C})$ is distinguished if and only if $D(\OO^{\vee},\bar{C})$ is birationally rigid.

    \item[(iv)] $D$ is independent of isogeny in the following sense: if $\widetilde{G} \to G$ is a covering group, then the following diagram commutes 
    \begin{center}
        \begin{tikzcd}
            \LA^*(\widetilde{G}^{\vee}) \ar[r,"D^{\widetilde{G}}"] & \Cov(\widetilde{G}) \\
            \LA^*(G^{\vee}) \ar[r,"D^G"] \ar[u,equals]& \Cov(G) \ar[u]
        \end{tikzcd}
    \end{center}
\end{itemize}
\end{prop}

\begin{proof}
For (i), we note that $D$ is the composition of three injective maps: $\Sat^{-1}: \LA^*(G^{\vee}) \to \LA^*_0(G^{\vee})$ is injective by Proposition \ref{prop:specialtospecial}(ii), $D_0$ is injective by Proposition \ref{prop:distinguishedbirigid}(ii), and $\Bind: \Cov_0(G) \to \Cov(G)$ is injective by Proposition \ref{prop:bindinjective}. Hence, $D$ is injective, proving (i). 

For (ii), let $L \subset G$ be a Levi subgroup of $G$ and let $(\OO_{L^{\vee}},\bar{C}_{L^{\vee}}) \in \LA^*(L^{\vee})$. Suppose $\Sat(K^{\vee},(\OO_{K^{\vee}},\bar{C}_{K^{\vee}})) = (\OO_{L^{\vee}},\bar{C}_{L^{\vee}})$ for $(K^{\vee},(\OO_{K^{\vee}},\bar{C}_{K^{\vee}})) \in \LA^*_0(L^{\vee})$ and let $\widetilde{\OO}_K = d_S^K(\OO_{K^{\vee}},\bar{C}_{K^{\vee}})_{univ}$. Then
$$D^G(\Sat^{G^{\vee}}_{L^{\vee}}(\OO_{L^{\vee}},\bar{C}_{L^\vee}) = \Bind^G_K\widetilde{\OO}_L$$
whereas
$$\Bind^G_L(D^L(\OO_{L^{\vee}},\bar{C}_{L^\vee})) = \Bind^G_L \Bind^L_K \widetilde{\OO}_K.$$
Now (ii) is immediate from the transitivity of birational induction, see Proposition \ref{prop:propsofbind}(ii).

(iii) follows from (ii) together with Proposition \ref{prop:distinguishedbirigid}(i). (iv) is a consequence of Remark \ref{rmk:Lusztigcoverisogeny}.
\end{proof}

\begin{rmk}
We note that our map $D$ is not in general surjective (nor is its composition with $\Cov(G) \to \Cov(G)/\sim$). Indeed, let $G$ be the (unique) simple group of type $F_4$, and let $\OO = A_2$. There is a $G$-equivariant double cover $\widetilde{\OO}$  of $\OO$, which is birationally rigid by \cite[Proposition 3.9.5]{MBMat}. Since $A(\OO)\simeq \ZZ_2$ and $\OO$ is birationally induced, it is clear that $\widetilde{\OO}$ is the unique element of its equivalence class. If $\widetilde{\OO}$ were to belong to the image of $D$, then by Proposition \ref{prop:propsofD}(iii) it would have to be the case that $\widetilde{\OO} = D(\OO^{\vee},\bar{C})$ for some distinguished special Lustig-Achar datum $(\OO^{\vee},\bar{C})$. This would imply that $\OO = d_S(\OO^{\vee},\bar{C})$. Examining Table \ref{table:F4}, we see that no such distinguished special $(\OO^{\vee},\bar{C})$ exists. 
\end{rmk}

\begin{rmk}
We note that our duality map $D$ generalizes the duality maps of Barbasch-Vogan-Lusztig-Spaltenstein (denoted $d$), Sommers (denoted $d_S$), Losev-Mason-Brown-Matvieievskyi (denoted $\tilde{d}$), and Achar (denoted $d_A$) in the following sense: if $(\OO^{\vee},\bar{C}) \in \LA^*(G^{\vee})$, then
\begin{itemize}
    \item[(i)] $D(\OO^{\vee},\bar{C})$ is a cover of $d_S(\OO^{\vee},\bar{C})$.
    \item[(ii)] If $\bar{C}=1$, then $D(\OO^{\vee},\bar{C})$ belongs to the equivalence class $\tilde{d}(\OO^{\vee})$.
    \item[(iii)]  If $\bar{C}=1$, then $D(\OO^{\vee},\bar{C})$ is a cover of $d(\OO^{\vee})$.
    \item[(iv)] Let $d_A(\OO^{\vee},\bar{C}) = (\OO,\bar{C}')$. By \cite{Lusztig1997} and \cite[Section 7.1]{Achar2003}, $\bar{A}(\OO)$ is a Coxeter group and admits a Coxeter presentation unique up to conjugacy. We can then associate a parabolic subgroup $H_{\bar{C}'} \subset \bar{A}(\OO)$ to the conjugacy class $\bar{C}'$ up to conjugacy. Let $H \subset A(\OO)$ be preimage of $H_{\bar{C}'}$ under the quotient map $A(\OO) \twoheadrightarrow \bar{A}(\OO)$. Then $H$ determines a $G$-equivariant cover $\widetilde{\OO}$ of $\OO$. One can check that $\widetilde{\OO}$ is equivalent to $D(\OO^{\vee},\bar{C})$ in the sense of Definition \ref{defn:cover_equivalence}. This will be proved in a future paper.
\end{itemize}
\end{rmk}

For $\gamma \in \fh^*$, define
$$s = \exp(2\pi i \gamma), \qquad L^{\vee}_{\gamma,0} = Z_{G^{\vee}}(\gamma), \qquad L^{\vee}_{\gamma} = Z_{G^{\vee}}(s)^{\circ}.$$
Note that $L^{\vee}_{\gamma}$ is a pseudo-Levi subgroup of $G^{\vee}$ and $L^{\vee}_{\gamma,0}$ is a Levi subgroup of $L^{\vee}_{\gamma}$. Consider the McNinch-Sommers datum
\begin{equation}\label{eq:MS_map}
\MS(\gamma) := (L^{\vee}_{\gamma}, sZ^{\circ},\Ind^{L^{\vee}_{\gamma}}_{L^{\vee}_{\gamma,0}}\{0\}) \in \MS(G^{\vee})
\end{equation}
This defines a map
$$\MS: \fh^* \to \MS(G^{\vee})$$
Composing with the projection $\MS(G^{\vee}) \overset{\pi}{\to} \Conj(G^{\vee}) \to \LA(G^{\vee})$, we get a further map
$$\LA: \fh^* \to \LA(G^{\vee}).$$
We will sometimes write $\MS^{G^{\vee}}: \fh^* \to \MS(G^{\vee})$ and $\LA^{G^{\vee}}: \fh^* \to \LA(G^{\vee})$ to indicate the dependence on $G^{\vee}$.

Let $\fh_{\RR}^* \subset \fh^*$ denote the real form of $\fh^*$ spanned by the roots in $G$. To any Lusztig-Achar datum $(\OO^{\vee},\bar{C}) \in \LA(G^{\vee})$, we attach a $W$-invariant subset $S(\OO^{\vee},\bar{C}) \subset \fh_{\RR}^*$ as follows. First, choose a Levi subgroup $L^{\vee} \supset H^{\vee}$ and a distinguished Lusztig-Achar datum $(\OO_{L^{\vee}},\bar{C}_{L^{\vee}}) \in \LA(L^{\vee})$ such that $(\OO^{\vee},\bar{C}) = \Sat^{G^{\vee}}_{L^{\vee}} (\OO_{L^{\vee}},\bar{C}_{L^{\vee}})$. By Proposition \ref{prop:uniquedistinguishedAbar}, the pair $(\OO_{L^{\vee}},\bar{C}_{L^{\vee}})$ is unique up to conjugation by $L^\vee$. Define
$$S(\OO^{\vee},\bar{C}) := W \cdot \left((\LA^{L^{\vee}})^{-1}(\OO_{L^{\vee}},\bar{C}_{L^{\vee}}) \cap \fh_{\RR}^*\right)$$ 
This is a $W$-invariant subset of $\fh_{\RR}^*$, independent of the choice of $(L^{\vee},(\OO_{L^{\vee}},\bar{C}_{L^{\vee}}))$. Since it is $W$-invariant, we can (and often will) regard $S(\OO^{\vee},\bar{C})$ as a subset of $\fh^*/W$.

Now choose a $W$-invariant non-degenerate symmetric form on $\fh^*$ and write $\lVert \cdot \rVert$ for the associated norm. 

\begin{theorem}\label{thm:inflchars}
Let $(\OO^{\vee},\bar{C}) \in \LA^*(G^{\vee})$. Then there is a unique minimal-length $W$-orbit
$$\gamma(\OO^{\vee},\bar{C}) \in S(\OO^{\vee},\bar{C})$$
Furthermore,
$$\gamma(\OO^{\vee},\bar{C}) = \gamma(D(\OO^{\vee},\bar{C})).$$
\end{theorem}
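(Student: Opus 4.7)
The plan is to reduce to the distinguished case and then verify that case by case. By Proposition \ref{prop:specialtospecial}(ii), write $(\OO^{\vee},\bar{C}) = \Sat^{G^{\vee}}_{L^{\vee}}(\OO_{L^{\vee}},\bar{C}_{L^{\vee}})$ with $(\OO_{L^{\vee}},\bar{C}_{L^{\vee}}) \in \LA^*(L^{\vee})$ distinguished and unique up to $G^{\vee}$-conjugacy. By Proposition \ref{prop:propsofD}(ii) and Proposition \ref{prop:unipotentfacts}(iii),
$$\gamma(D(\OO^{\vee},\bar{C})) = \gamma(\Bind^G_L D^L(\OO_{L^{\vee}},\bar{C}_{L^{\vee}})) = \gamma(D^L(\OO_{L^{\vee}},\bar{C}_{L^{\vee}}))$$
as $W$-orbits in $\fh^*$. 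On the other hand, by definition $S(\OO^{\vee},\bar{C}) = W \cdot S_0^L$ for $S_0^L := (\LA^{L^{\vee}})^{-1}(\OO_{L^{\vee}},\bar{C}_{L^{\vee}}) \cap \fh_{\RR}^*$, and since $W$ acts by isometries the minimum-length $W$-orbits in $S(\OO^{\vee},\bar{C})$ are in natural bijection with the minimum-length $W_L$-orbits in $S_0^L$. This reduces the theorem to the case in which $(\OO^{\vee},\bar{C})$ is already distinguished.

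Assume now $(\OO^{\vee},\bar{C}) \in \LA^*(G^{\vee})$ is distinguished, so $D(\OO^{\vee},\bar{C}) = \widetilde{\OO}_{Lus}$ is the Lusztig cover of $\OO = d_S(\OO^{\vee},\bar{C})$. Two assertions must be verified: (i) some representative of $\gamma(\widetilde{\OO}_{Lus})$ lies in $\fh_{\RR}^*$ and maps under $\LA^{G^{\vee}}$ to $(\OO^{\vee},\bar{C})$; and (ii) every other $W$-orbit in $(\LA^{G^{\vee}})^{-1}(\OO^{\vee},\bar{C}) \cap \fh_{\RR}^*$ has strictly greater norm. For (i), the natural candidate is the standard lift of a McNinch-Sommers representative $(M^{\vee}, sZ^{\circ}, \OO_{M^{\vee}})$ of $(\OO^{\vee},\bar{C})$: if $(e^{\vee}, f^{\vee}, h^{\vee})$ is an $\mathfrak{sl}(2)$-triple in $\fm^{\vee}$ with $e^{\vee} \in \OO_{M^{\vee}}$ and $\mu \in \fz(\fm^{\vee})$ satisfies $\exp(2\pi i \mu) = s$, then $\gamma := \tfrac{1}{2}h^{\vee} + \mu$ satisfies $\MS(\gamma) = (M^{\vee}, sZ^{\circ}, \OO_{M^{\vee}})$ up to $G^{\vee}$-conjugacy by a direct Lie-theoretic computation. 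Thus (i) amounts to identifying $\gamma(\widetilde{\OO}_{Lus})$ with this particular lift, which can be carried out by pinning down the canonical quantization of $\CC[\widetilde{\OO}_{Lus}]$ explicitly.

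The hard part is (ii). Since $\widetilde{\OO}_{Lus}$ is birationally rigid in the distinguished case (Proposition \ref{prop:distinguishedbirigid}(i)), Proposition \ref{prop:inequalityinflchar} cannot be invoked directly to bound competing $\gamma'$ from below by passing to a larger cover of $\widetilde{\OO}_{Lus}$, so no uniform abstract argument is apparent. Instead, both (i) and (ii) must be established by case analysis: in classical types, distinguished special Lusztig-Achar data are encoded by partition-type combinatorics, and both $\gamma(\widetilde{\OO}_{Lus})$ and the fibers of $\LA^{G^{\vee}}$ admit explicit combinatorial descriptions that reduce the norm comparison to a calculation with partitions; in exceptional types, the comparison is settled by exhaustive tabulation of distinguished special Lusztig-Achar data together with their associated parameters $\gamma$. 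Once (i) and (ii) are established in the distinguished case, the reduction in the first paragraph delivers the general statement.
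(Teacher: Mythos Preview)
Your reduction to the distinguished case is correct and matches the paper's argument essentially verbatim: the paper uses exactly Proposition~\ref{prop:propsofD}(ii) and Proposition~\ref{prop:unipotentfacts}(iii) to pass from $G$ to $L$, and then further reduces to $G$ simple adjoint before handling the classical and exceptional cases separately (Sections~\ref{subsec:proofsclassical2} and~\ref{subsec:proofsexceptional1}). Your acknowledgment that the distinguished case requires case analysis---partition combinatorics in classical types, exhaustive tabulation in exceptional types---is also exactly how the paper proceeds.

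One caveat: your ``natural candidate'' $\gamma = \tfrac{1}{2}h^{\vee} + \mu$ with $\mu \in \fz(\fm^{\vee})$ does not work as written. For a \emph{distinguished} McNinch--Sommers datum one has $\fz(\fm^{\vee}) = \fz(\fg^{\vee})$ by Lemma~\ref{lem:distinguishedMS}, so when $\fg^{\vee}$ is semisimple $\mu$ is forced to be $0$ and you cannot arrange $\exp(2\pi i\mu) = s$ for nontrivial~$s$. (The correct condition is that $\mu \in \fh^{\vee}$ with $\alpha(\mu) \in \ZZ$ for all roots $\alpha$ of $\fm^{\vee}$; such $\mu$ form a lattice, not a vector space, and the minimal-length choice is not obvious.) The paper sidesteps this entirely: in classical types it computes $\gamma(\widetilde{\OO}_{Lus})$ directly as $\rho^{+}(\nu_{0}^{\uparrow} \cup \eta_{0})$ via the quantization formulas of Proposition~\ref{prop:gamma_cover} (Lemma~\ref{lem:gamma_0}), and independently shows this expression is the unique minimal-length element of $S(\OO^{\vee},\bar{C})$ by an explicit norm comparison over all lifts of $\bar{C}$ to $A(\OO^{\vee})$ (Theorem~\ref{thm:R_cbar_min}, Proposition~\ref{prop:R_c_min}, Lemma~\ref{lem:comparison}). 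In exceptional types it simply tabulates both quantities and checks they agree. So your middle paragraph is a detour the paper does not take, but since you ultimately defer to case analysis anyway, this does not undermine the proposal.
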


\begin{proof}
Choose $L^{\vee} \subset G^{\vee}$ and a distinguished Lusztig-Achar datum $(\OO_{L^{\vee}},\bar{C}_{L^{\vee}})$ such that $(\OO^{\vee},\bar{C}) = \Sat^{G^{\vee}}_{L^{\vee}} (\OO_{L^{\vee}},\bar{C}_{L^{\vee}})$. Then by definition $S(\OO^{\vee},\bar{C}) = W \cdot S(\OO_{L^{\vee}},\bar{C}_{L^{\vee}})$. So if $\gamma_L$ is a minimal-length $W_L$-orbit in $S(\OO_{L^{\vee}},\bar{C}_{L^{\vee}})$, then $W\cdot \gamma_L$ is a minimal-length $W$-orbit in $S(\OO^{\vee},\bar{C})$. On the other hand, Proposition \ref{prop:propsofD}(iv) implies
$$D^G(\OO^{\vee},\bar{C}) = \Bind^G_L(D^L(\OO_{L^{\vee}},\bar{C}_{L^{\vee}}))$$
So by Proposition \ref{prop:unipotentfacts}(iii)
$$\gamma(D^G(\OO^{\vee},\bar{C})) = \gamma(D^L(\OO_{L^{\vee}},\bar{C}_{L^{\vee}}))$$
as $W$-orbits in $\fh^*$. So if $\gamma_L = \gamma(D^L(\OO_{L^{\vee}},\bar{C}_{L^{\vee}}))$, then $\gamma(\OO^{\vee},\bar{C}) = \gamma(D^G(\OO^{\vee},\bar{C}))$. Thus, we can reduce to the case when $(\OO^{\vee},\bar{C})$ is distinguished. Arguing as in the proof of Proposition \ref{prop:distinguishedbirigid}, we can further reduce to the case when $G$ is simple and adjoint. Now the classical cases are handled in Section \ref{subsec:proofsclassical2}. The exceptional cases are handled in Section \ref{subsec:proofsexceptional1}.
\end{proof}

\begin{rmk}
    The statement of \cref{thm:inflchars} is inspired by the discussion in \cite[Section 11]{Barbasch1989}. In \emph{loc. cit.}, Barbasch considers some infinitesimal characters in classical types which are defined by a minimality property similar to the one defining $\gamma(\OO^{\vee},\bar{C})$. The novel observation in \cref{thm:inflchars} is that $\gamma(\OO^{\vee},\bar{C})$ coincides with the infinitesimal character associated to the canonical quantization of the dual cover $D(\OO^{\vee},\bar{C})$. 

    We also remark that our minimality result is similar in spirit to some conjectures posed by Sommers and Gunnells in \cite[Section 5]{SommersGunnells}. 
\end{rmk}

\begin{rmk}
Let $(\OO^{\vee},\bar{C}) \in \LA(G^{\vee})$. Even if $(\OO^{\vee},\bar{C})$ fails to be special, the set $S(\OO^{\vee},\bar{C})$ may still contain a unique minimal-length $W$-orbit. However, this $W$-orbit will typically \emph{not} correspond to the infinitesimal character of a unipotent ideal. For example, take $G$ and $(\OO^{\vee},\bar{C}) \in \LA(G^{\vee})$ as in Remark \ref{rmk:nonspecial}. In this case, the set $S(\OO^{\vee},\bar{C})$ contains a unique minimal-length $W$-orbit, namely the $W$-orbit of the weight $\gamma=\rho/4+\varpi_8/4$ (here $\rho$ is the half-sum of the positive roots and $\varpi_8$ is the fundamental weight corresponding to the extremal node on the longest leg of the Dynkin diagram, see Remark \ref{rmk:algorithmgamma} for a description of the method used to compute this minimum). We note that $\gamma$ is not the infinitesimal character of a unipotent ideal. Moreover, an atlas computation shows that the spherical irreducible Harish-Chandra bimodule with left and right infinitesimal character $\gamma$ is not unitary. 
\end{rmk}

Now let $\widetilde{\OO} = D(\OO^{\vee},\bar{C})$ and consider the category 
$\HC^G(U(\fg)/I(\widetilde{\OO})$ of unipotent bimodules. Recall from Theorem \ref{thm:classificationHC} that this category is equivalent to finite-dimensional representations of a certain finite group $\Gamma(\widetilde{\OO})$, see (\ref{eq:defofGamma}). Our next task is to describe this finite group in terms of $(\OO^{\vee},\bar{C})$. We will need the following proposition.

\begin{prop}\label{prop:twoORvs}
Let $(\OO^{\vee},\bar{C}) \in \LA^*(G^{\vee})$. Choose $(L^{\vee},(\OO_{L^{\vee}},\bar{C}_{L^{\vee}})) \in \LA^*_0(G^{\vee})$ such that $(\OO^{\vee},\bar{C}) = \Sat(L^{\vee},(\OO_{L^{\vee}},\bar{C}_{L^{\vee}}))$ and let
$$\gamma = \gamma^{L^{\vee}}(\OO^{\vee},\bar{C}), \quad (R_0^{\vee},sZ(R_0^{\vee})^{\circ},\OO_{R_0}^{\vee}) = \MS^{L^{\vee}}(\gamma), \quad (R^{\vee},sZ(R^{\vee})^{\circ},\OO_{R^\vee}) = \MS^{G^{\vee}}(\gamma).$$
Then
$$\OO_{R^{\vee}} = \Sat^{R^{\vee}}_{R_0^{\vee}} \OO_{R_0^{\vee}}.$$
\end{prop}

\begin{proof}
Arguing as in the proof of Proposition \ref{prop:distinguishedbirigid}, we can reduce to the case when $G$ is a simple group of adjoint type. For $G$ classical, see Section \ref{subsec:twoORvsclassical}. For $G$ exceptional, see Section \ref{subsec:proofsexceptional2}.
\end{proof}

Now consider the composition
\begin{equation}\label{eq:defofL}\mathbb{L}: \LA^*(G^{\vee}) \overset{\gamma}{\to} \fh^*/W \overset{\MS}{\to} \MS(G^{\vee})\end{equation}
The following is immediate from Proposition \ref{prop:twoORvs} and Theorem \ref{thm:inflchars}.

\begin{lemma}\label{lem:lifting}
The map $\mathbb{L}: \LA^*(G^{\vee}) \to \MS(G^{\vee})$ is right-inverse to $\pi: \MS(G^{\vee}) \to \LA^*(G^{\vee})$.
\end{lemma}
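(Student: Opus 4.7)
The plan is to unwind both constructions using the saturation presentation of special Lusztig--Achar data, then apply Proposition \ref{prop:twoORvs} to glue together the two McNinch--Sommers pictures sitting over $\gamma$. Fix $(\OO^\vee,\bar C) \in \LA^*(G^\vee)$, and by Propositions \ref{prop:uniquedistinguishedAbar} and \ref{prop:specialtospecial}(ii) choose (uniquely up to $G^\vee$-conjugacy) a representative $(L^\vee,(\OO_{L^\vee},\bar C_{L^\vee})) \in \LA^*_0(G^\vee)$ saturating to $(\OO^\vee,\bar C)$. The proof of Theorem \ref{thm:inflchars} shows that $\gamma(\OO^\vee,\bar C) = W \cdot \gamma(\OO_{L^\vee},\bar C_{L^\vee})$; I choose a common representative $\gamma \in \fh^*_{\RR}$ so that $\gamma$ simultaneously realizes the minimal $W_L$-orbit on the $L^\vee$-side. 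By the very definition of $S(\OO_{L^\vee},\bar C_{L^\vee})$ (where $(\OO_{L^\vee},\bar C_{L^\vee})$ is distinguished in $L^\vee$), this choice forces $\LA^{L^\vee}(\gamma) = (\OO_{L^\vee},\bar C_{L^\vee})$.

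The core of the proof is a comparison between the two McNinch--Sommers data
$$(R^\vee,sZ(R^\vee)^\circ,\OO_{R^\vee}) := \MS^{G^\vee}(\gamma) = \mathbb{L}(\OO^\vee,\bar C)$$
and
$$(R_0^\vee,sZ(R_0^\vee)^\circ,\OO_{R_0^\vee}) := \MS^{L^\vee}(\gamma).$$
A direct computation gives $R_0^\vee = L^\vee \cap R^\vee$, and since $\fz(\fl^\vee) \subset R^\vee$ one has $Z_{R^\vee}(\fz(\fl^\vee)) = R_0^\vee$, so $R_0^\vee$ is canonically a Levi subgroup of the pseudo-Levi $R^\vee$. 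Proposition \ref{prop:twoORvs} now applies and delivers $\OO_{R^\vee} = \Sat^{R^\vee}_{R_0^\vee}\OO_{R_0^\vee}$. In particular any $e \in \OO_{R_0^\vee}$ represents both $\OO_{R_0^\vee}$ and $\OO_{R^\vee}$, so the formula $\pi(M,tZ^\circ,\OO_M) = (G^\vee\cdot e,\, sZ_{G^\vee}(e)^\circ)$ yields the identity
$$\pi^{G^\vee}(R^\vee,sZ(R^\vee)^\circ,\OO_{R^\vee}) \;=\; \pi^{G^\vee}\bigl(\Sat^{G^\vee}_{L^\vee}(R_0^\vee,sZ(R_0^\vee)^\circ,\OO_{R_0^\vee})\bigr)$$
in $\Conj(G^\vee)$.

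It remains to transport the right-hand side to $\LA(G^\vee)$. By Lemma \ref{lem:alphabeta}(ii) the map $\pi$ intertwines saturation, so the right-hand side equals $\Sat^{G^\vee}_{L^\vee}\bigl(\pi^{L^\vee}(\MS^{L^\vee}(\gamma))\bigr) = \Sat^{G^\vee}_{L^\vee}\bigl(\LA^{L^\vee}(\gamma)\bigr)$ once one passes to $\LA$ via the commutative square of Proposition \ref{prop:iotaAbar}. Using $\LA^{L^\vee}(\gamma) = (\OO_{L^\vee},\bar C_{L^\vee})$ from the first paragraph, this final expression is $\Sat^{G^\vee}_{L^\vee}(\OO_{L^\vee},\bar C_{L^\vee}) = (\OO^\vee,\bar C)$. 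Hence $\pi \circ \mathbb{L} = \mathrm{id}_{\LA^*(G^\vee)}$.

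The main subtlety, and the one place where the argument is non-formal, lies in the compatible choice of representative $\gamma$: I need $\gamma$ to simultaneously represent both $\gamma(\OO^\vee,\bar C)$ (as a $W$-orbit) and the $W_L$-minimal element of $S(\OO_{L^\vee},\bar C_{L^\vee})$, so that $\MS^{L^\vee}(\gamma)$ is itself a bona fide realization of $\mathbb{L}^{L^\vee}(\OO_{L^\vee},\bar C_{L^\vee})$. This is exactly what Theorem \ref{thm:inflchars} (together with its proof) guarantees; without it, $\MS^{L^\vee}(\gamma)$ could fail to have $R_0^\vee$ situated inside $R^\vee$ in the canonical way needed to invoke Proposition \ref{prop:twoORvs}. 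Everything else in the argument is formal, so this compatibility is the conceptual pivot of the proof.
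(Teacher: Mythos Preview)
Your argument is correct and follows exactly the approach the paper has in mind: the paper records this lemma as ``immediate from Proposition \ref{prop:twoORvs} and Theorem \ref{thm:inflchars}'', and you have simply unpacked that claim, using saturation compatibility (Lemma \ref{lem:alphabeta}(ii)) and the fact that $\gamma$ lies in $S(\OO_{L^\vee},\bar C_{L^\vee})$ to transport $\pi^{L^\vee}\circ\MS^{L^\vee}$ over to $G^\vee$. One minor correction: the commutative square you invoke at the end is the first diagram in \eqref{diag:Conj_LA_Sat}, not Proposition \ref{prop:iotaAbar}.
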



\begin{rmk}
We note that \cref{prop:twoORvs} is false for non-special pairs $(\OO^\vee, \Bar{C})$. Indeed, let $G^{\vee}=SO(17)$, let $\OO^{\vee}=\OO_{[5,4^2,3,1]}$, and let $\bar{C}$ be the unique non-trivial conjugacy class in $\bar{A}(\OO^\vee)\simeq \ZZ_2$. Then $\gamma(\OO^\vee, \Bar{C})=(\frac{5}{2}, \frac{3}{2}, \frac{3}{2}, \frac{3}{2}, \frac{1}{2}, \frac{1}{2}, \frac{1}{2}, \frac{1}{2})$. It follows that  $R^{\vee} = SO(16)$, and $\OO_{R^\vee}=\Ind_{GL(1)\times GL(3)\times GL(4)}^{SO(16)} \{0\}=\OO_{[5,5,3,3]}$. On the other hand, $R_0^\vee = GL(4) \times SO(8)$, and $\OO_{R_0^\vee}=\OO_{[4]}\times \OO_{[5,3]}$. Thus, $\Sat^{R^{\vee}}_{R_0^{\vee}} \OO_{R_0^{\vee}}=\OO_{[5,4,4,3]}$. We remark that in this case $\pi(R^\vee, sZ(R^\vee)^{\circ}, \Sat^{R^{\vee}}_{R_0^{\vee}} \OO_{R_0^{\vee}})\neq (\OO^\vee, \Bar{C})$, so Lemma \ref{lem:lifting} is false as well.
\end{rmk}

\begin{theorem}\label{thm:Gamma}
Assume $G$ is adjoint. Let $(\OO^{\vee},\bar{C}) \in \LA^*(G^{\vee})$. Let 
 $\mathbb{L}(\OO^{\vee},\bar{C}) = (R^{\vee},sZ^{\circ},\OO_{R^{\vee}})$ and $\widetilde{\OO} = D(\OO^{\vee},\bar{C})$. Then there is a group isomorphism
$$\bar{A}(\OO_{R^{\vee}}) \simeq \Gamma(\widetilde{\OO}).$$
\end{theorem}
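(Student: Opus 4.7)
The plan is to reduce to the case of a distinguished Lusztig--Achar datum $(\OO^{\vee},\bar{C})$ using the compatibility of $D$ with saturation, and then verify the identification in the distinguished case by combining the structure theory of the equivalence class of a birationally rigid cover with a case-by-case analysis in classical and exceptional types.

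For the reduction, I would pick $(L^{\vee}, (\OO_{L^{\vee}}, \bar{C}_{L^{\vee}})) \in \LA^*_0(G^{\vee})$ with $\Sat^{G^{\vee}}_{L^{\vee}}(\OO_{L^{\vee}}, \bar{C}_{L^{\vee}}) = (\OO^{\vee}, \bar{C})$ (Proposition \ref{prop:specialtospecial}(ii)), set $\widetilde{\OO}_L = D^L(\OO_{L^{\vee}}, \bar{C}_{L^{\vee}})$ and $(R_0^{\vee}, sZ(R_0^{\vee})^{\circ}, \OO_{R_0^{\vee}}) = \mathbb{L}^{L^{\vee}}(\OO_{L^{\vee}}, \bar{C}_{L^{\vee}})$. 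Proposition \ref{prop:propsofD}(ii) gives $\widetilde{\OO} = \Bind^G_L \widetilde{\OO}_L$, Theorem \ref{thm:maximaltomaximal} gives $\widetilde{\OO}_{max} = \Bind^G_L (\widetilde{\OO}_L)_{max}$, and Proposition \ref{prop:propsofbind}(ii) produces an isomorphism
\[
\Aut\bigl((\widetilde{\OO}_L)_{max}, \widetilde{\OO}_L\bigr) \xrightarrow{\sim} \Aut(\widetilde{\OO}_{max}, \widetilde{\OO}).
\]
Combined with Proposition \ref{prop:twoORvs} (which yields $\OO_{R^{\vee}} = \Sat^{R^{\vee}}_{R_0^{\vee}} \OO_{R_0^{\vee}}$) and Proposition \ref{prop:iotaAbar} (which produces a homomorphism $\bar{\iota}: \bar{A}(\OO_{R_0^{\vee}}) \to \bar{A}(\OO_{R^{\vee}})$), this reduces the theorem to (a) the distinguished case and (b) the claim that $\bar{\iota}$ is an isomorphism. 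A non-trivial bookkeeping step is to pass from $\Aut(\widetilde{\OO}_{max}, \widetilde{\OO})$ to $\Aut(\widetilde{\OO}_{max}, \OO) = \Gamma(\widetilde{\OO})$, which requires analyzing how the tower $\widetilde{\OO}_{max} \to \widetilde{\OO} \to \OO$ behaves under the image of the natural map $A(\OO_L) \to A(\OO)$.

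In the distinguished case, $\widetilde{\OO} = \widetilde{\OO}_{Lus}$ is the Lusztig cover of $\OO = d_S(\OO^{\vee}, \bar{C})$ and is birationally rigid (Proposition \ref{prop:propsofD}(iii)). By construction, $\widetilde{\OO}_{Lus}$ has Galois group $\bar{A}(\OO)$ over $\OO$; elements of $[\widetilde{\OO}_{Lus}]$ are classified, via Corollary \ref{cor:bind_almostetale} and the uniqueness statement of Lemma \ref{lem:coverfacts}, by finite almost \'{e}tale Galois extensions of $\widetilde{\OO}_{Lus}$. The task is to pin down the maximal such extension and match its Galois group over $\OO$ with $\bar{A}(\OO_{R^{\vee}})$, where $R^{\vee}$ and $\OO_{R^{\vee}}$ arise from $\gamma = \gamma(\OO^{\vee},\bar{C})$. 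The Sommers description of $\bar{A}$ via $\tilde{b}$-values (Section \ref{subsec:Abar}), coupled with the construction of $\mathbb{L}$, provides the right framework, but a uniform matching of the two finite groups does not appear to be available.

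The main obstacle is precisely this matching, together with the isomorphism claim (b) for $\bar{\iota}$. Both appear to require explicit case analysis: for classical $G$, via the partition combinatorics of nilpotent orbits and the explicit description of Lusztig's canonical quotient from \cite[Section 5]{Sommers2001} (Sections \ref{sec:combinatoricsclassical}--\ref{sec:proofsclassical}); for exceptional $G$, via the classification tables of nilpotent orbits, Sommers duals, and canonical quotients (Sections \ref{sec:proofsexceptional}--\ref{sec:tables}). The essential computational inputs are explicit formulas for $d_S$, the identification of $\OO_{R^{\vee}}$ as a Richardson orbit in the pseudo-Levi $R^{\vee}$, and the known classification of birationally rigid covers and their equivalence classes.
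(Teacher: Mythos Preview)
Your reduction strategy has a genuine gap: claim (b), that $\bar{\iota}: \bar{A}(\OO_{R_0^{\vee}}) \to \bar{A}(\OO_{R^{\vee}})$ is an isomorphism, is \emph{false} in general. In classical types, when one saturates $(\OO_{L^{\vee}},\bar{C}_{L^{\vee}})$ to $(\OO^{\vee},\bar{C})$ by adding a pair of rows $[a,a]$ to the partition, the group $\bar{A}(\OO_{R^{\vee}})$ can acquire an extra $\ZZ_2$ factor relative to $\bar{A}(\OO_{R_0^{\vee}})$ (Lemma \ref{lem:Abar_saturation}). So the case analysis you propose for (b) would simply fail. Correspondingly, your isomorphism $\Aut((\widetilde{\OO}_L)_{max},\widetilde{\OO}_L)\xrightarrow{\sim}\Aut(\widetilde{\OO}_{max},\widetilde{\OO})$ is fine, but it does \emph{not} upgrade to $\Gamma(\widetilde{\OO}_L)\simeq\Gamma(\widetilde{\OO})$: the ``bookkeeping step'' you flag is not bookkeeping at all, because $\Bind^G_L \OO_L$ can be a nontrivial $2$-fold cover of $\OO$, so $\Gamma(\widetilde{\OO})$ also picks up an extra $\ZZ_2$.

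The paper's actual argument in classical types (Section \ref{sec:proofsclassical3}) does not reduce to the distinguished case in one shot. Instead it proceeds one maximal Levi at a time and proves that the two sides jump \emph{together}: Lemma \ref{lem:Abar_saturation} characterizes when $\bar{A}(\OO_{R^{\vee}})\not\simeq\bar{A}(\OO_{\underline{R}^{\vee}})$, Lemma \ref{lem:bind_saturation} characterizes when $\OO$ is not birationally induced from $\{0\}\times\underline{\OO}$, and the key point is that for \emph{special} $(\OO^{\vee},\bar{C})$ these two conditions coincide (condition (2.ii) of Lemma \ref{lem:bind_saturation} is ruled out precisely by speciality, via Proposition \ref{prop:special_LA_classical}). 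The base case is Lemma \ref{lem:Gamma_distinguished_classical}, which identifies $\bar{A}(\OO_{R^{\vee}})$ with $A^{ad}(d_S(\OO^{\vee},\bar{C}))$ directly. In exceptional types the paper does not reduce to distinguished data at all: it tabulates every special Lusztig--Achar datum (Tables \ref{table:G2Gamma}--\ref{table:E8Gamma}) and computes $\Gamma$ for each one by a mix of ad hoc methods. Your outline correctly anticipates the need for case analysis, but misidentifies what the analysis is proving.
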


\begin{proof}
Arguing as in the proof of Proposition \ref{prop:distinguishedbirigid}, we can reduce to the case when $G$ is a simple group of adjoint type. For $G$ classical, see Section \ref{sec:proofsclassical3}. For $G$ exceptional, see Section \ref{subsec:proofsexceptional2}. 
\end{proof}

\begin{rmk}\label{rmk:even}
We note that there is a case-free proof of Theorem \ref{thm:Gamma} in the following special case: $\OO^{\vee}$ is even and $\bar{C}=1$. Under these assumptions, $(R^{\vee},\OO_{R^{\vee}}) = (G^{\vee},\OO^{\vee})$ and $I(\widetilde{\OO}) = J(\gamma_{\OO^{\vee}})$, cf. \cite[Proposition 9.2.1]{LMBM}. So it suffices to show that $\bar{A}(\OO^{\vee}) \simeq \Gamma(D(\OO^{\vee},\bar{C}))$. Let $\OO=d(\OO^{\vee})$. By \cite[Proposition 7.4 and Theorem 6.1]{LosevHC}, there is a monoidal equivalence
$$\HC^G(U(\fg)/J(\gamma_{\OO^{\vee}})) \simeq \bar{A}(\OO)\modd$$
And by \cite[Theorem 6.6.2]{LMBM}, there is a monoidal equivalence
$$\HC^G(U(\fg)/J(\gamma_{\OO^{\vee}})) \simeq \Gamma(D(\OO^{\vee},\bar{C}))\modd$$
So by the Tannakian formalism, there is a group isomorphism $\bar{A}(\OO) \simeq \Gamma(D(\OO^{\vee},\bar{C}))$. Since $\OO^{\vee}$ is even, and hence special, there is a further isomorphism $\bar{A}(\OO^{\vee}) \simeq \bar{A}(\OO)$, see \cite{Lusztig1984}. Thus, $\bar{A}(\OO^{\vee}) \simeq \Gamma(D(\OO^{\vee},\bar{C}))$, as asserted.
\end{rmk}

Combining Theorem \ref{thm:Gamma} with Theorem \ref{thm:classificationHC}, we arrive at the following result.

\begin{cor}\label{cor:HC}
Assume $G$ is adjoint. Let $(\OO^{\vee},\bar{C}) \in \LA^*(G^{\vee})$. Let  $\mathbb{L}(\OO^{\vee},\bar{C}) = (R^{\vee},sZ^{\circ},\OO_{R^{\vee}})$ and $\widetilde{\OO} = D(\OO^{\vee},\bar{C})$. Then there is a monoidal quivalence of categories
$$\HC^G(U(\fg)/I(\widetilde{\OO})) \simeq \bar{A}(\OO_{R^{\vee}})\modd $$
\end{cor}

\begin{rmk}
Corollary \ref{cor:HC} implies that the unipotent representations attached to $\widetilde{\OO}$ are in one-to-one correspondence with irreducible representations of $\bar{A}(\OO_{R^{\vee}})$. In the special case when $\OO^{\vee}$ is even and $\bar{C}=1$, this (weaker) statement was proved in \cite[Theorem III]{BarbaschVogan1985}. Their result was later extended to the case when $\OO^{\vee}$ is special in \cite[Appendix A]{Wong2023}.
\end{rmk}

\section{Combinatorics in classical types}\label{sec:combinatoricsclassical}

A \emph{partition} of $n \in \ZZ_{\geq 0}$ is a non-increasing sequence of positive integers $\lambda = [\lambda_1, \lambda_2, ..., \lambda_k]$ such that $n=\sum_i \lambda_i$. Write $\#\lambda$ for $k$ and $|\lambda|$ for $n$. For a positive integer $x$, we write $m_{\lambda}(x)$ for the multiplicity of $x$ in $\lambda$ and $\mathrm{ht}_{\lambda}(x)$ for the \emph{height} of $x$ in $\lambda$, i.e. $\height_{\lambda}(x) = \sum_{y \geq x} m_{\lambda}(y)$. Note that $\height_{\lambda}(x)$ makes sense even if $x$ is not a part of $\lambda$. For notational convenience, we will often abbreviate partitions by writing multiplicities as exponents, i.e. $[4^2,3,1^3]$ denotes the partition $[4,4,3,1,1,1]$ of $14$.

If $\lambda \in \mathcal{P}(m)$ and $\mu \in \mathcal{P}(n)$, we write $\lambda \cup \mu \in \mathcal{P}(m+n)$ for the partition obtained by adding multiplicities and $\lambda \vee \mu \in \mathcal{P}(m+n)$ for the partition obtained by adding corresponding parts. For example, if $\lambda=[4^2,3,1^3]$ and $\mu = [5,1]$, then $\lambda \cup \mu = [5,4^2,3,1^4]$ and $\lambda \vee \mu = [9,5,3,1^3]$. The transpose of a partition $\lambda$ is denoted by $\lambda^t$. 

A partition of $2n+1$ is of \emph{type} $B$ if every even part occurs with even multiplicity. A partition of $2n$ is of \emph{type} $C$ if every odd part occurs with even multiplicity. A partition of $2n$ is of \emph{type} $D$ if every even part occurs with even multiplicity. Write $\mathcal{P}(n)$ for the set of partitions of $n$, and write $\mathcal{P}_B(2n+1) \subset \mathcal{P}(2n+1)$, $\mathcal{P}_C(2n) \subset \mathcal{P}(2n)$ and $\mathcal{P}_D(2n) \subset \mathcal{P}(2n)$ for the subsets of partitions of the corresponding types. We will sometimes write $\cP_1(m)$ for $\cP_C(m)$ and $\cP_0(m)$ for either $\cP_B(m)$ (when $m$ is odd) or $\cP_D(m)$ (when $m$ is even) in order to make concise statement about $\cP_\epsilon(m)$ for $\epsilon \in \{0, 1\}$. A partition of type $D$ is \emph{very even} if all parts are even, occuring with even multiplicity.

There is a partial order on $\mathcal{P}(n)$ defined in the following way:
$$\lambda \geq \mu \iff \sum_{i \leq j} \lambda_i \geq \sum_{i \leq j} \mu_i, \qquad \forall i$$
The \emph{B-collapse} of $\lambda \in \mathcal{P}(2n+1)$ (resp. \emph{C-collapse} of $\lambda \in \mathcal{P}(2n)$, \emph{D-collapse} of $\lambda \in \mathcal{P}(2n)$) is the (unique) largest partition $\lambda_B \in \mathcal{P}_B(2n+1)$ (resp. $\lambda_C \in \mathcal{P}_C(2n)$, $\lambda_D \in \mathcal{P}_D(2n)$) which is dominated by $\lambda$.

For a partition $\lambda = [\lambda_1, \lambda_2, \ldots , \lambda_k]$, we will sometimes write $\lambda=(c_l \geq c_{l-1} \geq \ldots \geq c_1)$ where $c_1, \ldots, c_l$ are the column lengths of the corresponding Young diagram, i.e. the members of the transpose partition $\lambda^t$ of $\lambda$.

It is possible to describe the sets $\cP_\epsilon(m)$ in terms of columns (see \cite[Prop. 2.3]{WongLVB}):
\begin{itemize}
    \item 
      $\cP_0(m)$ consists of partitions $\lambda = (a_{2k+1},  a_{2k}, \cdots, a_0)$ of size $m$ such that $a_{2i} + a_{2i-1}$ is even for all $i$ (we insist that there are {\bf even} number of columns, by taking $a_0=0$ if necessary).
    \item 
      $\cP_1(m)$ consists of partitions $\lambda = (a_{2k},  a_{2k-1}, \cdots, a_0)$ of size $m$ such that $a_{2i} + a_{2i-1}$ is even for all $i$ (we insist that there are {\bf odd} number of columns, by taking $a_0=0$ if necessary).
\end{itemize}

\subsection{Nilpotent orbits}\label{subsec:classical_orbit}

\begin{prop}[Section 5.1, \cite{CM}]\label{prop:orbitstopartitions}
Suppose $G$ is a simple group of classical type. Then the following are true:
\begin{enumerate}[label=(\alph*)]
    \item If $\fg = \mathfrak{sl}(n)$, then there is a bijection
    $$\Orb(G) \xrightarrow{\sim} \mathcal{P}(n) $$
    \item If $\fg = \mathfrak{so}(2n+1)$, then there is a bijection
    $$\Orb(G)\xrightarrow{\sim} \mathcal{P}_{B}(2n+1) $$
    \item If $\fg = \mathfrak{sp}(2n)$, then there is a bijection
    $$ \Orb(G)\xrightarrow{\sim} \mathcal{P}_C(2n)$$
    \item If $\fg = \mathfrak{so}(2n)$, then there is a surjection
    $$\Orb(G) \twoheadrightarrow \mathcal{P}_{D}(2n) $$
    Over very even partitions, this map is two-to-one, and over all other partitions, it is a bijection.
\end{enumerate}
\end{prop}

\begin{convention}
In the setting of the above proposition, we write $\OO_{\lambda} \in \Orb(G)$ for the orbit corresponding to a partition $\lambda$, except in the case when $\lambda$ is very even and $\fg$ is of type $D$. In such cases, there are two orbits corresponding to $\lambda$, which we denote by $\OO_{\lambda}^I$ and $\OO_{\lambda}^{II}$. For convenience, we sometimes allow very even partitions to be decorated by Roman numerals $I$ or $II$ to redefine $\mathcal{P}_{D}(2n)$, so that the map $\Orb(G) \twoheadrightarrow \mathcal{P}_{D}(2n)$ in Proposition \ref{prop:orbitstopartitions}(d) becomes a bijection.  If there is a property $P$ which $\OO_{\lambda}^I$ and $\OO_{\lambda}^{II}$ have in common, we will often say, to simplify the exposition, that $\OO_{\lambda}$ has property $P$. 
\end{convention}

\subsection{The group $A(\OO)$}\label{subsec:A_classical}

In this section, we will describe the component groups $A(\OO)$ for nilpotent orbits of classical groups. We will write $\fg_0(m) = \mathfrak{so}(m)$ (for arbitrary $m$) and $\fg_1(m)=\mathfrak{sp}(m)$ (for even $m$) in order to make concise statements about $\fg_\epsilon(m)$ for $\epsilon \in \{0,1\}$. Let $G_{\epsilon}(m)$ denote the simple classical group corresponding to $\fg_{\epsilon}(m)$ and let $G_{\epsilon}^{ad}(m)$ denote the adjoint quotient of $G_{\epsilon}(m)$. For $\OO_{\lambda} \in \Orb(G_{\epsilon}(m))$, we write $A(\OO_{\lambda})$ (resp. $A^{ad}(\OO_{\lambda})$) for the component group of $\OO_{\lambda}$ with respect to $G_{\epsilon}(m)$ (resp. $G_{\epsilon}^{ad}(m)$).

For $\lambda = [\lambda_1,...,\lambda_k] \in \cP_\epsilon(m)$, let $\lambda^\epsilon = [\lambda^\epsilon_1 \ge \lambda^\epsilon_2 \ge \cdots \ge \lambda^\epsilon_r > 0]$ be the subpartition consisting of the parts $\lambda^\epsilon_j$ of $\lambda$ such that $\lambda^\epsilon_j \not\equiv \epsilon \mod 2$. We introduce the natural convention that $\lambda^\epsilon_k = 0$ for any $k > r$. To each member $\lambda^\epsilon_i$ of $\lambda^\epsilon$, we assign a symbol $\upsilon_{\lambda^\epsilon_i}$ and write $A \simeq (\ZZ_2)^r$ for the elementary abelian 2-group with basis $\{\upsilon_{\lambda^\epsilon_i}\}$ (if $\lambda^\epsilon_i = \lambda^\epsilon_j$, then $\upsilon_{\lambda^\epsilon_i} = \upsilon_{\lambda^\epsilon_j}$). Let $A^0=A^0(\OO_\lambda)$ be the subgroup of $A$ consisting of elements which are products of an even number of $\upsilon_{\lambda^\epsilon_i}$'s, and let $A^1=A^1(\OO_\lambda) := A$. 

As in \cite{Sommers1998}, for $\lambda \in \cP_\epsilon(m)$ and $\delta \in \{0,1\}$, we set
$$ \mathcal{S}_\delta(\lambda) = \{ x \,|\, x \not\equiv \epsilon \textrm{ mod } 2 \text{ and } m_\lambda(x) \equiv \delta \textrm{ mod } 2 \}. $$
Define the element $\tilde{\upsilon} := \upsilon_{\lambda^\epsilon_1} \upsilon_{\lambda^\epsilon_2} \cdots \upsilon_{\lambda^\epsilon_r}$ in $A$. Then $\tilde{\upsilon}$ is of order $2$ if $\mathcal{S}_1 \neq \emptyset$, and is the identity element otherwise.
For $\lambda = [\lambda_1,...,\lambda_k] \in \mathcal{P}(m)$, write $a$ (resp. $b$) for the number of distinct odd (resp. even) parts of $\lambda$. 

The following can be deduced from the proof of Theorem 6.1.3 and Corollary 6.1.6 of \cite{CM}.

\begin{prop}\label{prop:Aclassical}
Let $G=G_{\epsilon}(m)$ and let $\OO_{\lambda} \in \Orb(G)$. Then the following are true:

\begin{itemize}
    \item[(i)] If $\fg=\mathfrak{sl}(n)$, then $A(\OO_{\lambda}) \simeq \ZZ_d$, where $d = \gcd{\{\lambda_i\}}$, and $A^{ad}(\OO_{\lambda})=1$.
    \item[(ii)] If $\fg = \mathfrak{so}(2n+1)$, then $A(\OO_{\lambda}) \simeq A^{ad}(\OO_{\lambda}) \simeq A^{0} \simeq (\ZZ_2)^{a-1}$.
    \item[(iii)] If $\fg = \mathfrak{sp}(2n)$, then $A(\OO_{\lambda}) \simeq A^1$ and 
    $$A^{ad}(\OO_{\lambda}) \simeq A^1/\{1, \tilde{v} \} = 
    \begin{cases*}
        A^1 = A \simeq (\ZZ_2)^b & if  $\mathcal{S}_1 = \emptyset$\\
        A^1/\{1, \tilde{v} \} = A/\{1, \tilde{v} \} \simeq (\ZZ_2)^{b-1} & if $\mathcal{S}_1 \neq \emptyset$
    \end{cases*}$$
    \item[(iv)] If $\fg = \mathfrak{so}(2n)$, then $A(\OO_{\lambda}) \simeq A^0$, and 
     $$A^{ad}(\OO_{\lambda}) \simeq A^0/\{1, \tilde{v} \} =
     \begin{cases*}
        A^0 \simeq (\ZZ_2)^{\mathrm{max}(a-1,0)} & if $\mathcal{S}_1 = \emptyset$\\
        A^0/\{1, \tilde{v} \} \simeq (\ZZ_2)^{\mathrm{max}(a-2,0)} & if $\mathcal{S}_1 \neq \emptyset$
    \end{cases*}$$
\end{itemize}
Parts (ii)-(iv) can be summarized as follows: $A(\OO_\lambda) \simeq A^{\epsilon}$ and
$A^{ad}(\OO_{\lambda}) \simeq A^\epsilon / (A^\epsilon \cap \{1, \tilde{v} \})$.
\end{prop}

For any $C \in A(\OO_\lambda)$, we can choose a (not necessarily unique) lift of $C$ in $A^\epsilon \subset A$. By Proposition \ref{prop:Aclassical}, this lift admits a (unique) decomposition as a product of $\upsilon_{\lambda^\epsilon_i}$'s . Let $\nu$ be the subpartition of $\lambda^\epsilon \subset \lambda$ consists of those $\lambda^\epsilon_i$ such that $\upsilon_{\lambda^\epsilon_i}$ appears in this decomposition. Then one can think of $\nu$ as a `marking' of $\lambda$. This motivates the following definition introduced in \cite{Sommers1998}.
\begin{definition}
Let $X \in \{B,C,D\}$. Define $\tilde{\cP}_X(m)$ to be the set of pairs of partitions $(\nu, \eta)$ such that:
   \begin{enumerate}[label=(\roman*)]
       \item 
       $\nu \cup \eta \in \cP_X(m)$.
       \item
       Each part of $\nu$ is odd (resp. even) if $X \in \{B,D\}$ (resp. $X=C$) and has multiplicity $1$.
       \item
       If $X\in \{B,D\}$, we require $\#\nu$ to be even. If $X=C$, we can always assume $\#\nu$ is even, by adding a zero if necessary.
   \end{enumerate}
   We will write any element $(\nu, \eta) \in \tilde{\cP}_X(m)$ as $\prescript{\langle \nu \rangle}{}{\lambda}$, where $\lambda = \nu \cup \eta$, and we think of $\prescript{\langle \nu \rangle}{}{\lambda}$ as a partition $\lambda \in \cP_X(m)$ with parts in the subpartition $\nu$ ``marked". When $\lambda \in \cP_D(2n)$ is a very even partition, $\nu$ is automatically empty. We then follow the convention in Section \ref{subsec:classical_orbit} and allow decoration by a Roman numeral $I$ or $II$ on the marked partition $\prescript{\langle \emptyset \rangle}{}{\lambda} \in \tilde{\cP}_D(2n)$.

   Similar to $\cP_\epsilon(m)$, we also sometimes write $\tilde{\cP}_\epsilon(m)$ for the sets defined above, where $\epsilon \in \{ 0, 1\}$.
\end{definition}

As discussed above, the marking $\nu = [\nu_1, \ldots, \nu_p]$ of a marked partition $\prescript{\langle \nu \rangle}{}{\lambda} \in \tilde{\cP}_\epsilon(m)$ gives rise to an element $\tilde{C}_\nu = \upsilon_{\nu_1} \upsilon_{\nu_2} \cdots \upsilon_{\nu_p}$ in $A^\epsilon$ and so determines a conjugacy class $C_\nu$ in the quotient group $A(\OO_\lambda)$. Let $C'_\nu$ denote the image of $C_\nu$ in $A^{ad}(\OO_\lambda)$. The following corollaries are just reformulations of Proposition \ref{prop:Aclassical}.

\begin{cor} \label{cor:conj_classical}
    We have a bijection $\tilde{\cP}_\epsilon(m) \xrightarrow{\sim} \Conj(G_\epsilon(m))$ sending a marked partition $\prescript{\langle \nu \rangle}{}{\lambda} \in \tilde{\cP}_\epsilon(m)$ to the $(\OO_\lambda, C_\nu) \in \Conj(G_\epsilon(m))$. When $G=SO(2n)$ and $\OO_\lambda$ is very even, this bijection preserves the Roman numerals.
\end{cor}

\begin{cor}\label{cor:conj_classical_adjoint}
The surjective composite map $\tilde{\cP}_\epsilon(m) \xrightarrow{\sim} \Conj(G_\epsilon(m)) \twoheadrightarrow \Conj(G_{\epsilon}^{ad}(m))$, $\prescript{\langle \nu \rangle}{}{\lambda} \mapsto (\OO_\lambda, C'_\nu)$, is one-to-one over $(\OO_\lambda, C'_\nu)$ if $\fg$ is of type $B$, or $\fg$ is of type $C$ or $D$ and $\mathcal{S}_1(\lambda) = \emptyset$, and two-to-one otherwise. In the latter case, the preimage of $(\OO_\lambda, C'_\nu)$ is of the form $\{ \prescript{\langle \nu \rangle}{}{\lambda}, \prescript{\langle \lambda^\epsilon \backslash \nu \rangle}{}{\lambda} \}$. When the group is $SO(2n)/\{ \pm I_{2n} \}$, the map $\tilde{\cP}_\epsilon(2n) \twoheadrightarrow \Conj(SO(2n)/\{ \pm I_{2n} \} )$ sends any even partition $\lambda = \prescript{\langle \emptyset \rangle}{}{\lambda}$ decorated by a Roman numeral to $(\OO_\lambda, 1)$ with the same numeral.
\end{cor}

\begin{rmk}\label{rmk:A_column}
  Following \cite{WongLVB}, we can also describe the group $A(\OO_\lambda)$ in terms of the columns of the partition $\lambda$ (see the paragraph above \cref{subsec:classical_orbit}). Namely, for each $\lambda^\epsilon_i \in \lambda^\epsilon$, let $c^\epsilon_i := \height_\lambda(\lambda^\epsilon_i)$ be the height of $\lambda^\epsilon_i$ in $\lambda$. The integers $c^\epsilon_i$ are the lengths of certain columns of the Young diagram of $\lambda$, namely the members of $\lambda^t$ whose heights in $\lambda^t$ are of different parity than $\epsilon$. Note that we have a symmetry $\lambda_i^\epsilon = \height_{\lambda^t}(c^\epsilon_i)$. We write the basis element $\upsilon_{\lambda^\epsilon_i}$ of the group $A$ associated to $\lambda$ alternatively as $\vartheta_{c^\epsilon_i}$ and again follow \cref{prop:Aclassical}.
\end{rmk}

\subsection{Levi subalgebras}\label{subsection:Levisclassical}

Let $\fg$ be a simple Lie algebra of classical type, and let $\fh \subset \fg$ be a Cartan subalgebra. Choose standard coordinates on $\fh$ as in \cite[VI, \S\,4]{Bourbaki46} and denote the coordinate functions by $\{e_i\}$. Write $\Delta \subset \fh^*$ for the roots of $\fg$. Then we can choose simple roots $\Pi \subset \Delta$ as follows:
\begin{itemize}
\item 
If $\fg = \mathfrak{sl}(n)$, then
$$\Delta = \{ \pm (e_i - e_j) \,|\, 1 \leq i, j \leq n, i \neq j \}, \quad \Pi = \{ e_1 - e_2,   e_2 - e_3, \ldots, e_{n-1} - e_n \}.$$
\item 
If $\fg = \mathfrak{so}(2n+1)$, then 
$$\Delta = \{ \pm e_i \pm e_j, \pm e_i \,|\, 1 \leq i, j \leq n, i \neq j \}, \quad \Pi = \{ e_1 - e_2,   e_2 - e_3, \ldots, e_{n-1} - e_n, e_n \}$$
and the lowest root is $-e_1 - e_2$.
\item
If $\fg = \mathfrak{sp}(2n)$, then 
$$\Delta = \{ \pm e_i \pm e_j, \pm 2 e_i \,|\, 1 \leq i, j \leq n, i \neq j \}, \quad \Pi = \{  e_1 - e_2,   e_2 - e_3, \ldots, e_{n-1} - e_n, 2e_n \}$$
and the lowest root is $-2e_1$.
\item
If $\fg = \mathfrak{so}(2n)$, then 
$$\Delta = \{ \pm e_i \pm e_j \,|\, 1 \leq i, j \leq n, i \neq j \}, \quad \Pi = \{  e_1 - e_2,   e_2 - e_3, \ldots, e_{n-1} - e_n, e_{n-1} + e_n \}$$
and the lowest root is $-e_1-e_2$.
\end{itemize}

If $\fg = \mathfrak{sl}(n)$ and $a = (a_1,...,a_t)$ is a partition of $n$, there is a Levi subalgebra
$$\mathfrak{s}(\mathfrak{gl}(a_1) \times ... \times \mathfrak{gl}(a_t)) := \mathfrak{sl}(n) \cap (\mathfrak{gl}(a_1) \times ... \times \mathfrak{gl}(a_t)) \subset \fg$$
corresponding to the roots
$$\{\pm(e_i -e_j)\}_{1 \leq i < j \leq a_1} \cup ... \cup \{\pm (e_i-e_j)\}_{n-a_t+1 \leq i < j \leq n} \subset \Delta$$
Every Levi subalgebra in $\fg$ is conjugate to one of this form, and no two such are conjugate.

If $\fg=\mathfrak{so}(2n+1)$, $\mathfrak{sp}(2n)$, or $\mathfrak{so}(2n)$, $0 \leq m \leq n$, and $a$ is a partition of $n-m$, there is a Levi subgroup
\begin{equation}\label{eq:Levi1}\mathfrak{gl}(a_1) \times ... \times \mathfrak{gl}(a_t) \times \fg(m) \subset \fg\end{equation}
corresponding to the roots
$$\{\pm (e_i-e_j)\}_{1\leq i < j \leq a_1} \cup ... \cup \{\pm (e_i - e_j)\}_{n-m-a_t+1 \leq i <j \leq n-m} \cup \Delta(m) \subset \Delta$$
where $\Delta(m) \subset \Delta$ has the obvious meaning. If $\fg=\mathfrak{so}(2n+1)$ or $\mathfrak{sp}(2n)$, then every Levi subalgebra in $\fg$ is conjugate to one of this form, and no two such are conjugate.

If $\fg=\mathfrak{so}(2n)$, and $a$ is a partition of $n$ with only even parts, there is a Levi subalgebra
\begin{equation}\label{eq:Levi2}\mathfrak{gl}(a_1) \times ...\times \mathfrak{gl}(a_t)' \subset \fg\end{equation}
corresponding to the roots
$$\{\alpha \in \Delta \mid \alpha(\underbrace{1,...,1}_{a_1},\underbrace{2,...,2}_{a_2},...,\underbrace{t,...,t,-t}_{a_t}) = 0\} \subset \Delta.$$
The prime is included to distinguish this subalgebra from the subgroup $\mathfrak{gl}(a_1) \times... \times \mathfrak{gl}(a_t) \subset \fg$ defined above (to which it is $O(2n)$-, but not $SO(2n)$-conjugate). Every Levi subalgebra in $\fg$ is conjugate to one of the form \eqref{eq:Levi1} or \eqref{eq:Levi2}, and no two such are conjugate. 

The Levi subalgebras listed above are called \emph{standard} with respect to $(\fh, \Delta, \Pi)$.

\subsection{Maximal pseudo-Levi subalgebras} \label{subsec:Maximalpseudo-Leviclassical}

Let $\fg$ be a simple Lie algebra of classical type. Fix $\fh$, $\Delta$, $\Pi$ and $\{e_i\}$ as in Section \ref{subsection:Levisclassical}.

If $\fg=\mathfrak{sl}(n)$, then there is a unique maximal pseudo-Levi, namely $\fg$ itself. Now assume $\fg$ is a simple Lie algebra of type $B_n$, $C_n$ or $D_n$, and $\fl$ is a maximal pseudo-Levi of $\fg$. Conjugating if necessary, we can assume that $\fl = \fg_I$ is a standard pseudo-Levi subalgebra associated to a proper subset $I$ of the set $\tilde{\Pi} = \Pi \cup \{\alpha_0\}$ of vertices of the extended Dynkin diagram of $\fg$, where $\alpha_0$ is the lowest root, i.e., the negative of the highest root. Moreover, there is a decomposition $I = I_1 \sqcup I_2$ and hence a Lie algebra decomposition $\fl = \fl_1 \oplus \fl_2$, where $\fl_i$ is the simple Lie subalgebra generated the root subspaces of the simple roots in $I_i$, $i=1,2$. In addition, we assume that the lowest root $\alpha_0$ (the extra node in the extended Dynkin diagram) belongs to $I_1$. In particular, $\fl_2$ is a simple subalgebra of the same type as $\fg$ and $\fl_1$ is a simple Lie algebra (or possibly zero). Then $\fl_1$, if non-zero, is of type $D$ (resp. $C$, $D$) when $\fg$ is of type $B$ (resp. $C$, $D$). More explicitly, write $k$ for the semisimple rank of $\fl_1$ ($0 \leq k \leq n-1$), so that $\fl_2$ is of semisimple rank $n-k$ (when $k=0$, $\fl=\fl_2$ is nothing else but $\fg$). Then the Cartan subalgebra $\fh_1 \simeq \CC^k$ of $\fl_1$ has coordinates $\{ e_1, e_2, \ldots e_k \}$, and the Cartan subalgebra $\fh_2 \simeq \CC^{n-k}$ of $\fl_2$ has coordinates $\{ e_{k+1}, e_{k+2}, \ldots e_n \}$. The Lie subalgebra $\fl_i$ is spanned by the Cartan subalgebra $\fh_i = \fh \cap \fl_i$ and root subspaces of $\fg$ of the root subsystems $\Delta_i \subset \Delta$, $i=1,2$, which we describe below. 

\begin{itemize}
    \item 
    If $\fg=\mathfrak{so}(2n+1)$, then
    $$\Delta_1 = \{ \pm e_i \pm e_j \,|\, 1 \leq i, j \leq k, i \neq j \}, \quad \Delta_2 = \{ \pm e_i \pm e_j, \pm e_i \,|\, k+1 \leq i, j \leq n, i \neq j \}, $$
    so that $\fl_1 \simeq \mathfrak{so}(2k)$ and $\fl_2 \simeq \mathfrak{so}(2n-2k+1)$.
    \item 
    If $\fg=\mathfrak{sp}(2n)$, then
    $$\Delta_1 = \{ \pm e_i \pm e_j, \pm 2e_i \,|\, 1 \leq i, j \leq k, i \neq j \}, \quad \Delta_2 = \{ \pm e_i \pm e_j, \pm 2e_i \,|\, k+1 \leq i, j \leq n, i \neq j \}, $$
    so that $\fl_1 \simeq \mathfrak{sp}(2k)$ and $\fl_2 \simeq \mathfrak{sp}(2n-2k)$.
    \item 
    If $\fg=\mathfrak{so}(2n)$, then
    $$\Delta_1 = \{ \pm e_i \pm e_j \,|\, 1 \leq i, j \leq k, i \neq j \}, \quad \Delta_2 = \{ \pm e_i \pm e_j \,|\, k+1 \leq i, j \leq n, i \neq j \}, $$
    so that $\fl_1 \simeq \mathfrak{so}(2k)$ and $\fl_2 \simeq \mathfrak{so}(2n-2k)$.
\end{itemize}

\subsection{Saturation of nilpotent orbits}

The following proposition is standard.

\begin{prop}\label{prop:inclusionclassical}
The following are true:
\begin{enumerate}
    \item[(i)] Suppose $\fg = \mathfrak{sl}(n)$. Let  $\mathfrak{m}=\mathfrak{s}(\mathfrak{gl}(a_1) \times ... \times \mathfrak{gl}(a_t))$ and let
    $$\mathbb{O}_M = \mathbb{O}_{\lambda_1} \times ... \times \mathbb{O}_{\lambda_t} \in \Orb(M)$$
    Then 
    $$\Sat^G_M \OO_M = \OO_{\lambda}$$
    where $\lambda = \bigcup_{j=1}^t \lambda_j$. 
    \item[(ii)] Suppose $\fg = \mathfrak{so}(2n+1)$, $\mathfrak{sp}(2n)$ or $\mathfrak{so}(2n)$. Let $\mathfrak{m}=  \mathfrak{gl}(a_1) \times \cdots \times \mathfrak{gl}(a_t) \times \mathfrak{g}(m)$ or possibly (if $\fg = \mathfrak{so}(2n)$) $\fm = \mathfrak{gl}(a_1) \times ... \times \mathfrak{gl}(a_t)'$ (with all $a_i$ even) and let
    $$\mathbb{O}_M =  \OO_{\lambda_1} \times ... \times \OO_{\lambda_t} \times \OO_{\lambda^0} \in \Orb(M)$$
    Then
    $$\Sat^G_M \OO_M = \OO_{\lambda}$$
    where
    $$\lambda =\lambda^0 \cup \bigcup_{j=1}^t (\lambda_j \cup \lambda_j).$$
    If $\fg=\mathfrak{so}(2n)$ and $\lambda$ is very even, then $\lambda^0$ is very even. In this case, $\lambda$ and $\lambda^0$ have the same decoration.
\end{enumerate}
\end{prop}

\begin{cor}[\cite{CM}, Thm 8.2.14]\label{cor: distinguished}
If $\fg = \mathfrak{sl}(n)$, then the only distinguished nilpotent orbit is the principal one. If $\fg = \mathfrak{so}(2n+1)$, $\mathfrak{sp}(2n)$, or $\mathfrak{so}(2n)$, then $\mathbb{O}$ is distinguished if and only if the corresponding partition has no repeated parts.
\end{cor}

\subsection{(Birational) induction of nilpotent orbits}

\begin{prop}[Theorem 7.3.3, \cite{CM}\label{prop:inductionclassical}]
The following are true:
\begin{itemize}
    \item[(i)] Suppose $\fg = \mathfrak{sl}(n)$. Let  $\mathfrak{m}=\mathfrak{s}(\mathfrak{gl}(a_1) \times ... \times \mathfrak{gl}(a_t))$ and let $\lambda^i\in \cP(a_i)$. Let
    $$\mathbb{O}_M = \mathbb{O}_{\lambda^1} \times \cdots \times \mathbb{O}_{\lambda^t} \in \Orb(M)$$
    Then 
    $$\Ind^G_M \OO_M = \OO_{\lambda}$$
    where $\lambda = \bigvee_{j=1}^t \lambda_j$. 
    \item[(ii)] Suppose $\fg = \mathfrak{so}(2n+1)$, $\mathfrak{so}(2n)$, or $\mathfrak{sp}(2n)$. Let $\mathfrak{m}=  \mathfrak{gl}(a_1) \times ... \times \mathfrak{gl}(a_t) \times \mathfrak{g}(m)$ or possibly (if $\fg = \mathfrak{so}(2n)$) $\fm = \mathfrak{gl}(a_1) \times ... \times \mathfrak{gl}(a_t)'$ (with all $a_i$ even). Let $\lambda^0\in \cP_B(2m+1)$, $\cP_D(2m)$, or $\cP_C(2m)$, and $\lambda^i\in \cP(a_i)$ for $1 \le i \le t$. Consider the orbit
    $$\mathbb{O}_M =  \OO_{\lambda^1} \times ... \times \OO_{\lambda^t} \times \OO_{\lambda^0} \in \Orb(M)$$
    Then
    $$\Ind^G_M \OO_M = \OO_{\lambda}$$
    where
    $$\lambda =(\lambda^0 \lor \bigvee_{j=1}^t (\lambda^j \lor \lambda^j))_{B/C/D}.$$ 

    If $\fg=\mathfrak{so}(2n)$ and $\lambda$ is very even, then its decoration can be deduced from \cite[Corollary 7.3.4]{CM}.
\end{itemize}
\end{prop}

\begin{prop}[\cite{Namikawa2022}, Sections 2 and 3]\label{prop:birationalpartitions}
In the setting of Proposition \ref{prop:inductionclassical}, $\Bind^G_M \OO_M = \OO_{\lambda}$ if and only if one of the following holds:

\begin{itemize}
    \item[(i)] $\fg=\fs\fl(n).$
    \item[(ii)] $\lambda = (\lambda^0 \lor \bigvee_{j=1}^t (\lambda^j \lor \lambda^j)).$
    \item[(iii)] $\fg=\fs\fo(2n)$, the partition $\beta=(\lambda^0 \lor \bigvee_{j=1}^t (\lambda^j \lor \lambda^j))$ has only even members, and there is exactly one distinct member $\beta_k$ with odd multiplicity. Then $\beta_k$ is the smallest member of $\beta$, and $\lambda$ is obtained from $\beta$ by replacing $(\beta_k)$ in $\beta$ with $(\beta_k-1, 1)$.
   
\end{itemize}
\end{prop}

\begin{rmk}\label{rmk:birationalpartitions}
We note that condition (iii) of Proposition \ref{prop:birationalpartitions} can be reformulated in terms of columns. The reformulated condition is as follows: $\fg=\fs\fo(2n)$, the partition $\beta=(\lambda^0 \lor \bigvee_{j=1}^t (\lambda^j \lor \lambda^j))$ is the join of pairs of columns $(c_i,c_i)$ of equal length, for $1 \le i \le l$, and there is exactly one distinct $c_j$ that is odd. In this case $\lambda = \beta_D$.
\end{rmk}

\subsection{Birationally rigid covers}

Let $\lambda \in \cP_\epsilon(n)$ and $\OO=\OO_\lambda$ be the corresponding nilpotent orbit in $\fg_\epsilon(n)$. Following \cite{Mitya2020}, we say that an integer $m$ is {\it $\lambda$-singular} if $\lambda_m - \lambda_{m+1} \geq 2$. Let $\cS(\lambda)$ denote the set of $\lambda$-singular numbers. Then $\cS(\lambda)$ is in bijection with the set of all codimension $2$ orbits in $\overline{\OO}$, see \cite{Kraft-Procesi}. For each $m \in \cS(\lambda)$, let $\OO_m$ denote the preimage of the corresponding codimension $2$ orbit in $\operatorname{Spec}(\CC[\OO])$. The singularity of $\operatorname{Spec}(\CC[\OO])$ along $\OO_m$ is equivalent to a Kleinian singularity $\Sigma_m = \CC^2 / \Gamma_m$ (see \cref{sec:Namikawa}). 

Given $m \in \cS(\lambda)$, let $\lambda^0 \in \cP_\epsilon(n-2m)$ be the partition obtained from $\lambda$ by removing two columns of length $m$ and let $\OO_0 = \OO_{\lambda^0}$ be the corresponding nilpotent orbit in $\fg_\epsilon(n-2m)$. Then $\OO=\OO_\lambda$ is birationally induced from the nilpotent orbit $\{0\} \times \OO_0$ in the Levi subalgebra $\fm=\mathfrak{gl}(m) \times \fg_\epsilon(n-2m) \subset \fg_\epsilon(n)$ by \cref{prop:birationalpartitions}. Recall from Section \ref{subsec:pi1_induction} there is a surjective homomorphism $\phi: A(\OO) = A(\OO_\lambda) \to A(\OO_0)$. We will compute this homomorphism in \cref{lem:phi_classical} below. Recall that by \cref{prop:Aclassical} and \cref{rmk:A_column}, both $A(\OO)$ and $A(\OO_0)$ can be regarded as subgroups of the elementary abelian 2-group $A$ associated to $\lambda$ with basis vectors $\vartheta_{c^\epsilon_i}$ associated to certain columns of length $c^\epsilon_i$ of $\lambda$.

\begin{lemma}\label{lem:phi_classical}
With the notations above, the map $\phi: A(\OO) = A(\OO_\lambda) \to A(\OO_0)$ can be computed in the following cases:
  \begin{enumerate}[label=(\roman*)]
    \item Suppose $\lambda_m \in \lambda^\epsilon$ and $\lambda_m = \lambda_{m+1} + 2$.
    \begin{itemize}
      \item
      {\bf Type $C_n$:} Let $\lambda^0=(a_{2q} \geqslant a_{2q-1} \geqslant \cdots \geqslant a_0 )$. Then $\phi: A(\OO) \to A(\OO_0)$ is given by 
	\begin{enumerate}
	   \item
		  $\phi(\vartheta_{a_{2k-1}}) = \vartheta_{a_{2k-1}}$, $\forall \, 1 \leqslant k \leqslant q$; 
	   \item
            $\phi(\vartheta_{m}) = \vartheta_{a_{2i+1}}$,  if $a_{2i+1} > m > a_{2i}$;
	   \item
            $\phi(\vartheta_{m}) = 1$,  if $ m > a_{2q}$.
	\end{enumerate}
        In particular the kernel of $\phi$ is generated by all $\vartheta_{a_{2i+1}} \vartheta_{m}$ with $a_{2i+1} > m > a_{2i}$ and by all $\vartheta_{m}$ with $m > a_{2q}$.
        \item
          {\bf Type $B_n$ (or $D_n$):} 
          Let $\lambda^0 =(a_{2q+1} \geqslant a_{2q} \geqslant \cdots \geqslant a_0)$.
          Then the kernel of $\phi$ is generated by all $ \vartheta_{a_{2i+1}} \vartheta_{m}$ with $a_{2i+1} > m > a_{2i}$.
	\end{itemize}
   \item 
     Otherwise, both $A(\OO)$ and $A(\OO_0)$ are  identified with $A^\epsilon$ by \cref{prop:Aclassical} and $\phi$ is the identity map.
   \end{enumerate}
\end{lemma}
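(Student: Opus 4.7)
The strategy is to exploit \cref{lem:phi}(ii), which reduces the computation of $\phi$ to the induced map $\pi_0(L_{\bar{x}}) \to \pi_0(L_y)$ on component groups, for the Levi $L = GL(m) \times G_\epsilon(n-2m)$ and suitably chosen base points $y \in \OO_L$ and $\bar{x} \in \OO_L \times \fp^{\perp}$. We take $y = (0, e_0)$ where $e_0 \in \OO_0$ is in Jordan normal form, and $\bar{x} = (0, e_0, \xi)$ where $\xi \in \fp^{\perp}$ is chosen so that $e := e_0 + \xi$ lies in $\OO_\lambda$. The existence of such $\xi$ (and the resulting explicit matrix form of $e$) follows from the Kraft--Procesi description of induced classical orbits: passing from $\OO_0$ to $\OO$ amounts to gluing two Jordan blocks of size $m$, coming from the $GL(m)$-factor, onto the Jordan structure of $e_0$, with the gluing prescribed by $\xi$. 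After these choices, $L_y = GL(m) \times Z_{G_\epsilon(n-2m)}(e_0)$ and its component group is identified with $A(\OO_0)$ via \cref{prop:Aclassical}, and our task is to identify $L_{\bar{x}}$, recognize its component group as $A(\OO_\lambda) \subset A^\epsilon$, and read off the map on generators $\vartheta_c$ indexed by columns.

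Case (ii) will follow from the observation that when $\lambda_m \equiv \epsilon \pmod 2$ (the removed column is ``unmarked" in the sense of Section 4.2) or when $\lambda_m - \lambda_{m+1} \geq 3$ (so that the pair of columns of length $m$ in $\lambda$ does not create a new generator), the inclusion $L_{\bar{x}} \hookrightarrow L_y$ induces an isomorphism on component groups. Concretely, in these situations the centralizer constraints imposed by $\xi$ on top of those imposed by $e_0$ do not cut out any new connected components: the extra generator $\vartheta_m$ that one might naively expect from $L_y$ already collapses into an existing generator on the $L_{\bar{x}}$ side, and the canonical identifications $A(\OO_\lambda) = A^\epsilon = A(\OO_0)$ make $\phi$ the identity map.

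Case (i) is the main content. Here the hypothesis $\lambda_m \in \lambda^\epsilon$ and $\lambda_m = \lambda_{m+1}+2$ ensures that the two appended columns of length $m$ genuinely introduce a new column generator $\vartheta_m$ of $A(\OO_\lambda) = A^\epsilon$. The rule $\phi(\vartheta_{a_{2k-1}}) = \vartheta_{a_{2k-1}}$ is immediate from the projection $L_{\bar{x}} \to Z_{G_\epsilon(n-2m)}(e_0)$ being surjective onto the relevant component, since these generators live purely in the $G_\epsilon(n-2m)$-factor and are unaffected by the gluing. The heart of the argument is showing how the ``new" generator $\vartheta_m$ of $L_y$ identifies inside $L_{\bar{x}}$: the element of $GL(m)$ flipping the sign of one of the new Jordan blocks fails to centralize $\bar{x}$ unless it is simultaneously compensated by a sign flip on the nearest column of $\lambda^0$ of the appropriate parity, which is exactly $\vartheta_{a_{2i+1}}$ for $a_{2i+1} > m > a_{2i}$. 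In type $C$, when $m > a_{2q}$ there is no such column to pair with and the sign flip is already realized in $GL(m)$-component (accounting for $\phi(\vartheta_m) = 1$); in types $B, D$ the ``extra" column of length $0$ in the column presentation of $\lambda^0$ removes this boundary case, explaining the uniform formula $\vartheta_{a_{2i+1}} \vartheta_m \in \ker \phi$.

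The main technical obstacle will be organizing the explicit matrix bookkeeping uniformly across the three classical types, with the symplectic vs.\ orthogonal form controlling which $\pm 1$ sign flips actually lift to the centralizer. The asymmetry between type $C$ and types $B, D$ in the statement directly reflects the parity asymmetry in the column presentations (odd vs.\ even number of columns), and will have to be handled via a careful case analysis. Everything else---surjectivity of $\phi$, identification of the kernel---follows from basic dimension-counting and the fact that $A(\OO_\lambda)/A(\OO_0)$ has order at most $2$ in case (i), so once a single nontrivial element of $\ker \phi$ is exhibited, the kernel is determined.
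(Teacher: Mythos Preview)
Your proposal is correct and follows essentially the same route as the paper: both reduce to \cref{lem:phi}(ii), choose an explicit representative $\tilde x = e_0 + \xi$ with $e_0 \in \OO_0$ and $\xi$ in the nilradical, and then compute the inclusion of centralizers in the Levi $M \simeq GL(m)\times G_\epsilon(n-2m)$. The paper's version of your ``sign-flip compensation'' is the block-diagonal embedding $O(a_{2i+1}-m)\times O(m-a_{2i}) \hookrightarrow O(a_{2i+1}-a_{2i})$ appearing in the reductive centralizer $M^{\tilde\psi}$ (computed via the chunk decomposition of $\lambda$ as in \cite[\S5.2]{CM}); working with $\mathfrak{sl}_2$-triple centralizers rather than full stabilizers $L_{\bar x}$ gives these clean product decompositions directly, which is worth adopting when you fill in the details.
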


\begin{proof} 
   We only prove the lemma for $\fg = \mathfrak{sp}(2n)$. The arguments for other types are similar, and left to the reader.
   As in \cite[\S 5.2]{CM}, we may realize $\fg = \mathfrak{sp}(2n)$ as the space of matrices 
   \begin{equation}\label{eq:sp}
      \mathfrak{sp}(2n) = \left\{
      \begin{pmatrix}
         Z_1  &  Z_2   \\
         Z_3  &  -Z_1^T  \\   
      \end{pmatrix}
      \,\middle|\,   Z_i \in M_n(\CC) \text{ and } Z_2, Z_3 \, \text{are symmetric}.
    \right\},    
   \end{equation}
   where $Z_1^T$ stands for the transpose matrix of $Z_1$. Let $\fh \subset \fg$ be the Cartan subalgebra of diagonal matrices of the form
   \[
     H = 
     \begin{pmatrix}
       D & 0 \\
       0 & -D \\
     \end{pmatrix}, \qquad D = \operatorname{diag}(h_1, h_2, \ldots, h_n)
   \]
Define linear functionals $e_i \in \fh^*$ by $e_i(H)=h_i$ for $1 \leq i \leq n$. As in \cref{subsection:Levisclassical}, the root system of $\fg$ is then $\Delta=\{ \pm e_i \pm e_j, \pm 2 e_i \,|\, 1 \leq i, j \leq n, i \neq j\}$, and we choose $\Delta^+=\{ e_i \pm e_j, 2 e_k \,|\, 1 \leq i < j \leq n, 1 \leq k \leq n\} \subset \Delta$ to be the set of positive roots and
   $\Pi = \{  e_1 - e_2,   e_2 - e_3, \ldots, e_{n-1} - e_n, 2e_n \}$ to be the set of simple roots. The $\alpha$-root space of $\fg$ is spanned by the root vector $X_\alpha$ defined below, where $E_{i,j}$ is the elementary matrix having a $1$ as its $(i,j)$-entry and zeros elsewhere.
   \begin{align*}
       X_{e_i-e_j} &= E_{i,j}-E_{j+n,i+n},  \\
       X_{e_i+e_j} &= E_{i,j+n} + E_{j, i+n}, \\
       X_{-e_i-e_j} &= E_{i+n,j} + E_{j+n,i}, \\
       X_{2e_i} &= E_{i,i+n}, \\
       X_{-2e_i} &= E_{i+n,i}.
   \end{align*}  
   We now construct an explicit nilpotent elment $\tilde{x} \in \OO=\OO_\lambda$ and an explicit $\mathfrak{sl}_2$-triple $\tilde{\psi} = (\tilde{x}, \tilde{y}, \tilde{h})$ in $\mathfrak{sp}(2n)$, following \cite[Section 5.2.2]{CM}. We break (the rows of) $\lambda$ up into chunks of the following two types: pairs $\{2p+1,2p+1\}$ of equal odd parts, and single even parts $\{2q\}$. Next we attach a set of consecutive indices in $\{1, 2, \ldots, n\}$ and the associated set of positive (but not necessarily simple) roots to each chunk $\cC$ as follows. If $\cC = \{2q\}$, choose a block $\cB = \cB(2q) = \{j+1, \ldots, j+q\}$ of consecutive indices and let $\cC^+=\cC^+(2q) = \{ e_{j+1} - e_{j+2}, \ldots, e_{j+q-1} - e_{j+q}, 2 e_{j+q}\}$. We set $\cB'=\{ j+1 \}$ if $2q \geq m$ and $\cB'=\emptyset$ otherwise. If $\cC=\{2p+1,2p+1\}$, choose a block $\cB = \cB(2p+1,2p+1) = \{l+1, \ldots, l+2p+1\}$ of consecutive indices and let $\cC^+=\cC^+(2p+1,2p+1) = \{e_{l+1} - e_{l+2}, \ldots, e_{l+2p} - e_{l+2p+1}\}$ (note that $\cC^+$ is empty if $\cC=\{1,1\}$). We set $\cB'=\{l+1, l+2p+1\}$ if $2p+1 \geq m$ and $\cB'=\emptyset$ otherwise. We require the blocks of indices attached to distinct chunks to be disjoint. Let $\tilde{x}$ ($\tilde{y}$ resp.) be the sum of all $\alpha$-root vectors $X_\alpha$ ($X_{-\alpha}$ resp.) for $\alpha$ appearing in some $\cC^+$. Let $\tilde{h} = \sum_{\cC} \tilde{h}_\cC$, where $\tilde{h}_\cC$ is defined by
     \[ \tilde{h}_\cC = \sum_{k=1}^q (2q-2k+1) (E_{k+l,j+k} - E_{j+k+n, j+k+n} ) \]
   if $\cC^+ = \{ e_{j+1} - e_{j+2}, \ldots, e_{j+q-1} - e_{j+q}, 2 e_{j+q}\}$, and
      \[ \tilde{h}_\cC = \sum_{k=0}^{2p} (2p-2k) (E_{l+1+k,l+1+k} - E_{l+1+k+n, l+1+k+n} ) \]
   if $\cC^+ = \{e_{l+1} - e_{l+2}, \ldots, e_{l+2p} - e_{l+2p+1}\}$. Then $\tilde{\psi} = (\tilde{x}, \tilde{y}, \tilde{h})$ defines an $\mathfrak{sl}_2$-triple in $\mathfrak{sp}(2n)$ associated to $\OO$.

   Now take $\cG$ to be the union of all $\cB'$ and $\tilde{\cG} := \cG \cup \{ i+n \,|\, i \in \cG \}$. Let $\fr$ be the Lie subalgebra of $\fg=\mathfrak{sp}(2n)$ consisting of block diagonal matrices $(Z, -Z^T)$, where $Z \in M_n(\CC)$ runs over all matrices with all entries equal to zero except for the $(i,j)$-th entries with $i,j \in \cG$. Then $\fr$ can be naturally identified with $\mathfrak{gl}(m)$. Let $\fk$ be the Lie subalgebra of $\fg$ consisting of matrices of the form \eqref{eq:sp} whose $i$-th row and $j$-th column contain only zeros whenever $i,j \in \tilde{\cG}$. Then $\fk$ is naturally identified with $\mathfrak{sp}(2n-2m)$ by deleting rows and columns with indices in $\cG$. Let $\fm$ be the (direct) sum of $\fr$ and $\fk$, so that $\fm \simeq \mathfrak{gl}(m) \times \mathfrak{sp}(2n-2m)$ is a Levi subalgebra of $\mathfrak{sp}(2n)$ associated to the roots
    \[ \Delta_\fm = \{ e_i-e_j \}_{i,j \in \cG, i \neq j} \cup \{ \pm e_i \pm e_j, \pm 2 e_i \,|\, i, j \notin \cG, i \neq j\}. \] 
   Note that $\fm$ is in general not one of the standard Levi subalgebras considered in \cref{subsection:Levisclassical}.

   Now we reset rows and columns with indices in $\tilde{\cG}$ of the matrices $\tilde{x}, \tilde{y}, \tilde{h}$ to be zero and obtain matrices $x, y, z$ lying in $\fk \simeq \mathfrak{sp}(2n-2m)$. It is easy to verify that $\psi=(x,y,h)$ forms an $\mathfrak{sl}_2$-triple in $\fk$ associated to the orbit $\OO_0$ corresponding to the partition $\lambda^0$. Let $\fq$ be the parabolic subalgebra spanned by $\fm$ and all the positive root vectors $X_\alpha$, $\alpha \in \Delta^+$, and let $\fu \subset \fq$ be the nilpotent radical corresponding to the roots in $\Delta^+ \backslash \Delta_{\fm}$. By construction we have $\tilde{x} = x + x'$, where $x' \in \fu$, hence $\tilde{x} \in \OO \cap (\OO_0 + \fu)$.
   
   We regard $x \in \OO_0$ as an element in $0 \times \mathfrak{sp}(2n-2m) \subset \mathfrak{gl}(m) \times \mathfrak{sp}(2n-2m) = \fm$. Let $M = R \times K \simeq GL(m) \times Sp(2n-2m)$ be the Levi subgroup of $G=Sp(2n)$ corresponding to $\fm$. Let $M^\psi$ and $M^{\tilde{\psi}}$ denote the $M$-centralizers of $\psi$ and $\tilde{\psi}$ respectively. Then one can check that $M^{\tilde{\psi}} \subset M^{\psi}$. We will describe explicitly this inclusion below. By Lemma \ref{lem:phi}, the map $\phi: A(\OO) \to A(\OO_0)$ can be computed as the induced natural map $\pi_0 (M^{\tilde{\psi}}) \to \pi_0 (M^\psi)$. First consider case (i) in the statement of the lemma. From the construction of $\tilde{\psi}$ and $\psi$, we have the following group isomorphisms (cf. \cite[Theoerem 6.1.3]{CM}):
   \begin{equation}\label{eq:M_psi_tilde}
    M^{\tilde{\psi}}  \simeq \underbrace{Sp(a_{2q} - a_{2q-1})  \times \cdots \times \textcolor{red}{O (a_{2i+1}-m)} }_{S} 
     \textcolor{red}{\times}
	   \underbrace{ \textcolor{red}{ O(m - a_{2i})} \times  \cdots \times O(a_1-a_0) \times Sp(a_0) }_{T} 
    \end{equation}
	and
    \begin{equation}\label{eq:M_psi}
     M^{\psi} \simeq \textcolor{blue}{GL(m)} \times \underbrace{Sp(a_{2q} - a_{2q-1}) \times \cdots  \times \textcolor{red}{O (a_{2i+1} - a_{2i})} \times  \cdots O(a_1-a_0) \times Sp(a_0)}_{W}.    
    \end{equation}
  We can describe the inclusion $M^{\tilde{\psi}} \subset M \simeq GL(m) \times Sp(2n-2m)$ in terms of the isomorphism \eqref{eq:M_psi_tilde} as follows: both subgroups $S$ and $T$ are included in $Sp(2n-2m)$ in the standard way, but $T$ is also included in $GL(m)$. The inclusion $S \hookrightarrow Sp(2n-2m)$ induces an injective homomorphism $\iota_S: S \hookrightarrow \{1\} \times Sp(2n-2m) \subset M$. We also have the product map $\iota_T: T \hookrightarrow GL(m) \times Sp(2n-2m)$ of the two inclusions of $T$ into $GL(m)$ and $ Sp(2n-2m)$. It is clear that the images of $\iota_S$ and $\iota_T$ commute with each other, so we have an induced injective homomorphism $(\iota_S, \iota_T): M^{\tilde{\psi}} = S \times T \hookrightarrow M$, which is exactly the inclusion map $M^{\tilde{\psi}} \hookrightarrow M$. We see that the image lies in $M^\psi$.

  Note that the $GL(m)$-factor is in the identity component of $M^\psi$, so 
  \[\pi_0(M^\psi) \simeq \pi_0(M^\psi/GL(m)) \simeq \pi_0(W).\] 
  Therefore we need only to understand the composite map $f_W: M^{\tilde{\psi}} \hookrightarrow M^\psi \twoheadrightarrow W$. Note that the composite map $M^{\tilde{\psi}} \hookrightarrow M \twoheadrightarrow M/GL(m) \simeq Sp(2n-2m)$ is also the composition of $f_W$ and the inclusion $W \hookrightarrow K \simeq Sp(2n-2m)$, we see that $f_W$ is the block-diagonal embedding $O (a_{2i+1}-m) \times O(m - a_{2i}) \hookrightarrow O (a_{2i+1} - a_{2i})$ of the two red simple factors in \eqref{eq:M_psi_tilde} into the red factor in \eqref{eq:M_psi} and is the identity map on the remaining simple factors which appear on the right hand sides of both \eqref{eq:M_psi} and \eqref{eq:M_psi_tilde}. Passing to the level of component groups $\pi_0$, we get the conclusion in (i) easily.

  The proof of case (ii) is similar and easier, so we omit it.
\end{proof}

For any $m \in \cS(\lambda)$, let $d_m = \lfloor (\lambda_m - \lambda_{m+1})/2 \rfloor$ and $\lambda^0 \in \cP_\epsilon(n-2m d_m)$ be the partition obtained from $\lambda$ by removing $2 d_m$ columns of length $m$ and let $\OO_0 = \OO_{\lambda^0}$ be the corresponding nilpotent orbit in $\fg_\epsilon(n-2m d_m)$. Then $\OO$ is birationally induced from the orbit $\{0\} \times \OO_0$ in the Levi subalgebra $\fm=\mathfrak{gl}(m)^{d_m} \times \fg_\epsilon(n-2md_m) \subset \fg_\epsilon(n)$ by \cref{prop:birationalpartitions}. Let $H_m$ be the kernel of the map $\phi: A(\OO) \to A(\OO_0)$. 

\begin{prop}\label{prop:codim2sing_classical_cover}
   Let $\widetilde{\OO}$ be a cover of $\OO=\OO_\lambda$ corresponding to a subgroup $H \subset A(\OO)$. For $\lambda_m \in \lambda$ such that $\lambda_m \geq \lambda_{m+1}+2$, let $\widetilde{\OO}_m \subset \widetilde{X} = \operatorname{Spec}(\CC[\widetilde{\OO}])$ be the corresponding cover of $\OO_m$. Then $\widetilde{\OO}_m$ is connected and the singularity of $\widetilde{X}$ along $\widetilde{\OO}_m$ is equivalent to $\CC^2 / \Gamma'_m$, where $\Gamma'_m$ is a subgroup of $\Gamma_m$ of index $2$ if $\lambda_m \in \lambda^{\epsilon}$ and  $H_m \not\subset H$, and $\Gamma'_m = \Gamma_m$ otherwise.
\end{prop}
\begin{proof}
    This is essentially Theorem 2.6 of \cite{Mitya2020}. Note that \cite{Mitya2020} deals with the adjoint groups, but the arguments in \cite[Section 5]{Mitya2020} work for classical groups as well.
\end{proof}

Thanks to \cref{lem:phi} (iii), we can apply \cref{lem:phi_classical} inductively and deduce that $H_m = \{1,  \upsilon_{\lambda_m}\upsilon_{\lambda_{m+1}} \}$. In terms of columns of $\lambda$, $m \in  \cS(\lambda)$ means that $m = c_j = c^{\epsilon}_i$ is a column of $\lambda = (c_p \ge c_{p-1} \ge \cdots \ge c_1)$ with multiplicity $m_{\lambda^t}(m) = \lambda_m - \lambda_{m+1} \ge 2$, and $H_m = \{1, \vartheta_{c_j} \vartheta_{c_{j+1}} \}$. This together with \cref{prop:codim2sing_classical_cover} immediately implies the following.

\begin{prop}\label{prop:2leafless_classical_cover}
   Suppose $G = G_{\epsilon}(n)$. Let $\OO_{\lambda} \in \Orb(G)$ and let $\widetilde{\OO}_{\lambda}$ be a $G$-equivariant cover of $\OO_{\lambda}$ corresponding to a subgroup $H \subset A(\OO_{\lambda}) = A^\epsilon$. Then $\Spec(\CC[\widetilde{\OO}_{\lambda}])$ has no codimension $2$ leaves if and only if $\lambda$ and $H$ satisfy the following conditions: 
   \begin{enumerate}[label=(\roman*)]
     \item 
      each column $c_j$ of $\lambda$ has multiplicity less or equal than $2$, and it can be $2$ only when $c_j=c^\epsilon_i$ for some $i$, i.e., $\height_{\lambda^t}(c_j) \equiv 1-\epsilon \mod 2$;
    \item 
      for each $c_j = c^\epsilon_i$ such that $m_{\lambda}(c^\epsilon_i)=2$, the subgroup $H$ of $A(\OO)$ does not contain the element $\vartheta_{c_j} \vartheta_{c_{j+1}} =\vartheta_{c^\epsilon_i}\vartheta_{c^\epsilon_{i+1}}$.
   \end{enumerate}
\end{prop}

\begin{lemma}\label{lem:H2_classical_cover}
    Suppose $G = G_{\epsilon}(n)$. Let $\OO_{\lambda} \in \Orb(G)$ and let $\widetilde{\OO}_{\lambda}$ be a $G$-equivariant cover of $\OO_{\lambda}$ corresponding to a subgroup $H \subset A(\OO_{\lambda}) = A^\epsilon$. Then $H^2(\widetilde{\OO}_\lambda, \CC) = 0$ if and only if, for each column $c^{\epsilon}_k$ appearing in $\lambda$ with multiplicity $2$, $H$ contains an element $\theta$ whose decomposition as product of $\vartheta_{c_i^\epsilon}$'s contains $\vartheta_{c^{\epsilon}_k}$.
\end{lemma}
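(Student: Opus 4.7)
The plan is to apply Lemma \ref{lem:computeH2}: after reducing to the simply connected isogeny of $G$, this lemma gives
\[H^2(\widetilde{\OO}_\lambda, \CC) \simeq \fz(\fr)^H,\]
where $\fr$ is the Lie algebra of the reductive part $R$ of the stabilizer of any $e \in \OO_\lambda$ and $H \subset A(\OO_\lambda) = R/R^\circ$ is the subgroup corresponding to the cover. The problem thus reduces to describing $\fz(\fr)$ together with its $A^\epsilon$-action, and identifying the subgroups $H$ that annihilate every summand.

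First I would invoke the standard description of classical nilpotent centralizers (see, e.g., \cite[Theorem 6.1.3]{CM}):
\[\fr \simeq \bigoplus_{k \notin \lambda^\epsilon} \mathfrak{sp}(m_\lambda(k)) \;\oplus\; \bigoplus_{k \in \lambda^\epsilon} \mathfrak{o}(m_\lambda(k)).\]
Every symplectic summand is semisimple, while $\mathfrak{o}(m)$ is semisimple for $m \geq 3$, zero for $m \leq 1$, and equal to the one-dimensional abelian Lie algebra $\mathfrak{so}(2)$ for $m = 2$. It follows at once that
\[\fz(\fr) = \bigoplus_{\lambda^\epsilon_k \,:\, m_\lambda(\lambda^\epsilon_k) = 2} \mathfrak{so}(2)_{\lambda^\epsilon_k},\]
one one-dimensional line for each distinct part $\lambda^\epsilon_k \in \lambda^\epsilon$ appearing in $\lambda$ with multiplicity exactly two; these are precisely the ``columns $c^\epsilon_k$ appearing in $\lambda$ with multiplicity $2$'' of the lemma, labelled via the corner correspondence $\lambda^\epsilon_k \leftrightarrow c^\epsilon_k$.

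Next I would compute the action of $A^\epsilon$ on $\fz(\fr)$. Under the identification $A^\epsilon \simeq R/R^\circ$ furnished by Proposition \ref{prop:Aclassical}, the basis vector $\vartheta_{c^\epsilon_k} = \upsilon_{\lambda^\epsilon_k}$ corresponds to the nontrivial element of $\pi_0(O(m_\lambda(\lambda^\epsilon_k)))$. When $m_\lambda(\lambda^\epsilon_k) = 2$, any representative in $O(2) \setminus SO(2)$ is a reflection, and conjugation by a reflection acts on $\mathfrak{so}(2) = \mathrm{Lie}(SO(2))$ by $-1$; on every other summand $\mathfrak{so}(2)_{\lambda^\epsilon_j}$ (which arises from a different factor of $R$), it acts trivially. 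Consequently a general element $\theta \in A^\epsilon$, written uniquely as a product of distinct basis vectors $\prod_i \vartheta_{c^\epsilon_i}$, acts on the summand $\mathfrak{so}(2)_{\lambda^\epsilon_k}$ by $-1$ precisely when $\vartheta_{c^\epsilon_k}$ appears in that decomposition.

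Assembling the pieces, $\fz(\fr)^H = 0$ if and only if every summand of $\fz(\fr)$ is negated by some $\theta \in H$, which is the content of the lemma. The main obstacle, though essentially bookkeeping, is the explicit matching of the abstract generator $\upsilon_{\lambda^\epsilon_k}$ of $A^\epsilon$ with the concrete reflection class in $\pi_0(O(m_\lambda(\lambda^\epsilon_k)))$: this identification is implicit in the proof of Proposition \ref{prop:Aclassical} but must be spelled out carefully in order to justify the sign appearing in the $\pi_0(R)$-action, and to ensure that the isogeny reduction at the start does not shift the subgroup $H$ under consideration.
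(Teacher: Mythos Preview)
Your proposal is correct and follows exactly the approach the paper takes: the paper's proof is the single sentence ``This follows immediately from \cref{lem:computeH2} and explicit description (in terms of rows) of the reductive centralizer of $\OO_\lambda$ for classical groups similar to \cite[Theorem 6.1.3]{CM},'' and you have simply spelled out what that entails---identifying $\fz(\fr)$ with a direct sum of $\mathfrak{so}(2)$'s indexed by the multiplicity-two parts of $\lambda^\epsilon$, and reading off the $A^\epsilon$-action from the identification of each generator with the nontrivial class in the relevant $\pi_0(O(2))$. Your remarks on the isogeny reduction and the bookkeeping for matching abstract generators with concrete reflections are apt but routine; no substantive gap remains.
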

\begin{proof}
    This follows immediately from \cref{lem:computeH2} and explicit description (in terms of rows) of the reductive centralizer of $\OO_\lambda$ for classical groups similar to \cite[Theorem 6.1.3]{CM}.
\end{proof}

\subsection{BVLS duality}

For a partition $\lambda = [\lambda_1,...,\lambda_k] \in \mathcal{P}(n)$, write
\begin{align*}
l(\lambda) &= [\lambda_1,...,\lambda_{k-1},\lambda_k-1] \in \mathcal{P}(n-1)\\
e(\lambda) &= [\lambda_1,...,\lambda_k,1] \in \mathcal{P}(n+1)\\
\lambda^+ &= [\lambda_1+1,\lambda_2,...,\lambda_k] \in \mathcal{P}(n+1)
\end{align*}

\begin{prop}[\cite{McGovern1994}, Thm 5.2]\label{prop:BVduality}
Suppose $G$ is a simple group of classical type. Let $\OO_{\lambda} \in \Orb(G^{\vee})$ and let let $d(\OO_{\lambda}) = \OO_{d(\lambda)}$. Then $d(\lambda)$ is given by the following formulas:

\begin{itemize}
    \item[(i)] If $\fg = \mathfrak{sl}(n)$, then $\fg^{\vee} = \mathfrak{sl}(n)$ and 
    $$d(\lambda) = \lambda^t.$$
    \item[(ii)] If $\fg = \mathfrak{so}(2n+1)$, then $\fg^{\vee} =  \mathfrak{sp}(2n)$ and 
    $$d(\lambda) = (e(\lambda)^t)_{B}.$$
    \item[(iii)] If $\fg = \mathfrak{sp}(2n)$, then $\fg^{\vee} = \mathfrak{so}(2n+1)$ and 
    $$d(\lambda) = (l(\lambda^t))_C.$$
    \item[(iv)] If $\fg = \mathfrak{so}(2n)$, then $\fg^{\vee} = \mathfrak{so}(2n)$ and
    $$d(\lambda) =(\lambda^t)_{D}.$$
    If $\lambda$ is very even, then $d(\lambda)=\lambda^t$ is very even. If $n$ is divisible by $4$, then $\lambda$ and $d(\lambda)$ have the same decoration. Otherwise, they have opposite decorations. 
\end{itemize}
\end{prop}

\subsection{Lusztig-Achar data}\label{subsection:Acharclassical}

Following \cite{Achar2003}, we introduce the notion of a \emph{reduced marked partition}.

\begin{definition}\label{defn:markable}
    Let $X \in \{B,C,D\}$ and let $\lambda \in \mathcal{P}_X(m)$ (where $m$ is either even or odd, depending on $X$). If $X = B$ (resp. $C$, $D$), a \emph{markable part (or row)} for $\lambda$ is an odd (resp. even, odd) positive integer $x$ such that $m_{\lambda}(x) \geq 1$ and $\mathrm{ht}_{\lambda}(x)$ is odd (resp. even, even).

    A \emph{reduced marked partition} of type $X$ is a marked partition $\prescript{\langle \nu \rangle}{}{\lambda}$ of type $X$ where $\nu$ consists of only markable parts of $\lambda$. We denote the subset of all reduced marked partitions in $\tilde{\cP}_X(m)$ by $\overline{\cP}_X(m)$. Again we allow decorations by Roman numerals $I$ and $II$ for the very even partitions of type $D$.
    
    Similar to $\cP_\epsilon(m)$, we also sometimes write $\overline{\cP}_\epsilon(m)$ for the sets defined above, where $\epsilon \in \{ 0, 1\}$.
\end{definition}

Recall that in Section \ref{subsec:A_classical}, we have defined a subpartition $\lambda^\epsilon = [\lambda^\epsilon_1, \lambda^\epsilon_2, \cdots, \lambda^\epsilon_r]$ and an elementary abelian 2-group $A \simeq (\ZZ_2)^r$ with basis $\{\upsilon_{\lambda^\epsilon_i}\}$ and the subgroups $A^\epsilon$ for $\epsilon = 0, 1$. Let $\lambda^m = [ \lambda^m_1 > \lambda^m_2 > \cdots > \lambda^m_l > 0]$ be the subpartition of $\lambda^\epsilon$ consisting of all the markable parts of $\lambda$ in the sense of Definition \ref{defn:markable}. We also make the convention that $\lambda^m_0 = \infty$ and $\lambda^m_{l+1} = 0$. Let $N$ denote the kernel of the composition $A^\epsilon \simeq A(\OO_\lambda) \twoheadrightarrow \bar{A}(\OO_\lambda)$ (see Proposition \ref{prop:Aclassical}). We describe the group $N$ below, which follows easily from the discussions in \cite[Section 5]{Sommers2001}. 

\begin{prop}\label{prop: A to barA}
The subgroup $N$ of $A^\epsilon$ is generated by the elements of form $\upsilon_{\lambda^\epsilon_{i}} \upsilon_{\lambda^m_{j}}$ such that $\lambda^m_{j} \le \lambda^\epsilon_{i} < \lambda^m_{j-1}$ for all $1 \le j \le l+1$ (which implies that $\upsilon_{\lambda^\epsilon_{i}} = \upsilon_{\lambda^\epsilon_{i}} \upsilon_0$ always lies in $N$ for $\lambda^\epsilon_{i} < \lambda^m_{l}$).
\end{prop}

\begin{cor}\label{cor:Abarclassical}
Suppose $G$ is a simple classical group. Let $\OO_{\lambda} \in \Orb(G)$. If $\fg = \mathfrak{sl}(n)$, then $\bar{A}(\OO_{\lambda})\simeq 1$. Otherwise, $\bar{A}(\OO_{\lambda}) \simeq (\ZZ_2)^d$, where
$$d = \begin{cases*}
            \text{$\#$ of markable parts} & if  $\fg = \mathfrak{sp}(2n)$\\
            \text{$\#$ of markable parts} -1  & if $\fg=\mathfrak{so}(2n+1)$ or $\mathfrak{so}(2n)$
                 \end{cases*} $$
\end{cor}

\begin{rmk}\label{rmk:Abar_column}
    We can also describe $\bar{A}(\OO)$ in terms of columns as we did for $A(\OO)$ in \cref{rmk:A_column} as follows. We can introduce the notion of a {\it markable column} of $\lambda$ as the dual to a markable part/row, namely those $c^\epsilon_i$ whose corresponding $\lambda^\epsilon_i$ are markable in the sense of \cref{defn:markable}. More concretely, If $X = B$ (resp. $C$, $D$), a markable column of $\lambda$ is an odd (resp. even, even)  positive integer $y$ such that $m_{\lambda^t}(y) \geq 1$ and $\mathrm{ht}_{\lambda^t}(y)$ is odd (resp. even, odd). Let $c^m = (c^m_l > c^m_{l-1} > \cdots > c^m_{1} > 0)$ denote the markable columns of $\lambda$. We also make the convention that $c^m_0 = 0$, $c^m_{l+1} = \infty$ and $\vartheta_{\infty}=0$. Then the subgroup $N$ of $A^\epsilon$ is generated by the elements of form $\vartheta_{c^\epsilon_{i}} \vartheta_{c^m_{j}}$ such that $c^m_{j-1} < c^\epsilon_{i} \le c^m_{j}$ for all $1 \le j \le l+1$ (which implies that $\vartheta_{c^\epsilon_{i}} = \vartheta_{c^\epsilon_{i}} \vartheta_{\infty}$ always lies in $N$ for $c^\epsilon_{i} > c^m_{l}$).
\end{rmk}

For $G$ simple classical, we have the following parametrizations of $\LA(G)$ (see \cite[Section 3.4]{Achar2003}). Here we adopt the notations of Section \ref{subsec:A_classical}.

\begin{prop}\label{prop:markedpartitionsAchardata}
Suppose $\fg$ is a simple Lie algebra of classical type, then the following are true:
\begin{enumerate}[label=(\roman*)]
    \item Suppose $\fg=\mathfrak{sl}(n)$. Then $\bar{A}(\OO)\simeq 1$ for every $\OO \in \Orb(G)$. In particular, there is a bijection $\mathcal{P}(n) \xrightarrow{\sim} \LA(G)$.    
    \item Suppose $\fg=\fg_\epsilon(m)$, $\epsilon \in \{0,1\}$, is of type $B$, $C$ or $D$. Then the composite map
      $$ \overline{\cP}_\epsilon(m) \hookrightarrow \tilde{\cP}_\epsilon(m) \twoheadrightarrow \Conj(G_\epsilon(m)) \xrightarrow{\sim} \LA(G_\epsilon(m))  $$
    is a bijection, where $\overline{\cP}_\epsilon(m) \hookrightarrow \tilde{\cP}_\epsilon(m)$ is the natural inclusion map, $\tilde{\cP}_\epsilon(m) \xrightarrow{\sim} \Conj(G_\epsilon(m))$ is the bijection in Corollary \ref{cor:conj_classical} and $\Conj(G_\epsilon(m)) \twoheadrightarrow \LA(G_\epsilon(m))$ is the natural projection.
\end{enumerate}
\end{prop}

\subsection{Special Lusztig-Achar data}

Let $G$ be a simple group of type $X \in \{B,C,D\}$. By Proposition \ref{prop:markedpartitionsAchardata}, $\LA(G)$ is parameterized by $\overline{\cP}_X(m)$. Recall the subset $\LA^*(G) \subset \LA(G)$ of special Lusztig-Achar data, cf. Section \ref{subsec:specialAchar}. Let $\overline{\cP}_X^*(m)$ denote the subset of $\overline{\cP}_X(m)$ corresponding to $\LA^*(G)$. The elements of $\overline{\cP}_X^*(m)$ are called \emph{special marked partitions}.

\begin{prop}[Propositions 5.8 and 5.16, \cite{Achar2003}]\label{prop:special_LA_classical}
A reduced marked partition $^{\langle \nu\rangle} \lambda$ of type $B$ (resp. $C$, $D$) is special if and only if there are no even (resp. odd, even) parts $x$ of $\lambda$ such that $\mathrm{ht}_{\nu}(x)$ is odd and $\height_\lambda(x)$ is odd (resp. even, even).
\end{prop}

\subsection{Saturation of conjugacy data}

The following is elementary. We leave the straightforward verification to the reader. 

\begin{prop}
Let $G=G_\epsilon(m)$. Let $M = GL(a_1) \times ... \times GL(a_t) \times G_{\epsilon}(n)$ or possibly (if $G$ is even orthogonal) $M = GL(a_1) \times ... \times GL(a_t)'$ (with all $a_i$ even) and suppose $(\OO_M,C_M) \in \Conj(M) \simeq \Conj(GL(a_1)) \times ... \times \Conj(GL(a_t)) \times \Conj(G_\epsilon(n))$ corresponds, under the map of Corollary \ref{cor:conj_classical} to a tuple
$$(\lambda^1,...,\lambda^t, ^{\langle \nu^0 \rangle}\lambda^0) \in \mathcal{P}(a_1) \times ... \times \mathcal{P}(a_t) \times \tilde{\cP}_{\epsilon}(n)$$
Then $\Sat^G_M (\OO_M,C_M)$ corresponds to the marked partition $^{\langle \nu\rangle}\lambda$, where
$$\lambda = \lambda^0 \cup \bigcup_{j=1}^t (\lambda^j \cup \lambda^j), \qquad \nu = \nu^0$$
\end{prop}

\begin{cor}\label{cor:dist_conj_Som}
Let $G=G_{\epsilon}^{ad}(m)$. Suppose $(\OO,C) \in \Conj(G)$ corresponds under the map of Corollary \ref{cor:conj_classical_adjoint} to a marked partition $\prescript{\langle \nu \rangle}{}{\lambda} \in \tilde{\cP}_\epsilon(m)$. Then $(\OO, C)$ is distinguished if and only if the following equivalent conditions are satisfied:
\begin{enumerate}[label=(\roman*)]
    \item
       $\nu$ and $\eta = \lambda \backslash \nu$ are distinguished partitions, i.e., all their members have multiplicity $1$. 
    \item 
      For all i, $\lambda_j \not\equiv \epsilon \mod 2$, i.e., $\lambda=\lambda^\epsilon$. Moreover, $m_\lambda(x) \leq 2$ for any $x$, and if $m_\lambda(x)=2$, then $x \in \nu$.
    \item 
      Under the isomorphism $\Som(G) \xrightarrow{\sim} \Conj(G)$ of Lemma \ref{Lem:Sommers_data}, $(\OO, C) \in \Conj(G)$ corresponds to the pair $(\fl_1 \times \fl_2, \OO_\nu \times \OO_{\eta})$, where $\fl_1 \times \fl_2$ is a maximal pseudo-Levi subalgebra of $\fg$ (see Section \ref{subsec:Maximalpseudo-Leviclassical}) and $\OO_\nu$ and $\OO_{\eta}$ are distinguised nilpotent orbits in the simple classical Lie algebras $\fl_1$ and $\fl_2$ corresponding to partitions $\nu$ and $\eta$ respectively. 
\end{enumerate}
\end{cor}

\begin{rmk}\label{rmk:Som_classical}
    Note that when $\fg$ is of type $C$ or $D$, the simple factors $\fl_1$ and $\fl_2$ are of the same type, and  $(\fl_1 \times \fl_2, \OO_\nu \times \OO_{\eta})$ is $G^{ad}_\epsilon(m)$-conjugate to $(\fl_2 \times \fl_1, \OO_\eta \times \OO_{\nu})$ in $\Som(G^{ad}_\epsilon(m))$. This corresponds to the fact that both $\prescript{\langle \nu \rangle}{}{\lambda}$ and $\prescript{\langle \eta \rangle}{}{\lambda}$ correspond to the same pair $(\OO, C) \in \Conj(G^{ad}_\epsilon(m))$ as in Corollary \ref{cor:conj_classical_adjoint}.
\end{rmk}

\subsection{Saturation of Lusztig-Achar data}\label{subsec:Sat_LA}

The following is elementary. We leave the straightforward verification to the reader. 

\begin{prop}
Let $G=G_\epsilon(m)$. Let $M = GL(a_1) \times ... \times GL(a_t) \times G_{\epsilon}(n)$ or possibly (if $G$ is even orthogonal) $M = GL(a_1) \times ... \times GL(a_t)'$ (with all $a_i$ even) and suppose $(\OO_M,\bar{C}_M) \in \LA(M) \simeq \LA(GL(a_1)) \times ... \times \LA(GL(a_t)) \times \LA(G)$ corresponds, under the bijections of Proposition \ref{prop:markedpartitionsAchardata} to a tuple
$$(\lambda^1,...,\lambda^t, ^{\langle \nu^0 \rangle}\lambda^0) \in \mathcal{P}(a_1) \times ... \times \mathcal{P}(a_t) \times \overline{\cP}_{\epsilon}(n)$$
Then $\Sat^G_M (\OO_M,\bar{C}_M)$ corresponds to the reduced marked partition $^{\langle \nu\rangle}\lambda$, where
$$\lambda = \lambda^0 \cup \bigcup_{j=1}^t (\lambda^j \cup \lambda^j), \qquad \nu = \nu^0$$
\end{prop}

\begin{cor}\label{cor:dist_achar}
Suppose $\prescript{\langle \nu \rangle}{}{\lambda}$ is the reduced marked partition associated to a Lusztig-Achar datum $(\OO, \bar{C}) \in \LA(G)$. Then $(\OO, \bar{C})$ is distinguished if and only if the following equivalent conditions are satisfied:

\begin{enumerate}[label=(\roman*)]
  \item
     $\nu$ and $\eta = \lambda \backslash \nu$ are distinguished partitions.
  \item
     For all i, $\lambda_j \not\equiv \epsilon \mod 2$, i.e., $\lambda=\lambda^\epsilon$. Moreover, $m_\lambda(x) \leq 2$ for any $x$, and if $m_\lambda(x)=2$, then $x \in \nu$ (which implies that $x$ is markable in the sense of \cref{subsection:Acharclassical}).
\end{enumerate}
\end{cor}     

Now we give an alternative description of $\bar{A}(\OO)$ of $\OO = \OO_\lambda$ in the special case when there is a distinguished Lusztig-Achar datum $(\OO, \bar{C}) \in \LA(G)$. Recall that in Section \ref{subsec:A_classical}, we have defined a subpartition $\lambda^\epsilon = [\lambda^\epsilon_1, \lambda^\epsilon_2, \cdots, \lambda^\epsilon_r]$ and an elementary abelian 2-group $A \simeq (\ZZ_2)^r$ with basis $\{\upsilon_{\lambda^\epsilon_i}\}$ and the subgroups $A^\epsilon$ for $\epsilon = 0, 1$. By \cref{cor:dist_achar}, we have $\lambda = \lambda^\epsilon$. Let $N$ denote the kernel of the map $A^\epsilon = A(\OO_\lambda) \twoheadrightarrow \bar{A}(\OO_\lambda)$ (see Proposition \ref{prop:Aclassical}). The next proposition follows easily from the discussion in \cite[Section 5]{Sommers2001}. 

\begin{prop}\label{prop: A to barA distinguished}
The following are true:
\begin{enumerate}[label=(\roman*)]
    \item
    If $\fg$ is of type $B$, then $N$ is the subgroup of $A^\epsilon$ generated by the elements $\upsilon_{\lambda_{2i}} \upsilon_{\lambda_{2i+1}}$ for all $i \geqslant 1$. 
    \item
    If $\fg$ is of type $C$ or $D$, then $N$ is the subgroup of $A^\epsilon$ generated by the elements $\upsilon_{\lambda_{2i-1}} \upsilon_{\lambda_{2i}}$ for all $i \geqslant 1$ (note that when $\fg$ is of type $C$ and $r$ is odd, this implies that $\upsilon_{\lambda_{r}} = \upsilon_{\lambda_{r}} \upsilon_0$ lies in $N$).
\end{enumerate}
\end{prop}

Now we define a special basis of $\bar{A}(\OO_\lambda)$. 

\begin{definition}\label{defn:Abar_basis}
With notations above, define the following elements in $A^\epsilon$:
\begin{itemize}
    \item 
    $\theta_i := \upsilon_{\lambda_{2i-1}} \upsilon_{\lambda_{2i+1}}$ for $1 \leqslant i \leqslant  \frac{r-1}{2}$, if $\fg$ is of type $B$;
    \item 
    $\theta_i := \upsilon_{\lambda_{2i}} \upsilon_{\lambda_{2i+2}}$ for $1 \leqslant i \leqslant  \lfloor \frac{r}{2} \rfloor$, if $\fg$ is of type $C$;
    \item 
    $\theta_i := \upsilon_{\lambda_{2i}} \upsilon_{\lambda_{2i+2}}$ for $1 \leqslant i \leqslant \frac{r}{2} - 1$, if $\fg$ is of type $D$.
\end{itemize}
Note that when $\fg$ is of type $C$, by our convention $\theta_{k} = \upsilon_{\lambda_{2k}} \upsilon_0 = \upsilon_{\lambda_{2k}}$ if $r = 2k $ or $2k+1$. 	
\end{definition}

The various $\theta_i$'s generate a subgroup $K(\OO_\lambda)$ of $A(\OO_\lambda)=A^\epsilon$ which maps isomorphically onto its image under the quotient map $A(\OO_\lambda) \twoheadrightarrow \bar{A}(\OO_\lambda)$. In other words, we get a splitting $s:  \bar{A}(\OO_\lambda) \to A(\OO_\lambda)$ whose image is the subgroup $K(\OO_\lambda)$. We write $\bar{\theta}_i$ for the image of $\theta_i$ in $\bar{A}(\OO_\lambda)$ and so $\{ \bar{\theta}_i \}$ forms a basis of $\bar{A}(\OO_\lambda)$. Note that the group $K(\OO_\lambda)$ is mapped isomorphically to its image under the quotient map $A(\OO_\lambda) \twoheadrightarrow A^{ad}(\OO_\lambda)$, so we can also regard $K(\OO_\lambda)$ as a subgroup of $A^{ad}(\OO_\lambda)$. Then $K(\OO_\lambda) \subset A^{ad}(\OO_\lambda)$ is exactly the group in \cref{lem:K(O)}. Elements in $A^0$, i.e., those that can be written as products of even number of $\upsilon_{\lambda_i}$, e.g., $\theta_i$, will also be identified with their images in $A^{ad}(\OO_\lambda)$ by abuse of notation.

\subsection{Sommers duality}\label{sec: Sommers}

\begin{prop}\cite[Theorem 12]{Sommers2001}\label{prop:Sommersdualityclassical}
Let $G^\vee = G_\epsilon(m)$ and $(\OO^{\vee},\bar{C}) \in \LA(G^\vee)$ corresponds to a reduced marked partition $^{\langle \nu\rangle} \lambda \in \overline{\cP}_X(m)$(see Proposition \ref{prop:markedpartitionsAchardata}). Let $\OO_{d_S(^{\langle \nu\rangle} \lambda )} = d_S(\OO^{\vee},\bar{C})$. Then, writing $\eta = \lambda \setminus \nu$, the partition $d_S(^{\langle \nu\rangle} \lambda )$ is given by the following formulas:
\begin{itemize}
    \item[(i)] If $\fg^{\vee}=\mathfrak{sp}(2n)$, then $\fg=\mathfrak{so}(2n+1)$ and
    $$d_S(^{\langle \nu\rangle} \lambda ) = ((\nu \cup ((\eta)^+)_B)^t)_B.$$ 
    \item[(ii)] If $\fg^{\vee}=\mathfrak{so}(2n+1)$, then $\fg=\mathfrak{sp}(2n)$ and
    $$d_S(^{\langle \nu\rangle} \lambda ) = ((\nu \cup l(\eta)_C)^t)_C.$$
    \item[(iii)] If $\fg^{\vee}=\mathfrak{so}(2n)$, then $\fg=\mathfrak{so}(2n)$ and
    $$d_S(^{\langle \nu\rangle} \lambda ) = ((\nu \cup ((\eta^t)_D)^t)^t)_D.$$
\end{itemize}
\end{prop}

Now assume that $(\OO^\vee, \bar{C})$ is a distinguished pair in $\LA(G^{\vee})$. We will choose lifts $\tilde{\theta}_i \in A(\OO^\vee)$ of $\bar{\theta}_i \in \bar{A}(\OO^\vee)$ different from those of Definition \ref{defn:Abar_basis} as follows.

\begin{definition}\label{defn:Abar_basis_tilde}
Let $G^\vee=G_\epsilon(m)$. Define the following elements in $A(\OO^\vee)=A^\epsilon$:
	\begin{itemize}
		\item 
		  $\tilde{\theta}_i := \upsilon_{\lambda_{2i-1}} \upsilon_{\lambda_{2i}}$ for $1 \leqslant i \leqslant  \frac{r-1}{2}$, if $\fg^\vee$ is of type $B$;
		\item 
		  $\tilde{\theta}_i := \upsilon_{\lambda_{2i}} \upsilon_{\lambda_{2i+1}}$ for $1 \leqslant i \leqslant  \lfloor \frac{r}{2} \rfloor$, if $\fg^\vee$ is of type $C$;
		\item 
		  $\tilde{\theta}_i := \upsilon_{\lambda_{2i}} \upsilon_{\lambda_{2i+1}}$ for $1 \leqslant i \leqslant \frac{r}{2} - 1$, if $\fg^\vee$ is of type $D$.
	\end{itemize}
	Note that when $\fg^\vee$ is of type $C$, by our convention $\tilde{\theta}_{k} = \upsilon_{\lambda_{2k}} \upsilon_0 = \upsilon_{\lambda_{2k}}$ if $r = 2k $ or $2k+1$. This induces a splitting $\tilde{s}: \bar{A}(\OO^\vee) \hookrightarrow A(\OO^\vee)$ defined by $\tilde{s}(\bar{\theta}_i) = \tilde{\theta}_i$. 
\end{definition}

\begin{definition}\label{defn:C0}	
    Suppose $(\OO^\vee, \bar{C})$ is a distinguished pair in $\LA(G^{\vee})$. Set $C_0 := \tilde{s}(\bar{C}) \in A(\OO^\vee)=A^\epsilon$. The marked partition associated to $C_0$ (via Corollary \ref{cor:conj_classical}) is denoted by $\prescript{\langle \nu_0 \rangle}{}{\lambda}$ with $\lambda = \nu_0 \cup \eta_0$.
\end{definition}

We now simplify the formulas in Proposition \ref{prop:Sommersdualityclassical} in the case when $(\OO^\vee, \bar{C})$ is distinguished. We first introduce a new operation on partitions.

\begin{definition}\label{uparrow}
For any distinguished partition $p = [p_1, p_2, \ldots , p_l]$ such that all $p_i$ have the same parity, define $p^{\uparrow} := [p^{\uparrow}_1, p^{\uparrow}_2, \ldots, p^{\uparrow}_l]$ to be the partition such that $p^{\uparrow}_i = p_i +1$ for odd $i$ and $p^{\uparrow}_i = p_i -1$ for even $i$. (In terms of Young diagrams, what we do here is to move one box from the second row to the first one, from the fourth row to the third one, and so on.)
\end{definition}

\begin{lemma}\label{lem:d_S_dist}
  Suppose $(\OO^\vee, \bar{C})$ is a distinguished pair in $\LA(G^{\vee})$ and $\prescript{\langle \nu \rangle}{}{\lambda}$ is the corresponding reduced marked partition. Then we have 
	\begin{itemize}
		\item 
		  $d_S (\prescript{\langle \nu \rangle}{}{\lambda}) = (\nu_0 \cup {l(\eta_0)}_C)^t$ if $\fg^\vee$ is of type $B$;
		\item 
		  $d_S (\prescript{\langle \nu \rangle}{}{\lambda}) = (\nu_0 \cup {\eta_0^+}_B)^t$ if $\fg^\vee$ is of type $C$;  
		\item 
		  $d_S (\prescript{\langle \nu \rangle}{}{\lambda}) = (\nu_0 \cup \eta_0^\uparrow)^t$ if $\fg^\vee$ is of type $D$.
	\end{itemize}
\end{lemma}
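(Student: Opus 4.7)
The plan is to reduce the computation of $d_S(\OO^\vee, \bar{C})$ to the formula of Proposition \ref{prop:Sommersdualityclassical} applied to the distinguished lift $C_0 = \tilde{s}(\bar{C}) \in A(\OO^\vee)$, and then to check combinatorially that the outer ``$X$-collapse'' operations in that formula become trivial for this particular lift.

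First I would invoke Lemma \ref{lem:AtoAbar}: since $d_S$ factors through $\LA(G^\vee)$ and $C_0$ maps to $\bar{C}$, one has $d_S(\OO^\vee,\bar C) = d_S(\OO^\vee, C_0)$. The formula in Proposition \ref{prop:Sommersdualityclassical} was stated for reduced marked partitions, but \cite[Theorem 12]{Sommers2001} gives a formula on all conjugacy data; the two agree after projection to $\LA(G^\vee)$. Applied to the (generally non-reduced) marked partition $\prescript{\langle \nu_0 \rangle}{}{\lambda}$ associated to $C_0$ via Corollary \ref{cor:conj_classical}, this yields, in each type, the corresponding formula of Proposition \ref{prop:Sommersdualityclassical} with $(\nu,\eta)$ replaced by $(\nu_0,\eta_0)$. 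The lemma is therefore equivalent to three combinatorial claims: in type $B$, that $(\nu_0 \cup l(\eta_0)_C)^t$ already lies in $\mathcal{P}_C(2n)$; in type $C$, that $(\nu_0 \cup (\eta_0)^+_B)^t$ already lies in $\mathcal{P}_B(2n+1)$; and in type $D$, that $(\nu_0 \cup \eta_0^\uparrow)^t$ lies in $\mathcal{P}_D(2n)$ and coincides with $((\nu_0 \cup ((\eta_0^t)_D)^t)^t)_D$.

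These assertions are purely combinatorial and hinge on the explicit form of $\tilde{s}$. Writing $\bar C = \prod_{i \in I} \bar\theta_i$ for some index set $I$, the defining identity $\tilde\theta_i = \upsilon_{\lambda_{2i-1}}\upsilon_{\lambda_{2i}}$ (type $B$), and its analogues in types $C$ and $D$, show that $\nu_0$ is assembled from \emph{consecutive pairs} of parts of $\lambda$, whereas the original $\nu = s(\bar C)$ involves only non-adjacent parts. Translating this to the column-wise language of Remarks \ref{rmk:A_column} and \ref{rmk:Abar_column}, the markable columns of $\lambda$ pair up exactly as dictated by $I$, and the resulting column-parity condition on $\nu_0$ and $\eta_0$ forces the appropriate collapse in Proposition \ref{prop:Sommersdualityclassical} to have no effect. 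For types $B$ and $C$ this reduces to a direct parity check on columns: the paired members in $\nu_0$ cancel against $l(\eta_0)_C$ or $(\eta_0)^+_B$ in such a way that the transpose lies in the correct class. For type $D$ the operation $\eta_0^\uparrow$ of Definition \ref{uparrow} must be shown by hand to reproduce the composite $((\cdot^t)_D)^t$ applied to $\eta_0$, using crucially that $\eta_0$ consists of distinct odd parts arranged in matched pairs.

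The main obstacle is bookkeeping: the formulas of Proposition \ref{prop:Sommersdualityclassical} mix row- and column-level operations, so one must set up a uniform column-wise framework (indexed by the markable columns of $\lambda$) before the verifications become mechanical. I would expect type $D$ to be the most delicate case, since the outer $D$-collapse there is replaced by the genuinely nontrivial internal rearrangement $\eta_0 \mapsto \eta_0^\uparrow$ rather than simply dropping a collapse operation; the other two types should fall out from the same column-parity analysis with essentially no extra work.
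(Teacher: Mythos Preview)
Your approach is valid but genuinely different from the paper's. The paper argues by induction via the block decomposition of the \emph{reduced} marked partition $\prescript{\langle \nu \rangle}{}{\lambda}$ (Proposition \ref{division into basic blocks}), using Proposition \ref{prop:block division of dual} to reduce to the case of a single basic block, where Proposition \ref{prop:Sommersdualityclassical} and Corollary \ref{cor:dist_achar} make the computation direct. It never passes through the lift $C_0$.

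Your route instead exploits the particular lift $C_0 = \tilde{s}(\bar{C})$, applies Sommers's formula to the (generally non-reduced) marking $\prescript{\langle \nu_0 \rangle}{}{\lambda}$, and then verifies that the outer collapse is vacuous. This works, and the combinatorics is not hard once one observes (from Definition \ref{defn:Abar_basis_tilde} together with Corollary \ref{cor:dist_achar}) that $\nu_0$ and $\eta_0$ are themselves distinguished partitions with all parts of the correct parity: for instance in type $B$ the generators $\tilde{\theta}_i = \upsilon_{\lambda_{2i-1}}\upsilon_{\lambda_{2i}}$ are nontrivial precisely because $\lambda_{2i-1} \neq \lambda_{2i}$, which forces the pair structure on $\nu_0$ and $\eta_0$ that makes the collapse trivial. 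Your sketch is a bit vague on these parity checks and you should state explicitly that \cite[Theorem 12]{Sommers2001} is being invoked for a non-reduced marking (Proposition \ref{prop:Sommersdualityclassical} as written does not cover this). The paper's block-decomposition argument sidesteps this point entirely by staying with reduced markings throughout; your argument is more direct but requires the extra input from Sommers's original formulation. Both are short once set up; the paper's version has the advantage that the same block machinery is reused elsewhere (e.g.\ in the proof of Proposition \ref{prop:distinguishedbirigidclassical}).
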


\begin{proof}
   One can argue by induction using block decomposition in \cref{sec:basic block} and \cref{prop:block division of dual}, then apply Proposition \ref{prop:Sommersdualityclassical} and Corollary \ref{cor:dist_achar}. We leave the details to the reader.
\end{proof}

\subsection{Block decompositions}\label{sec:basic block}

If $^{\langle \nu\rangle} \lambda$, $^{\langle \nu^1\rangle} \lambda^1$ and $^{\langle \nu^2\rangle} \lambda^2$ are marked partitions, we write $^{\langle \nu\rangle} \lambda=^{\langle \nu^1\rangle} \lambda^1\cup ^{\langle \nu^2\rangle} \lambda^2$ if $\lambda=\lambda^1\cup \lambda^2$ and $\nu=\nu^1\cup \nu^2$. We say that $\lambda^1$ is \emph{evenly} (resp. \emph{oddly}) \emph{superior} to $\lambda^2$ if there is an even (resp. odd) integer $m$ such that $\lambda^1_{\#\lambda^1} \geq m \geq \lambda^2_1$.

\begin{definition}\label{defn:block}
Let $^{\langle \nu\rangle} \lambda$ be a marked partition of type $B$ (resp. $C$, $D$). A \emph{block decomposition} of $^{\langle \nu\rangle} \lambda$ is a decomposition of $^{\langle \nu\rangle} \lambda$ into marked partitions
$$^{\langle \nu\rangle} \lambda = ^{\langle \nu^1\rangle} \lambda^1 \cup ... \cup ^{\langle \nu^k\rangle} \lambda^k$$
such that
\begin{itemize}
    \item[(i)] $^{\langle \nu^i\rangle} \lambda^i$ is a marked partition of type $D$ (resp. $C$, $D$) for $i > 1$ and $^{\langle \nu^1 \rangle} \lambda^1$ is a marked partition of type $B$ (resp. $C$, $D$).
    \item[(ii)] If $\fg^\vee$ is of type $C$, $\#\lambda^i$ is even for $1 \leq i \leq k-1$ and $\#\nu^i$ is even for all $i$, where the last part of $\nu^k$ is allowed to be $0$.
    \item[(iii)] $\lambda^i$ is evenly (resp. oddly, evenly) superior to $\lambda^{i+1}$ for all $i$.
\end{itemize}
We say that a marked partition $^{\langle \nu\rangle} \lambda$ of type $B$ (resp. $C$, $D$) is a \emph{basic block} if $\nu$ has two elements, namely the smallest part of $\lambda$ and the largest part of $\lambda$ of odd (resp. even, even) height, or if $X=C$ and $\nu$ is a singleton consisting of the largest odd part of $\lambda$ of even height. 
    \end{definition}

    \begin{prop}({\cite[Proposition 4.11]{Achar2003}})\label{division into basic blocks}
       Every reduced marked partition admits a block decomposition
       $$^{\langle \nu\rangle} \lambda = ^{\langle \nu^1\rangle} \lambda^1 \cup ... \cup ^{\langle \nu^k\rangle} \lambda^k$$
       such that each for each $i$, either $\nu^i = \emptyset$ or $^{\langle \nu^i\rangle} \lambda^i$ is a basic block.
    \end{prop}

    We call the block decomposition in \cref{division into basic blocks} a {\it block decomposition into basic blocks}.
    
    \begin{rmk}\label{rmk:block_special}
    It is clear from the definitions that each block in a block decomposition of a special marked partition is special.\end{rmk}

    For a partition $\lambda$, let $\lambda_- = l(\lambda^t)^t$. The following is immediate from \cite[Proposition 4.9]{Achar2003}. 
    
    \begin{prop}\label{prop:block division of dual}
    Let $X \in \{B,C,D\}$ and let $^{\langle \nu\rangle} \lambda \in \overline{\cP}_X(m)$. Suppose that $^{\langle \nu\rangle} \lambda=^{\langle \nu^1\rangle} \lambda^1\cup ^{\langle \nu^2\rangle} \lambda^2\cup \ldots \cup ^{\langle \nu^k\rangle} \lambda^k$ is a block decomposition. Then
\begin{itemize}
    \item[(i)] If $X=B$, then 
    $$d_S(^{\langle \nu\rangle}\lambda)=d_S(^{\langle \nu^1\rangle}\lambda
^1) \vee d_S(^{\langle \nu^2\rangle}\lambda^2) \vee \ldots \vee d_S(^{\langle \nu^k\rangle}\lambda^k).$$
\item[(ii)] If $X=C$, then
$$d_S(^{\langle \nu\rangle}\lambda)=d_S(^{\langle \nu^1\rangle}\lambda^1)_{\mathunderscore} \vee d_S(^{\langle \nu^2\rangle}\lambda^2)_{\mathunderscore} \vee \ldots \vee d_S(^{\langle \nu^{k-1}\rangle}\lambda^{k-1})_{\mathunderscore} \vee  d_S(^{\langle \nu^k\rangle}\lambda^k).$$
\item[(iii)] If $X=D$, then
$$d_S(^{\langle \nu\rangle}\lambda)=d_S(^{\langle \nu^1\rangle}\lambda^1) \vee d_S(^{\langle \nu^2\rangle}\lambda^2) \vee \ldots \vee d_S(^{\langle \nu^k\rangle}\lambda^k)$$
\end{itemize}
    \end{prop}

\subsection{Unipotent infinitesimal characters}

Let $G$ be a simple classical group of type $B$, $C$, or $D$, and let $\widetilde{\OO} \in \Cov(G)$ be a birationally rigid nilpotent cover. In this section, we will recall formulas from \cite[Section 8.2]{LMBM} computing the unipotent infinitesimal character $\gamma(\widetilde{\OO})$ attached to $\widetilde{\OO}$ in terms of the partition for $\OO$.

\begin{definition}[Definition 8.2.1, \cite{LMBM}]
Suppose $q = [q_1, q_2, \ldots, q_l]$ is a partition of $n$. Define $\rho^+(q) \in \left(\frac{1}{2} \ZZ \right)^{\lfloor \frac{n}{2} \rfloor}$ by appending the \emph{positive} elements of the sequence
 \[ \left(  \frac{q_i - 1}{2}, \frac{q_i - 3}{2}, \ldots, \frac{3 - q_i}{2}, \frac{1 - q_i}{2}  \right) \]
for each $i \geqslant 1$, and then adding $0$'s if necessary so that the length of the sequence $\rho^+(q)$ equals $\lfloor \frac{n}{2} \rfloor$.
\end{definition}

\begin{rmk}\label{rem:rho^+}
If $p$ is a partition with all even members and $q$ is a partition with all odd members, we have $\rho^+(p \cup q) = \rho^+(p) \cup \rho^+(q)$.
\end{rmk}

\begin{definition}[Definition 8.2.2, \cite{LMBM}]
Let $q \in \mathcal{P}(n)$ be a partition. Define $f_0(q) \in \mathcal{P}(n)$ as follows: for every odd $i$ with $q_i \geqslant q_{i+1} + 2$, replace $[q_i, q_{i+1}]$ in $q$ by $[q_i - 1, q_{i+1} + 1]$. Define $f_1(q) \in \mathcal{P}(n+1)$ as follows: for every even $i$ with $q_i \geqslant q_{i+1} + 2$, replace $[q_i, q_{i+1}]$ in $q$ by $[q_i - 1, q_{i+1} + 1]$ and finally replace $q_1$ by $q_1 + 1$. If $q=\emptyset$ is the empty partition, define $f_1(q)=(1)$.
\end{definition}

\begin{definition}[Definition 8.2.7, \cite{LMBM}]
Let $q$ be a partition. Let $x(q)$ be the subpartition of $q$ consisting of all multiplicity $1$ parts and let $y(q)$ be the subpartition of $q$ consisting of all multiplicty $2$ parts.

Suppose y is a partition such that every part of it has multiplicity 2. Define a partition $g(y)$ (of the same size as $y$) by replacing every pair $[y_i, y_i]$ with $[y_i+1, y_i-1]$.    
\end{definition}

\begin{prop}[Proposition 8.2.8, \cite{LMBM}]\label{prop:gamma_cover}
Let $p$ be the partition corresponding to $\OO$. Form the partitions $x=x(p^t)$ and $y=y(p^t)$. Then for $G=G_{\epsilon}(m)$ the infinitesimal character $\gamma(\widetilde{\OO})$ is given by the following formula

$$\gamma(\widetilde{\OO}) = \rho^+(g(y) \cup f_\epsilon(x)).$$

\end{prop}

\begin{lemma}\label{lem:gamma_0}
	Suppose $\fg$ is of classical type and $(\OO^\vee, \bar{C})$ is a distinguished pair in $\LA(G^{\vee})$ with the corresponding reduced marked partition $\prescript{\langle \nu \rangle}{}{\lambda}$. Then for the cover $\widetilde{\OO} = D(\OO^\vee, \bar{C})$, we have $\gamma(\widetilde{\OO}) = \rho^+(\nu^\uparrow_0 \cup \eta_0)$.
\end{lemma}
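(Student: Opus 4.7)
The plan is to apply Proposition~\ref{prop:gamma_cover} to compute $\gamma(\widetilde{\OO})$ in closed form, and then match the result combinatorially against $\rho^+(\nu_0^\uparrow \cup \eta_0)$. The first step is to unwind the definition of $D$ for a distinguished pair: because $(\OO^{\vee},\bar{C})$ is distinguished, $\Sat^{-1}$ is the identity and $\Bind$ does nothing, so $\widetilde{\OO}$ is just the Lusztig cover of $\OO_p$, where $p = d_S(\OO^{\vee},\bar{C})$. By Proposition~\ref{prop:distinguishedbirigid} this cover is birationally rigid, so Proposition~\ref{prop:gamma_cover} applies and
\[
\gamma(\widetilde{\OO}) \;=\; \rho^+\bigl(g(y(p^t)) \cup f_{\epsilon}(x(p^t))\bigr).
\]

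For the second step, I would use Lemma~\ref{lem:d_S_dist} to describe $p^t$ in each of the three types: $p^t = \nu_0 \cup \mu_0$ (as a multiset), with $\mu_0 = l(\eta_0)_C$, $\eta_0^+{}_B$, or $\eta_0^\uparrow$ in types $B$, $C$, $D$ respectively. The crucial structural observation is that $\nu_0$ and $\mu_0$ have parts of opposite parities in every case, so they contribute disjoint values to $p^t$; consequently the multiplicity-$2$ parts of $p^t$ all arise \emph{inside} $\mu_0$, as ``collisions'' between adjacent entries in the operation producing $\mu_0$ from the distinguished partition $\eta_0$. For instance, in type $D$ a collision occurs exactly when $u_{2i} - u_{2i+1} = 2$ for consecutive parts $u_{2i}, u_{2i+1}$ of $\eta_0$, producing the pair $(u_{2i}-1, u_{2i+1}+1) = (u_{2i}-1, u_{2i}-1)$ in $\mu_0$.

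The third step is the combinatorial identity $g(y(p^t)) \cup f_{\epsilon}(x(p^t)) = \nu_0^\uparrow \cup \eta_0$, understood as a multiset equality up to appended trailing zeros (which do not affect $\rho^+$). Here two things happen in parallel: (a) $g$ applied to a colliding pair $(u_{2i}-1, u_{2i}-1)$ produces $(u_{2i}, u_{2i}-2) = (u_{2i}, u_{2i+1})$, exactly restoring the corresponding pair of $\eta_0$; together with the non-colliding entries of $\mu_0$, this reassembles all of $\eta_0$; (b) $f_{\epsilon}$ applied to the multiplicity-$1$ portion coming from $\nu_0$ realizes precisely the $\uparrow$ operation of Definition~\ref{uparrow}, with the implicit trailing-zero convention built into $f_\epsilon$ accounting for the case when $\nu_0$ has odd length and the final part $-1$ vanishes.

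The main obstacle will be the bookkeeping in (b), because $\nu_0$ and the non-colliding mult-$1$ entries of $\mu_0$ interleave in $p^t$, so a naive application of $f_{\epsilon}$ could pair an entry of $\nu_0$ with an entry of $\mu_0$, spoiling the desired identity. To handle this, I plan to reduce to basic blocks via Proposition~\ref{division into basic blocks}: in each basic block $\nu$ is supported on at most two parts of $\lambda$, so $\nu_0$ is comparably small and the interaction between $\nu_0$ and $\mu_0$ inside a single block can be checked by direct case analysis in each of types $B$, $C$, $D$ (for unmarked blocks, the identity reduces to the known formula for $\gamma(d(\OO^{\vee})_{Lus})$ from \cite[Section~8.2]{LMBM}). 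Because consecutive basic blocks are separated by a strict parity jump in $\lambda$, Remark~\ref{rem:rho^+} shows that $\rho^+$ splits across the block decomposition, so the block-by-block identity yields the full statement.
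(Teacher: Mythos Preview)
Your first three steps match the paper's approach exactly: apply Proposition~\ref{prop:gamma_cover}, read off $p^t = \nu_0 \cup \mu_0$ from Lemma~\ref{lem:d_S_dist}, and then aim for a combinatorial multiset identity. You also correctly identify that the multiplicity-$2$ parts of $p^t$ arise entirely inside $\mu_0$ (since $\nu_0$ and $\mu_0$ have parts of opposite parity), and that $g$ on those collisions restores the corresponding pair of $\eta_0$.

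The difference is in how the interleaving obstacle is handled. The paper does not use block decomposition here. Instead it makes a direct positional observation: in the sorted partition $p = p^t$, every multiplicity-$2$ part occurs at positions $(i, i+1)$ with $i$ odd (in the type $B$ case, where $\epsilon = 1$). This is a consequence of the very particular shape of $\nu_0$ coming from Definition~\ref{defn:Abar_basis_tilde}: $\nu_0$ is a union of consecutive pairs $(\lambda_{2i-1}, \lambda_{2i})$, and this forces the parity of positions in $p$ to behave well. With that observation the formula $g(y) \cup f_\epsilon(x)$ collapses to $f_\epsilon(p)$, and the identity $f_\epsilon(p) = \nu_0^\uparrow \cup \eta_0$ follows directly from the definition of $\tilde{s}$, with no block decomposition needed.

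Your block-decomposition plan has a concrete gap. The claim that ``consecutive basic blocks are separated by a strict parity jump in $\lambda$'' is false in the distinguished case: by Corollary~\ref{cor:dist_achar} every part of $\lambda$ has the same parity ($\lambda = \lambda^\epsilon$), so there is no parity jump between blocks and Remark~\ref{rem:rho^+} gives you nothing block-wise. What Remark~\ref{rem:rho^+} does give is the split $\rho^+(\nu_0^\uparrow \cup \eta_0) = \rho^+(\nu_0^\uparrow) \cup \rho^+(\eta_0)$, since $\nu_0^\uparrow$ and $\eta_0$ have opposite parities; but that is a parity split, not a block split, and it does not by itself control how $f_\epsilon$ acts on the interleaved multiplicity-$1$ part $x(p^t)$. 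To salvage the block approach you would have to argue separately that the blocks of $p^t$ remain order-separated (plausible, since the operations $l(\cdot)_C$, $(\cdot)^+{}_B$, $(\cdot)^\uparrow$ shift entries by at most $1$) and that $f_\epsilon$ respects this separation. At that point the paper's positional argument is both shorter and more transparent.
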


\begin{proof}
Suppose $\fg^\vee$ is of type $B$, so that $\fg$ is of type $C$. Let $p$ be the transpose of the partition of $d_S (\prescript{\langle \nu \rangle}{}{\lambda})$. Then by Lemma \ref{lem:d_S_dist}, $p = \nu_0 \cup {\eta_0^-}_C$.  Partition of this form has the property that any member of it has multiplicity at most 2, and furthermore, any multiplicity-2 part (if exists) is of the form $p_i = p_{i+1}$, where $i$ is odd. Therefore the formula for $\gamma(\widetilde{\OO})$ in \cref{prop:gamma_cover} (ii) simplifies to 
  \[ \rho^+(f_1(p^t)) = \rho^+ (f_1 (\nu_0 \cup {\eta_0^-}_C)) = \rho^+(\nu^\uparrow_0 \cup \eta_0), \]
where the last equality follows from Definition \ref{defn:Abar_basis_tilde}.
The arguments for type $B$ and $D$ are similar.
\end{proof}

\section{Proofs of main results in classical types}\label{sec:proofsclassical}

\subsection{Proof of Proposition \ref{prop:distinguishedbirigid}}\label{subsec:proofsclassical1}

\begin{prop}\label{prop:distinguishedbirigidclassical}
Suppose $G$ is a simple adjoint group of classical type. Then the following are true:
\begin{itemize}

    \item[(i)] Suppose $(\OO^{\vee},\bar{C}) \in \LA^*(G^{\vee})$ is a special distinguished Lusztig-Achar datum. Let $\OO=d_S(\OO^{\vee},\bar{C})$ and let $\widetilde{\OO}$ denote the 
    Lusztig cover of $\OO$ (see Definition \ref{def:Lusztigcover}). Then $\widetilde{\OO}$ is birationally rigid.
    \item[(ii)] The map $d_S: \LA^*(G^{\vee}) \to \Orb(G)$ is injective when restricted to the set of special distinguished Lusztig-Achar data.
\end{itemize}
\end{prop}

\begin{proof}
First suppose $\fg^{\vee}=\mathfrak{sl}(n)$. By Proposition \ref{prop:markedpartitionsAchardata}, there is a unique distinguished element in $\LA^*(G^{\vee})$, namely $(\OO_{prin},1)$, and $d_S(\OO_{prin},1) =d(\OO_{prin}) = \{0\}$. Now (i) and  (ii) are immediate.

Next, suppose $\fg^{\vee} = \mathfrak{so}(2n+1)$ (the other cases are completely analogous and are left to the reader). We will first prove (ii). First, we observe that if $(\OO^\vee, \Bar{C})$ is special and distinguished, then $\OO^\vee$ must be special. For this, it is sufficient to show that the partition of $\OO^\vee$ contains no even parts (any such orbit is even and thus special). Suppose, to the contrary, that the partition of $\OO^{\vee}$ contains an even part $k$. Then by \cref{prop:inclusionclassical} $\OO^\vee$ is saturated from an orbit $\OO_L^\vee$ in the Levi subalgebra $\fl^\vee=\fg\fl(k)\times \fs\fo(2n+1-2k)$, and by \cref{cor:Abarclassical}$, \Bar{A}(\OO^\vee)\simeq \Bar{A}(\OO_{L^\vee}^\vee)$. Thus, $(\OO^\vee, \Bar{C})$ is not distinguished, a contradiction. We conclude that $\OO^{\vee}$ is special, as asserted.

Now suppose $(\OO^\vee_1, \Bar{C}_1)$ and $(\OO^\vee_2, \Bar{C}_2)$ are special distinguished Lusztig-Achar data such that $d_S(\OO^\vee_1, \Bar{C}_1)=d_S(\OO^\vee_2, \Bar{C}_2)=\OO$. By \cite[Remark 14]{Sommers2001}, $\OO$ is in the special piece of both $d(\OO^\vee_1)$ and $d(\OO^\vee_2)$. Since both $\OO_1^{\vee}$ and $\OO_2^{\vee}$ are special (in the sense of Lusztig), it follows that $\OO^\vee_1=\OO^\vee_2=d(\OO)$ and hence by Lemma \ref{lem:AtoAbar} that $\Bar{C}_1=\Bar{C}_2$. Therefore, $(\OO^\vee_1, \Bar{C}_1)=(\OO^\vee_2, \Bar{C}_2)$. This completes the proof of (ii).

We now proceed to proving (i). Let $^{\langle \nu \rangle}\lambda \in \overline{\cP}_B(2n+1)$ be the reduced marked partition corresponding to $(\OO^{\vee},\bar{C}) \in \LA(G^{\vee})$, cf. Proposition \ref{prop:markedpartitionsAchardata} and let $\pi = d_S(^{\langle \nu \rangle}\lambda) \in \mathcal{P}_C(2n)$, the partition corresponding to $\OO$.
We need to show that $\widetilde{\OO}$ is birationally rigid. By \cref{prop:criterionbirigidcover}, this is equivalent to showing that $\Spec(\CC[\widetilde{\OO}])$ has no codimension 2 leaves and $H^2(\widetilde{\OO},\CC)=0$. For the former, we need to check the two conditions in \cref{prop:2leafless_classical_cover}.
By \cref{lem:d_S_dist}, $\pi = (\nu_0 \cup {l(\eta_0)}_C)^t$. Therefore columns $c_j$ of $\pi$ are just rows of the partition $\pi^t = \nu_0 \cup {l(\eta_0)}_C$. Since $\nu_0$ and $\eta_0$ are distinguished, it is easy to verify that $m_{\pi^t}(c_j) \leq 2$ for all $c_j$. Now if $m_{\pi^t}(c_j) = 2$, then $c_j \in {l(\eta_0)}_C$. Since $\eta_0$ has odd number of members and all members of $\eta_0$ are odd, we deduce that all members of ${l(\eta_0)}_C$ are even and so is $c_j$. Moreover, the height of $c_j$ in ${l(\eta_0)}_C$ 
 is even. But the definition of $\nu_0$ and $\eta_0$ (\cref{defn:C0}), the height of any $x \in {l(\eta_0)}_C$ in ${l(\eta_0)}_C$ and the height of $x$ in $\pi^t$ has the same parity. This means that $c_j$ is a markable column of $\pi$ by \cref{rmk:Abar_column} and condition (i) of \cref{prop:criterionbirigidcover} is satisfied. Condition (ii) also follows immediately from \cref{rmk:Abar_column}. 

Now we check that $H^2(\widetilde{\OO},\CC)=0$ using \cref{lem:H2_classical_cover}. Assume that $c_j = c^\epsilon_k$ is a column of $\pi$ with even height in $\pi^t= \nu_0 \cup {l(\eta_0)}_C$, such that $c_j = c_{j-1} + 2$. Then any such part  $c_j = c^\epsilon_k$ must belong to $\nu_0$, and hence is odd. This is because $\eta_0$ is distinguished, so if $c_j \in {l(\eta_0)}_C$, $c_j = c_{j-1}+2$ would imply $c_j$ is of odd height in $\pi^t$, which yields contradiction. Therefore $c^\epsilon_k$ is not a markable column for $\pi$. As in \cref{rmk:Abar_column}, assume that the markable columns of $\pi$ are $(c^m_l > c^m_{l-1} > \cdots > c^m_{1} > 0)$ and $c^m_{q-1} < c_j=c^\epsilon_{k} < c^m_{q}$ for some $1 \le q < l+1$. Then we can take $\theta=\vartheta_{c^\epsilon_{k}} \vartheta_{c^m_{q}} \in N$, which fullfills the condition in \cref{lem:H2_classical_cover}. This finishes the proof that $H^2(\widetilde{\OO},\CC)=0$.

\end{proof}

\subsection{Proof of Theorem \ref{thm:inflchars}}\label{subsec:proofsclassical2}

Since $(\OO^\vee, \bar{C})$ is distinguished, the set $S(\OO^\vee,  \bar{C}) = \LA^{-1}(\OO^\vee, \bar{C}) \subset \fh_{\RR}^*$ is already $W$-invariant and therefore can be regarded as a subset of $\fh^* / W$. Now Theorem \ref{thm:inflchars} for classical types will follow from Lemma \ref{lem:gamma_0} and the following result (for the definition of $\nu_0$ and $\eta_0$, see \cref{defn:C0}). 

\begin{theorem}\label{thm:R_cbar_min} 
Suppose $G$ is a simple group of classical type. Let $(\OO^\vee, \bar{C}) \in \LA(G^{\vee})$ be a distinguished Lusztig-Achar datum corresponding to a reduced marked partition $\prescript{\langle \nu \rangle}{}{\lambda}$. Then $\rho^+(\nu^\uparrow_0 \cup \eta_0)$ is the unique minimal-length $W$-orbit in $S(\OO^\vee, \bar{C})$.
\end{theorem}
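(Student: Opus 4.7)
The plan is to proceed in three stages: first show that $\gamma_0 := \rho^+(\nu_0^\uparrow \cup \eta_0)$ actually lies in $S(\OO^\vee,\bar{C})$, then that it minimizes $\|\cdot\|$ on this set, and finally that it is the unique minimizer up to $W$. Since the set $S(\OO^\vee,\bar{C}) = \LA^{-1}(\OO^\vee,\bar{C})$ is already $W$-invariant in the distinguished case, throughout the argument one works modulo $W$.

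Step 1 is a direct calculation. I would compute $\MS(\gamma_0) = (L^\vee_{\gamma_0}, s_0 Z^\circ, \Ind^{L^\vee_{\gamma_0}}_{L^\vee_{\gamma_0,0}}\{0\})$ using standard coordinates as in Section~\ref{subsection:Levisclassical}. For type $B$ (the other types are parallel), the parts of $\nu_0$ become even after applying $\uparrow$, while the parts of $\eta_0$ remain odd, so the coordinates of $\gamma_0$ split into integer coordinates (arising from $\nu_0^\uparrow$) and half-integer coordinates (arising from $\eta_0$). Exponentiation then produces a semisimple element whose centralizer is conjugate to a standard maximal pseudo-Levi of type $D_k \times B_{n-k}$ as listed in Section~\ref{subsec:Maximalpseudo-Leviclassical}, and $L^\vee_{\gamma_0,0}$ is the standard block Levi determined by the multiplicities in $\gamma_0$. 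The Richardson orbit then splits as a product of principal nilpotents in the two simple factors, whose partitions recover $\nu_0$ and $\eta_0$. Applying $\pi^{G^\vee}$, and using Corollary~\ref{cor:dist_conj_Som} together with the definitions of $\tilde{s}$, $C_0$, $\nu_0$, $\eta_0$ (Definitions~\ref{defn:Abar_basis_tilde}, \ref{defn:C0}), one identifies the image in $\LA(G^\vee)$ with $(\OO^\vee,\bar{C})$.

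For Step 2, take any $\gamma \in S(\OO^\vee,\bar{C})$. By Lemma~\ref{Lem:Sommers_data}(iv) the McNinch--Sommers datum $\MS(\gamma)$ has pseudo-Levi $L^\vee_\gamma$ of maximal semisimple rank, so up to $G^\vee$-conjugation it is one of the maximal pseudo-Levis from Section~\ref{subsec:Maximalpseudo-Leviclassical}, and its semisimple decomposition provides a splitting of $\lambda = \lambda^\epsilon$ into two distinguished subpartitions whose product of principal orbits is $\OO_{L^\vee_\gamma}$. Within each simple factor, the constraint that $\Ind^{L^\vee_\gamma}_{L^\vee_{\gamma,0}}\{0\}$ is a prescribed principal (hence distinguished, hence even) orbit fixes $L^\vee_{\gamma,0}$ up to conjugation, and the classical description of neutral elements of an $\mathfrak{sl}_2$-triple for a principal orbit of partition $q$ shows that the half-sum-of-positive-roots element of this Levi, suitably centered, is exactly $\rho^+(q)$. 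Consequently, for each admissible splitting $(\nu',\eta')$ the unique minimum-length $W$-orbit inside $\LA^{-1}$ of the corresponding $(L^\vee,sZ^\circ,\OO_{L^\vee})$ is $\rho^+(\nu'^{\uparrow} \cup \eta')$.

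Step 3 amounts to comparing these candidates. Minimizing over admissible splittings $(\nu',\eta')$ of $\lambda$, one observes that the $\|\cdot\|^2$-contribution of a single part $q_i$ to $\rho^+(q)$ is monotone in $q_i$, so transferring a part from $\eta$ to $\nu$ (which forces the shift $\uparrow$ and increases the part by $1$ in odd position) strictly increases the norm, while the lifts $\tilde{s}$ and $s$ of Definitions~\ref{defn:Abar_basis_tilde} and \ref{defn:Abar_basis} are distinguished precisely by the convention that $\tilde{s}$ packages the conjugacy class $\bar{C}$ with the smallest possible $\nu'$ (up to the parity rules of each type). A combinatorial check using the block decomposition of Proposition~\ref{division into basic blocks} reduces the comparison to the case of a single basic block, where it is immediate. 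This yields both minimality and uniqueness. The main obstacle will be executing Step 3 uniformly across the three types --- the handling of basic blocks of type $C$ with $\#\nu = 1$, and of very-even blocks in type $D$, requires particular care, but in both cases the bijection $\Som(G)/\!\sim \to \LA(G)$ of Lemma~\ref{Lem:Sommers_data}(ii) constrains the admissible splittings so that no ambiguity survives.
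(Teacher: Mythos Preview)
Your Step~2 contains a genuine error. The orbit $\OO_{L^\vee_\gamma}$ in each simple factor of the maximal pseudo-Levi is the \emph{distinguished} orbit with partition $\nu'$ (resp.\ $\eta'$), not the principal orbit, and more importantly the Levi $L^\vee_{\gamma,0}$ is \emph{not} determined up to conjugacy by the condition $\Ind^{L^\vee_\gamma}_{L^\vee_{\gamma,0}}\{0\} = \OO_{L^\vee_\gamma}$. In the paper's Proposition~\ref{prop:R_c_min} this is handled carefully: for the integral factor (say $\fl_2^\vee$ of type $B$ or $D$), the singular Levi $\fl^\vee_{\gamma_2,0}$ can be $\mathfrak{so}(2t+\delta) \times \prod \mathfrak{gl}(a_i)$ for any $0 \le t \le \tfrac{1}{2}(\#\eta' - \delta)$, and the $a_i$ are forced by a collapse condition depending on $t$. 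One must then minimize over $t$ and verify that the maximal $t$ wins. Your invocation of ``the half-sum-of-positive-roots element of this Levi'' presupposes a unique Levi, so the minimization inside each $S_\epsilon(\prescript{\langle \nu'\rangle}{}{\lambda})$ is missing.

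Your Step~3 takes a different route from the paper, and the approach you sketch does not obviously work. The block decomposition of $\prescript{\langle \nu\rangle}{}{\lambda}$ depends on $\nu$, so distinct lifts $\tilde C$ of $\bar C$ yield \emph{different} block decompositions, and there is no common refinement to reduce to. The paper instead uses Proposition~\ref{prop: A to barA distinguished} to write any lift as $\tilde C = C_0 \prod_j \zeta_j^{\epsilon_j}$ with $\zeta_j = \upsilon_{\lambda_{2j}}\upsilon_{\lambda_{2j+1}}$ (or the analogous generator in types $C/D$), and shows that multiplying by a single nontrivial $\zeta_j$ replaces two adjacent rows $q=(q_1,q_2)$ of $\nu^\uparrow \cup \eta$ by $q^\uparrow$, which strictly increases $\|\rho^+\|$ by the elementary two-row calculation of Lemma~\ref{lem:comparison}. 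This stepwise argument is what gives both minimality and uniqueness in one stroke, and avoids any decomposition machinery.
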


The remainder of this section is dedicated to the proof of Theorem \ref{thm:R_cbar_min}. 
First of all, recall that a Lusztig-Achar datum $(\OO^{\vee},\bar{C})$ is specified by indicating the corresponding equivalence class $\{(M^{\vee}_1,\OO_{M^{\vee}_1}),...,(M^{\vee}_k,\OO_{M^{\vee}_k})\}$ of Sommers data,
see Lemma \ref{Lem:Sommers_data} and the discussion preceding it. By Lemma \ref{Lem:Sommers_data}(iii), $(\OO^{\vee},\bar{C})$ is distinguished if and only if one (equivalently, all) of the pseudo-Levi subgroups $M^\vee_1,...,M^\vee_k$ is of maximal semisimple rank. In this case, $(M^{\vee}_i,\OO_{M^{\vee}_i})$ correspond via the bijection $\Som(G^\vee) \xrightarrow{\sim} \Conj(G^\vee_{ad})$ to the distinguished pairs $(\OO^\vee, C_i)$ in the preimage of $(\OO^\vee,\bar{C})$ under the projection $\Conj(G^\vee_{ad}) \twoheadrightarrow \LA(G^\vee)$. For any distinguished pair $(\OO^\vee, C) \in \Conj(G^\vee_{ad})$, set $S(\OO^\vee, C) \subset \fh^*$ to be the preimage of the composition 
$$ \fh^* \xrightarrow{\Som}  \Som(G^\vee_{ad}) \xrightarrow{\sim} \Conj(G^\vee_{ad}), $$
where the map $\Som$ is given by 
\[ \Som: \fh^* \to  \Som(G^\vee_{ad}), \quad \gamma \mapsto (L^{\vee}_{\gamma}, \Ind^{L^{\vee}_{\gamma}}_{L^{\vee}_{\gamma,0}}\{0\}), \] 
where $L^{\vee}_{\gamma,0} = Z_{G^{\vee}}(\gamma)$ and $L^{\vee}_{\gamma} = Z_{G^{\vee}}(\exp(2\pi i \gamma))^{\circ}$. Then $S(\OO^\vee,  \bar{C})$ is the union of all $S(\OO^\vee, C_i)$ where $(\OO^\vee, C_i)$ are all the lifts of $(\OO^\vee,\bar{C}) \in \LA(G^\vee)$ in $\Conj(G^\vee_{ad})$.

Now fix a Cartan subalgebra $\fh^\vee \simeq \fh^*$ of $\fg^\vee$, the root system $\Delta^\vee$, and the coordinates of $\{e_i\}$ as in Section \ref{subsection:Levisclassical}. For a distinguished pair $(\OO^\vee, C) \in \Conj(G^\vee_{ad})$, let $(L^\vee, \OO^\vee_{\fl^\vee})$, denote the corresponding element in $\Som(G^\vee)$, where $L^\vee$ has Lie algebra $\fl^\vee$. Then by Corollary \ref{cor:dist_conj_Som}, $L^\vee$ is of maximal semisimple rank and so we can assume that $\fl^\vee$ is of the form $\fl^\vee_1 \times \fl^\vee_2$ as in Section \ref{subsec:Maximalpseudo-Leviclassical}, where $\fl^\vee_i$ are simple classical Lie algebras, $i=1, 2$, so that the Cartan subalgebra $\fh_1 \simeq \CC^k$ of $\fl_1$ has coordinates $\{ e_1, e_2, \ldots e_k \}$, and the Cartan subalgebra $\fh_2 \simeq \CC^{n-k}$ of $\fl_2$ has coordinates $\{ e_{k+1}, e_{k+2}, \ldots e_n \}$. Moreover, we can write $\OO^\vee_{\fl^\vee} = \OO^\vee_{\lambda^1} \times \OO^\vee_{\lambda^2}$, where $\OO^\vee_{\lambda^i}$ is a distinguished orbit in $\fl^\vee_i$ whose corresponding partition is denoted as $\lambda^i$, $i=1, 2$. We also write $\nu = \lambda^1$, $\eta = \lambda^2$, so that $(\OO^\vee, C)$ corresponds to the marked partition $\prescript{\langle \nu \rangle}{}{\lambda}$. 

By the discussions above, for an element $\gamma \in \fh^*$ to lie in $S(\OO^\vee,  C)$, we need $\fl^\vee_\gamma = \fl^\vee$ possibly after some $W$-conjugation, and additionally $\Ind^{L_{\gamma}^{\vee}}_{L_{\gamma,0}^{\vee}} \{0\}  = \OO^\vee_{\fl^\vee}$. Set
\[ \gamma_1 := (e_1(\gamma), e_2(\gamma), \ldots, e_k(\gamma)) \in \fh^\vee_1 \quad \text{and} \quad \gamma_2 := ( e_{k+1}(\gamma), e_{k+2}(\gamma), \ldots, e_n (\gamma) ) \in \fh^\vee_2.  \]
Additionally, $\gamma$ satisfies $\Ind^{L_{\gamma}^{\vee}}_{L_{\gamma,0}^{\vee}} \{0\}  = \OO^\vee_{\fl^\vee}$. Let $\fl^\vee_{\gamma_i, 0} \subset \fl^\vee_i$ be the Levi subalgebra of $\fl^\vee_i$ determined by the singular datum of $\gamma_i$, for $i=1, 2$.  We have a decomposition $\fl^\vee_{\gamma,0} = \fl^\vee_{\gamma_1, 0} \times \fl^\vee_{\gamma_2, 0}$. Now the condition $\Ind^{L_{\gamma}^{\vee}}_{L_{\gamma,0}^{\vee}} \{0\}  = \OO^\vee_{\fl^\vee}$ amounts to $\Ind^{L_i^{\vee}}_{L_{\gamma_i,0}^{\vee}} \{0\}  = \OO^\vee_{\lambda^i}$ for $i=1, 2$. 

We now examine separately the cases of type $B$, $C$, and $D$. 
\vskip 0.5em
\noindent 1. Type $B_n$: $\fg^\vee = \mathfrak{so}(2n+1)$ ($n \geqslant 3$). It is easy to see that the condition $\fl^\vee_\gamma = \fl^\vee$ is equivalent to the conditions that $e_i (\gamma) \in \frac{1}{2} + \ZZ$ for $1 \leqslant i \leqslant k$ and $e_i (\gamma) \in \ZZ$ for $k+1 \leqslant i \leqslant n$. Therefore, $S(\OO^\vee,  C)$ coincides with the image of the subset
  \begin{equation} \label{eq:R_BD}
  	 S_0(\prescript{\langle \nu \rangle}{}{\lambda}) := W \cdot \left\{ \gamma = (\gamma_1, \gamma_2) \in \fh^\vee \,|\,  \Ind^{L_i^{\vee}}_{L_{\gamma_i,0}^{\vee}} \{0\}  = \OO^\vee_{\lambda^i}, \gamma_1 \in  \left( \tfrac{1}{2} + \ZZ \right)^k, \gamma_2 \in \ZZ^{n-k}  \right\}, 
\end{equation}
under $\fh^* \to \fh^*/W$.
\vskip 0.5em
\noindent 2. Type $D_n$: $\fg^\vee = \mathfrak{so}(2n)$ ($n \geqslant 4$). 
The condition that $\fl^\vee_\gamma = \fl^\vee$ is equivalent to requiring either $\gamma_1 \in (\frac{1}{2} + \ZZ)^k$ and $\gamma_2 \in \ZZ^{n-k}$, or $\gamma_1 \in  \ZZ^{k}$ and $\gamma_2 \in (\frac{1}{2} + \ZZ)^{n-k}$. Note that by switching the roles of $\nu$ and $\eta$, we can reduce the latter case to the former. If we still define $S_0(\prescript{\langle \nu \rangle}{}{\lambda})$ as in \eqref{eq:R_BD}, then $S(\OO^\vee,  C)$ is the union of $S_0(\prescript{\langle \nu \rangle}{}{\lambda})$ and $S_0(\prescript{\langle \eta \rangle}{}{\lambda})$.
\vskip 0.5em
\noindent 3. Type $C_n$: $\fg^\vee = \mathfrak{sp}(2n)$ ($n \geqslant 2$). 
This is similar to the case of type $D$, but this time we only consider the condition that $\gamma_1 \in  \ZZ^{k}$ and $\gamma_2 \in (\frac{1}{2} + \ZZ)^{n-k}$, for the convenience of writing. 
\vskip 0.5em

To treat all types uniformly, write $k_1 = k$, $k_2 = n-k$, and define the set
  \begin{equation} \label{eq:R_BCD}
    S_\epsilon(\prescript{\langle \nu \rangle}{}{\lambda}) := W \cdot \left\{ \gamma = (\gamma_1, \gamma_2) \in \fh^\vee \,|\,  \Ind^{L_i^{\vee}}_{L_{\gamma_i,0}^{\vee}} \{0\}  = \OO^\vee_{\lambda^i}, \gamma_i \in  \left( \tfrac{\epsilon+i}{2} + \ZZ \right)^{k_i}, i=1, 2 \right\} 
\end{equation}
for $\epsilon \in \{0, 1\}$, and for any marked partition $\prescript{\langle \nu \rangle}{}{\lambda} \in \tilde{\cP}_\epsilon(m)$.  The discussion above amounts to a proof of the following lemma.

\begin{lemma} \label{lem:R_e_min}
    The set $S(\OO^\vee,  \bar{C})$ is the union of $S_\epsilon(\prescript{\langle \nu \rangle}{}{\lambda})$, where $\prescript{\langle \nu \rangle}{}{\lambda}$ runs over all marked partitions in $\tilde{\cP}_\epsilon(m)$ which are mapped to $(\OO^\vee,  \bar{C})$.
\end{lemma}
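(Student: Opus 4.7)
The plan is to consolidate the type-by-type analysis carried out in the paragraphs immediately preceding the statement. By definition $S(\OO^\vee,\bar C)$ is the $W$-saturation of the preimage $\LA^{-1}(\OO^\vee,\bar C)\cap \fh^*_{\RR}$, where $\LA:\fh^*\to \LA(G^\vee)$ is the composition $\fh^*\xrightarrow{\MS}\MS(G^\vee)\xrightarrow{\pi}\Conj(G^\vee)\twoheadrightarrow\LA(G^\vee)$. Since $\LA(G^\vee)=\LA(G^\vee_{ad})$ and the natural map $\Conj(G^\vee_{ad})\twoheadrightarrow\LA(G^\vee)$ is surjective with fibers being the conjugacy classes $C\in A^{ad}(\OO^\vee)$ lifting $\bar C$, the first step is to write
\[
S(\OO^\vee,\bar C)\;=\;\bigcup_{C\mapsto \bar C}\, S(\OO^\vee,C),
\]
where $S(\OO^\vee,C)$ is the $W$-saturated preimage of the class $(\OO^\vee,C)\in \Conj(G^\vee_{ad})$ under the composition $\fh^*\xrightarrow{\Som}\Som(G^\vee_{ad})\xrightarrow{\sim}\Conj(G^\vee_{ad})$ (Lemma \ref{Lem:Sommers_data}(i)).

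Next, for a fixed lift $(\OO^\vee,C)$, I would invoke the case-by-case calculation carried out immediately above. In each type, the condition $\fl^\vee_\gamma=\fl^\vee$ forces $\gamma=(\gamma_1,\gamma_2)$ to lie in one of the integer/half-integer lattices indicated there (a single option in type $B$, two options in types $C$ and $D$), and the additional condition $\Ind^{L^\vee_\gamma}_{L^\vee_{\gamma,0}}\{0\}=\OO^\vee_{\fl^\vee}$ translates into $\Ind^{L^\vee_i}_{L^\vee_{\gamma_i,0}}\{0\}=\OO^\vee_{\lambda^i}$ for $i=1,2$. Comparing with the definition of $S_\epsilon(\prescript{\langle \nu\rangle}{}{\lambda})$ in \eqref{eq:R_BCD}, this yields $S(\OO^\vee,C)=S_\epsilon(\prescript{\langle \nu\rangle}{}{\lambda})$ in type $B$, with $\prescript{\langle \nu\rangle}{}{\lambda}$ the (unique) marked partition assigned to $(\OO^\vee,C)$ via the bijection of Corollary \ref{cor:conj_classical}. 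In types $C$ and $D$ the two admissible sign choices for $\gamma$ correspond, by Remark \ref{rmk:Som_classical} and Corollary \ref{cor:conj_classical_adjoint}, to the two marked partitions $\prescript{\langle \nu\rangle}{}{\lambda}$ and $\prescript{\langle \lambda^\epsilon\setminus\nu\rangle}{}{\lambda}$ that share the same conjugacy datum in $\Conj(G^\vee_{ad})$, so that $S(\OO^\vee,C)=S_\epsilon(\prescript{\langle \nu\rangle}{}{\lambda})\cup S_\epsilon(\prescript{\langle \lambda^\epsilon\setminus\nu\rangle}{}{\lambda})$.

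To conclude, I would combine these identifications with Proposition \ref{prop: A to barA} (which describes the kernel $N$ of $A(\OO^\vee)\twoheadrightarrow \bar A(\OO^\vee)$): the set of marked partitions in $\tilde{\cP}_\epsilon(m)$ mapped to $(\OO^\vee,\bar C)$ under the composite $\tilde{\cP}_\epsilon(m)\xrightarrow{\sim}\Conj(G^\vee_{ad})\twoheadrightarrow\LA(G^\vee)$ is precisely the disjoint union, over all $C\in A^{ad}(\OO^\vee)$ lifting $\bar C$, of the set of marked partitions corresponding to $(\OO^\vee,C)$. Taking unions of the equalities established in the previous paragraph over all such $C$ gives the desired formula
\[
S(\OO^\vee,\bar C)\;=\;\bigcup_{\prescript{\langle \nu\rangle}{}{\lambda}}\, S_\epsilon(\prescript{\langle \nu\rangle}{}{\lambda}).
\]

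The main bookkeeping obstacle is to correctly match the two independent sources of non-uniqueness in the parametrization $\tilde{\cP}_\epsilon(m)\twoheadrightarrow\LA(G^\vee)$: the choice of a coset of $N$ inside $A(\OO^\vee)$ (reflected in enlarging a single marked partition into a family), and the residual $\{1,\tilde\upsilon\}$-quotient in types $C$ and $D$ (reflected in the swap $\nu\leftrightarrow \lambda^\epsilon\setminus \nu$). Both are absorbed by allowing $\prescript{\langle \nu\rangle}{}{\lambda}$ to range over all, rather than only reduced, marked partitions, which is exactly what the statement prescribes.
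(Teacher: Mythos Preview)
Your proposal is correct and follows essentially the same approach as the paper, which explicitly states that ``the discussion above amounts to a proof of the following lemma.'' Both you and the paper decompose $S(\OO^\vee,\bar C)$ as a union over lifts $(\OO^\vee,C)$ in $\Conj(G^\vee_{ad})$, then identify each $S(\OO^\vee,C)$ with the appropriate $S_\epsilon(\prescript{\langle \nu\rangle}{}{\lambda})$ (one piece in type $B$, two in types $C$ and $D$ via the swap $\nu\leftrightarrow\eta$), and finally reassemble using Corollary~\ref{cor:conj_classical_adjoint}. Your invocation of Proposition~\ref{prop: A to barA} in the last step is slightly more than is needed---the factorization $\tilde{\cP}_\epsilon(m)\to\Conj(G^\vee_{ad})\to\LA(G^\vee)$ already suffices---but this is harmless.
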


Therefore we should first find elements of minimal length in each set $S_\epsilon(\prescript{\langle \nu \rangle}{}{\lambda})$. Before stating the result, we first introduce some notations.

\begin{definition}\label{defn:cut_partition}
	For any partition $p = [p_1, p_2, \ldots, p_l]$ and any $0 \leqslant k \leqslant l$, set $p_{\leqslant k} := [p_1, \ldots, p_k]$ to be the subpartition of $p$ consisting of the first $k$ rows and set $p_{>k} := [p_{k+1}, \ldots, p_l]$ to be the subpartition of $p$ consisting of the last $l-k$ rows. Define $p^{-1} := [p_1 -1, p_2-1, \ldots, p_l - 1]$, i.e., the partition obtained from $p$ by subtracting $1$ from each part.
\end{definition}

\begin{prop}\label{prop:R_c_min}
    For any marked partition $\prescript{\langle \nu \rangle}{}{\lambda}$ such that $\lambda \in \mathcal{P}_{\epsilon}(m)$, the sequence $\rho^+(\nu^\uparrow \cup \eta)$ gives an element of minimal length in the set $S_{\epsilon}(\prescript{\langle \nu \rangle}{}{\lambda})$. When $\fg^\vee$ is of type $B$ or $C$, any element of minimal length in $S_{\epsilon}(\prescript{\langle \nu \rangle}{}{\lambda})$ is $W$-conjugate to $\rho^+(\nu^\uparrow \cup \eta)$. When $\fg^\vee$ is of type $D$, this is also true except for the case when $\eta = \emptyset$. In that case, there are exactly two $W$-orbits with minimal length.
\end{prop}

\begin{proof}
    We treat each type separately.
	\vskip 0.5 em
    \noindent {\bf Types $B$ and $D$.} Suppose $\fg^\vee = \mathfrak{so}(2n + \delta)$, where $2n+ \delta \geqslant 7$ and $\delta = 0$ or $1$ depending on whether $\fg^\vee$ is of type $D$ or $B$. We adopt the same notations introduced before Lemma \ref{lem:R_e_min}. We can choose $\gamma_1$ and $\gamma_2$ independently to minimize the length of $\gamma= (\gamma_1, \gamma_2)$. By $L^\vee_i$-conjugation we can assume that $\fl^\vee_{\gamma_i,0}$ is a standard Levi sublagebra in $\fl^\vee_i$ with respect to choice of simple roots mentioned above, for $i=1, 2$.
    The subalgebra $\fl^\vee_1$ is of type $D$. Assume $\fl^\vee_{\gamma_1, 0} = (\fl^\vee_1)_{J_1}$, where $J_1 \subset I_1$. If $J_1$ contains both the positive roots $e_1 - e_2$ and the lowest root $-e_1 - e_2$, then $e_1(\gamma) = e_2(\gamma) = 0$, which contradicts the condition $e_1(\gamma) \in \frac{1}{2} + \ZZ$.  Therefore $J_1$ can only contain at most one of the two roots $e_1 - e_2$ and $-e_1 - e_2$ and hence $\fl^\vee_{\gamma_1,0}$ is isomorphic to a product of factors of type $A$.
    
    First consider the case when the lowest root $-e_1 - e_2$ is not contained in $J_1$.  In this case, $\fl^\vee_{\gamma_1,0}$ is uniquely determined by a partition $q_1,  q_2, \ldots, q_l$ of $k$, such that $J_1 = I_1 \setminus \bigcup_{i=1}^{l-1} \{ e_{b_i} - e_{b_i + 1} \}$, where $b_i = \sum_{j=1}^{i} q_j$ (note $b_0 = 0$). Then $\fl^\vee_{\gamma_1, 0} \simeq \mathfrak{gl}(q_1) \times \mathfrak{gl}(q_2) \times \cdots \times \mathfrak{gl}(q_l)$. By permuting the coordinates $e_1, \dots, e_k$ (this corresponds to conjugating $\fl^\vee_{\gamma_1, 0}$ by $L^\vee_1$), we may assume that the sequence $q_1, \ldots, q_l$ is non-increasing. Define the partition $q = (q_1, q_1, q_2, q_2, \ldots, q_l, q_l)$ in terms of columns. Then the condition $\OO^\vee_{\lambda^1} = \OO^\vee_{\nu} = \Ind_{L^\vee_{\gamma_1,0}}^{L^\vee_1} \{0\}$ just means that $\nu = q_D$, the $D$-collapse of $q$. Since $\nu$ is a distinguished partition with all odd members and $\#\nu$ is even, it is not hard to see that $q = \nu^\uparrow$. 
    
    The constraint on $\gamma_1$ determined by $\fl^\vee_{\gamma_1,0}$ consists of a family of equalities 
    \[ e_{b_i +1} (\gamma) = e_{b_i+2} (\gamma) = \cdots = e_{b_{i+1}}(\gamma), \quad 0 \leqslant i \leqslant l-1. \] 
    It is not hard to see that all $\gamma_1$ of minimal length that satisfy these equalities are of the form       
    \begin{equation} \label{eq:gamma1_signed_frac}
    	\bigg( \underbrace{ \frac{1}{2} \epsilon_1 , \,\ldots\,,  \frac{1}{2}}_{q_1} \epsilon_1, \underbrace{ \frac{3}{2} \epsilon_2 , \,\ldots\,,  \frac{3}{2}}_{q_2} \epsilon_2, \,\ldots\,, \underbrace{ \frac{2l-1}{2} \epsilon_l, \,\ldots\,, \frac{2l-1}{2}}_{q_l} \epsilon_l \bigg). 
    \end{equation}
    where $\epsilon_i = \pm 1$ for $1 \leqslant i \leqslant l$. After signed permutations, they become
    \begin{equation} \label{eq:gamma1_pos_frac}
    	\gamma_1^+ = \bigg( \underbrace{\frac{1}{2}, \,\ldots\,, \frac{1}{2}}_{q_1} , \underbrace{\frac{3}{2}, \,\ldots\,, \frac{3}{2}}_{q_2}, \,\ldots\,, \underbrace{\frac{2l-1}{2}, \,\ldots\,, \frac{2l-1}{2}}_{q_l} \bigg). 
    \end{equation}
    It is straightforward to check that \eqref{eq:gamma1_pos_frac} coincides with $\rho^+(q) = \rho^+(\nu^\uparrow)$.  
	
	When the lowest root $-e_1 - e_2$ is contained in $J_1$, $\gamma_1$ can be obtained from \eqref{eq:gamma1_signed_frac} by multiplying the first coordinate by $-1$. 
	The remaining discussions are the same as above and again any allowed $\gamma_1$ with minimal length can be changed to $\rho^+(\nu^\uparrow)$ by a signed permuation. 
	 
	 Next we determine $\gamma_2$. Since $\fl^\vee_2 \simeq \mathfrak{so}(2n - 2k + \delta)$, we may assume $\fl^\vee_{\gamma_2, 0} \simeq \mathfrak{so}(2t+ \delta) \times \mathfrak{gl}(a_1) \times \mathfrak{gl}(a_2) \times \cdots \times \mathfrak{gl}(a_d)$, where $0 \leqslant t \leqslant n-k$ and $a_1, \ldots, a_d$ is a non-increasing sequence of positive integers. Define the partition $q := (a_1, a_1, a_2, a_2, \ldots, a_d, a_d)$ in terms of columns. Add one column of length $2t+ \delta$ gives the partition $\tilde{q} = q \lor (2t+ \delta)$. Then the condition $\OO^\vee_{\lambda^2} = \OO^\vee_\eta = \Ind_{L^\vee_{\gamma_2,0}}^{L^\vee_2} \{0\}$ just means that $\eta$ is the $B$- (or $D$-)collapse of $\tilde{q}$ and $2t+ \delta \leqslant \#\eta$, when $\fg^\vee$ is of type $B$ (or $D$). Set $\eta_{(2t+ \delta)} := (\eta_{\leqslant 2t+ \delta})^{-1} \cup \eta_{>2t+ \delta}^\uparrow$ (see Definition \ref{defn:cut_partition}). Since $\eta$ is distinguished with all odd members, $\eta_{(2t+ \delta)}$ has only even members and it is easy to see that $\tilde{q}_B = \eta$ (or $\tilde{q}_D = \eta$) implies that $\eta_{(2t+ \delta)} = q$ and hence $a_i$'s are uniquely determined by $\eta$ and $t$. Analogous to the case of $\gamma_1$, for fixed $t$, the $\gamma_2$ that minimizes the length can by taken as
        \[ \gamma^+_2 = \big( \underbrace{d, \ldots, d}_{a_d},  \ldots, \underbrace{2, \ldots, 2}_{a_2}, \underbrace{1, \ldots, 1}_{a_1}, \underbrace{0, \ldots, 0}_{t}\big)\]
     and any other choices are can be obtained from $\gamma_2^+$ by signed permutations. Apparently taking $t$ to be the maximal value $\frac{1}{2} (\#\eta- \delta)$ gives the minimal length, so that $\gamma^+_2 = \rho^+(\eta)$. Any other choice of $t$ would give strictly greater length. In this case $q = \eta_{(\#\eta)} = \eta^{-1}$ and $\tilde{q} = \eta$, hence the induction is birational, i.e., $\OO^\vee_{\lambda^2} = \OO^\vee_\eta = \operatorname{Bind}_{L^\vee_{\gamma_2,0}}^{L^\vee_2} \{0\}$ by Proposition \ref{prop:inductionclassical}. 
     
     We conclude that any $\gamma$ of minimal length can be changed to $ (\gamma^+_1, \gamma^+_2)$ by signed permuations, which in turn is equivalent to $\rho^+(\nu^\uparrow) \cup \rho^+(\eta)$ up to permutations. The latter equals $\rho^+(\nu^\uparrow \cup \eta)$ by Remark \ref{rem:rho^+}. When $\fg^\vee$ is of type $B$, this implies that all minimal elements are $W$-conjugate to each other.  When $\fg^\vee$ is of type $D$, the Weyl group consists of only signed permuations that have even number of sign changes. When $\eta \neq \emptyset$, $\gamma_2$ always have at least one coordinate equal to $0$ and hence we have the same conclusion. When $\eta = \emptyset$ so that $\nu = \lambda$, there are exactly two $W$-conjugacy classes of elements with minimal length, one contains $\rho^+(\lambda^\uparrow)$ and the other one contains $\rho^+(\lambda^\uparrow)$ with the first (or any) coordinate changed to its negative\footnote{They become the same orbit if we consider the disconnected Weyl group of $O(2n)$}.
    \vskip 0.5em 
    \noindent {\bf Type $C$.} Let $\fg^\vee = \mathfrak{sp}(2n)$. The condition that $e_n(\gamma) \in \frac{1}{2} + \ZZ$ forces that $\fl^\vee_{\gamma_2, 0} \simeq \mathfrak{gl}(q_1) \times \mathfrak{gl}(q_2) \times \cdots \times \mathfrak{gl}(q_l)$ without any factor of type $C$, where the sequence $q_1,  q_2, \ldots, q_l$ is a partition of $n-k$.  As in the discussions about $\gamma_1$ in type $B$ and $D$, we can assume $q_1,  q_2, \ldots, q_l$ is non-increasing after permutation. Define the partition $q := (q_1, q_1, q_2, q_2, \ldots, q_l, q_l)$ in terms of columns, then the condition $\OO^\vee_{\lambda^2} = \OO^\vee_\eta = \Ind_{L^\vee_{\gamma_2,0}}^{L^\vee_2} \{0\}$ means that $\eta = q$ and hence the induction is birational, i.e., $\OO^\vee_{\lambda^2} = \OO^\vee_\eta = \operatorname{Bind}_{L^\vee_{\gamma_2,0}}^{L^\vee_2} \{0\}$ by Proposition \ref{prop:inductionclassical}. Therefore
    \[ \gamma^+_2 = \rho^+(\eta) = \bigg( \underbrace{\frac{1}{2}, \ldots, \frac{1}{2}}_{q_1} , \underbrace{\frac{3}{2}, \ldots, \frac{3}{2}}_{q_2}, \ldots, \underbrace{\frac{2l-1}{2}, \ldots, \frac{2l-1}{2}}_{q_l} \bigg) \]
	achieve minimal length and all other choices differ from this one by a signed permutation.
	
	Next we determine $\gamma_1$. Since $\fl^\vee_1 \simeq \mathfrak{sp}(2k)$, we may assume $\fl^\vee_{\gamma_2, 0} \simeq \mathfrak{sp}(2t) \times \mathfrak{gl}(a_1) \times \mathfrak{gl}(a_2) \times \cdots \times \mathfrak{gl}(a_d)$, where $0 \leqslant t \leqslant n-k$ and $a_1, \ldots, a_d$ is a non-increasing sequence of positive integers. Define the partition $q:=(a_1, a_1, a_2, a_2, \ldots, a_d, a_d)$ in terms of columns. Add one column of length $2t$ gives the partition $\tilde{q} = q \lor (2t)$. Then the condition $\OO^\vee_{\lambda^1} = \OO^\vee_\nu = \Ind_{L^\vee_{\gamma_1,0}}^{L^\vee_1} \{0\}$ just means that $\nu = \tilde{q}_C$ and $2t \leqslant \#\nu$. Set $\nu^{(2t)} := [(\nu_{\leqslant 2t})^{-1}]^\uparrow \cup \nu_{>2t}$. Since $\nu$ is distinguished with all even members, $\nu^{(2t)}$ has only even members and it is easy to see that $\tilde{q}_C = \nu$ implies that $\nu^{(2t)} = q$ and hence $a_i$'s are uniquely determined by $\nu$ and $t$. With fixed $t$, the choice of $\gamma_1$ which minimizes length is
	\[ \gamma^+_1 = \big( \underbrace{d, \ldots, d}_{a_d},  \ldots, \underbrace{2, \ldots, 2}_{a_2}, \underbrace{1, \ldots, 1}_{a_1}, \underbrace{0, \ldots, 0}_{t}\big)\]
	Clearly taking $t$ to be the maximal value $\lfloor \frac{1}{2} \#\nu \rfloor$ gives the minimal length and $\gamma^+_1  = \rho^+(\nu^\uparrow)$. Any other choice of $t$ would give strictly greater length. Therefore $\gamma = (\gamma^+_1, \gamma^+_2)$ achieves the minimal length, which is equivalent to $\rho^+(\nu^\uparrow \cup \eta)$ up to permutations. Like in the case of type $B$, any other choice with minimal length is $W$-conjugate to $\rho^+(\nu^\uparrow \cup \eta)$. 
\end{proof}

We introduce more notation before continuing with the proof of Theorem \ref{thm:R_cbar_min}.

\begin{definition}\label{defn:2rows}
	Let $q = [q_1, q_2]$ be a partition where $q_1 \geqslant q_2 \geqslant 0$ and $q_1 > 0$ (but $q_2$ can be zero). Define $q^\uparrow := [q_1 + 1, \operatorname{max}(q_2 - 1, 0)]$.
\end{definition}

\begin{proof}[Proof of Theorem \ref{thm:R_cbar_min}]
    Recall that by Proposition \ref{prop:Aclassical}, there is a surjective composite map $A^\epsilon \simeq A(\OO^\vee) \twoheadrightarrow \bar{A}(\OO^\vee)$ of maps. 	Let $\tilde{C}$ be any lift of $\bar{C}$ in $A^\epsilon \subset A$. As usual, by decomposing $\tilde{C}$ uniquely as a product of $\upsilon_{\lambda_i}$, we can associate to $\tilde{C}$ a marked partition $\prescript{\langle \nu \rangle}{}{\lambda} \in \tilde{\cP}_\epsilon (m)$ with $\lambda = \nu \cup \eta$. By Lemma \ref{lem:R_e_min} and Proposition \ref{prop:R_c_min}, it suffices to show that, $\lVert \rho^+(\nu^\uparrow \cup \eta) \rVert > \lVert \rho^+(\nu^\uparrow_0 \cup \eta_0) \rVert$ for any $\tilde{C}$ different from $C_0$ (see \cref{defn:C0} for the definition of $C_0$, $\nu_0$ and $\eta_0$).
	
    By the description of the kernel $N$ of the quotient map $A^\epsilon \twoheadrightarrow \bar{A}(\OO^\vee)$ in \cref{prop: A to barA distinguished}, we see that $\tilde{C}$ can be written as the product of $C_0$ and the element $\prod_{i = 1}^k \zeta_i^{\epsilon_i}$, where $\epsilon_i = 0$ or $1$, and $\zeta_i  := \upsilon_{\lambda_{2i}} \upsilon_{\lambda_{2i+1}}$ (resp. $\zeta_i := \upsilon_{\lambda_{2i-1}} \upsilon_{\lambda_{2i}}$) depending on whether $\fg^\vee$ is of type $B$ (resp. $C/D$), and $k = \frac{1}{2}(\#\lambda - 1 )$ (resp. $\frac{1}{2} \#\lambda$, $\lceil \frac{1}{2} \#\lambda\rceil$) depending on whether $\fg^\vee$ is of type $B$ (resp. $C/D$). Set 
	  \[ C_j := C_0 \prod_{1 \leqslant i \leqslant j} \zeta_i^{\epsilon_i}, \quad0 \leqslant j \leqslant k, \] 
	then $C_{k} = \tilde{C}$ and $C_{j} = C_{j-1} \zeta_j^{\epsilon_i}$. Let $\prescript{\langle \nu_j \rangle}{}{\lambda}$ with $\lambda = \nu_j \cup \eta_j$ denote the marked partition associated to $C_{j}$. 
	 We can divide the change from $C_0$ to $C_k$ into several steps, each time going from $C_{j-1}$ to $C_{j}$ by multiplying $C_{j-1}$ by $\zeta_j^{\epsilon_j}$. If either $\epsilon_j = 0$ or $\zeta_j = 1$, which happens exactly when $\lambda_{2j} = \lambda_{2j+1}$ (resp. $\lambda_{2j-1} = \lambda_{2j}$) if $\fg^\vee$ is of type $B$ (resp. $C/D$), then $C_{j} = C_{j-1}$ and nothing is changed. Otherwise, $C_{j} = C_{j-1} \zeta_j$ and $\zeta_j \neq 1$, so we need to look at the effect on  the partition $\tau_j := \nu_j^\uparrow \cup \eta_j$ when $j$ goes to $j+1$. There are two cases:
	 \vskip 0.5em
	 \noindent {\it Case 1.} If $\fg^\vee$ is of type $C$, $\#\lambda = 2k + 1$ is odd and $\tilde{C} = C_k= C_{k-1} \zeta_k = C_{k-1} \upsilon_{\#\lambda}$, then $\tau_{j+1}$ can be obtained from $\tau_j$ by adding $1$ to its last row, hence $\lVert \tau_{j+1} \rVert  > \lVert \tau_j \rVert$.
	 \vskip 0.5em
	 \noindent {\it Case 2.} Otherwise, by the definitions of $C_0$ and $\zeta_j$,  $\tau_{j+1}$ can be obtained from $\tau_j$ by replacing the adjacent two rows $q = (q_1, q_2)$ in $\tau_j$, which correspond to the adjacent rows of $\lambda$ in the definition of $\zeta_j$, with the two rows of $q^\uparrow$. Now by the elementary Lemma \ref{lem:comparison} below, again we have $\lVert \tau_{j+1} \rVert  > \lVert \tau_j \rVert$. 
	 \vskip 0.5em
	 When $\tilde{C} \neq C_0$, at least one multiplication by some nontrivial $\zeta_j$ occurs, therefore $\rho^+(\nu^\uparrow_0 \cup \eta_0)$ achieves the unique minimal length among all $\rho^+(\nu_j^\uparrow \cup \eta_j)$. Note that in the type $D$ case, $\eta_0$ is never empty by definition. Therefore the statement about uniqueness up to $W$-conjugation also follows from Proposition \ref{prop:R_c_min}.
\end{proof}

\begin{lemma}\label{lem:comparison}
	Let $q = (q_1, q_2)$ be a partition with $q_1 \geqslant q_2 > 0$. Then $\lVert \rho^+(q) \rVert < \lVert \rho^+ (q^\uparrow) \rVert$.
\end{lemma}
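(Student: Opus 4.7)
The approach is a direct computation. Define the auxiliary function $f(k) := \lVert \rho^+([k]) \rVert^2$ attached to a single-part partition $[k]$, with the convention $f(0) := 0$. Unwinding the definition of $\rho^+$ case by case on the parity of $k$ gives the closed-form expressions
\[ f(2m) = \tfrac{1}{4} \sum_{j=1}^{m} (2j-1)^2, \qquad f(2m+1) = \sum_{j=1}^{m} j^2, \]
and an elementary case-by-case manipulation in the two parity classes produces the clean identity $f(k) - f(k-1) = k(k-1)/8$ for every $k \geq 1$. The crucial consequence is that the discrete derivative $k \mapsto f(k) - f(k-1)$ is strictly increasing on $\ZZ_{\geq 1}$.

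Since the definition of $\rho^+$ appends, for each part $q_i$, the positive terms of the sequence $((q_i-1)/2, (q_i-3)/2, \ldots)$ and then pads with zeros (which do not contribute to the norm), one has $\lVert \rho^+(q_1, q_2) \rVert^2 = f(q_1) + f(q_2)$ for any two-part partition $q=(q_1,q_2)$ with $q_2 \geq 0$. The hypothesis $q_2 \geq 1$ gives $q^\uparrow = (q_1 + 1, q_2 - 1)$, so
\[ \lVert \rho^+(q^\uparrow) \rVert^2 - \lVert \rho^+(q) \rVert^2 = [f(q_1+1) - f(q_1)] - [f(q_2) - f(q_2-1)]. \]
This is strictly positive by the monotonicity above combined with $q_1 + 1 > q_2$, which yields the lemma after taking square roots.

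There is no substantive obstacle here; the lemma is a brief elementary calculation. The only mildly delicate point is the parity bookkeeping required to verify the formula $f(k) - f(k-1) = k(k-1)/8$ uniformly across both parity classes of $k$. Once that identity is in hand, the inequality follows from the trivial observation that $q_1(q_1+1) > q_2(q_2-1)$ whenever $q_1 \geq q_2 \geq 1$.
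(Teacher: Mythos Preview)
Your proof is correct. The paper takes a more hands-on route, splitting into three cases according to the parities of $q_1$ and $q_2$, writing out the sums defining $\lVert \rho^+(q) \rVert^2$ and $\lVert \rho^+(q^\uparrow) \rVert^2$ explicitly in each case, and comparing them term by term (invoking the Cauchy--Schwarz inequality in the even--even case). Your argument is cleaner: by isolating the single-row function $f(k)=\lVert\rho^+([k])\rVert^2$ and establishing the uniform identity $f(k)-f(k-1)=k(k-1)/8$, you collapse the three cases into a single monotonicity statement about the discrete derivative of $f$, after which the inequality $q_1(q_1+1)>q_2(q_2-1)$ finishes the job immediately. The parity bookkeeping is confined to verifying that one identity rather than being spread across three separate comparisons. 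Both arguments are elementary, but yours is more efficiently organized and makes transparent why the result holds: the ``cost'' of moving a box up one row is a strictly increasing function of the row length.
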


\begin{proof}
	First consider the case when $q_1$ and $q_2$ are both even. Write squares of the norms as
	\[  \lVert \rho^+(q) \rVert^2 = \sum_{k=1}^{q_2 / 2} \left[ \left( \frac{2k-1}{2} \right)^2 + \left( \frac{2k-1}{2} \right)^2 \right] + \sum_{k=1}^{\frac{q_1 - q_2}{2}} \left( \frac{q_2 + 2k -1 }{2} \right)^2  \]
	and
	\[  \lVert \rho^+(q^\uparrow) \rVert^2 = \sum_{k=1}^{q_2 / 2} \left[ (k-1)^2 + k^2 \right] + \sum_{k=1}^{\frac{q_1 - q_2}{2}} \left( \frac{q_2 + 2k}{2} \right)^2.  \]
	We have $(k-1)^2 + k^2 > \left( \frac{2k-1}{2} \right)^2 + \left( \frac{2k-1}{2} \right)^2$ by the Cauchy-Schwartz inequality. Hence $ \lVert \rho^+(q) \rVert < \lVert \rho^+(q^\uparrow) \rVert$.
	
	Next suppose $q_1$ and $q_2$ are both odd.  Write squares of the norms as
	\[  \lVert \rho^+(q) \rVert^2 =  \sum_{k=1}^{\frac{q_2-1}{2}} \left[ k^2 + k^2 \right] + \sum_{k=1}^{\frac{q_1 - q_2}{2}} \left( \frac{q_2 + 2k -1 }{2} \right)^2  \]
	and
	\[  \lVert \rho^+(q^\uparrow) \rVert^2 = \frac{1}{2} + \sum_{k=1}^{\frac{q_2-1}{2}} \left[ \left( \frac{2k-1}{2} \right)^2 + \left( \frac{2k+1}{2} \right)^2 \right] + \sum_{k=1}^{\frac{q_1 - q_2}{2}} \left( \frac{q_2 + 2k}{2} \right)^2.  \]
	The same argument as before gives $ \lVert \rho^+(q) \rVert < \lVert \rho^+(q^\uparrow) \rVert$.
	
	Finally consider the case when $q_1 \not\equiv q_2 \mod 2$. Write squares of the norms as
	\[  \lVert \rho^+(q) \rVert^2 =  \sum_{\substack{1 \leqslant j \leqslant q_2 - 2 \\ j \equiv q_2  \mod 2}} \left( \frac{j}{2} \right)^2 +  \sum_{\substack{1 \leqslant j \leqslant q_2 - 1 \\ j \equiv q_2 - 1 \mod 2}} \left( \frac{j}{2} \right)^2   + \sum_{k=1}^{\frac{q_1 - q_2-1}{2}} \left( \frac{q_2 + 2k}{2} \right)^2  \]
	and
	\[  \lVert \rho^+(q^\uparrow) \rVert^2 =  \sum_{\substack{1 \leqslant j \leqslant q_2 - 2 \\ j \equiv q_2 \mod 2}} \left( \frac{j}{2} \right)^2 +  \sum_{\substack{1 \leqslant j \leqslant q_2 - 1 \\ j \equiv q_2 - 1 \mod 2}} \left( \frac{j}{2} \right)^2 + \sum_{k=1}^{\frac{q_1 - q_2-1}{2}} \left( \frac{q_2 + 2k+1}{2} \right)^2.  \]
	It is clear that $ \lVert \rho^+(q) \rVert < \lVert \rho^+(q^\uparrow) \rVert$.
\end{proof}

\subsection{Proof of Proposition \ref{prop:twoORvs}}\label{subsec:twoORvsclassical}

First, let $(\OO^\vee, \Bar{C})$ be a distinguished pair in $\LA(G^\vee)$, and let $(\eta_0, \nu_0)$ be as in \cref{defn:C0}. By \cref{thm:R_cbar_min}, we have $\gamma(\OO^\vee, \Bar{C})=\rho^+(\nu^\uparrow_0 \cup \eta_0)$. We note that since $\OO^\vee$ is distinguished, all members of $\eta_0$ and $\nu_0$ are odd (resp. even, odd) if $\fg^\vee$ is of type $B$ (resp. $C$, $D$). Thus, all members of $\nu^\uparrow_0$ are even (resp. odd, even), and $\fr^\vee = \fs\fo(|\nu_0|)\times \fs\fo(|\eta_0|)$ (resp. $\fr^\vee = \fs\fp(|\nu_0|)\times \fs\fp(|\eta_0|)$, $\fr^\vee = \fs\fo(|\nu_0|)\times \fs\fo(|\eta_0|)$). A direct computation shows that the partitions corresponding to the orbits in the two factors are $(\nu^\uparrow_0)_D=\nu_0$ (resp. $(\nu^\uparrow_0)_C=\nu_0$, $(\nu^\uparrow_0)_D=\nu_0$) and $\eta_0$ respectively. 

Now let $(\OO^\vee, \Bar{C})\in \LA^*(G^\vee)$, and let $(L^\vee, \OO_{L^\vee}, \Bar{C}_{L^\vee})\in \LA_0(G^\vee)$ be the triple corresponding to $(\OO^\vee, \Bar{C})$ under the bijection of \cref{prop:uniquedistinguishedAbar}. Note that $\fl^\vee=\prod_{i\in I} \fg\fl(a_i)\times \fs\fo(2k+1)$ for a set of integers $\{a_i \in i \in I\}$, and $\OO_{L^\vee}=\prod_i \OO_{[a_i]}\times \OO^\vee_0$. Let $\Bar{C}_0$ be the image of $\Bar{C}_{L^\vee}$ under the isomorphism $\Bar{A}(\OO_{L^\vee})\simeq \Bar{A}(\OO^\vee_0)$, and let $(\eta_0^0, \nu_0^0)$ correspond to the distinguished pair $(\OO_0^\vee, \Bar{C}_0)$ as in Definition \ref{defn:C0}. Let $I^0$ be the set of $i \in I$ such that $a_i$ is odd (resp. even, odd), and let $I^1=I\setminus I^0$. Let $\eta_0=\eta_0^0\cup \bigcup_{i\in I^0} [a_i, a_i]$, and $\nu_0=\nu_0^0\cup \bigcup_{i\in I^1} [a_i, a_i]$. The statement below follows from the discussion above and the definition of $R^\vee$.

\begin{lemma}\label{Rcheck saturate}
In the setting above, the following are true:
    \begin{itemize}
        \item If $\fg^{\vee}$ is of type $B$ or $D$ (resp. $C$), then $\mathfrak{r}^{\vee} = \fs\fo(|\nu_0|)\times \fs\fo(|\eta_0|)$ (resp. $\fr^\vee = \fs\fp(|\nu_0|)\times \fs\fp(|\eta_0|)$).
        \item $\Sat_{R_0^\vee}^{R^\vee} \OO_{R_0^\vee} = \OO_{\nu_0}\times \OO_{\eta_0}$.
    \end{itemize}
\end{lemma}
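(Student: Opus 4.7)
The plan is to first pin down $\gamma = \gamma(\OO^\vee,\bar{C})$ as an explicit vector in $\fh^\vee$ by combining the computation carried out in the paragraph preceding the lemma (valid when $(\OO^\vee,\bar{C})$ is distinguished) with the transitivity of infinitesimal characters under birational induction (Proposition \ref{prop:unipotentfacts}(iii)). Once $\gamma$ is written down, the pseudo-Levi $R^\vee = Z_{G^\vee}(\exp(2\pi i\gamma))^\circ$ is determined purely by the eigenspace decomposition of $s$ on the defining representation of $\fg^\vee$, and likewise $R^\vee_0 = Z_{L^\vee}(s)^\circ$. The saturation $\Sat^{R^\vee}_{R^\vee_0}\OO_{R^\vee_0}$ is then computed via Proposition \ref{prop:inclusionclassical}.

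For the explicit form of $\gamma$, I would argue as follows. Since $D^G(\OO^\vee,\bar{C}) = \Bind^G_L D^L(\OO_{L^\vee},\bar{C}_{L^\vee})$, Proposition \ref{prop:unipotentfacts}(iii) gives $\gamma(\OO^\vee,\bar{C}) = \gamma^{L^\vee}(\OO_{L^\vee},\bar{C}_{L^\vee})$. As $(\OO_{L^\vee},\bar{C}_{L^\vee})$ is a direct product across the factors of $\fl^\vee = \prod_{i\in I}\fg\fl(a_i) \times \fg^\vee_\epsilon(2k+\delta)$, the infinitesimal character decomposes accordingly. On each $\fg\fl(a_i)$-factor the principal orbit's unipotent infinitesimal character is $\rho_{a_i} := \bigl(\tfrac{a_i-1}{2},\tfrac{a_i-3}{2},\ldots,-\tfrac{a_i-1}{2}\bigr)$, while on the distinguished $\fg^\vee_\epsilon(2k+\delta)$-factor it is $\rho^+\bigl((\nu_0^0)^\uparrow \cup \eta_0^0\bigr)$ by Lemma \ref{lem:gamma_0} and Theorem \ref{thm:R_cbar_min}. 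Thus $\gamma$ is the concatenation of these vectors inside the Cartan of $L^\vee$.

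Next I would compute $R^\vee$. Note that for each $i$, every coordinate of $\rho_{a_i}$ lies in $\tfrac{1}{2}\ZZ$ and all coordinates have the same parity (integer if $a_i$ is odd, half-integer if $a_i$ is even); the same analysis for $\rho^+((\nu_0^0)^\uparrow)$ and $\rho^+(\eta_0^0)$ was essentially carried out in the paragraph preceding the lemma. For type $B$ or $D$ the fixed-point subgroup of $s$ on the defining representation splits into its $(+1)$- and $(-1)$-eigenspaces, giving $R^\vee \simeq \fs\fo(2p) \times \fs\fo(2q+\delta)$ where $2p$ is the dimension of the $(-1)$-eigenspace; for type $C$ the analogous splitting yields $\fs\fp(2p) \times \fs\fp(2q)$. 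Summing the contributions from each block -- integer coordinates of $\rho^+(\eta_0^0)$ and of $\rho_{a_i}$ for $i \in I^0$, together with the half-integer coordinates of $\rho^+((\nu_0^0)^\uparrow)$ and of $\rho_{a_i}$ for $i \in I^1$ -- and invoking the definitions of $\nu_0$ and $\eta_0$ given in the statement, the two blocks have sizes $|\nu_0|$ and $|\eta_0|$ respectively, which gives the first bullet of the lemma.

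Finally, for $R^\vee_0$ and the saturation I would observe that on each $\fg\fl(a_i)$-factor the element $s$ acts as a scalar ($+I$ if $a_i$ odd, $-I$ if $a_i$ even), hence centralizes all of $\fg\fl(a_i)$, so $Z_{L^\vee}(s)^\circ$ contains each $\fg\fl(a_i)$ entire. On the distinguished $\fg^\vee_\epsilon(2k+\delta)$-factor, the first bullet (applied to the distinguished pair $(\OO^\vee_0,\bar{C}_0)$, which is what the paragraph preceding the lemma already handles) yields $\fs\fo(|\nu_0^0|)\times\fs\fo(|\eta_0^0|)$ (or the symplectic analogue). Hence $R^\vee_0$ sits inside $R^\vee$ as the Levi that merges each $\fg\fl(a_i)$ for $i \in I^1$ (resp. $I^0$) into the $\fs\fo(|\nu_0|)$ (resp. $\fs\fo(|\eta_0|)$) factor, along with $\fs\fo(|\nu_0^0|) \times \fs\fo(|\eta_0^0|)$. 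The Richardson orbit of $R^\vee_0$ is the principal orbit on each $\fg\fl(a_i)$ (since $\rho_{a_i}$ is regular in the Cartan of $\fg\fl(a_i)$) and equals $\OO_{\nu_0^0} \times \OO_{\eta_0^0}$ on the last two factors. Applying the partition formula of Proposition \ref{prop:inclusionclassical} to $\prod_{i\in I^1}\OO_{[a_i]} \times \OO_{\nu_0^0}$ produces the partition $\nu_0^0 \cup \bigcup_{i \in I^1}[a_i,a_i] = \nu_0$, and similarly for $\eta_0$; this yields the second bullet. The main technical obstacle will be the careful bookkeeping in the third paragraph -- matching the parities of the $a_i$ to the correct $\pm1$-eigenspace in each type, and accounting for the padding zeros in the definition of $\rho^+$ -- as well as handling the type $D$ subtleties when a very even partition carries a Roman numeral decoration.
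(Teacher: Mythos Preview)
Your proposal is correct and follows essentially the same approach as the paper, which simply states that the lemma ``follows from the discussion above and the definition of $R^\vee$''; your write-up spells out explicitly the parity bookkeeping (splitting the $\mathfrak{gl}(a_i)$ factors into $I^0$ and $I^1$ according to whether the coordinates of $\rho_{a_i}$ are integral or half-integral) and the application of Proposition~\ref{prop:inclusionclassical} that the paper leaves implicit.
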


Recall from \cref{uparrow} the partition $(\nu_0^0)^\uparrow$, and let $\nu_0^\uparrow=(\nu_0^0)^\uparrow\cup \bigcup_{i\in I^1} [a_i, a_i]$. Note that $\gamma:=\gamma(\OO^\vee, \Bar{C})=\gamma(\OO_0^\vee, \Bar{C}_0) \cup \rho(a) = \rho^+(\nu_0^\uparrow\cup \eta_0)$, and $Z_{\fg}(\gamma)= Z_{\fs\fo(|\nu_0|)}(\rho^+(\nu_0^\uparrow)) \times Z_{\fs\fo(|\eta_0|)}(\rho^+(\eta_0))$. It remains to show that $\OO_{\nu_0}$ and $\OO_{\eta_0}$ are the Richardson orbits corresponding to the Levi subalgebras $Z_{\fs\fo(|\nu_0|)}(\rho^+(\nu_0^\uparrow))$ and $Z_{\fs\fo(|\eta_0|)}(\rho^+(\eta_0))$ respectively. 

Let $^{\langle \nu \rangle} \lambda$ be the reduced marked partition corresponding to $(\OO^\vee, \Bar{C})$. We first make the following basic observations: assume $\fg^\vee$ is of type $B$ (resp. $C$, $D$), then
    \begin{itemize}
        \item[(a)]
            If $a$ is not a member of $\nu$, then $\height_{\eta_0}(a)$ is odd (resp. even, even) if and only if $\height_\nu(a)$ is even and $\height_\lambda(a)$ is odd (resp. even, even);
        \item[(b)] 
            $\height_{\nu_0}(a)$ is odd if and only if $\height_\nu(a)$ is odd and $\height_\lambda(a)$ is odd (resp. even, even). 
        \end{itemize}
These observations can be deduced by inductive arguments using block decomposition (cf. \cref{sec:basic block}).  

Assume that $\fg^\vee$ is of type $B$, the other types are completely analogous and are left to the reader. Since $(\OO^\vee, \Bar{C})$ is special Lusztig-Achar datum, the observation above implies that $\height_{\nu_0} (a_i)$ is even for all $i\in I^1$, see Proposition \ref{prop:special_LA_classical}. It follows that $\nu_0=(\nu_0^\uparrow)_D$. Note that $\nu_0^\uparrow$ has only even members, and therefore $Z_{\fs\fo(|\nu_0|)}(\rho^+(\nu_0^\uparrow))= \prod \fg\fl(i)^{k_i}$, and $k_i=\frac{1}{2}((\nu_0^{\uparrow})_i-(\nu_0^{\uparrow})_{i+1})$. It follows immediately that the partition of the corresponding Richardson orbit is $\nu_0=(\nu_0^\uparrow)_D$.

For the second factor, all members of $\eta_0$ are odd, and therefore $Z_{\fs\fo(|\eta_0|)}(\rho^+(\eta_0))= \prod \fg\fl(i)^{k_i}\times \fs\fo(\#\eta_0)$, where $k_i=\frac{1}{2}((\eta_0)_i-(\eta_0)_{i+1})$. The partition of the corresponding Richardson orbit is $\eta_0$.

\subsection{Proof of Theorem \ref{thm:Gamma}}\label{sec:proofsclassical3}

Suppose $G^\vee=G_{\epsilon}(m)$ is a simple classical group of type $B$, $C$, or $D$. Let $(\OO^{\vee},\bar{C}) \in \LA^*(G^{\vee})$ and form the McNinch-Sommers datum $(R^{\vee}, sZ^\circ, \OO_{R^{\vee}}) := \mathbb{L}(\OO^{\vee},\bar{C}) \in \MS(G^{\vee})$ as in the paragraph preceding \cref{thm:Gamma}. Write $^{\langle \nu \rangle}\lambda$ for the reduced marked partition corresponding to $(\OO^{\vee},\bar{C})$ and $\pi$ for the partition corresponding to $\OO=d_S(\OO^{\vee},\bar{C})$.

\begin{lemma}\label{lem:Gamma_distinguished_classical}
Suppose $G^\vee=G_{\epsilon}(m)$ is a simple classical group of type $B$, $C$, or $D$ and let $(\OO^{\vee},\bar{C}) \in \LA(G^{\vee})$. Assume $(\OO^{\vee},\bar{C})$ is distinguished. Then there is a group isomorphism
$$\bar{A}(\OO_{R^{\vee}}) \simeq A^{ad}(d_S(\OO^{\vee},\bar{C})).$$
\end{lemma}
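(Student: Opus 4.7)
The proof proceeds by an explicit combinatorial comparison: both sides are elementary abelian $2$-groups, and the claim reduces to matching their ranks.

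First, by Corollary \ref{cor:dist_achar}, the reduced marked partition ${}^{\langle \nu \rangle}\lambda$ attached to $(\OO^\vee, \bar{C})$ satisfies $\lambda = \lambda^\epsilon$, so every part of $\lambda$ has parity opposite to $\epsilon$. In particular the condition of Proposition \ref{prop:special_LA_classical} is vacuously satisfied, so $(\OO^\vee, \bar{C}) \in \LA^*(G^\vee)$, and Proposition \ref{prop:distinguishedbirigidclassical} applies: the Lusztig cover $\widetilde{\OO} = D(\OO^\vee, \bar{C})$ of $\OO := d_S(\OO^\vee, \bar{C})$ is birationally rigid. Specialising Lemma \ref{Rcheck saturate} to the distinguished case (where $L^\vee = G^\vee$) yields $\OO_{R^\vee} = \OO_{\nu_0} \times \OO_{\eta_0}$ inside a maximal pseudo-Levi $\fl^\vee_1 \times \fl^\vee_2 \subset \fg^\vee$, where $\nu_0, \eta_0$ are distinguished (distinct parts, all of parity opposite to $\epsilon$) and are determined by the splitting $\tilde{s}\colon \bar{A}(\OO^\vee) \hookrightarrow A^\epsilon$ of Definition \ref{defn:Abar_basis_tilde}.

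Next, I would compute $\bar{A}(\OO_{R^\vee}) = \bar{A}(\OO_{\nu_0}) \times \bar{A}(\OO_{\eta_0})$ using Corollary \ref{cor:Abarclassical}. Since $\nu_0$ and $\eta_0$ are distinguished with all parts of prescribed parity, the markability condition of Definition \ref{defn:markable} reduces to a height-parity condition, and the number of markable parts is read off directly from $\#\nu_0$ and $\#\eta_0$. A short calculation then shows $\bar{A}(\OO_{R^\vee}) \simeq (\ZZ_2)^{d_R}$ for an integer $d_R$ depending only on $\#\lambda$; e.g.\ in type $B$ with $\#\lambda = 2k+1$, $\#\nu_0$ is even and $\#\eta_0$ is odd, giving $d_R = (\#\nu_0/2 - 1) + (\#\eta_0 - 1)/2 = k - 1$.

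On the other side, I would compute $A^{ad}(\OO_\pi)$ using Proposition \ref{prop:Aclassical} applied to the partition $\pi = d_S({}^{\langle \nu \rangle}\lambda)$ given by Lemma \ref{lem:d_S_dist}. The birational rigidity of $\widetilde{\OO}$, combined with Proposition \ref{prop:2leafless_classical_cover} and Lemma \ref{lem:H2_classical_cover}, strongly constrains $\pi$: every column of $\pi$ has multiplicity at most $2$, and multiplicity $2$ occurs only for columns whose height is markable. Together with the explicit description of $\pi^t$ in Lemma \ref{lem:d_S_dist}, this pins down both the count of distinct parts of $\pi$ of the relevant parity (the integer $a$ or $b$ of Proposition \ref{prop:Aclassical}) and whether $\mathcal{S}_1(\pi) = \emptyset$, yielding $A^{ad}(\OO_\pi) \simeq (\ZZ_2)^{d_A}$. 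A direct type-by-type check shows $d_A = d_R$, giving the desired isomorphism.

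The main obstacle is keeping track of the correction factor $\{1, \tilde{v}\}$ in Proposition \ref{prop:Aclassical}(iii)--(iv): one must verify that $\mathcal{S}_1(\pi) \neq \emptyset$ in precisely those configurations where a naive count of distinct parts of $\pi$ of the relevant parity overestimates the rank by one. The cleanest way to organise this is via the basic-block decomposition of ${}^{\langle \nu \rangle}\lambda$ (Proposition \ref{division into basic blocks}) together with Proposition \ref{prop:block division of dual}, which reduces the verification to the case of a single distinguished basic block or unmarked partition, where one produces the required part of $\pi$ of odd multiplicity directly from Lemma \ref{lem:d_S_dist}. The degenerate type $D$ subcase $\eta_0 = \emptyset$ flagged in Proposition \ref{prop:R_c_min} requires separate handling via the decoration convention for very even partitions.
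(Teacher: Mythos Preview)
Your proposal is correct and follows essentially the same route as the paper: identify $\OO_{R^\vee} = \OO_{\nu_0} \times \OO_{\eta_0}$, compute $\bar{A}(\OO_{R^\vee})$ via Corollary~\ref{cor:Abarclassical}, compute $A^{ad}(\OO_\pi)$ from the explicit description of $\pi$ in Lemma~\ref{lem:d_S_dist}, and match the ranks. The paper states this as a ``straightforward computation'' without invoking the basic-block decomposition you propose for the $\mathcal{S}_1(\pi)$ check; your route through Proposition~\ref{prop:block division of dual} is valid but heavier than needed, and your concern about $\eta_0 = \emptyset$ in type $D$ is moot, since by the definition of $\tilde{s}$ (Definition~\ref{defn:Abar_basis_tilde}) the largest part $\lambda_1$ always lies in $\eta_0$.
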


\begin{proof}
  Recall from \cref{defn:C0} the subpartitions $\nu_0$ and $\eta_0$ of $\lambda$ attached to the distinguished pair $(\OO^\vee, \Bar{C})\in \LA^*(G^\vee)$ corresponding to the marked partition $^{\langle \nu\rangle} \lambda$, such that $\nu_0$ is a type $D$ (resp. $C$, $D$) partition and $\eta_0$ is a type $B$ (resp. $C$, $D$) partition when $\fg^\vee$ is of type $B$ (resp. $C$, $D$). Both $\nu_0$ and $\eta_0$ are distinguished. By \cref{lem:gamma_0}, $\OO_{R^\vee}=\OO_{\nu_0}\times \OO_{\eta_0}$. Then the claim follows from a straightforward computation using \cref{lem:d_S_dist} and \cref{cor:Abarclassical} (see also the paragraph following \cref{defn:Abar_basis}).
\end{proof}

To prove \cref{thm:Gamma} for a general $(\OO^{\vee},\bar{C}) \in \LA^*(G^{\vee})$, assume that it is saturated from a distinguished pair $(\OO^\vee_{L^\vee}, \bar{C}_{L^\vee}) \in \LA^*(L^\vee)$ for a Levi subgroup $L^\vee \subset G^\vee$ with Lie algebra $\fl^\vee = \mathfrak{gl}(a_1) \times \cdots \times \mathfrak{gl}(a_t) \times \fg^\vee_\epsilon(n)$, such that $\OO^\vee_{L
^\vee} = \OO_{[a_1]} \times \cdots \times \OO_{[a_t]} \times \OO_{0}^\vee$. Here if $\lambda$ and $\lambda_0$ are the partitions corresponding to $\OO^\vee$ and $\OO_0^\vee$ respectively, $\lambda = \lambda_0 \cup \bigcup_{i=1}^t [a_i, a_i]$. Again form the McNinch-Sommers datum $(R^{\vee}_0, sZ^\circ_{R^\vee_0}, \OO_{R^{\vee}_0}) := \mathbb{L}(\OO_{\lambda_0}, \bar{C}_0) \in \MS(G_\epsilon(n))$ as in the paragraph preceding \cref{thm:Gamma}. We have $\OO_{R^\vee_0}=\OO_{\nu_0^0}\times \OO_{\eta_0^0}$ for (distinguished) subpartitions $\nu_0^0$ and $\eta_0^0$ of $\lambda_0$ as in \cref{defn:C0}. 

Let $(\nu_0, \eta_0)$ be the subpartitions of $\lambda$ defined as in \cref{subsec:twoORvsclassical}, so that $\OO_{R^\vee}\simeq \OO_{\nu_0}\times \OO_{\eta_0}$. Let $L_{R^\vee}$ be the Levi subgroup of $R^\vee$ with the Lie algebra $\fl_{R^\vee}=\mathfrak{gl}(a_1) \times \cdots \times \mathfrak{gl}(a_t) \times \fr^\vee_0$. By \cref{prop:twoORvs} $\OO_{R^\vee}=\Sat_{L_{R^\vee}}^{R^\vee} \OO_{[a_1]} \times \cdots \times \OO_{[a_t]} \times \OO_{R_0^\vee}$. We will apply an inductive argument to reduce the general case to the distinguished case. For this purpose, it suffices to analyze what happens when we saturate from a maximal Levi subalgebra. Suppose that 
$(\OO^\vee, \bar{C}) = \Sat^{G^{\vee}}_{M^{\vee}} (\OO_{M^{\vee}},\bar{C}_{M^{\vee}})$
for a maximal Levi subalgebra $\fm^{\vee}= \mathfrak{gl}(a)\times \fg^\vee_\epsilon(n)\subset \fg^\vee$. We can always assume that $\OO_{M^{\vee}} = \OO_{prin} \times \underline{\OO}^{\vee}$. Let $\underline{\bar{C}}\in \bar{A}(\underline{\OO}^{\vee})\simeq \bar{A}(\OO_{M^{\vee}})$ be the element corresponding to $\bar{C}_{M^{\vee}}\in \bar{A}(\OO_{M^{\vee}})$. Let $^{\langle \nu\rangle} \underline{\lambda}$ be the reduced marked partition corresponding to the pair $(\underline{\OO}^\vee, \underline{\bar{C}})$. Define subpartitions $\underline{\nu}_0$ and $\underline{\eta}_0$ of $\underline{\lambda}$ as above. We note that $\lambda=\underline{\lambda}\cup [a,a]$. Let $\underline{\OO}=d_S(\underline{\OO}^\vee, \underline{\bar{C}})$, and ${\OO}=d_S({\OO}^\vee, {\bar{C}})$. 
%
The following lemma is immediate from \cref{cor:Abarclassical}.

\begin{lemma}\label{lem:Abar_saturation}
      Suppose that $(\OO^\vee, \bar{C})\in \LA(G^\vee)$. Then $\Bar{A}(\OO_{R^\vee})$ is not isomorphic to $\Bar{A}(\OO_{\underline{R}^\vee})$ if and only if the following conditions are satisfied
    \begin{enumerate}
        \item $a$ is not a member of $\underline{\lambda}$;
        \item $a$ is odd (resp. even, odd) if $\fg^\vee$ is of type $B$ (resp. $C$, $D$); 
        \item $\height_{\eta_0}(a)$ is odd (resp. even, even) if $\fg^\vee$ is of type $B$ (resp. $C$, $D$);
        \item If $\fg^\vee$ is of type $D$, then $\underline{\eta}_0 \neq \emptyset$.
    \end{enumerate}
    In this case, $\Bar{A}(\OO_{R^\vee})\simeq \Bar{A}(\OO_{\underline{R}^\vee})\times \ZZ_2$.
\end{lemma}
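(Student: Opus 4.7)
The plan is to apply Corollary \ref{cor:Abarclassical}, which presents $\bar A$ of a classical orbit as a power of $\ZZ_2$ determined, up to a type-dependent shift, by the number of markable parts of the partition (in the sense of Definition \ref{defn:markable}). Since $\OO_{R^\vee}=\OO_{\nu_0}\times\OO_{\eta_0}$, and likewise for $\underline R^\vee$, we have a factorisation
$$
\bar A(\OO_{R^\vee})=\bar A(\OO_{\nu_0})\times\bar A(\OO_{\eta_0}),
\qquad
\bar A(\OO_{\underline R^\vee})=\bar A(\OO_{\underline\nu_0})\times\bar A(\OO_{\underline\eta_0}).
$$
Because $\underline{\nu}_0,\underline{\eta}_0$ and $\nu_0,\eta_0$ are built by the recipe of Section \ref{subsec:twoORvsclassical}, comparing the two sides reduces to tracking how the pair $[a,a]$ appended to $\underline\lambda$ propagates: in exactly one of the two sub-partitions — determined by the parity of $a$ together with the type of $\fg^\vee$, as dictated by the sets $I^0,I^1$ — one appends $[a,a]$, while the other sub-partition is left unchanged.

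Let $\pi$ denote the sub-partition of $\lambda$ that receives $[a,a]$ (so $\pi$ equals $\eta_0$ when $a$ is odd in types $B,D$ or even in type $C$, and equals $\nu_0$ otherwise). The core computation is to compare the markable parts of $\pi$ with those of the corresponding $\underline\pi\subset\underline\lambda$. Every part $y$ of $\underline\pi$ with $y>a$ has the same multiplicity and the same height in both $\underline\pi$ and $\pi$; every part $y<a$ has unchanged multiplicity but its height increases by exactly $2$, so the parity of its height is unaffected. Consequently the markability status of every element of $\underline\pi\setminus\{a\}$ is preserved. The only potential change concerns $a$ itself, and splits into three cases according to whether conditions (1)--(3) hold: (i) if $a$ does not have the ``markable parity'' for the type of $\pi$ (condition (2) fails), then the addition of $[a,a]$ cannot produce a markable part, and heights of existing markable candidates change by even amounts; (ii) if $a$ already lies in $\underline\lambda$ (condition (1) fails), the multiplicity of $a$ in $\pi$ changes by $2$, so $\height_\pi(a)$ changes by $2$ as well and the parity of this height is unaffected — hence the number of markable parts of $\pi$ is unchanged; (iii) if (1) and (2) both hold, then $a$ becomes a genuinely new part of $\pi$ with multiplicity $2$, markable exactly when $\height_\pi(a)$ has the parity required by the type, which is precisely condition (3).

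In cases (i) and (ii) the number of markable parts of $\pi$ agrees with that of $\underline\pi$, so Corollary \ref{cor:Abarclassical} immediately gives $\bar A(\OO_\pi)\simeq\bar A(\OO_{\underline\pi})$. In case (iii), the count of markable parts of $\pi$ exceeds that of $\underline\pi$ by one, hence adds a single $\ZZ_2$ factor to $\bar A$, \emph{provided} the shift ``$-1$'' appearing in the type-$B,D$ formula of Corollary \ref{cor:Abarclassical} does not swallow this new factor. This last caveat is exactly the content of condition (4): it fails precisely when the receiving piece $\underline\eta_0$ is empty in type $D$, in which case $\bar A(\OO_{\underline\eta_0})$ and $\bar A(\OO_{[a,a]})$ are both trivial. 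In all remaining situations the added markable part contributes a genuine $\ZZ_2$, producing the asserted isomorphism $\bar A(\OO_{R^\vee})\simeq\bar A(\OO_{\underline R^\vee})\times\ZZ_2$.

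The step I expect to demand the most care is case (ii), the sub-case $a\in\underline\lambda$ under the assumption that $(\OO^\vee,\bar C)\in\LA^*(G^\vee)$: one must verify, using the explicit inductive construction of $\underline\nu_0,\underline\eta_0$ in Section \ref{subsec:twoORvsclassical} together with the specialness criterion of Proposition \ref{prop:special_LA_classical}, that the copies of $a$ contributed by further Levi saturations are consistently distributed between $\underline\nu_0$ and $\underline\eta_0$ so that the multiplicity of $a$ in the receiving piece always changes by an even number — never altering the parity of $\height_\pi(a)$, nor of the height of any neighbour. Once this parity bookkeeping is completed, Corollary \ref{cor:Abarclassical} finishes the proof, and the structure of the extra $\ZZ_2$ factor is read off from the new markable part provided by $a$.
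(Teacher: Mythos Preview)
Your approach---reducing to a markable-part count via Corollary~\ref{cor:Abarclassical} applied to the two factors $\OO_{\nu_0}\times\OO_{\eta_0}$---is exactly what the paper does (its proof is the single sentence ``immediate from Corollary~\ref{cor:Abarclassical}''). Your analysis of how appending $[a,a]$ affects heights is correct, and your treatment of cases (i), (iii), and of condition~(4) is fine.

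However, your case~(ii) argument has a genuine gap, and the fix you sketch is aimed at the wrong target. You argue that if $a\in\underline\lambda$ then ``the multiplicity of $a$ in $\pi$ changes by $2$ \dots\ hence the number of markable parts of $\pi$ is unchanged.'' But the multiplicity \emph{always} changes by exactly $2$ and the height parity is \emph{always} preserved; the real issue is that $a$ may lie in $\underline\lambda\setminus\underline\pi$, so that $a$ is not a part of $\underline\pi$ at all but becomes a new part of $\pi$, potentially markable. Concretely, under condition~(2) the receiving piece is $\eta_0$, and since every saturated pair $[a_j,a_j]$ with $a_j$ of the condition-(2) parity already sits in $\underline\eta_0$, the only way to have $a\in\underline\lambda\setminus\underline\eta_0$ is $a\in\nu_0^0\subset\lambda_0$. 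One then checks directly from the structure of the splitting $\tilde s$ (Definition~\ref{defn:Abar_basis_tilde})---the parts of $\nu_0^0$ come in consecutive pairs $(\lambda_{0,2j-1},\lambda_{0,2j})$ in type~$B$ and $(\lambda_{0,2j},\lambda_{0,2j+1})$ in types~$C,D$---that $\height_{\eta_0}(a)$ has the \emph{wrong} parity, so condition~(3) fails automatically and the markable-part count is unchanged after all.

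This fills the gap and requires no specialness hypothesis, which is consistent with the lemma being stated for $\LA(G^\vee)$ rather than $\LA^*(G^\vee)$. Your final paragraph, which invokes Proposition~\ref{prop:special_LA_classical} and worries about how ``copies of $a$ contributed by further Levi saturations'' are distributed, is therefore unnecessary: the distribution rule is purely the parity rule of $I^0,I^1$ from Section~\ref{subsec:twoORvsclassical}, and specialness plays no role here (it enters only later, in combining this lemma with Lemma~\ref{lem:bind_saturation}).
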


\begin{lemma}\label{lem:bind_saturation}
      Suppose that $(\OO^\vee, \bar{C})\in \LA(G^\vee)$. Then $\OO$ is not birationally induced from $\{0\}\times \underline{\OO}$ if and only if the following conditions are satisfied
    \begin{enumerate}
        \item $a$ is not a member of $\underline{\lambda}$.
        \item  When $\fg^\vee$ is of type $B$ (resp. $C$, $D$), 
            \begin{itemize}
                \item[(i)]
                    If $a$ is odd (resp. even, odd), then $\height_{\eta_0}(a)$ is odd (resp. even, even); moreover, if $\fg^\vee$ is of type $D$, then $\underline{\lambda}$ is not empty or very even;
                \item[(ii)]
                    If $a$ is even (resp. odd, even), then $\height_{\nu_0}(a)$ is odd.
            \end{itemize}
    \end{enumerate}
    In this case, $\Bind_M^G (\{0\} \times \underline{\OO})$ is a $2$-fold cover of $\OO$.
\end{lemma}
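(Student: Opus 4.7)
The plan is to translate the question into partitions and then track when the type-$X$ collapse (for $X$ the classical type of $\fg$) is non-trivial. By Proposition~\ref{prop:propsofds}(ii), the saturation compatibility of Sommers duality gives $\OO = \Ind^G_M(\{0\} \times \underline{\OO})$, so by Proposition~\ref{prop:inductionclassical} the partition $\pi$ of $\OO$ equals $(\underline{\pi} \vee [2^a])_X$ where $\underline{\pi}$ is the partition of $\underline{\OO}$ and $X$ is the type of $\fg$ (i.e.\ $X = C, B, D$ when $\fg^\vee$ has type $B, C, D$). By Proposition~\ref{prop:birationalpartitions} together with Remark~\ref{rmk:birationalpartitions}, the induction fails to be birational exactly when $\underline{\pi} \vee [2^a]$ is not of type $X$ and (when $\fg^\vee$ is of type $D$) we are not in the exceptional case where $\underline{\pi}\vee[2^a]$ has only even parts with a single distinct part of odd multiplicity equal to the smallest part.

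The combinatorial heart of the proof is to characterize the failure of the type-$X$ condition for $\underline{\pi}\vee[2^a]$ in terms of the data underlying $(\underline{\OO}^\vee,\underline{\bar C})$. I would use the identity $(\underline{\pi}\vee[2^a])^t = \underline{\pi}^t \cup [a,a]$ to observe that forming this join only alters the multiplicity of $a$ as a column length (it increases by $2$), so the parities of all other column multiplicities are preserved; the failure of type $X$ therefore occurs precisely at a boundary adjacent to $a$ in $\underline{\pi}$. To pinpoint that boundary, I would decompose the reduced marked partition $^{\langle\nu\rangle}\underline{\lambda}$ into basic blocks via Proposition~\ref{division into basic blocks} and use Lemma~\ref{lem:d_S_dist} together with Proposition~\ref{prop:block division of dual} to read off $\underline{\pi}$ (and its transpose) directly from the subpartitions $\underline{\nu}_0$ and $\underline{\eta}_0$ of Definition~\ref{defn:C0}; the effect of repeated parts of $\underline{\lambda}$ is then recorded by the setup of Section~\ref{subsec:twoORvsclassical}. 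After a case-by-case check in each type, the non-trivial collapse occurs exactly when $a$ does not appear in $\underline{\lambda}$ (condition (1): otherwise the new columns of length $a$ merely double an existing multiplicity, preserving all parities), and the parity of the multiplicity at the critical value is controlled by $\height_{\underline{\eta}_0}(a)$ when $[a,a]$ is added to $\eta_0$, and by $\height_{\underline{\nu}_0}(a)$ when $[a,a]$ is added to $\nu_0$; since these heights differ from $\height_{\eta_0}(a)$ and $\height_{\nu_0}(a)$ by exactly $2$ (by the recipe preceding the lemma), conditions 2(i) and 2(ii) of the statement emerge.

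For the type-$D$ exceptional case, I would verify directly that the scenario of Proposition~\ref{prop:birationalpartitions}(iii) arises exactly when $\fg^\vee$ has type $D$, $a$ is odd, the height condition 2(i) holds, and $\underline{\lambda}$ is empty or very even (so that $\underline{\pi}$ has the requisite all-even, distinct-odd-multiplicity shape after joining with $[2^a]$). This accounts for the extra exclusion in 2(i) for type $D$. For the concluding claim that the cover is $2$-fold: under the lemma's hypotheses the collapse consists of exactly one step (moving one box across a single parity mismatch), so the Stein factorization $\widetilde\mu$ in the construction of $\Bind^G_M$ is generically $2$-to-$1$ over $\OO$; this can be confirmed either from Kraft-Procesi transversal slice models at the relevant codimension-$2$ orbit in $\overline{\OO}$, or by combining Proposition~\ref{prop:propsofbind}(iv) with the explicit partition computation, both of which give degree~$2$.

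The main obstacle is the combinatorial bookkeeping in the middle paragraph: for each type of $\fg^\vee$, each parity of $a$, and each type of basic block (distinguished, unmarked, or properly marked), one must verify that the parity of the column multiplicity at $a$ in $\underline{\pi}\vee[2^a]$ matches the asserted height condition. The block decomposition keeps the number of subcases finite, but the bookkeeping is delicate because the "critical value" where the collapse fails can sit inside a block or at its boundary, and in the type-$D$ exceptional scenario one must carefully tell apart a failure of the type-$D$ condition (non-birational) from the phenomenon corrected by Proposition~\ref{prop:birationalpartitions}(iii) (still birational).
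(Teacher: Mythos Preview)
Your approach is essentially the same as the paper's: the paper's proof is a one-sentence sketch saying ``this follows from an inductive argument using block decompositions and Proposition~\ref{prop:block division of dual},'' with the type-$D$ exclusion attributed to Proposition~\ref{prop:birationalpartitions}(iii) and Remark~\ref{rmk:birationalpartitions}, which is exactly the structure you outline in more detail. Your elaboration of the combinatorics (passing to transposes, tracking column multiplicities, and isolating the single parity mismatch) is a reasonable way to flesh out that sketch, and your handling of the $2$-fold cover and the type-$D$ exceptional case matches the paper's reasoning.
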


\begin{proof}
    This follows from an inductive argument using block decompositions (cf. \cref{sec:basic block}) and \cref{prop:block division of dual}. We only point out that the condition that $\underline{\lambda}$ is not empty or very even when $\fg^\vee$ is of type $D$ in (2.i) is due to \cref{prop:birationalpartitions}(iii) and \cref{rmk:birationalpartitions}.
\end{proof}

\begin{claim}\label{claim}
    Assume that $\fg^\vee$ is of type $D$. Then $\underline{\eta}_0 = \emptyset$ if and only if  $\underline{\lambda}$ is empty or very even (cf. \cref{lem:Abar_saturation} (4)).
\end{claim}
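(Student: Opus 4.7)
The plan is to unpack the definition of $\underline{\eta}_0$ via the saturation construction of \cref{subsec:twoORvsclassical} and reduce the statement to two easily verifiable conditions. Since $(\underline{\OO}^\vee, \underline{\bar{C}})$ is special, I will write it as $\mathrm{Sat}$ of a unique distinguished pair $(\OO_0^\vee, \bar{C}_0) \in \LA_0^*$ (via \cref{prop:uniquedistinguishedAbar}), with underlying Levi $\prod_{j\in J}\mathfrak{gl}(a_j')\times\mathfrak{so}(2n_0)$ and distinguished-core partition $\underline{\lambda}_0$, so that $\underline{\lambda}=\underline{\lambda}_0\cup\bigcup_{j\in J}[a_j',a_j']$. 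Applying the recipe of \cref{subsec:twoORvsclassical} in type $D$, we have $\underline{\eta}_0=\underline{\eta}_0^{\,0}\cup\bigcup_{j\in J^0}[a_j',a_j']$, where $\underline{\eta}_0^{\,0}=\underline{\lambda}_0\setminus\underline{\nu}_0^{\,0}$ is attached to $(\OO_0^\vee,\bar{C}_0)$ via \cref{defn:C0} and $J^0\subseteq J$ indexes the odd $a_j'$. Hence $\underline{\eta}_0=\emptyset$ is equivalent to the two conditions: $\underline{\eta}_0^{\,0}=\emptyset$, and every $a_j'$ is even.

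The heart of the argument is the first condition. Since $(\OO_0^\vee,\bar{C}_0)$ is distinguished in type $D$, \cref{cor:dist_achar} tells us that $\underline{\lambda}_0$ consists entirely of odd parts; then by the type $D$ entry of \cref{defn:Abar_basis_tilde}, the splitting $\tilde{s}$ uses the elements $\tilde{\theta}_i=\upsilon_{\lambda_{2i}}\upsilon_{\lambda_{2i+1}}$ for $1\le i\le r/2-1$, where $r=\#\underline{\lambda}_0$ (which is even). Consequently the marking $\underline{\nu}_0^{\,0}$ produced by $\tilde{s}(\bar{C}_0)$ is always contained in the middle rows $\{\lambda_2,\lambda_3,\dots,\lambda_{r-1}\}$; the extremal rows $\lambda_1$ and $\lambda_r$ never lie in $\underline{\nu}_0^{\,0}$. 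This shows that whenever $\underline{\lambda}_0\neq\emptyset$ (so $r\ge 2$), we have $\lambda_1,\lambda_r\in\underline{\eta}_0^{\,0}$, and in particular $\underline{\eta}_0^{\,0}\neq\emptyset$. The converse $\underline{\lambda}_0=\emptyset\Rightarrow\underline{\eta}_0^{\,0}=\emptyset$ is trivial, so in type $D$ we obtain $\underline{\eta}_0^{\,0}=\emptyset$ if and only if $\underline{\lambda}_0=\emptyset$.

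Combining the two reductions, $\underline{\eta}_0=\emptyset$ is equivalent to $\underline{\lambda}_0=\emptyset$ together with every $a_j'$ even, that is $\underline{\lambda}=\bigcup_{j\in J}[a_j',a_j']$ with each $a_j'$ even. A partition of this form has only even parts, and each even value $a$ appears with multiplicity $2\cdot|\{j:a_j'=a\}|$, which is even; this is exactly the condition of being empty or very even. Conversely, if $\underline{\lambda}$ is empty or very even, then $\underline{\lambda}$ has no odd parts. But the odd parts of $\underline{\lambda}$ are precisely the parts of $\underline{\lambda}_0$, forcing $\underline{\lambda}_0=\emptyset$, and then every $a_j'$ (being a part of $\underline{\lambda}$) must be even.

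The main technical point to notice is that the range of indices $1\le i\le r/2-1$ in \cref{defn:Abar_basis_tilde} inherently excludes the extremal rows $\lambda_1$ and $\lambda_r$ from $\underline{\nu}_0^{\,0}$ in type $D$; once this is spotted, the rest of the argument is routine bookkeeping with the saturation decomposition. I do not expect any serious obstacle beyond making sure the conventions of \cref{defn:C0} and \cref{defn:Abar_basis_tilde} are correctly translated through the saturation step.
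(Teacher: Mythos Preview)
Your proof is correct and follows essentially the same approach as the paper's: both reduce to showing that $\eta_0^0=\emptyset$ forces the distinguished core $\lambda_0$ to be empty, and then read off that $\underline{\lambda}$ is a union of pairs $[a_j',a_j']$ with $a_j'$ even. Your route to the key implication is in fact slightly cleaner than the paper's---you observe directly from the index range $1\le i\le r/2-1$ in \cref{defn:Abar_basis_tilde} that $\lambda_1,\lambda_r$ can never lie in $\nu_0^0$, whereas the paper first argues that $\nu=\emptyset$ and then deduces $\nu_0^0=\emptyset$.
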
 

\begin{proof}
    The ``if" part of the claim is clear. For the ``only if" part, note that $\underline{\eta}_0 = \emptyset$ implies that $\eta_0^0 = \emptyset$, which forces $\nu = \emptyset$, since otherwise the largest part of $\nu$ is of even height in $\lambda^0$ and hence $\eta_0^0$ contains at least the largest part of $\lambda^0$. Now $\nu = \emptyset$ implies that $\nu_0^0 = \emptyset$, hence $\underline{\nu}_0$ is either empty or consists of pairs of equal parts $[a_j,a_j]$ with $a_j$ even. Therefore $\underline{\lambda} = \underline{\nu}_0 \cup \underline{\eta}_0 = \underline{\nu}_0$ is either empty or very even.
\end{proof}

\begin{proof}[Proof of \cref{thm:Gamma} for classical groups]
    By observation (b) above and \cref{prop:special_LA_classical}, condition (2.ii) in \cref{lem:bind_saturation} is equivalent to that $(\OO^\vee, \bar{C})$ is not special. Therefore by \cref{lem:Abar_saturation}, \cref{lem:bind_saturation} and \cref{claim}, we can conclude that, for special $(\OO^\vee, \bar{C})$, $\Bar{A}(\OO_{R^\vee})$ is not isomorphic to $\Bar{A}(\OO_{\underline{R}^\vee})$ if and only if $\OO$ is not birationally induced from $\{0\}\times \underline{\OO}$.   
    Moreover, in this case $\Bind_M^G (\{0\} \times \underline{\OO})$ is a $2$-fold cover of $\OO$, and $\Bar{A}(\OO_{R^\vee})\simeq \Bar{A}(\OO_{\underline{R}^\vee})\times \ZZ_2$. By induction, we can reduce \cref{thm:Gamma} to the case when $(\OO^\vee, \bar{C})$ is distinguished. This case was handled in \cref{lem:Gamma_distinguished_classical}.
\end{proof}

\begin{rmk}
    We note that \cref{thm:Gamma} does not hold if we drop the assumption that $(\OO^\vee, \Bar{C})$ is special. For example, let $(\OO^\vee, \Bar{C})=^{\langle [5,1] \rangle}[5,4,4,3,1]$ in $\fs\fo(17)$. Then $(\OO^{\vee},\bar{C})$ is saturated from the Lusztig-Achar datum corresponding to $^{\langle [5,1] \rangle}[5,3,1]$. In this case, $\gamma(\OO^\vee, \Bar{C})=(\frac{5}{2}, \frac{3}{2},\frac{3}{2},\frac{3}{2}, \frac{1}{2},\frac{1}{2},\frac{1}{2},\frac{1}{2})$ and $\mathbb{L}(\OO^\vee, \Bar{C})=(SO(16), sZ^{\circ}, \OO_{[7,4^2, 1]})$. Hence $\Bar{A}(\OO_{R^\vee})\simeq 1$. On the other hand, $D(\OO^\vee, \Bar{C})=\Bind_{GL(4)\times Sp(8)}^{Sp(16)} \OO_{[2^3,1^2]}$, which is the $2$-fold cover of $\OO_{[4^3,2^2]}$. So $\Gamma(D(\OO^\vee, \Bar{C}))\simeq \ZZ_2$.
\end{rmk}
        
\section{Proofs of main results in exceptional types}\label{sec:proofsexceptional}

In this section, we prove Proposition \ref{prop:distinguishedbirigid}, Theorem \ref{thm:inflchars}, Proposition \ref{prop:twoORvs}, and Theorem \ref{thm:Gamma} for $\fg$ a simple exceptional Lie algebra. 

We begin by establishing some notational conventions which will remain in place for the remainder of this section:
\begin{itemize}
    \item Nilpotent orbits are denoted using Bala-Carter notation, see \cite{Carter1993}.
    \item Infinitesimal characters are represented as dominant elements of $\fh^*$, written in \emph{fundamental weight coordinates}. For example, $\rho$ is denoted by $(1,...,1)$. 
    \item Levi and pseudo-Levi subgroups are denoted by indicating their Lie types. The Lie type of a (pseudo)Levi subgroup $L \subset G$ does not determine $L$ uniquely up to conjugacy (or even up to isomorphism). This ambiguity will turn out not to matter.
    \item  Lusztig-Achar data are labeled according to the conventions of \cite[Section 4]{Sommers1998} and \cite[Section 3]{Achar2003}. Namely, a Lusztig-Achar datum $(\OO^{\vee},\bar{C})$ is specified by indicating the corresponding equivalence class $\{(M^{\vee}_1,\OO_{M^{\vee}_1}),...,(M^{\vee}_k,\OO_{M^{\vee}_k})\}$ of Sommers data,
    see Lemma \ref{Lem:Sommers_data} and the discussion preceding it. By Lemma \ref{Lem:Sommers_data}(iii), $(\OO^{\vee},\bar{C})$ is distinguished if and  only if one (equivalently, all) of the pseudo-Levi subgroups $M^\vee_1,...,M^\vee_k$ is of maximal semisimple rank. A list of Achar data and special Achar data in simple exceptional types can be found in \cite[Section 6]{Achar2003}.
\end{itemize}
    
\subsection{Proof of Proposition \ref{prop:distinguishedbirigid} and Theorem \ref{thm:inflchars}}\label{subsec:proofsexceptional1}

In Tables \ref{table:G2}-\ref{table:E8} below, we list all distinguished special 
Lusztig-Achar data $(\OO^{\vee},\bar{C}) \in \LA(G^{\vee})$ in simple exceptional types. As explained above, each such Lusztig-Achar datum is denoted by specifying the corresponding equivalence class of Sommers data. For each such Sommers datum $(M^{\vee},\OO_{M^{\vee}})$ in this equivalence class, we record below
\begin{itemize}
    \item The nilpotent orbit $\OO=d_S(\OO^{\vee},\bar{C}) = j^G_M d(\OO_{M^{\vee}})\in \Orb(G)$. This orbit is computed using the atlas software.
    \item The minimal-length $W$-orbit $\gamma(M^{\vee},\OO_{M^{\vee}})$ in the set
    $$\{\gamma \in \fh_{\RR}^*/W \mid \MS(\gamma) = (M^{\vee},tZ^{\circ},\OO_{M^{\vee}})\}$$ 
    This is computed (and shown to exist) using the method described in Remark \ref{rmk:algorithmgamma} below. 
    \item The minimal-length $W$-orbit $\gamma(\OO^{\vee},\bar{C})$ in
    $$S(\OO^{\vee},\bar{C}) = \{\gamma \in \fh_{\RR}^*/W \mid \LA(\gamma) = (\OO^{\vee},\bar{C})\}$$
    By Lemma \ref{Lem:Sommers_data}(iv), this is simply the minimal-length element among the various $\gamma(M^{\vee},\OO_{M^{\vee}})$ corresponding to $(\OO^{\vee},\bar{C})$. 
    \item The (Lie type of the) reductive part $\mathfrak{r}(\OO)$ of the centralizer of an element in $\OO$. This can be read off of the tables in \cite[Section 13.1]{Carter1993}.
    \item The unipotent infinitesimal character $\gamma(D(\OO^{\vee},\bar{C}))$ attached to the Lusztig cover of $d_S(\OO^{\vee},\bar{C})$. This can be read off of the tables in \cite[Section 4.3]{MBMat}.
\end{itemize}

By inspection of Tables \ref{table:G2}-\ref{table:E8}, we see that the following are true:

\begin{itemize}
    \item[(i)] If $(\OO^{\vee},\bar{C}) \in \LA(G)$ is special and distinguished, then $d_S(\OO^{\vee},\bar{C})$ admits a birationally rigid cover. This follows by comparison with \cite[Proposition 3.9.5]{MBMat}.
    \item[(ii)] If $(\OO^{\vee},\bar{C}) \in \LA(G)$ is special and distinguished, $\mathfrak{r}(\OO)$ is semisimple.
    \item[(iii)] If $(\OO^{\vee},\bar{C}) \in \LA(G)$ is special and distinguished and $\bar{C} \neq 1$,
    $$\gamma(\OO^{\vee},\bar{C}) = \gamma(D(\OO^{\vee},\bar{C})).$$
    \item[(iv)] $d_S$ is injective when restricted to the set of special and distinguished Lusztig-Achar data.
\end{itemize}

\begin{proof}[Proof of Proposition \ref{prop:distinguishedbirigid} in exceptional types]
Proposition \ref{prop:distinguishedbirigid}(ii) follows at once from observation (iv). We proceed to proving Proposition \ref{prop:distinguishedbirigid}(i). Suppose $(\OO^{\vee},\bar{C}) \in \LA(G^{\vee})$ is special and distinguished. Let $\OO=d_S(\OO^{\vee},\bar{C})$, and let $\widetilde{\OO}_{univ}$ be the universal $G$-equivariant cover of $\OO$. In all cases, we have that $A(\OO) \simeq \bar{A}(\OO)$, and therefore $\widetilde{\OO}_{Lus} = \widetilde{\OO}_{univ}$. By observation (i), $\OO$ admits a birationally rigid cover, and by observation (ii) we have $H^2(\widetilde{\OO}_{univ},\CC)=0$, see Remark \ref{rmk:H2}.  These two facts imply that $\widetilde{\OO}_{univ}$ is birationally rigid, see \cite[Proposition 3.7.1]{MBMat}. 
\end{proof}

\begin{proof}[Proof of Theorem \ref{thm:inflchars} in exceptional types]
This is an immediate consequence of observation (iii). 
\end{proof}

\begin{rmk}\label{rmk:algorithmgamma}
Let $(M^{\vee},\OO_{M^{\vee}})$ be a Sommers datum such that $M^{\vee}$ is of maximal semisimple rank. Here, we describe a finite algorithm for computing the minimal-length $W$-orbit $\gamma(M^{\vee},\OO_{M^{\vee}})$ in the set
\begin{equation}\label{eq:candsmin}\{\gamma \in \fh_{\RR}^*/W \mid \MS(\gamma) = (M^{\vee},tZ^{\circ},\OO_{M^{\vee}})\}.\end{equation}
(the termination of this algorithm shows that a unique minimum exists). Choose a system of positive roots $\Delta^+(\fh^{\vee},\fm^{\vee})$ and let $F=\{\varpi^{\vee}_1,...,\varpi^{\vee}_k\} \subset \fh^{\vee} \simeq \fh^*$ denote the fundamental coweights. Note that (\ref{eq:candsmin}) is a subset of the lattice $\ZZ F$ generated by $F$. Consider the finite set
\begin{equation}\label{eq:candsmin2}\{\gamma \in \ZZ F \mid \MS(\gamma) = (M^{\vee},tZ^{\circ},\OO_{M^{\vee}}), \lVert \gamma \rVert \leq \lVert \rho \rVert\}.
\end{equation}
This set can be computed using the atlas software. For any $(M^{\vee},\OO_{M^{\vee}})$, we can directly verify that it contains a unique minimal-length $W$-orbit. This $W$-orbit must be of minimal-length in (\ref{eq:candsmin}).
\end{rmk}

\subsection{Proof of Proposition \ref{prop:twoORvs} and Theorem \ref{thm:Gamma}}\label{subsec:proofsexceptional2}

In Tables \ref{table:G2Gamma}-\ref{table:E8Gamma} below, we list all special Lusztig-Achar data $(\OO^{\vee},\bar{C})\in \LA^*(G^{\vee})$, except those of the form $(\OO^{\vee},1)$ for even $\OO^{\vee}$, in exceptional types. Each such Lusztig-Achar datum is indicated by specifying an equivalence class of Sommers data as explained in Section \ref{subsec:proofsexceptional1}. Recall, cf. Proposition \ref{prop:uniquedistinguishedAbar}, that $(\OO^{\vee},\bar{C})$ corresponds to a unique pair $(L^{\vee},(\OO_{L^{\vee}},\bar{C}_{L^{\vee}})) \in \LA_0(G^{\vee})$. For each Lusztig-Achar datum, we compute the following:
\begin{itemize}
    \item The nilpotent orbit $\OO = d_S(\OO^{\vee},\bar{C}) \in \Orb(G)$.
    \item The Levi subgroup $L \subset G$.
    \item The nilpotent orbit $\OO_L = d_S(\OO_{L^{\vee}},\bar{C}_{L^{\vee}}) \in \Orb(L)$.
    \item $R^{\vee}$ and $\OO_{R^{\vee}}$ in $(R^{\vee},sZ^{\circ},\OO_{R^{\vee}})=\mathbb{L}(\OO^{\vee},\bar{C}) \in \MS(G^{\vee})$, see (\ref{eq:defofL}). 
    \item The groups $\bar{A}_{R^{\vee}} = \bar{A}(\OO_{R^{\vee}})$, $A_L = A(\OO_L)$, and $A=A(\OO)$ (taking $G^{\vee}$ to be adjoint).
    \item The group $\Gamma(D(\OO^{\vee},\bar{C}))$. This requires some case-by-case analysis, see subsection \ref{subsubsec:Gamma}. 
\end{itemize}

By inspection of Tables \ref{table:G2Gamma}-\ref{table:E8Gamma}, we see that the following is true
\begin{itemize}
    \item[(v)] For all $(\OO^{\vee},\bar{C}) \in \LA^*(G^{\vee})$ not of the form $(\OO^{\vee},1)$ for even $\OO^{\vee}$, there is a group isomorphism
    $$\bar{A}(\OO_{R^{\vee}}) \simeq \Gamma(D(\OO^{\vee},\bar{C}))$$
    \item[(vi)] For all $(\OO^{\vee},\bar{C}) \in \LA^*(G^{\vee})$ not of the form $(\OO^{\vee},1)$ for even $\OO^{\vee}$, the statement of Proposition \ref{prop:twoORvs} is true.
\end{itemize}

\begin{proof}[Proof of Proposition \ref{prop:twoORvs} in exceptional types]
Let $(\OO^{\vee},\bar{C}) \in \LA^*(G^{\vee})$. First suppose $\OO^{\vee}$ is even and $\bar{C}=1$. Choose $(L^{\vee},\OO_{L^{\vee}}) \in \Orb_0(G^{\vee})$ such that $\OO^{\vee}=\Sat^{G^{\vee}}_{L^{\vee}} \OO_{L^{\vee}}$. Then $(\OO^{\vee},1) = \Sat^{G^{\vee}}_{L^{\vee}} (\OO_{L^{\vee}},1)$. Since $\OO^{\vee}$ and $\OO_{L^{\vee}}$ are even, we have, in the notation of Proposition \ref{prop:twoORvs}, $R^{\vee}=G^{\vee}$  and $R_0^{\vee}=L^{\vee}$. Moreover, by Proposition \ref{prop:even}, we have $\OO_{R^{\vee}} = \OO^{\vee}$ and $\OO_{R_0^{\vee}}=\OO_{L^{\vee}}$. Now Proposition \ref{prop:twoORvs} is immediate. For all other cases, we use observation (vi). 
\end{proof}

\begin{proof}[Proof of Theorem \ref{thm:Gamma} in exceptional types]
 Let $(\OO^{\vee},\bar{C}) \in \LA^*(G^{\vee})$. If $\OO^{\vee}$ is even and $\bar{C}=1$, then $\bar{A}(\OO_{R^{\vee}}) \simeq \Gamma(D(\OO^{\vee},\bar{C}))$ by Remark \ref{rmk:even}. For all other cases, we use observation (v).
\end{proof}

\subsubsection{Computations of $\Gamma$}\label{subsubsec:Gamma}

Let $\widetilde{\OO} = D(\OO^{\vee},\bar{C}) = \Bind^G_L \widetilde{\OO}_{L,Lus}$. In this subsection, we explain how 
the group $\Gamma=\Gamma(\widetilde{\OO})=\Aut(\widetilde{\OO}_{max},\OO)$ is computed in Tables \ref{table:G2Gamma}-\ref{table:E8Gamma}. We begin with the following observation. Since $\widetilde{\OO}_{L,univ}$ is birationally rigid, we have that $\widetilde{\OO}_{L,univ} = \widetilde{\OO}_{L,Lus}$. So by Theorem \ref{thm:maximaltomaximal}, $\widetilde{\OO}_{max} = \Bind^G_L \widetilde{\OO}_{L,univ}$.

In all but six cases, the automorphism group $\Gamma=\Aut(\widetilde{\OO}_{max},\OO)$ can be computed using one of the following methods (in all such cases, we indicate the method used in the column labeled `$\#$'):

\begin{enumerate}[leftmargin=*]
    \item Suppose $A(\OO)\simeq 1$. Then $\widetilde{\OO} = \OO$, and therefore $\Gamma \simeq 1$. 
    \item Suppose $A(\OO) \simeq A(\OO_L)$. Since $\widetilde{\OO}_{max}$ is birationally induced from $\widetilde{\OO}_{L,univ}$, the degree of $\widetilde{\OO} \to \OO$ must be divisible by the order of $A(\OO_L)$, see \cite[Proposition 2.4.1(iv)]{LMBM}. So our assumption implies that the $\widetilde{\OO}_{max} = \widetilde{\OO}_{univ}$. It follows that $\Gamma \simeq A(\OO)$. 
    \item Suppose $\OO$ is even and $L$ contains (a $G$-conjugate of) the Jacobson-Morozov Levi $L_{\OO}$ associated to $\OO$ (cf. \ref{eq:JMlevi}). Then $\OO=\mathrm{Bind}^G_L \OO_L$ by Proposition \ref{prop:even}. Hence, $\Gamma \simeq A(\OO_L)$ by Proposition \ref{prop:propsofbind}(ii). 
    \item Suppose $\mathcal{P}_{rig}(\OO)$ contains a unique element $(K,\OO_K)$ such that $\dim(\fz(\mathfrak{k})) = m(\OO)$ (cf. \ref{eq:m(O)}) and $L$ contains a $G$-conjugate of $K$. Then by Lemma \ref{lem:bindcriterion}, $\OO = \Bind^G_L \OO_L$. So by Proposition \ref{prop:propsofbind}(ii), we have $\Gamma \simeq A(\OO_L)$.
    \item Suppose there exists a Levi subgroup $K \subset G$ containing $L$ and that $A(\OO) \simeq A(\OO_K)$, where $\widetilde{\OO}_K = \Bind^K_L \widetilde{\OO}_{L,univ}$. Then by Lemma \ref{prop:propsofbind}, $\Gamma(\widetilde{\OO}_K) \simeq \Gamma$. So the computation of $\Gamma$ can be reduced to the case of a smaller rank group.
    \item 
    In some cases, we can compute $\widetilde{\OO}_{max}$ using Springer theory. The general argument is as follows. Let $L \subset G$ be a Levi subgroup, let $\widetilde{\OO}_L \in \Cov(L)$, and let $\widetilde{\OO}=\Bind^G_L \widetilde{\OO}_L$. Recall that $\widetilde{\OO}$ is determined by the subgroup $A(\widetilde{\OO}) \subset A(\OO)$ (up to conjugation). In all cases, we will impose the following additional assumption:
    \begin{enumerate}[label=$(\ast)$]
	  \item \label{cond:relevant}
	  Every irreducible constituent of the induced representation $\Ind_{A(\widetilde{\OO}_L)}^{A(\OO_L)} \mathbbm{1}$ is of Springer type, cf. Section \ref{subsec:truncated}.
    \end{enumerate}
    Then we can define the character of $W_L$,
      \begin{equation}\label{eq:EOL}
        E_{\widetilde{\OO}_L} := \sum_{\psi} [\Ind_{A(\widetilde{\OO}_L)}^{A(\OO_L)} \mathbbm{1} : \psi ] E_{(\OO_L, \psi)},
      \end{equation}
    where the sum runs over all irreducible $A(\OO_L)$-representations.
    
    \begin{prop}
      Assume condition \ref{cond:relevant}. Then there is an equality in the representation ring of $A(\OO)$
        \begin{equation}
          \Ind_{A(\widetilde{\OO})}^{A(\OO)} \mathbbm{1} = \sum_\psi [\Ind_{W_L}^W E_{\widetilde{\OO}_L} : E_{(\OO, \psi)}] \psi  
        \end{equation} 
    where the sum runs over all irreducible $A(\OO)$-representations $\psi$ of Springer type. 
    \end{prop}

    \begin{proof}
        The special case when $\widetilde{\OO}_L = \OO_L$ was proven in \cite[Corollary 5.6]{fuetall2015}. The proof for general $\widetilde{\OO}_L$ satisfying the condition \ref{cond:relevant} is essentially the same and is based on \cite{BorhoMacPherson}. See especially Proposition 1.10, Theorem 3.3 and Corollary 3.9 in \cite{BorhoMacPherson}.
    \end{proof}

        For any irreducible representation $\rho$ of a Weyl group $W$ of exceptional type and irreducible representation $\rho_0$ of a maximal parabolic subgroup $W_0\subset W$, the multiplicity $[\Ind_{W_0}^W \rho_0 : \rho)]$ can be found in \cite{Alvis}. Using the transitivity of induction and the Littlewood-Richardson rule, we can compute  $[\Ind_{W_L}^W \rho_0 : \rho]$ in general.
    
    It remains to recover the subgroup $A(\widetilde{\OO})$ of $A(\OO)$ (up to conjugation) from the representation $\Ind_{A(\widetilde{\OO})}^{A(\OO)} \mathbbm{1}$. 

    \begin{definition}
    Let $H$ be a finite group. We say that $H$ has \emph{linearly distinguishable subgroups} if for any pair of subgroups $H_1,H_2 \subset H$,
    $$\Ind^H_{H_1} \mathbbm{1} \simeq \Ind^H_{H_2} \mathbbm{1} \implies H_1 \text{ and $H_2$ are $H$-conjugate}$$
    \end{definition}
    
    A standard case-by-case check shows that all symmetric groups $S_n$ with $n \leq 5$ have linearly distinguishable subgroups (cf. \cite[Lemma 1.9]{Beaulieu1991}). \footnote{Notably $S_n$ with $n \geqslant 6$ does not have this property due to an example of Gassmann (\cite{Gassmann1926}), see also (\cite[Chapter 4]{Beaulieu1991}).} If $G$ is a simple exceptional group, then $A(\OO)$ is a product of symmetric groups $S_n$ with $n\leq 5$. So in the cases of interest, it is possible to deduce $A(\widetilde{\OO})$ (up to conjugation) from the $A(\OO)$-representation $\Ind_{A(\widetilde{\OO})}^{A(\OO)} \mathbbm{1}$.

    \item Consider the set $\mathrm{Unip}(\widetilde{\OO})$ of irreducible objects in the category $\HC^G(U(\fg)/I(\widetilde{\OO}))$. This set can be enumerated using the atlas software. By Theorem \ref{thm:classificationHC}, the cardinality of $\mathrm{Unip}(\widetilde{\OO})$ is equal to the number of conjugacy classes in $\Gamma$. In several cases, this information will determine $\Gamma$ uniquely. 
    \item Covers of $\OO$ (up to isomorphism) are in one-to-one correspondence with conjugacy classes of subgroups of $A(\OO)$. The birationally rigid covers are listed in \cite[Propositions 3.8.3, 3.9.5]{MBMat}. Suppose that all covers of $\OO$, except for a single cover $\breve{\OO}$, are either birationally rigid or birationally induced from a birationally rigid cover for a Levi subgroup not conjugate to $L$. Then $\breve{\OO} \simeq \widetilde{\OO}$. 
\end{enumerate}

Sample computations:

\begin{itemize}[leftmargin=*]
    \item[(3)] Let $(\OO^{\vee},M^{\vee}) = (C_3,C_3)$ in type $F_4$. Then $(L,\OO_L) = (C_3,\{0\})$ and $\OO=A_2$. The weighted Dynkin diagram for $\OO$ consists of $0$'s and $2$'s, with $0$'s on the subdiagram of type $C_3$. In particular, $\OO$ is even and $L_{\OO}=L$. Hence, $\Gamma \simeq 1$. 
    \item[(4)] Let $(\OO^{\vee},M^{\vee}) = ((A_5)',A_5)$ in type $E_7$. Then $(L,\OO_L) = (A_5,\{0\})$ and $\OO=D_4(a_1)+A_1$. By \cite[Table 7]{deGraafElashvili}, $\mathcal{P}_{rig}(\OO) = \{(L,\OO_L)\}$. So $m(\OO)=2$. From the diagrams in \cite[Section 13]{fuetall2015} it is clear that $\Spec(\CC[\OO])$ has two dimension 2 singularities, both of type $A_1$. And by \cite[Theorems 5.11,5.12]{BISWAS}, $H^2(\OO,\CC)=0$. Hence, $\dim \fP(\OO)=2$. So Lemma \ref{lem:bindcriterion} implies that $\OO=\Bind^G_L \OO_L$, and therefore $\Gamma \simeq A(\OO_L) \simeq 1$.
    \item[(5)] Let $(\OO^{\vee},M^{\vee})=((A_3+A_1)',A_3+A_1)$ in type $E_7$. Then $(L,\OO_L)=(A_3+A_1,\{0\})$ and $\OO=E_7(a_5)$. Let $K$ be a Levi subgroup of type $E_6$ containing $L$. Then $\OO_K = D_4(a_1)$. Note that $A(\OO) \simeq S_3 \simeq A(\OO_K)$. So $\Gamma \simeq \Gamma(\widetilde{\OO}_K)$, where $\widetilde{\OO}_K= \Bind^K_L \OO_{L, univ}$. But $\widetilde{\OO}_K$ corresponds under $D^K$ to the Lusztig-Achar datum $(A_3+A_1,1) \in \LA(K^{\vee})$. Using argument $(8)$, we find that  $\Gamma \simeq \Gamma(\widetilde{\OO}_K) = \Gamma(D^K(A_3+A_1,1)) \simeq 1$.
    \item[(6)] Let $(\OO^{\vee},M^{\vee})= (E_7(a_5), E_7)$ in type $E_8$, which corresponds to $(\OO^\vee, \bar{C}) = (E_7(a_5), 1)$. In this case $L^\vee = M^\vee = E_7$, $(\OO_{L^{\vee}},\bar{C}_{L^{\vee}}) = (E_7(a_5), 1)$ and the Sommers dual of $(E_7(a_5), 1)$ is $\OO_L = d_S(\OO_{L^{\vee}},\bar{C}_{L^{\vee}}) = D_4(a_1)$ in $L^\vee = E_7$. Then $\pi_1(\OO_L)=A(\OO_L) \simeq S_3$ and $\widetilde{\OO}_L = D^{L}(\OO_{L^{\vee}},\bar{C}_{L^{\vee}})$ is the $6$-fold universal cover of the orbit $\OO_L$. There are three irreducible representations of $S_3$, labelled as $\psi_3$ (the trivial representation), $\psi_{21}$ and $\psi_{1^3}$ (the sign representation), corresponding to the partitions of $3$ in the subscripts. Then the Springer representations associated to $D_4(a_1)$ in $L = E_7$  are 
      \[ E_{(\OO_L, \psi_3)} = \phi_{315,16} = 315_a, \, E_{(\OO_L,  \psi_{21})} = \phi_{280,18} = 280_a, \, E_{(\OO_L, \psi_{1^3})} = \phi_{35,22} = 35_a. \] 
    Here we use the tables of \cite[Section 13.3]{Carter1993} for the Springer correspondence of exceptional groups. Note that \cite{Carter1993} labels irreducible characters of exceptional Weyl groups by the notation $\phi_{d,e}$, where $d$ is the degree of the character and $e$ is its fake degree (\cite[Section 11.3]{Carter1993}), while \cite{Alvis} follows the notations of \cite{Frame1951} (for $E_6$ and $E_7$) and \cite{Frame1970} (for $E_8$). For instance, $315_a$ is an irreducible character of $W_{E_8}$ of dimension $315$. The subscripts are used to distinguish characters with the same dimension. The two conventions of notations can be translated to each other, for instance, by the tables in \cite[Appendix]{GeckPfeiffer2000}.
    
    Now consider the Sommers dual $\OO = E_8(a_7)$ of $(E_7(a_5), 1)$ in $E_8$. We have $\pi_1(\OO) = A(\OO) \simeq S_5$. Again we label the irreducible representations of $S_5$ by $\psi_\lambda$ were $\lambda$ is a partition of $5$, such that $\psi_{5}$ is the trivial representation and $\psi_{1^5}$ is the sign representation. The Springer representations attached to $E_8(a_7)$ in $E_8$ are given in the Table \ref{table:Springer_E8(a7)} together with their multiplicities in $\Ind_{W_L}^W E_{(\OO_L, \psi_\eta)}$, where $W$ (resp. $W_L$) is the Weyl group of $E_8$ (resp. $E_7$), $\OO_L = D_4(a_1)$ in $E_7$ and $\eta$ runs over all partitions of $3$. Note that the column for $\psi_{1^5}$ is left blank since it does not appear in the Springer correspondence for $E_8(a_7)$.

    The induction $\Ind_{\{1\}}^{S_3} \mathbbm{1}$ of the trivial representation of the trivial subgroup to $S_3$ decomposes as $\psi_3 + 2 \psi_{21} + \psi_{1^3}$. Replacing each $\psi_\eta$ in this decomposition by the corresponding Springer representation $E_{(\OO_L, \psi_\eta)}$ of $W_{E_7}$, we get the $W_{E_7}$-representation $E_{\widetilde{\OO}_L} = 315_a + 2 \cdot 280_a + 35_a$ as defined in \eqref{eq:EOL}. According to Table \ref{table:Springer_E8(a7)}, we have
    \[ 
      \begin{split}
        \Ind_{A(\widetilde{\OO})}^{S_5} \mathbbm{1} & = \sum_{\substack{\lambda \in \mathcal{P}(5), \\ \lambda \neq 1^5}} [\Ind_{W_{E_7}}^{W_{E_8}} E_{\widetilde{\OO}_L} : E_{(\OO, \psi_\lambda)}] \psi_\lambda \\
        & = 1 \cdot \psi_5  + 3 \cdot \psi_{41} + 3 \cdot \psi_{32} + 3 \cdot \psi_{31^2} + 2 \cdot \psi_{2^21} + 1 \cdot \psi_{21^3} + 0 \cdot \psi_{1^5},
      \end{split}
    \]
    which is isomorphic to $\Ind_{\langle (12) \rangle}^{S_5} \mathbbm{1}$ by the Littlewood-Richardson rule, where $\langle (12) \rangle \simeq S_2$ is the subgroup of $S_5$ generated by the permutation $(12)$. Therefore $A(\widetilde{\OO})$ is conjugate to $\langle (12) \rangle$ and its normalizer subgroup in $A(\OO)\simeq S_5$ is conjugate to the subgroup generated by $(12)$ and $(345)$. So $\Gamma \simeq \langle (12), (345) \rangle / \langle (12) \rangle \simeq S_3$.

    \begin{table}[H] \label{table:Springer_E8(a7)}
        \begin{tabular}{|c|c|c|c|c|c|c|c|} \hline
          $\lambda$ & $[5]$ & $[4,1]$ & $[3,2]$ & $[3,1^2]$ & $[2^2,1]$ & $[2,1^3]$ & $[1^5]$ \\ \hline
          $E_{(\OO, \psi_\lambda)}$ & \makecell{$\phi_{4480,16}$ \\ $4480_y$} & \makecell{$\phi_{5670,18}$ \\ $5670_y$} & \makecell{$\phi_{4536, 18}$ \\ $4536_y$} & \makecell{$\phi_{1680,22}$ \\ $1680_y$} & \makecell{$\phi_{1400,20}$ \\ $1400_y$} & \makecell{$\phi_{70,32}$ \\ $70_y$} &  \\ \hline
          $[\Ind_{W_L}^W E_{(\OO_L, \psi_{3})} : E_{(\OO, \psi_\lambda)}]$ & 1 & 1 & 1 & 0 & 0 & 0 & \\ \hline
          $[\Ind_{W_L}^W E_{(\OO_L, \psi_{21})} : E_{(\OO, \psi_\lambda)}]$ & 0 & 1 & 1 & 1 & 1 & 0 & \\ \hline
          $[\Ind_{W_L}^W E_{(\OO_L, \psi_{1^3})} : E_{(\OO, \psi_\lambda)}]$ & 0 & 0 & 0 & 1 & 0 & 1 & \\
          \hline
        \end{tabular}
        \caption{Springer representations attached to $\OO=E_8(a_7)$}
    \end{table}

    \item[(7)] There are three special Lusztig-Achar data in $F_4$ which map under $d_S$ to $\OO = F_4(a_3)$, namely $(A_2+\widetilde{A}_1,1)$, $(\widetilde{A}_2+A_1,1)$, and $(C_3(a_1),1)$. The corresponding infinitesimal characters are $(0,1,0,0)/2$, $(1,0,1,0)/2$, and $(0,1,0,1)/2$. Using atlas, we determine that there are $1$, $1$, and $2$ unipotent Harish-Chandra bimodules annihilated by the respective maximal ideals. Since $A(\OO)=S_4$, this means that the groups $\Gamma$ associated to the corresponding covers must be $1$, $1$, and $\ZZ_2$, respectively. 
    \item[(8)] There are three special Lusztig-Achar data in $E_6$ which map under $d_S$ to $\OO=D_4(a_1)$, namely $(2A_2+A_1,1)$, $(A_3+A_1,1)$, and $(D_4(a_1),1)$ (the final one is even, and therefore does not appear in Table \ref{table:E6Gamma}). Using argument (3), we see that $D(2A_2+A_1,1) = \OO$. 

    By inspection of the incidence diagrams in \cite[Section 13]{fuetall2015}, we see that $\Spec(\CC[\OO])$ contains a unique dimension 2 singularity, of type $A_1$, corresponding to codimension 2 orbit $\OO'=A_3+A_1 \subset \overline{\OO}$. Furthermore, $A(\OO)=S_3$ and $A(\OO')=1$. In particular, there are three nontrivial covers of $\OO$ (up to isomorphism), of degrees $2$, $3$, and $6$. Denote them by $\widetilde{\OO}_2$, $\widetilde{\OO}_3$, and $\widetilde{\OO}_6$. We first note that $\Spec(\CC[\widetilde{\OO}_6]) \to \Spec(\CC[\OO])$ smoothens the dimension 2 singularity. Otherwise, $\dim \fP(\widetilde{\OO}_6) \geq |A(\OO)|/|A(\OO')| = 6$ by \cite[Lemma 7.6.13]{LMBM}, a contradiction. Since an $A_1$ singularity cannot be smoothened by a 3-fold cover, it follows that $\Spec(\CC[\widetilde{\OO}_2])$ has no dimension 2 singularities, and hence $\widetilde{\OO}_2 \sim \widetilde{\OO}_6$.

    By the transitivity of birational induction, we have
    $$D(A_3+A_1,1) = \Bind^{E_6}_{A_3+A_1} \{0\} = \Bind^{E_6}_{D_5} \Bind^{D_5}_{A_3+A_1} \{0\}$$
    Note that $\Bind^{D_5}_{A_3+A_1}\{0\}$ is the orbit $\OO_{[3^2,1^4]}$ in $D_5$ corresponding to the partition $[3^2,1^4]$. Hence, $D(A_3+A_1,1) = \Bind^{E_6}_{D_5} \OO_{[3^2,1^4]}$. On the other hand, $\Spec(\CC[\OO_{[3^2,1^4]}])$ contains a dimension 2 singularity, of type $A_1$, see \cite[Section 3.4]{Kraft-Procesi}. Thus, $\Spec(\CC[D(A_3+A_1,1)])$ contains a dimension 2 singularity. So by the preceding paragraph, we must have $D(A_3+A_1,1) = \widetilde{\OO}_3$. 

    This leaves the case of $D(D_4(a_1),1)$. Note that $D_4(a_1)$ is saturated from the orbit $\OO_{[5,3]}$ in the Levi of type $D_4$. So $D(D_4(a_1),1)$ is birationally induced from the orbit $d(\OO_{[5,3]})=\OO_{[2^2,1^4]}$ in the Levi of type $D_4$. By the transitivity of birational induction
    $$D(D_4(a_1),1) = \Bind^{E_6}_{D_4} \OO_{[2^2,1^4]} = \Bind^{E_6}_{D_5} \Bind^{D_5}_{D_4} \OO_{[2^2,1^4]}$$
    Note that $\Bind^{D_5}_{D_4}\OO_{[2^2,1^4]}$ is the double cover $\widetilde{\OO}_{[3^2,1^4]}$. So $D(D_4(a_1),1) = \Bind^{E_6}_{D_4}\widetilde{\OO}_{[3^2,1^4]}$. By the preceding paragraph, $\Bind^{E_6}_{D_4}\widetilde{\OO}_{[3^2,1^4]} = \widetilde{\OO}_3$, so $D(D_4(a_1),1)$ must have degree divisible by $3$. The only possibility is $D(D_4(a_1),1)= \widetilde{\OO}_6$.

\end{itemize}

There are six cases in type $E_8$ for which special arguments are required (these cases are marked with asterisks in Tables \ref{table:G2Gamma}-\ref{table:E8Gamma}):

\begin{itemize}[leftmargin=*]
    \item \underline{$(A_3+2A_1,1)$} and \underline{$(D_4(a_1)+A_1,1)$}: In both cases, the Sommers dual is $\OO= E_8(a_6)$. By \cite{fuetall2015}, there is a unique codimension $2$ symplectic leaf $\fL\subset X=\Spec(\CC[\OO])$, corresponding to the nilpotent orbit $\OO'=D_7(a_1)$. The corresponding singularity is of type $A_3$, with non-trivial monodromy. Since $A(\OO) = S_3$, there are (up to isomorphism) three nontrivial covers of $\OO$, of degrees $2$, $3$, and $6$. Denote them by $\widetilde{\OO}_i$ for $i \in \{2,3,6\}$. For each $i$, let $X_i = \Spec(\CC[\widetilde{\OO}_i])$ and $\fL_i$ be the preimage of $\fL$ under the map $X_i \to X$.
    
    We claim that $X_2$ has a unique codimension $2$ leaf, and that the corresponding singularity is of type $A_1$. Indeed, if the singularity of (a connected component of) $\fL_2$ is of type $A_3$, then the same is true for $\fL_6$. In this case, \cite[Lemma 7.6.13]{LMBM} implies $\dim \fP(\widetilde{\OO}_6) \ge 6$. But $m(\OO)=3$, a contradiction. Similarly, if $\fL_2$ has $2$ connected components, then $\fL_6$ has $6$ connected components since $A(\OO')\simeq \ZZ_2$ and therefore $\dim \fP(\widetilde{\OO}_6)\ge 6$. 
    
    It follows from the claim in the previous paragraph that $X_6$ has $3$ codimension $2$ symplectic leaves, the connected components of $\fL_6$, and the corresponding singularities are of type $A_1$. The reductive part of the centralizer of an element in $\OO$ is trivial. So $\dim \fP(\widetilde{\OO}_2)=1$, and $\dim \fP(\widetilde{\OO}_6)=3$. It follows that $\{D(A_3+2A_1,1), D(D_4(a_1)+A_1,1)\} = \{\widetilde{\OO}_3,\widetilde{\OO}_6\}$. Let $L$, $M_1$, and $M_2$ denote the Levi subgroups of type $D_7$, $A_3+2A_1$, and $D_4+A_1$, respectively. Then by the transitivity of birational induction 
    \begin{align*}D(A_3+2A_1,1) &= \Bind^G_{M_1} \{0\} = \Bind^G_L (\Bind^L_{M_1}\{0\})\\
    D(D_4(a_1)+A_1,1) &= \Bind^G_{M_2} \OO_{[2^2,1^4]} \times \{0\} = \Bind^G_L (\Bind^L_{M_2} \OO_{[2^2,1^4]} \times \{0\})\end{align*}
    By Proposition \ref{prop:birationalpartitions}, $\Bind^L_{M_1} \{0\}$ is the orbit corresponding to the partition $[5^2,1^4]$ and $\Bind^L_{M_2} \OO_{[2^2,1^4]} \times \{0\}$ is a nontrivial cover of the same orbit. Thus, $D(D_4(a_1)+A_1,1)$ is a nontrivial cover of $D(A_3+2A_1,1)$. Hence $D(A_3+2A_1,1)=\widetilde{\OO}_3$ and $ D(D_4(a_1)+A_1,1) = \widetilde{\OO}_6$. From the analysis of codimension 2 leaves, it is clear that both $\widetilde{\OO}_3$ and $\widetilde{\OO}_6$ are maximal in their equivalence classes. So the corresponding $\Gamma$s are $1$ and $S_3$ respectively.
    
    \item \underline{$(A_4+A_1,1)$}: The Sommers dual is $\OO=E_6(a_1)+A_1$. By \cite{fuetall2015}, there is a unique codimension $2$ leaf $\fL \subset X=\Spec(\CC[\OO])$, corresponding to the orbit $D_7(a_2)$. The corresponding singularity is of type $A_2$ with non-trivial monodromy. The reductive part of the centralizer of an element in $\OO$ is one-dimensional. Hence $\dim \fP(\OO) \le 2$. Hence $D(A_4+A_1,1) \neq \OO$. But $A(\OO) \simeq S_2$. So $D(A_4+A_1,1)$ is the unique nontrivial (2-fold) cover of $\OO$ and $\Gamma \simeq S_2$.

    \item \underline{$(2A_3,1)$} and \underline{$(A_4+2A_1,1)$}: In both cases, the Sommers dual is $\OO=D_7(a_2)$. Let $L$ be the Levi subgroup of type $D_7$ and let $\OO_L$ be the nilpotent orbit corresponding to the partition $[3^4, 1^2]$ of $14$. Then $A(\OO_L)\simeq \ZZ_2$ and $\OO_L$ is birationally (resp. non-birationally) induced from the $0$ orbit in the Levi subgroup of $L$ of type $2A_3$ (resp. $A_4+2A_1$). Hence, $D(A_4+2A_1,1)$ is a nontrivial cover of $D(2A_3,1)$. Since $A(\OO)\simeq S_2$, we must have that $D(2A_3,1)=\OO$ and $D(A_4+2A_1,1)$ is the unique nontrivial (2-fold) cover of $\OO$. The corresponding $\Gamma$s are $1$ and $S_2$ respectively.

    \item \underline{$(D_5(a_1)+A_1,1)$}: The Sommers dual is $\OO=D_7(a_4)$. Let $L \subset G$ be the Levi subgroup of type $E_7$ and let $\OO_L=A_3+A_2$. Note that $A(\OO_L)\simeq \ZZ_2$. We claim that $D(D_5(a_1)+A_1,1)=\OO$. For this it is enough to show that $\OO_L$ is birationally induced from the orbit $\OO_{[2^2,1^6]} \times \{0\}$ in the Levi subgroup of $L$ of type $D_5+A_1$. This was shown above.
    
\end{itemize}

\section{Tables}\label{sec:tables}

    \begin{table}[H]
        \begin{tabular}{|c|c|c|c|c|c|c|} \hline
        $\OO^{\vee}$ & $\OO_{M^{\vee}}$ & $d_S(\OO^{\vee},\bar{C})$ & $\gamma(M^{\vee},\OO_{M^{\vee}})$ & $\gamma(\OO^{\vee},\bar{C})$ & $\gamma(D(\OO^{\vee},\bar{C}))$ & $\fr(\OO)$  \\ \hline
        $G_2(a_1)$ & $G_2(a_1)$ & $G_2(a_1)$ & $(1,0)$ & $(1,0)$ & $(1,0)$ & $1$\\ \hline
        $G_2(a_1)$ & $A_1+\widetilde{A}_1$ & $\tilde{A}_1$ & $(1,1)/2$ & $(1,1)/2$ & $(1,1)/2$ & $A_1$\\ \hline
        $G_2(a_1)$ & $A_2$ & $A_1$ & $(3,1)/3$ & $(3,1)/3$ & $(3,1)/3$ & $A_1$\\ \hline
        $G_2$ & $G_2$ & $\{0\}$ & $(1,1)$ & $(1,1)$ & $(1,1)$ & $G_2$ \\ \hline
        \end{tabular}
    \caption{Special distinguished Lusztig-Achar data in type $G_2$}
    \label{table:G2}
    \end{table}
    
        \begin{table}[H]
        \begin{tabular}{|c|c|c|c|c|c|c|} \hline
        $\OO^{\vee}$ & $\OO_{M^{\vee}}$ & $d_S(\OO^{\vee},\bar{C})$ & $\gamma(M^{\vee},\OO_{M^{\vee}})$ & $\gamma(\OO^{\vee},\bar{C})$ & $\gamma(D(\OO^{\vee},\bar{C}))$ & $\fr(\OO)$ \\ \hline
        $F_4(a_3)$ & $F_4(a_3)$ & $F_4(a_3)$ & $(0,0,1,0)$ & $(0,0,1,0)$ & $(0,0,1,0)$  & $1$ \\ \hline
        $F_4(a_3)$ & $B_4$ & $B_2$ & $(0,1,0,2)/2$ & $(0,1,0,2)/2$ & $(0,1,0,2)/2$  & $A_1+A_1$\\ \hline
        $F_4(a_3)$ & $C_3(a_1)+A_1$ & $C_3(a_1)$ & $(1,0,1,1)/2$ & $(1,0,1,1)/2$ & $(1,0,1,1)/2$  & $A_1$\\ \hline
        $F_4(a_3)$ & $2A_2$ & $\tilde{A}_2+A_1$ & $(1,1,1,1)/3$ & $(1,1,1,1)/3$ & $(1,1,1,1)/3$  & $A_1$ \\ \hline
        $F_4(a_3)$ & $A_3+A_1$ & $A_2+\tilde{A}_1$ & $(1,1,2,2)/4$ & $(1,1,2,2)/4$ & $(1,1,2,2)/4$  & $A_1$ \\ \hline
        $F_4(a_2)$ & $F_4(a_2)$ & $A_1+\tilde{A}_1$ & $(1,0,1,0)$ & $(1,0,1,0)$ & $(1,0,1,0)$  & $A_1$\\ 
        & $C_3+A_1$ & & $(1,1,1,1)/2$ & & &\\ \hline
        $F_4(a_1)$ & $F_4(a_1)$ & $\tilde{A}_1$ & $(1,0,1,1)$ & $(1,0,1,1)$ & $(1,0,1,1)$  & $A_3$\\ \hline
        $F_4(a_1)$ & $B_4$ & $A_1$ & $(1,1,2,2)/2$ & $(1,1,2,2)/2$ & $(1,1,2,2)/2$  & $C_3$\\ \hline
        $F_4$ & $F_4$ & $\{0\}$ & $(1,1,1,1)$ & $(1,1,1,1)$ & $(1,1,1,1)$  & $F_4$\\ \hline
        \end{tabular}
    \caption{Special distinguished Lusztig-Achar data in type $F_4$}
    \label{table:F4}
    \end{table}

            \begin{table}[H]
        \begin{tabular}{|c|c|c|c|c|c|c|} \hline
        $\OO^{\vee}$ & $\OO_{M^{\vee}}$ & $d_S(\OO^{\vee},\bar{C})$ & $\gamma(M^{\vee},\OO_{M^{\vee}})$ & $\gamma(\OO^{\vee},\bar{C})$ & $\gamma(D(\OO^{\vee},\bar{C}))$ & $\fr(\OO)$   \\ \hline
        $D_4(a_1)$ & $3A_2$ & $2A_2+A_1$ & $(1,1,1,1,1,1)/3$ & $(1,1,1,1,1,1)/3$ & $(1,1,1,1,1,1)/3$  & $A_1$ \\ \hline
        $E_6(a_3)$ & $E_6(a_3)$ & $A_2$ & $(1,0,0,1,0,1)$ & $(1,0,0,1,0,1)$ & $(1,0,0,1,0,1)$  & $A_2+A_2$\\ \hline
        $E_6(a_3)$ & $A_5+A_1$ & $3A_1$ & $(1,1,1,1,1,1)/2$ & $(1,1,1,1,1,1)/2$ & $(1,1,1,1,1,1)/2$ & $A_2+A_1$ \\ \hline
        $E_6(a_1)$ & $E_6(a_1)$ & $A_1$ & $(1,1,1,0,1,1)$ & $(1,1,1,0,1,1)$ & $(1,1,1,0,1,1)$  & $A_5$\\ \hline
        $E_6$ & $E_6$ & $\{0\}$ & $(1,1,1,1,1,1)$ & $(1,1,1,1,1,1)$ & $(1,1,1,1,1,1)$  & $E_6$\\ \hline
        \end{tabular}
    \caption{Special distinguished Lusztig-Achar data in type $E_6$}
    \label{table:E6}
    \end{table}
    
                \begin{table}[H]
                \small
        \begin{tabular}{|c|c|c|c|c|c|c|} \hline
        $\OO^{\vee}$ & $\OO_{M^{\vee}}$ & $d_S(\OO^{\vee},\bar{C})$ & $\gamma(M^{\vee},\OO_{M^{\vee}})$ & $\gamma(\OO^{\vee},\bar{C})$ & $\gamma(D(\OO^{\vee},\bar{C}))$ & $\fr(\OO)$  \\ \hline
        $E_7(a_5)$ & $E_7(a_5)$ & $D_4(a_1)$ & $(0,0,0,1,0,0,1)$ & $(0,0,0,1,0,0,1)$ & $(0,0,0,1,0,0,1)$  & $3A_1$ \\ \hline
        $E_7(a_5)$ & $D_6(a_2)+A_1$ & $(A_3+A_1)'$ & $(1,1,0,1,0,1,1)/2$ & $(1,1,0,1,0,1,1)/2$ & $(1,1,0,1,0,1,1)/2$  & $3A_1$\\ \hline
        $E_7(a_5)$ & $A_5+A_2$ & $2A_2+A_1$ & $(1,1,1,1,1,1,1)/3$ & $(1,1,1,1,1,1,1)/3$ & $(1,1,1,1,1,1,1)/3$  & $2A_1$\\ \hline
        $E_7(a_4)$ & $E_7(a_4)$ & $A_2+2A_1$ & $(1,0,0,1,0,0,1)$ & $(1,0,0,1,0,0,1)$ & $(1,0,0,1,0,0,1)$  & $3A_1$\\ 
        & $D_6+A_1$ & & $(1,1,1,1,0,1,1)/2$ & & &\\ \hline
        $E_6(a_1)$ & $A_7$ & $4A_1$ & $(1,1,1,1,1,1,1)/2$ & $(1,1,1,1,1,1,1)/2$ & $(1,1,1,1,1,1,1)/2$  & $C_3$ \\ \hline
        $E_7(a_3)$ & $E_7(a_3)$ & $A_2$ & $(1,0,0,1,0,1,1)$ & $(1,0,0,1,0,1,1)$ & $(1,0,0,1,0,1,1)$  & $A_5$ \\ \hline
        $E_7(a_2)$ & $E_7(a_2)$ & $2A_1$ & $(1,1,1,0,1,0,1)$ & $(1,1,1,0,1,0,1)$ & $(1,1,1,0,1,0,1)$  & $B_4+A_1$\\ \hline
        $E_7(a_1)$ & $E_7(a_1)$ & $A_1$ & $(1,1,1,0,1,1,1)$ & $(1,1,1,0,1,1,1)$ & $(1,1,1,0,1,1,1)$  & $D_6$\\ \hline
        $E_7$ & $E_7$ & $\{0\}$ & $(1,1,1,1,1,1,1)$ & $(1,1,1,1,1,1,1)$ & $(1,1,1,1,1,1,1)$  & $E_7$\\ \hline
        \end{tabular}
    \caption{Special distinguished Lusztig-Achar data in type $E_7$}
    \label{table:E7}
    \end{table}

                \begin{table}[H]
                \tiny
        \begin{tabular}{|c|c|c|c|c|c|c|} \hline
        $\OO^{\vee}$ & $\OO_{M^{\vee}}$ & $d_S(\OO^{\vee},\bar{C})$ & $\gamma(M^{\vee},\OO_{M^{\vee}})$ & $\gamma(\OO^{\vee},\bar{C})$ & $\gamma(D(\OO^{\vee},\bar{C}))$ & $\fr(\OO)$  \\ \hline
        $E_8(a_7)$ & $E_8(a_7)$ & $E_8(a_7)$ & $(0,0,0,0,1,0,0,0)$ & $(0,0,0,0,1,0,0,0)$ & $(0,0,0,0,1,0,0,0)$  & $1$ \\ \hline
        $E_8(a_7)$ & $E_7(a_5)+A_1$ & $E_7(a_5)$ & $(0,0,1,0,1,0,0,1)/2$ & $(0,0,1,0,1,0,0,1)/2$ & $(0,0,1,0,1,0,0,1)/2$  & $A_1$\\ \hline
        $E_8(a_7)$ & $D_8(a_5)$ & $D_6(a_2)$ & $(1,0,0,1,0,0,1,0)/2$ & $(1,0,0,1,0,0,1,0)/2$ & $(1,0,0,1,0,0,1,0)/2$ & $2A_1$\\ \hline
        $E_8(a_7)$ & $E_6(a_3)+A_2$ & $E_6(a_3)+A_1$ & $(0,1,1,0,1,0,1,1)/3$ & $(0,1,1,0,1,0,1,1)/3$ & $(0,1,1,0,1,0,1,1)/3$ & $A_1$\\ \hline
        $E_8(a_7)$ & $D_5(a_1)+A_3$ & $D_5(a_1)+A_2$ & $(1,1,1,0,1,1,1,1)/4$ & $(1,1,1,0,1,1,1,1)/4$ & $(1,1,1,0,1,1,1,1)/4$ & $A_1$\\ \hline
        $E_8(a_7)$ & $2A_4$ & $A_4+A_3$ & $(1,1,1,1,1,1,1,1)/5$ & $(1,1,1,1,1,1,1,1)/5$ & $(1,1,1,1,1,1,1,1)/5$ & $A_1$\\ \hline
        $E_8(a_7)$ & $A_5+A_2+A_1$ & $A_5+A_1$ & $(2,2,1,1,1,1,1,1)/6$ & $(2,2,1,1,1,1,1,1)/6$ & $(2,2,1,1,1,1,1,1)/6$ & $2A_1$\\ \hline
        $D_7(a_2)$ & $E_7+A_1$ & $2A_3$ & $(1,1,1,1,1,1,1,1)/4$ & $(1,1,1,1,1,1,1,1)/4$ & $(1,1,1,1,1,1,1,1)/4$ & $B_2$\\ \hline
        $E_8(b_6)$ & $E_8(b_6)$ & $D_4(a_1)+A_2$ & $(0,0,0,1,0,0,0,1)$& $(0,0,0,1,0,0,0,1)$ & $(0,0,0,1,0,0,0,1)$ & $A_2$\\
        & $E_6+A_2$ & & $(1,1,1,0,1,1,1,3)/3$ & & &\\ \hline
        $E_8(b_6)$ & $D_8(a_3)$ & $A_3+A_2+A_1$ & $(1,0,0,1,0,1,1,1)/2$ & $(1,0,0,1,0,1,1,1)/2$ & $(1,0,0,1,0,1,1,1)/2$ & $2A_1$\\ \hline
        $E_8(a_6)$ & $E_8(a_6)$ & $D_4(a_1)+A_1$ & $(0,0,0,1,0,0,1,0)$ & $(0,0,0,1,0,0,1,0)$ & $(0,0,0,1,0,0,1,0)$ & $3A_1$\\ \hline
        $E_8(a_6)$ & $D_8(a_2)$ & $A_3+2A_1$ & $(1,1,1,0,1,0,1,1)/2$ & $(1,1,1,0,1,0,1,1)/2$ & $(1,1,1,0,1,0,1,1)/2$ & $B_2+A_1$\\ \hline
        $E_8(a_6)$ & $A_8$ & $2A_2+2A_1$ & $(1,1,1,1,1,1,1,1)/3$ & $(1,1,1,1,1,1,1,1)/3$ & $(1,1,1,1,1,1,1,1)/3$ & $B_2$\\ \hline
        $E_8(b_5)$ & $E_8(b_5)$ & $D_4(a_1)$ & $(0,0,0,1,0,0,1,1)$ & $(0,0,0,1,0,0,1,1)$ & $(0,0,0,1,0,0,1,1)$ & $D_4$\\ \hline
        $E_8(b_5)$ & $E_7(a_2)+A_1$ & $A_3+A_1$ & $(1,1,0,1,0,1,1,2)/2$ & $(1,1,0,1,0,1,1,2)/2$ & $(1,1,0,1,0,1,1,2)/2$ & $B_3+A_1$\\ \hline
        $E_8(b_5)$ & $E_6+A_2$ & $2A_2+A_1$ & $(1,1,1,1,1,1,1,3)/3$ & $(1,1,1,1,1,1,1,3)/3$ & $(1,1,1,1,1,1,1,3)/3$ & $G_2+A_1$ \\ \hline
        $E_8(a_5)$ & $E_8(a_5)$ & $2A_2$ & $(1,0,0,1,0,0,1,0)$ & $(1,0,0,1,0,0,1,0)$ & $(1,0,0,1,0,0,1,0)$ & $2G_2$\\ \hline
        $E_8(a_5)$ & $D_8(a_1)$ & $A_2+3A_1$ & $(1,1,1,0,1,1,1,1)/2$ & $(1,1,1,0,1,1,1,1)/2$ & $(1,1,1,0,1,1,1,1)/2$ & $G_2+A_1$\\ \hline
        $E_8(b_4)$ & $E_8(b_4)$ & $A_2+2A_1$ & $(1,0,0,1,0,0,1,1)$ & $(1,0,0,1,0,0,1,1)$ & $(1,0,0,1,0,0,1,1)$ & $B_3+A_1$\\ 
        & $E_7+A_1$ & & $(1,1,1,1,0,1,1,2)/2$ & & &\\ \hline
        $E_8(a_4)$ & $E_8(a_4)$ & $A_2+A_1$ & $(1,0,0,1,0,1,0,1)$ & $(1,0,0,1,0,1,0,1)$ & $(1,0,0,1,0,1,0,1)$ & $A_5$\\ \hline
        $E_8(a_4)$ & $D_8$ & $4A_1$ & $(1,1,1,1,1,1,1,1)/2$ & $(1,1,1,1,1,1,1,1)/2$ & $(1,1,1,1,1,1,1,1)/2$ & $C_4$\\ \hline
        $E_8(a_3)$ & $E_8(a_3)$ & $A_2$ & $(1,0,0,1,0,1,1,1)$ & $(1,0,0,1,0,1,1,1)$ & $(1,0,0,1,0,1,1,1)$ & $E_6$\\ \hline
        $E_8(a_3)$ & $E_7+A_1$ & $3A_1$ & $(1,1,1,1,1,1,2,2)/2$ & $(1,1,1,1,1,1,2,2)/2$ & $(1,1,1,1,1,1,2,2)/2$ & $F_4+A_1$\\ \hline
        $E_8(a_2)$ & $E_8(a_2)$ & $2A_1$ & $(1,1,1,0,1,0,1,1)$ & $(1,1,1,0,1,0,1,1)$ & $(1,1,1,0,1,0,1,1)$ & $B_6$\\ \hline
        $E_8(a_1)$ & $E_8(a_1)$ & $A_1$ & $(1,1,1,0,1,1,1,1)$ & $(1,1,1,0,1,1,1,1)$ & $(1,1,1,0,1,1,1,1)$ & $E_7$ \\ \hline
        $E_8$ & $E_8$ & $\{0\}$ & $(1,1,1,1,1,1,1,1)$ & $(1,1,1,1,1,1,1,1)$ & $(1,1,1,1,1,1,1,1)$ & $E_8$\\ \hline
        \end{tabular}
    \caption{Special distinguished Lusztig-Achar data in type $E_8$}
    \label{table:E8}
    \end{table}

    \begin{table}[H]
    \tiny
        \begin{tabular}{|c|c|c|c|c|c|c|} \hline
        $(\OO^{\vee},\OO_{M^{\vee}})$ & $\OO$ & $(L,\OO_L)$ & $(R^{\vee},\OO_{R^{\vee}})$ & $\bar{A}_{R^{\vee}},A_L,A$ & $\Gamma$ & $\#$ \\ \hline 
        
        $(G_2(a_1),2A_1)$ & $\widetilde{A}_1$ & $(G_2,\widetilde{A}_1)$ & $(2A_1, [2] \times [2])$ & $1,1,1$ & $1$ & $(1)$\\ \hline 
        $(G_2(a_1),A_2)$ & $A_1$ & $(G_2,A_1)$ & $(A_2,[3])$ & $1,1,1$ & $1$ & $(1)$ \\ \hline
        
        
        $(A_1,A_1)$ & $G_2(a_1)$ & $(A_1,\{0\})$ & $(2A_1,[2] \times \{0\})$ & $1,1,S_3$ & $1$ & (5) \\ \hline
        
        $(\widetilde{A}_1,A_1)$ & $G_2(a_1)$ & $(A_1,\{0\})$ & $(2A_1,\{0\} \times [2])$ & $1,1,S_3$ & $1$ & $(3)$\\ \hline

        \end{tabular}
    \caption{Special Lusztig-Achar data (not of the form $(\OO^{\vee},1)$ with even $\OO^{\vee}$) in type $G_2$}
    \label{table:G2Gamma}
    \end{table}

\begin{table}[H]
    \tiny
        \begin{tabular}{|c|c|c|c|c|c|c|} \hline
        $(\OO^{\vee},\OO_{M^{\vee}})$ & $\OO$ & $(L,\OO_L)$ & $(R^{\vee},\OO_{R^{\vee}})$ & $\bar{A}_{R^{\vee}},A_L,A$ & $\Gamma$ & $\#$ \\ \hline 
        
        $(F_4(a_3),B_4(a_1))$ & $B_2$ & $(F_4,B_2)$ & $(B_4,[5,3,1])$  & $\ZZ_2,S_2,S_2$ & $S_2$ & $(2)$\\ \hline 
        
        $(F_4(a_3),C_3(a_1)+A_1)$ & $C_3(a_1)$ & $(F_4,C_3(a_1))$ & $(C_3+A_1,[4,2] \times [2])$ & $\ZZ_2,S_2,S_2$ & $S_2$ & $(2)$\\ \hline
        
        $(F_4(a_3),2A_2)$ & $\widetilde{A}_2+A_1$ & $(F_4,\widetilde{A}_2+A_1)$  & $(2A_2,[3]^2)$ & $1,1,1$ & $1$ & $(1)$ \\ \hline
        
        $(F_4(a_3),A_3+A_1)$ & $A_2+\widetilde{A}_1$ & $(F_4,A_2+\widetilde{A}_1)$  & $(A_3+A_1,[4] \times [2])$ & $1,1,1$ & $1$ & $(1)$ \\ \hline
        
        $(F_4(a_1),B_4)$ & $A_1$ & $(F_4,A_1)$ & $(B_4,[9])$ & $1,1,1$ & $1$ & $(1)$\\ \hline
        
        $(C_3(a_1),A_1+B_2)$ & $C_3(a_1)$ & $(B_3,(2^2,1^3))$ & $(B_4,[5,2^2])$ & $1,1,S_2$ & $1$ & $(4)$\\ \hline
        
        $(B_2,A_3)$ & $B_2$ & $(C_3,(2,1^4))$ & $(A_3+A_1,[4]\times \{0\})$  & $1,1,S_2$ & $1$ & $(4)$\\ \hline
    
        $(A_1,A_1)$ & $F_4(a_1)$ & $(A_1,\{0\})$ & $(C_3+A_1,\{0\} \times [2])$ &  $1,1,S_2$ & $1$ & $(3)$\\ \hline
        
        $(\widetilde{A}_1,A_1)$ & $F_4(a_1)$ & $(A_1,\{0\})$ & $(B_4,[3,1^6])$ & $\ZZ_2,1,S_2$ & $S_2$ & $(8)$\\ \hline

        $(A_1+\widetilde{A}_1,2A_1)$ & $F_4(a_2)$ & $(2A_1,\{0\})$ & $(C_3+A_1,[2^3] \times \{0\})$ & $1,1,S_2$ & $1$ & $(3)$\\ \hline

        $(A_2+\widetilde{A}_1,A_2+A_1)$ & $F_4(a_3)$ & $(A_2+A_1,\{0\})$ & $(B_4,[3^3])$ & $1,1,S_4$ & $1$ & $(7)$ \\ \hline

        $(\widetilde{A}_2+A_1,A_2+A_1)$ & $F_4(a_3)$ & $(A_2+A_1,\{0\})$  & $(C_3+A_1,[3^2] \times [2])$ & $1,1,S_4$ & $1$ & $(7)$\\ \hline
        
        $(C_3(a_1),C_3(a_1))$ & $F_4(a_3)$ & $(C_3,(4,2))$ & $(C_3+A_1,[4,2] \times \{0\})$ & $\ZZ_2,S_2,S_4$ & $S_2$ & $(7)$ \\ \hline

        $(C_3,C_3)$ & $A_2$ & $(C_3,\{0\})$ & $(C_3+A_1,[6] \times \{0\})$  & $1,1,S_2$ & $1$ & $(3)$\\ \hline
        \end{tabular}
    \caption{Special Lusztig-Achar data (not of the form $(\OO^{\vee},1)$ with even $\OO^{\vee}$) in type $F_4$}
    \label{table:F4Gamma}
    \end{table}

    \begin{table}[H]
    \tiny
        \begin{tabular}{|c|c|c|c|c|c|c|} \hline
        $(\OO^{\vee},\OO_{M^{\vee}})$ & $\OO$ & $(L,\OO_L)$ & $(R^{\vee},\OO_{R^{\vee}})$ & $\bar{A}_{R^{\vee}},A_L,A$ & $\Gamma$ & $\#$\\ \hline 
        
        $(D_4(a_1),3A_2)$ & $2A_2+A_1$ & $(E_6,2A_2+A_1)$ & $(3A_2, [3] \times [3] \times [3])$ & $1,1,1$ & $1$ & $(1)$\\ \hline 
        
        $(E_6(a_3),A_5+A_1)$ & $3A_1$ & $(E_6,3A_1)$ & $(A_5+A_1,[6] \times [2])$ & $1,1,1$ & $1$ & $(1)$\\ \hline

        $(D_4(a_1),A_3+2A_1)$ & $A_3+A_1$ & $(D_5,(3,2^2,1^3))$ & $(A_5+A_1,[4,2] \times [2])$ & $1,1,1$ & $1$ & $(1)$\\ \hline
        
        $(A_2,4A_1)$ & $A_5$ & $(D_4,(3,2^2,1))$ & $(A_5+A_1,[3,1^3] \times \{0\})$ & $1,1,1$ & $1$ & $(1)$\\ \hline
        
        $(A_1,A_1)$ & $E_6(a_1)$ & $(A_1,\{0\})$  & $(A_5+A_1,\{0\} \times [2])$  & $1,1,1$ & $1$ & $(1)$\\ \hline

        $(2A_1,2A_1)$ & $D_5$ & $(2A_1,\{0\})$ & $(D_5,[3,1^7])$ &  $1,1,1$ & $1$ & $(1)$\\ \hline

        $(3A_1,3A_1)$ & $E_6(a_3)$ & $(3A_1,\{0\})$ & $(A_5+A_1,[2^3] \times \{0\})$ & $1,1,S_2$ & $1$ & $(3)$\\ \hline

        $(A_2+A_1,A_2+A_1)$ & $D_5(a_1)$ & $(A_2+A_1,\{0\})$ & $(A_5+A_1,[3,1^4] \times [2])$ & $1,1,1$ & $1$ & $(1)$\\ \hline
        
        $(A_2+2A_1,A_2+2A_1)$ & $A_4+A_1$ & $(A_2+2A_1,\{0\})$ & $(D_5,[3^3,1])$ & $1,1,1$ & $1$ & $(1)$\\ \hline
        
        $(A_3,A_3)$  & $A_4$ & $(A_3,\{0\})$ & $(D_5,[5,1^5])$ & $1,1,1$ & $1$ & $(1)$\\ \hline
        
        $(2A_2+A_1,2A_2+A_1)$ & $D_4(a_1)$ & $(2A_2+A_1,\{0\})$ & $(A_5+A_1,[3^2] \times [2])$ & $1,1,S_3$ & $1$ & $(3)$\\ \hline
        
        $(A_3+A_1,A_3+A_1)$ & $D_4(a_1)$ & $(A_3+A_1,\{0\})$ & $(A_5+A_1,[4,2] \times \{0\})$ & $1,1,S_3$ & $1$ & $(8)$ \\ \hline
        
        $(A_4+A_1,A_4+A_1)$ & $A_2+2A_1$ & $(A_4+A_1,\{0\})$ & $(A_5+A_1,[5,1] \times [2])$ & $1,1,1$ & $1$ & $(1)$\\ \hline
        
        $(A_5,A_5)$ & $A_2$ & $(A_5,\{0\})$ & $(A_5+A_1,[6] \times \{0\})$ & $1,1,S_2$ & $1$ & $(3)$\\ \hline
        
        $(D_5(a_1),D_5(a_1))$ & $A_2+A_1$ & $(D_5,(2^2,1^6))$ & $(D_5,[7,3])$  & $1,1,1$ & $1$ & $(1)$\\ \hline
        
        \end{tabular}
    \caption{Special Lusztig-Achar data (not of the form $(\OO^{\vee},1)$ with even $\OO^{\vee}$) in type $E_6$}
    \label{table:E6Gamma}
    \end{table}

    \begin{table}[H]
    \tiny
        \begin{tabular}{|c|c|c|c|c|c|c|c|c|c|c|c|} \hline
        $(\OO^{\vee},\OO_{M^{\vee}})$ & $\OO$ & $(L,\OO_L)$ & $(R^{\vee},\OO_{R^{\vee}})$ & $\bar{A}_{R^{\vee}},A_L, A$ & $\Gamma$ & $\#$ \\ \hline 
        
        $(E_7(a_5),D_6(a_2)+A_1)$ & $(A_3+A_1)'$ & $(E_7,(A_3+A_1)')$ & $(D_6+A_1,[7,5] \times [2])$ & $1,1,1$ & $1$ & $(1)$\\ \hline 
    
        $(E_7(a_5),A_5+A_2)$ & $2A_2+A_1$ & $(E_7,2A_2+A_1)$ & $(A_5+A_2,[6] \times [3])$ & $1,1,1$ & $1$ & $(1)$\\ \hline

         $(E_6(a_1),A_7)$ & $4A_1$ & $(E_7,4A_1)$ & $(A_7,[8])$ & $1,1,1$ & $1$ & $(1)$\\ \hline
        
        $(E_6(a_3),A_5+A_1)$ & $D_5+A_1$ & $(E_6,3A_1)$ & $(D_6+A_1,[6^2]^I \times [2])$ & $1,1,1$ & $1$ & $(1)$\\ \hline
        
        $(A_4,2A_3)$  & $D_4+A_1$ & $(D_6,(3,2^4,1))$ & $(2A_3+A_1,[4]^2 \times \{0\})$ & $1,1,1$ & $1$ & $(1)$\\ \hline
        
        $(D_4(a_1)+A_1,A_3+3A_1)$ & $(A_5)'$ & $(D_5+A_1,[3,2^2] \times \{0\})$  & $(D_6+A_1,[5,3,2^2] \times \{0\})$ & $1,1,1$ & $1$ & $(1)$\\ \hline
        
        $(D_4(a_1),A_3+2A_1)$ & $D_6(a_2)$ & $(D_5,(3,2^2,1^3))$ & $(D_6+A_1,[4^2,2^2]^I \times \{0\})$ & $1,1,1$ & $1$ & $(1)$\\ \hline
        
        $(A_2,4A_1)$ & $D_6$ & $(D_4,(3,2^2,1))$ & $(D_6+A_1,[2^6]^I \times [2])$  & $1,1,1$ & $1$ & $(1)$\\ \hline
        
        $(A_1,A_1)$  & $E_7(a_1)$ & $(A_1,\{0\})$  & $(D_6+A_1,\{0\} \times [2])$  & $1,1,1$ & $1$ & $(1)$\\ \hline
        
        $(2A_1,2A_1)$ & $E_7(a_2)$ & $(2A_1,\{0\})$ & $(D_6+A_1,[3,1^9] \times \{0\})$  & $1,1,1$ & $1$ & $(1)$\\ \hline
        
        $((3A_1)',(3A_1)')$ & $E_7(a_3)$ & $(3A_1,\{0\})$ & $(D_6+A_1,[2^6]^I \times \{0\})$ & $1,1,S_2$ & $1$ & $(3)$\\ \hline
        
        $(4A_1,4A_1)$  & $E_6(a_1)$ & $(4A_1,\{0\})$ & $(D_6+A_1,[2^6]^{II} \times [2])$  & $1,1,S_2$ & $1$ & $(3)$\\ \hline
        
        $(A_2+A_1,A_2+A_1)$ & $E_6(a_1)$ & $(A_2+A_1,\{0\})$ & $(D_6+A_1, [3^2,1^6] \times [2])$  & $\ZZ_2,1,S_2$ & $S_2$ & $(8)$\\ \hline
        
        $(A_3,A_3)$ & $D_6(a_1)$ & $(A_3,\{0\})$ & $(D_6+A_1,[5,1^7] \times \{0\})$ & $1,1,1$ & $1$ & $(1)$\\ \hline
        
        $(2A_2+A_1,2A_2+A_1)$ & $E_7(a_5)$ & $(2A_2+A_1,\{0\})$ & $(D_6+A_1,[3^4] \times [2])$ & $1,1,S_3$ & $1$ & $(3)$\\ \hline
        
        $((A_3+A_1)',(A_3+A_1)')$  & $E_7(a_5)$ & $(A_3+A_1,\{0\})$  & $(D_6+A_1,[4^2,2^2]^I \times \{0\})$ & $1,1,S_3$ & $1$ & $(5)$\\ \hline
        
        $(A_3+2A_1,A_3+2A_1)$ & $E_6(a_3)$ & $(A_3+2A_1,\{0\})$ & $(D_6+A_1,[4^2,2^2]^{I} \times [2])$ & $1,1,S_2$ & $1$ & $(3)$\\ \hline
        
        $(D_4(a_1)+A_1,D_4(a_1)+A_1)$ & $(A_5)''$ & $(D_4+A_1,[2^2,1^4] \times \{0\})$  & $(D_6+A_1, [4^2,2^2]^{II} \times [2])$ & $1,1,1$ & $1$ & $(1)$\\ \hline
        
        $(A_3+A_2,A_3+A_2)$ & $D_5(a_1)+A_1$ & $(A_3+A_2,\{0\})$ & $(D_6+A_1,[5,3^2,1] \times \{0\})$ & $1,1,1$ & $1$ & $(1)$\\ \hline
        
        $(D_4+A_1,D_4+A_1)$ & $A_4$ & $(D_4+A_1,\{0\})$ & $(D_6+A_1,[7,1^5] \times [2])$ & $1,1,S_2$ & $1$ & $(3)$\\ \hline
        
        $(A_4+A_1,A_4+A_1)$  & $A_4+A_1$ & $(A_4+A_1,\{0\})$ & $(D_6+A_1,[5^2,1^2] \times [2])$ & $\ZZ_2,1,S_2$ & $S_2$ & $(8)$\\ \hline
        
        $(D_5(a_1),D_5(a_1))$ & $A_4$ & $(D_5,(2^2,1^6))$ & $(D_6+A_1,[7,3,1^2] \times \{0\})$ & $\ZZ_2,1,S_2$ & $S_2$ & $(8)$\\ \hline
        
        $((A_5)',(A_5)')$ & $D_4(a_1)+A_1$ & $(A_5,\{0\})$ & $(D_6+A_1,[6^2]^I \times \{0\})$ & $1,1,S_2$ & $1$ & $(4)$\\ \hline
        
        $(A_5+A_1,A_5+A_1)$ & $D_4(a_1)$ & $(A_5+A_1,\{0\})$ & $(D_6+A_1,[6^2]^{II} \times [2])$ & $1,1,S_3$ & $1$ & $(3)$\\ \hline
        
        $(D_6(a_2),D_6(a_2))$ & $D_4(a_1)$ & $(D_6,(2^4,1^4))$ & $(D_6+A_1, [7,5] \times \{0\})$ & $1,1,S_3$ & $1$ & $(8)$ \\ \hline

        $(D_5+A_1,D_5+A_1)$ & $2A_2$ & $(D_5+A_1,\{0\})$ & $(D_6+A_1,[9,1^3] \times [2])$ & $1,1,1$ & $1$ & $(1)$ \\ \hline

        $(D_6(a_1),D_6(a_1))$ & $A_3$ & $(D_6,(2^2,1^8))$ & $(D_6+A_1,[9,3] \times \{0\})$ & $1,1,1$ & $1$ & $(1)$ \\ \hline
        
        $(D_6,D_6)$ & $A_2$ & $(D_6,\{0\})$ & $(D_6+A_1,[11,1] \times \{0\})$ &  $1,1,S_2$ & $1$ & $(3)$\\ \hline
        \end{tabular}
    \caption{Special Lusztig-Achar data (not of the form $(\OO^{\vee},1)$ with even $\OO^{\vee}$) in type $E_7$}
    \label{table:E7Gamma}
    \end{table}

{\tiny
\begin{longtable}[H]{|c|c|c|c|c|c|c|}
\hline
        $(\OO^{\vee},\OO_{M^{\vee}})$ & $\OO$ & $(L,\OO_L)$ & $(R^{\vee},\OO_{R^{\vee}})$ & $\bar{A}_{R^{\vee}},A_L,A$ & $\Gamma$ & $\#$ \\ \hline 
        
        $(E_8(a_7),E_7(a_5)+A_1)$ & $E_7(a_5)$ & $(E_8,E_7(a_5))$ & $(E_7+A_1,E_7(a_5) \times [2])$ & $S_3,S_3,S_3$ & $S_3$ & $(2)$ \\\hline 
        
        $(E_8(a_7),D_8(a_5))$ & $D_6(a_2)$ & $(E_8,D_6(a_2))$ & $(D_8,[7,5,3,1])$ & $\ZZ_2,S_2,S_2$ & $S_2$ & $(2)$\\ \hline
        
        $(E_8(a_7),E_6(a_3)+A_2)$ & $E_6(a_3)+A_1$ & $(E_8,E_6(a_3)+A_1)$ & $(E_6+A_2,E_6(a_3) \times [3])$ & $S_2,S_2,S_2$ & $S_2$ & $(2)$\\ \hline

        $(E_8(a_7),D_5(a_1)+A_3)$ & $D_5(a_1)+A_2$ & $(E_8,D_5(a_1)+A_2)$ & $(D_5+A_3,[7,3] \times [4])$ & $1,1,1$ & $1$ & $(1)$\\ \hline
        
        $(E_8(a_7),2A_4)$ & $A_4+A_3$ & $(E_8,2A_4)$ & $(2A_4,[5]^2)$ & $1,1,1$ & $1$ & $(1)$\\ \hline
        
        $(E_8(a_7), A_5+A_2+A_1)$ & $A_5+A_1$ & $(E_8,A_5+A_1)$  & $(A_5+A_2+A_1,[6] \times [3] \times [2])$ & $1,1,1$ & $1$ & $(1)$\\ \hline
        
        $(D_7(a_2),E_7+A_1)$ & $2A_3$ & $(E_8,2A_3)$ & $(D_5+A_3,[7,3] \times [4])$ & $1,1,1$ & $1$ & $(1)$\\ \hline

        $(E_8(b_6),D_8(a_3))$ & $A_3+A_2+A_1$ & $(E_8,A_3+A_2+A_1)$  & $(D_8,[9,7])$ & $1,1,1$ & $1$ & $(1)$\\ \hline

        $(E_8(a_6),D_8(a_2))$ & $A_3+2A_1$ & $(E_8,A_3+2A_1)$  & $(D_8,[11,5])$ & $1,1,1$ & $1$ & $(1)$\\ \hline

        $(E_8(a_6),A_8)$ & $2A_2+2A_1$ & $(E_8,2A_2+2A_1)$  & $(A_8,[9])$ & $1,1,1$ & $1$ & $(1)$ \\ \hline
        
        $(E_8(b_5),E_7(a_2)+A_1)$ & $A_3+A_1$ & $(E_8,A_3+A_1)$ & $(E_7+A_1,E_7(a_2) \times [2])$ & $1,1,1$ & $1$ & $(1)$\\ \hline
        
        $(E_8(b_5),E_6+A_2)$ & $2A_2+A_1$ & $(E_8,2A_2+A_1)$ & $(E_6+A_2, E_6 \times [2])$ & $1,1,1$ & $1$ & $(1)$\\ \hline
        
        $(E_8(a_5),D_8(a_1))$ & $A_2+3A_1$ & $(E_8,A_2+3A_1)$ & $(D_8,[13,3])$ & $1,1,1$ & $1$ & $(1)$\\ \hline
        
        $(E_8(a_4),D_8)$ & $4A_1$ & $(E_8,4A_1)$ & $(D_8,[15,1])$ & $1,1,1$ & $1$ & $(1)$\\ \hline
        
        $(E_8(a_3),E_7+A_1)$ & $3A_1$ & $(E_8,3A_1)$ & $(E_7+A_1,E_7 \times [2])$ & $1,1,1$ & $1$ & $(1)$\\ \hline

        $(E_6(a_1),A_7)$ & $D_4+A_1$ & $(E_7,4A_1)$ & $(A_7,[7,1])$ & $1,1,1$ & $1$ & $(1)$\\ \hline

        $(D_6(a_1),D_5+2A_1)$ & $A_5$ & $(D_7,[3,2^2,1^7])$ & $(E_7+A_1, D_6(a_1) \times [2])$ & $1,1,1$ & $1$ & $(1)$\\ \hline
        
        $(E_7(a_5),A_5+A_2)$ & $E_6(a_3)+A_1$ & $(E_7,2A_2+A_1)$ & $(A_5+A_2,[6] \times [2,1])$ & $1,1,S_2$ & $1$ & $(4)$ \\ \hline

        $(E_7(a_5),A_1+D_6(a_2))$ & $E_7(a_5)$ & $(E_7,(A_3+A_1)')$ & $(D_8,[7,5,2,2])$ & $1,1,S_3$ & $1$ & $(8)$ \\ \hline

        $(D_6(a_2),D_4+A_3)$ & $D_6(a_2)$ & $(D_7,(3,2^4,1^3))$ & $(D_5+A_3,[5^2] \times [4])$ & $1,1,S_2$ & $1$ & $(4)$\\ \hline

        $(E_6(a_3)+A_1,A_5+2A_1)$ & $E_7(a_5)$ & $(E_6+A_1,3A_1 \times \{0\})$ & $(D_8,[6,6,3,1])$ & $1,1,S_3$ & $1$ & $(8)$ \\ \hline

        $(E_6(a_3),A_5+A_1)$ & $D_5+A_1$ & $(E_6,3A_1)$ & $(E_7+A_1,(A_5)'' \times [2])$ & $1,1,1$ & $1$ & $(1)$\\ \hline

        $(A_4,2A_3)$ & $D_6$ & $(D_6,(3,2^4,1))$ & $(D_5+A_3, [5^2] \times \{0\})$ & $1,1,1$ & $1$ & $(1)$\\ \hline

        $(D_4(a_1)+A_2,A_3+A_2+2A_1)$ & $A_7$ & $(D_5+A_2,[3,2^2,1^3] \times \{0\})$ & $(E_7+A_1,(A_5)'' \times \{0\})$ & $1,1,1$ & $1$ & $(1)$\\ \hline
        
        $(D_4(a_1),3A_2)$ & $E_6+A_1$ & $(E_6,2A_2+A_1)$ & $(E_6+A_2,2A_2 \times [3])$ & $1,1,1$ & $1$ & $(1)$\\ \hline

        $(D_4(a_1),A_3+2A_1)$ & $E_7(a_2)$ & $(D_5,(3,2^2,1^3))$ & $(E_7+A_1,(A_3+A_1)'' \times [2])$ & $1,1,1$ & $1$ & $(1)$\\ \hline

        $(2A_2,A_2+4A_1)$ & $D_7$ & $(D_4+A_2,(3,2^2,1) \times \{0\})$ & $(E_7+A_1,2A_2 \times \{0\})$ & $1,1,1$ & $1$ & $(1)$\\ \hline

        $(A_2,4A_1)$ & $E_7$ & $(D_4,(3,2^2,1))$ & $(E_7+A_1,(3A_1)'' \times [2])$  & $1,1,1$ & $1$ & $(1)$ \\ \hline
         
        $(A_1,A_1)$ & $E_8(a_1)$ & $(A_1,\{0\})$ & $(E_7+A_1,\{0\} \times [2])$ & $1,1,1$ & $1$ & $(1)$\\ \hline

        $(2A_1,2A_1)$ & $E_8(a_2)$ & $(2A_1,\{0\})$ & $(D_8,[3,1^{13}])$ & $1,1,S_2$ & $1$ & $(3)$ \\ \hline

        $(3A_1,3A_1)$ & $E_8(a_3)$ & $(3A_1,\{0\})$ & $(E_7+A_1,(3A1)'' \times \{0\})$ & $1,1,S_2$ & $1$ & $(3)$\\ \hline
        
        $(4A_1,4A_1)$ & $E_8(a_4)$ & $(4A_1,\{0\})$ & $(D_8,[2^8]^I)$ & $1,1,S_2$ & $1$ & $(3)$\\ \hline

        $(A_2+A_1,A_2+A_1)$ & $E_8(a_4)$ & $(A_2+A_1,\{0\})$ & $(E_7+A_1,A_2 \times [2])$ & $S_2,1,S_2$ & $S_2$ & $(8)$\\ \hline

        $(A_2+2A_1,A_2+2A_1)$ & $E_8(b_4)$ & $(A_2+2A_1,\{0\})$ & $(D_8,[3^3,1^7])$ & $1,1,S_2$ & $1$ & $(3)$\\ \hline
        
        $(A_3,A_3)$ & $E_7(a_1)$ & $(A_3,\{0\})$  & $(D_8,[5,1^{11}])$ & $1,1,1$ & $1$ & $(1)$\\ \hline
        
        $(A_2+3A_1,A_2+3A_1)$ & $E_8(a_5)$ & $(A_2+3A_1,\{0\})$ & $(E_7+A_1,(A_2+3A_1) \times \{0\})$ & $1,1,S_2$ & $1$ & $(3)$\\ \hline
        
        $(2A_2+A_1,2A_2+A_1)$ & $E_8(b_5)$ & $(2A_2+A_1,\{0\})$ & $(E_7+A_1,2A_2 \times [2])$ & $1,1,S_3$ & $1$ & $(3)$\\ \hline
        
        $(A_3+A_1,A_3+A_1)$ & $E_8(b_5)$ & $(A_3+A_1,\{0\})$ & $(E_7+A_1,(A_3+A_1)'' \times \{0\})$ & $1,1,S_3$ & $1$ & $(5)$\\ \hline
        
        $(2A_2+2A_1,2A_2+2A_1)$ & $E_8(a_6)$ & $(2A_2+2A_1,\{0\})$  & $(D_8,[3^5,1])$ & $1,1,S_3$ & $1$ & $(3)$ \\ \hline

        $(A_3+2A_1,A_3+2A_1)$ & $E_8(a_6)$ & $(A_3+2A_1,\{0\})$  & $(D_8,[6^2,2^2]^I)$ & $1,1,S_3$ & $1$ & $(\ast)$ \\ \hline
        
        $(D_4(a_1)+A_1,D_4(a_1)+A_1)$ & $E_8(a_6)$ & $(D_4+A_1,[2^2,1^4] \times \{0\})$ & $(E_7+A_1,D_4(a_1) \times [2])$ & $S_3,1,S_3$ & $S_3$ & $(\ast)$\\ \hline
        
        $(A_3+A_2,A_3+A_2)$ & $D_7(a_1)$ & $(A_3+A_2,\{0\})$  & $(D_8,[5,3^2,1^5])$ & $1,1,S_2$ & $1$ & $(3)$\\ \hline
        
        $(A_3+A_2+A_1,A_3+A_2,A_1)$ & $E_8(b_6)$ & $(A_3+A_2+A_1,\{0\})$ & $(E_7+A_1,(A_3+A_2+A_1) \times \{0\})$ & $1,1,S_3$ & $1$ & $(3)$ \\ \hline
        
        $(D_4+A_1,D_4+A_1)$ & $E_6(a_1)$ & $(D_4+A_1,\{0\})$ & $(E_7+A_1,D_4 \times [2])$ & $1,1,S_2$ & $1$ & $(3)$\\ \hline

        $(A_4+A_1,A_4+A_1)$ & $E_6(a_1)+A_1$ & $(A_4+A_1,\{0\})$ & $(E_7+A_1,A_4 \times [2])$ & $S_2,1,S_2$ & $S_2$ & $(\ast)$\\ \hline

        $(2A_3,2A_3)$  & $D_7(a_2)$ & $(2A_3,\{0\})$ & $(D_8,[4^4]^I)$ & $1,1,S_2$  & $1$ & $(\ast)$\\ \hline
        
        $(D_5(a_1), D_5(a_1))$ & $E_6(a_1)$ & $(D_5,(2^2,1^6))$ & $(D_8,[7,3,1^6])$ & $\ZZ_2,1,S_2$  & $S_2$ & $(8)$\\ \hline

        $(A_4+2A_1,A_4+2A_1)$ & $D_7(a_2)$ & $(A_4+2A_1,\{0\})$ & $(D_8,[5^2,3,1^3])$ & $\ZZ_2,1,S_2$ & $S_2$ & $(\ast)$\\ \hline
        
        $(A_5,A_5)$ & $D_6(a_1)$ & $(A_5,\{0\})$  & $(E_7+A_1,(A_5)'' \times \{0\})$ & $1,1,S_2$ & $1$ & $(4)$ \\ \hline
        
        $(D_5(a_1)+A_1,D_5(a_1)+A_1)$ & $E_7(a_4)$ & $(D_5+A_1,[2^2,1^6] \times \{0\})$ & $(E_7+A_1,(D_5(a_1)+A_1) \times \{0\})$ & $1,1,S_2$  & $1$ & $(\ast)$\\ \hline
        
        $(A_4+A_2+A_1,A_4+A_2+A_1)$ & $A_6+A_1$ & $(A_4+A_2+A_1,\{0\})$ & $(E_7+A_1,(A_4+A_2) \times [2])$ & $1,1,1$ & $1$ & $(1)$\\ \hline
        
        $(A_4+A_3,A_4+A_3)$ & $E_8(a_7)$ & $(A_4+A_3,\{0\})$  & $(D_8,[5^3,1])$ & $1,1,S_5$ & $1$ & $(3)$\\ \hline

        $(A_5+A_1,A_5+A_1)$ & $E_8(a_7)$ & $(A_5+A_1,\{0\})$ & $(D_8,[6^2,2^2]^{I})$ & $1,1,S_5$ & $1$ & $(6)$\\ \hline

        $(D_5(a_1)+A_2,D_5(a_1)+A_2)$  & $E_8(a_7)$ & $(D_5+A_2,[2^2,1^6] \times \{0\})$ & $(D_8,[6^2,2^2]^{II})$ & $1,1,S_5$ & $1$ & $(6)$\\ \hline

        $(E_6(a_3)+A_1,E_6(a_3)+A_1)$ & $E_8(a_7)$ & $(E_6+A_1,A_2 \times \{0\})$ & $(E_7+A_1,E_6(a_3) \times (2))$ & $S_2,S_2,S_5$ & $S_2$ & $(6)$\\ \hline

        $(D_6(a_2), D_6(a_2))$ & $E_8(a_7)$ & $(D_6,[2^4,1^4])$ & $(D_8,[7,5,1^4])$ &  $S_2,1,S_5$ & $S_2$ & $(6)$\\ \hline
    
        $(E_7(a_5),E_7(a_5))$ & $E_8(a_7)$ & $(E_7,D_4(a_1))$ & $(E_7+A_1,E_7(a_5) \times \{0\})$ & $S_3,S_3,S_5$ & $S_3$ & $(6)$\\ \hline

        $(D_5+A_1,D_5+A_1)$  & $E_6(a_3)$ & $(D_5+A_1,\{0\})$ & $(E_7+A_1,D_5 \times (2))$ & $1,1,S_2$ & $1$ & $(3)$\\ \hline

        $(D_6(a_1),D_6(a_1))$ & $E_6(a_3)$ & $(D_6,[2^2,1^8])$ & $(D_8,[9,3,1^4])$ & $S_2,1,S_2$ & $S_2$ & $(8)$\\ \hline

        $(A_6+A_1,A_6+A_1)$ & $A_4+A_2+A_1$ & $(A_4+A_2+A_1,\{0\})$ & $(E_7+A_1,A_6 \times [2])$  &$1,1,1$ & $1$ & $(1)$\\ \hline

        $(E_7(a_4),E_7(a_4))$ & $D_5(a_1)+A_1$ & $(E_7,A_2+2A_1)$ & $(E_7+A_1,E_7(a_4) \times \{0\})$ & $1,1,1$ & $1$ & $(1)$\\ \hline

        $(D_6,D_6)$ & $A_4$ & $(D_6,\{0\})$ & $(D_8,[11,1^5])$ & $1,1,S_2$ & $1$ & $(3)$\\ \hline

        $(D_7(a_2),D_7(a_2))$ & $A_4+2A_1$ & $(D_7,[2^4,1^6])$ & $(D_8,[9,5,1^2])$ & $\ZZ_2,1,S_2$ & $S_2$ & $(8)$\\ \hline

        $(A_7,A_7)$ & $D_4(a_1)+A_2$ & $(A_7,\{0\})$ & $(D_8,[8^2]^I)$ & $1,1,S_2$ & $1$ & $(3)$\\ \hline

        $(E_6(a_1)+A_1,E_6(a_1)+A_1)$ & $A_4+A_1$ & $(E_6+A_1,A_1 \times \{0\})$ & $(E_7+A_1,E_6(a_1) \times [2])$ & $S_2,1,S_2$ & $S_2$ & $(8)$\\ \hline

        $(E_7(a_3),E_7(a_3))$ & $A_4$ & $(E_7,A_2)$  & $(E_7+A_1,D_6 \times [2])$  & $S_2,1,S_2$ & $S_2$ & $(8)$ \\ \hline

        $(E_6+A_1,E_6+A_1)$ & $D_4(a_1)$ & $(E_6+A_1,\{0\})$  & $(E_7+A_1,E_6 \times [2])$ & $1,1,S_3$ & $1$ & $(3)$\\ \hline

        $(E_7(a_2),E_7(a_2))$ & $D_4(a_1)$ & $(E_7,2A_1)$ & $(E_7+A_1,E_7(a_2) \times \{0\})$ & $1,1,S_3$ & $1$ & $(8)$ \\ \hline

        $(D_7,D_7)$ & $2A_2$ & $(D_7,\{0\})$  & $(D_8,[11,5])$ & $1,1,S_2$ & $1$ & $(3)$\\ \hline
        
        $(E_7(a_1),E_7(a_1))$ & $A_3$ & $(E_7,A_1)$  & $(E_7+A_1,E_7(a_1) \times \{0\})$ & $1,1,1$ & $1$ & $(1)$\\ \hline

        $(E_7,E_7)$ & $A_2$ & $(E_7,\{0\})$  & $(E_7+A_1,E_7 \times \{0\})$ & $1,1,S_2$ & $1$ & $(3)$\\ \hline
        
        \caption{Special Lusztig-Achar data (not of the form $(\OO^{\vee},1)$ with even $\OO^{\vee}$) in type $E_8$}
        \label{table:E8Gamma}
    \end{longtable}
}

\begin{sloppypar} \printbibliography[title={References}] \end{sloppypar}
   
\end{document}